\newcommand{\noun}[1]{\textsc{#1}}
\numberwithin{equation}{section}
\numberwithin{figure}{section}
\theoremstyle{definition}
\newtheorem*{problem*}{\protect\problemname}
\newcommand\thmsname{\protect\theoremname}
\newcommand\nm@thmtype{theorem}
\theoremstyle{plain}
\newenvironment{namedthm}[1][Undefined Theorem Name]{
  \ifx{#1}{Undefined Theorem Name}\renewcommand\nm@thmtype{theorem*}
  \else\renewcommand\thmsname{#1}\renewcommand\nm@thmtype{namedtheorem}
  \fi
  \begin{\nm@thmtype}}
  {\end{\nm@thmtype}}
\theoremstyle{plain}
\newtheorem{thm}{\protect\theoremname}[section]
\theoremstyle{remark}
\newtheorem{rem}[thm]{\protect\remarkname}
\theoremstyle{definition}
\newtheorem{example}[thm]{\protect\examplename}
\newenvironment{lyxlist}[1]
	{\begin{list}{}
		{\settowidth{\labelwidth}{#1}
		 \setlength{\leftmargin}{\labelwidth}
		 \addtolength{\leftmargin}{\labelsep}
		 }}
	{\end{list}}
\theoremstyle{definition}
\newtheorem{defn}[thm]{\protect\definitionname}
\theoremstyle{plain}
\newtheorem{lem}[thm]{\protect\lemmaname}
\theoremstyle{plain}
\newtheorem{prop}[thm]{\protect\propositionname}
\theoremstyle{plain}
\newtheorem{cor}[thm]{\protect\corollaryname}
\providecommand{\corollaryname}{Corollary}
\providecommand{\definitionname}{Definition}
\providecommand{\examplename}{Example}
\providecommand{\lemmaname}{Lemma}
\providecommand{\problemname}{Problem}
\providecommand{\propositionname}{Proposition}
\providecommand{\remarkname}{Remark}
\providecommand{\theoremname}{Theorem}
\begin{document}


\global\long\def\tx#1{\mathrm{#1}}%
\global\long\def\dd#1{\tx d#1}%
\global\long\def\tt#1{\mathtt{#1}}%
\global\long\def\ww#1{\mathbb{#1}}%
\global\long\def\DD#1{\tx D#1}%
\global\long\def\nf#1#2{\nicefrac{#1}{#2}}%
\global\long\def\group#1{{#1}}%

\newcommand{\bigslant}[2]{{\raisebox{.3em}{$#1$}/\raisebox{-.3em}{$#2$}}}


\global\long\def\quot#1#2{\bigslant{#1}{#2}}%

\global\long\def\rr{\mathbb{R}}%
\global\long\def\rbar{\overline{\mathbb{R}}}%
\global\long\def\cc{\mathbb{C}}%
\global\long\def\cbar{\overline{\cc}}%
\global\long\def\disc{\mathbb{D}}%
\global\long\def\dbar{\overline{\disc}}%
\global\long\def\zz{\mathbb{Z}}%
\global\long\def\zp{\mathbb{Z}_{\geq0}}%
\newcommandx\zsk[1][usedefault, addprefix=\global, 1=k]{\nicefrac{\zz}{#1\zz}}%
\global\long\def\nn{\mathbb{N}}%
\global\long\def\nbar{\overline{\nn}}%
\global\long\def\qq{\mathbb{Q}}%
\global\long\def\qbar{\overline{\qq}}%

\global\long\def\rat#1{\cc\left(#1\right) }%
\global\long\def\pol#1{\cc\left[#1\right] }%
\global\long\def\id{\tx{Id}}%

\global\long\def\GL#1#2{\tx{GL}_{#1}\left(#2\right)}%

\global\long\def\spec#1{\tx{Spec}\left(#1\right)}%


\global\long\def\fol#1{\mathcal{F}_{#1}}%
\global\long\def\pp#1{\frac{\partial}{\partial#1}}%
\global\long\def\ppp#1#2{\frac{\partial#1}{\partial#2}}%
\newcommandx\sat[2][usedefault, addprefix=\global, 1=\fol{}]{\tx{Sat}_{#1}\left(#2\right)}%
\global\long\def\lif#1{\mathcal{L}_{#1}}%
\newcommandx\holo[1][usedefault, addprefix=\global, 1=\gamma]{\mathfrak{h}_{#1}}%
\newcommandx\mono[1][usedefault, addprefix=\global, 1=\chi]{\mathfrak{m}_{#1}}%
\global\long\def\sing#1{\tx{Sing}\left(#1\right)}%
\global\long\def\flow#1#2#3{\Phi_{#1}^{#2}#3}%
\global\long\def\ddd#1#2{\frac{\dd{#1}}{\dd{#2}}}%
\newcommandx\vfs[1][usedefault, addprefix=\global, 1={\cc^{2},0}]{\frak{X}\left(#1\right)}%
\newcommandx\per[2][usedefault, addprefix=\global, 1=, 2=\bullet]{\mathfrak{T}_{#2}^{#1}}%
\newcommandx\cs[2][usedefault, addprefix=\global, 1={\fol{},p_{*}}, 2=\bullet]{\mathfrak{cs}\left(#1\right)}%


\newcommandx\spt[1][usedefault, addprefix=\global, 1=\gamma]{#1_{\star}}%
\newcommandx\ept[1][usedefault, addprefix=\global, 1=\gamma]{#1^{\star}}%


\global\long\def\ii{\tx i}%
\global\long\def\ee{\tx e}%

\global\long\def\re#1{\Re\left(#1\right)}%
\global\long\def\im#1{\Im\left(#1\right)}%

\global\long\def\sgn#1{\mbox{sign}\left(#1\right)}%
\global\long\def\floor#1{\left\lfloor #1\right\rfloor }%
\global\long\def\ceiling#1{\left\lceil #1\right\rceil }%


\global\long\def\germ#1{\cc\left\{  #1\right\}  }%
\global\long\def\frml#1{\cc\left[\left[#1\right]\right] }%


\newcommandx\norm[2][usedefault, addprefix=\global, 1=\bullet]{\left|\left|#1\right|\right|_{#2}}%
\global\long\def\adh#1{\overline{#1}}%
\global\long\def\intr#1{\tx{int}\left(#1\right)}%
\global\long\def\ccnx#1{\tx{cc}\left(#1\right)}%
\newcommandx\neigh[2][usedefault, addprefix=\global, 1=, 2=0]{\left(\cc^{#1},#2\right)}%


\newcommandx\proj[2][usedefault, addprefix=\global, 1=1, 2=\cc]{\mathbb{P}_{#1}\left(#2\right)}%
\global\long\def\sone{\mathbb{S}^{1}}%


\global\long\def\inj{\hookrightarrow}%
\global\long\def\surj{\twoheadrightarrow}%
\global\long\def\longinj#1#2{\xymatrix{#1\ar@{^{(}->}[r]  &  #2}
 }%
\global\long\def\longsurj#1#2{\xymatrix{#1\ar@{->>}[r]  &  #2}
 }%
\global\long\def\longto{\longrightarrow}%
\global\long\def\longmaps{\longmapsto}%
\global\long\def\cst{\tx{cst}}%
\global\long\def\restr#1{\big|_{#1}}%
\global\long\def\Beta#1#2{\tx B\left(#1,#2\right)}%
\global\long\def\Gama#1{\tx{\Gamma}\left(#1\right)}%
\global\long\def\oo#1{\tx o\left(#1\right)}%
\global\long\def\OO#1{\tx O\left(#1\right)}%
\newcommandx\diff[1][usedefault, addprefix=\global, 1={\cc^{2},0}]{\tx{Diff}\left(#1\right)}%
\newcommandx\holf[1][usedefault, addprefix=\global, 1={\cc^{2},0}]{\tx{Holo}\left(#1\right)}%
\newcommandx\holb[1][usedefault, addprefix=\global, 1={\cc^{2},0}]{\tx{Holo_{c}}\left(#1\right)}%
\newcommandx\fdiff[2][usedefault, addprefix=\global, 1=2, 2=0]{\widehat{\tx{Diff}}\left(\cc^{#1},#2\right)}%
\newcommandx\homeo[1][usedefault, addprefix=\global, 1={\cc^{2},0}]{\tx{Homeo}\left(#1\right)}%
\newcommandx\mero[1][usedefault, addprefix=\global, 1={\cc,0}]{\tx{Mero}\left(#1\right) }%
\global\long\def\hom#1{\tx{Hom}\left(#1\right) }%
\global\long\def\ev{\tx{B\acute{E}V}}%
\newcommandx\parab[1][usedefault, addprefix=\global, 1=1]{\tx{Parab}_{#1}}%
\newcommandx\isect[2][usedefault, addprefix=\global, 1=V, 2=\sharp]{#1^{#2}}%
\newcommandx\zero[1][usedefault, addprefix=\global, 1=V]{\isect[#1][0]}%
\newcommandx\infi[1][usedefault, addprefix=\global, 1=V]{\isect[#1][\infty]}%
\newcommandx\adapt[2][usedefault, addprefix=\global, 1=\lambda, 2=\varphi]{\tx{Adapt}_{#1}\left(#2\right)}%
\newcommandx\cauchein[1][usedefault, addprefix=\global, 1=\varphi]{\text{CH}^{#1}}%
\global\long\def\act{\sigma{}^{\circledast}}%
\global\long\def\zset#1{\text{Zero}\left(#1\right)}%
\global\long\def\pset#1{\text{Pole}\left(#1\right)}%
\global\long\def\spine#1{\text{Spine}\left(#1\right)}%
\global\long\def\sep#1{\text{Sep}\left(#1\right)}%

\keywords{parabolic fixed-point, normal forms, inverse problem, parabolic renormalization}
\subjclass[2000]{37F45, 37F75, 34M35, 34M50}
\title{Spherical normal forms for germs of parabolic line biholomorphisms}
\author{Loïc\noun{ Teyssier}}
\date{September 2020}
\address{Laboratoire IRMA (UMR 7501)\\
Université de Strasbourg --- CNRS\\
\noun{France}}
\email{\href{mailto:teyssier@math.unistra.fr}{teyssier@math.unistra.fr}}
\begin{abstract}
We address the inverse problem for holomorphic germs of a tangent-to-identity
mapping of the complex line near a fixed point. We provide a preferred
(family of) parabolic map $\Delta$ realizing a given Birkhoff--Écalle-Voronin
modulus $\psi$ and prove its uniqueness in the functional class we
introduce. The germ is the time-$1$ map of a Gevrey formal vector
field admitting meromorphic sums on a pair of infinite sectors covering
the Riemann sphere. For that reason, the analytic continuation of
$\Delta$ is a multivalued map admitting finitely many branch points
with finite monodromy. In particular $\Delta$ is holomorphic and
injective on an open slit sphere containing $0$ (the initial fixed
point) and $\infty$, where sits the companion parabolic point under
the involution $\frac{-1}{\id}$. It turns out that the Birkhoff--Écalle-Voronin
modulus of the parabolic germ at $\infty$ is the inverse $\psi^{\circ-1}$
of that at $0$.
\end{abstract}

\maketitle
\emph{Acknowledgment}. I would like to express my greatest gratitude
to my skillful colleague R.~\noun{Schäfke,} who introduced me to
the joy of holomorphic fixed-point methods in dynamics.

\section{Introduction}

The classification of holomorphic, univariate parabolic germs up to
local change of analytic coordinate was first described by G.~\noun{Birkhoff}~\cite{Birk}
in 1939, but the manuscript has somehow been forgotten almost immediately
to be dug out only by the end of the century. In the meantime, J.~\noun{Écalle~\cite{Ecal}}
and S.~\noun{Voronin}~\cite{VoroParaboEng} carried out this task
again nearly forty years ago. In addition to building a modulus --characterizing
the conjugacy class of a parabolic germ-- they were able to solve
the inverse problem by recognizing which alike objects came as the
moduli of parabolic germs, a property G.~\noun{Birkhoff} was only
able to conjecture. The latter problem is what interests us in the
present paper.

\subsection{Modulus of classification and inverse problem}

Heuristically the conformal structure of the orbits space $\Omega$
of a non-trivial tangent-to-identity germ fixing $0\in\cc$
\begin{align*}
\Delta\left(z\right) & =z+\oo z\in\diff[\cc,0]_{\id}\backslash\left\{ \id\right\} 
\end{align*}
completely encodes the analytic class of $\Delta$ up to local changes
of analytic coordinate near the fixed point. For the sake of simplicity
assume that $\Delta$ is generic, \emph{i.e.} that it belongs to the
space
\begin{align*}
\parab & :=\left\{ z+*z^{2}+\oo{z^{2}}\right\} \subset\diff[\cc,0]_{\id}
\end{align*}
(here and in all the sequel $*$ stands for a nonzero complex number).

\hfill{}\includegraphics[width=0.9\columnwidth]{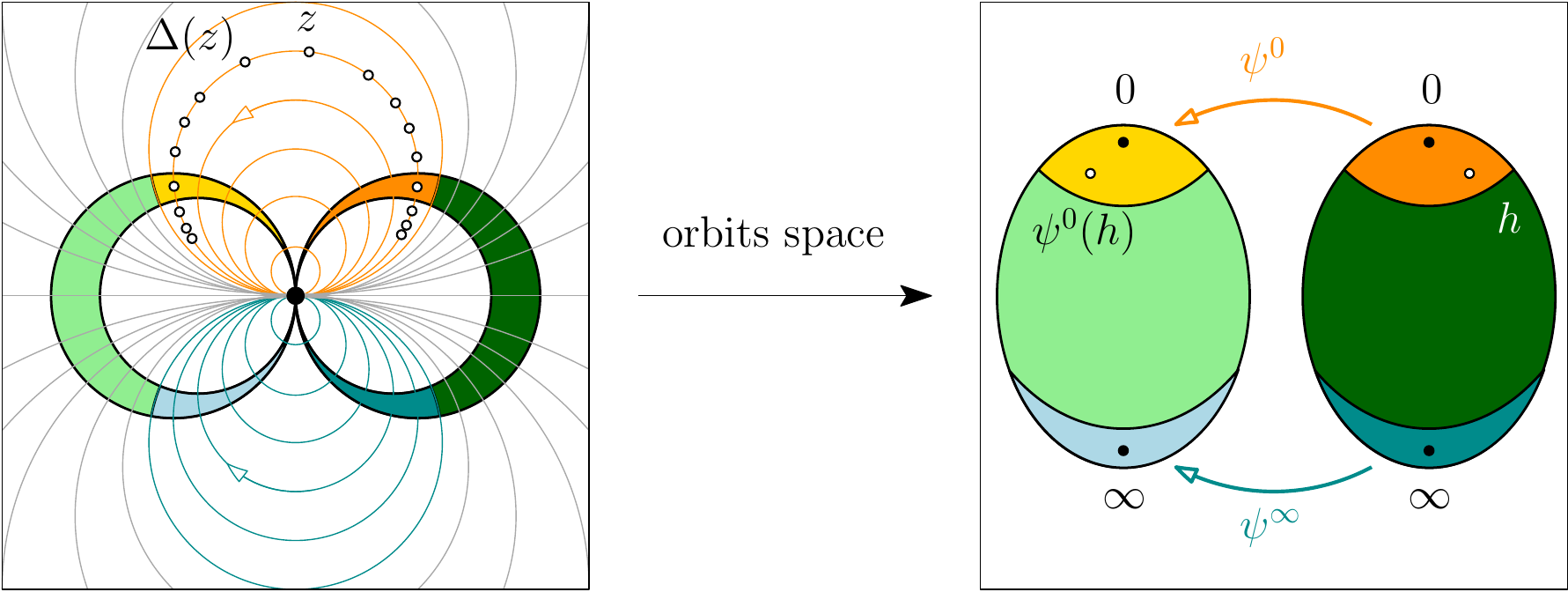}\hfill{}

It is well-known that a fundamental region of the iterative action
of $\Delta$ is given by a pair of crescent-shaped regions which are
identified by the dynamics near their horns, attached at the fixed
point $0$. Said more precisely, the orbits space
\begin{align*}
\Omega & =\quot{\neigh}{\Delta}
\end{align*}
is obtained as the gluing of two spheres $\cbar$ near $0$ and $\infty$
by germs of a diffeomorphism $\zero[\psi]\in\diff[\cbar,0]$ and $\infi[\psi]\in\diff[\cbar,\infty]$,
sometimes called <<horn maps>>:
\begin{align*}
\Omega & =\quot{\cbar\sqcup\cbar}{\left({\zero[\psi]},{\infi[\psi]}\right)}.
\end{align*}
When we endow the space $\quot{\neigh}{\Delta}$ with the quotient
topology, the two points $0$ and $\infty$ (corresponding to the
fixed point of $\Delta$) are not separated. 

One can choose conformal coordinates on the spheres so that $\infi[\psi]$
itself becomes tangent-to-identity. The only degree of freedom that
remains is the linear change of coordinates $h\mapsto ch$ for $c\in\cc^{\times}$
applied simultaneously to both spheres. The Birkhoff--Écalle--Voronin
theorem states that the map
\begin{align*}
\ev~:~\quot{\parab}{\diff[\cc,0]} & \longto\quot{{\diff[\cc,0]}\times{\diff[\cc,0]}_{\id}}{\cc^{\times}}\\
\left[\Delta\right] & \longmaps\left[\left(\zero[\psi],\infi[\psi]\right)\right]
\end{align*}
is well-defined and one-to-one. The Écalle--Voronin theorem states
that it is also onto, answering the inverse problem.
\begin{problem*}
Being given a pair $\left(\zero[\psi],\infi[\psi]\right)\in\diff[\cc,0]\times\diff[\cc,0]_{\id}$,
to recover (<<synthesize>>) a parabolic germ $\Delta$ such that
its modulus is $\ev\left(\Delta\right)=\left(\zero[\psi],\infi[\psi]\right)$.
\end{problem*}
In the present article we revisit the question and provide an essentially
unique representative $\Delta$ for a given pair $\left(\zero[\psi],\infi[\psi]\right)$,
what can be called a normal form. The germ $\Delta$ will be <<global>>
on the sphere $\cbar$ in some sense, which is why we borrow Écalle
terminology and speak about \emph{spherical normal forms}.

\subsection{Statement of the main results}

We answer the inverse problem in the following fashion.
\begin{namedthm}[Synthesis Theorem]
For fixed $\mu\in\cc$ and positive $\lambda>0$ define the rational
vector field (holomorphic near $0$ and $\infty$), called here the
\textbf{formal model},
\begin{align*}
X_{0}\left(z\right) & :=\frac{1-z^{2}}{1+z^{2}}\times\frac{\lambda z^{2}}{1+\mu\lambda z-z^{2}}\pp z
\end{align*}
and denote by $f\mapsto X_{0}\cdot f$ the associated Lie derivative
on power series. Let $\psi=\left(\zero[\psi],\infi[\psi]\right)\in{\diff[\cc,0]}\times{\diff[\cc,0]}_{\id}$
be given and pick $\mu$ so that $\ddd{\zero[\psi]}h\left(0\right)=\exp\left(4\pi^{2}\mu\right)$.

One can find an effective $\lambda\left(\psi\right)>0$ such that
for each $0<\lambda\leq\lambda\left(\psi\right)$ there exists a unique
$F\in z\frml z$ satisfying all of the following properties.
\begin{enumerate}
\item The time-$1$ map $\Delta$ of the formal vector field, reputed to
be in \textbf{spherical normal form}, 
\begin{align*}
X_{F}:= & \frac{1}{1+X_{0}\cdot F}X_{0}
\end{align*}
is a convergent power series at $0$.
\item $\Delta$ belongs to $\parab$ with
\begin{align*}
\ev\left(\Delta\right)= & \left(\zero[\psi],\infi[\psi]\right).
\end{align*}
\item The power series $F$ is $1$-summable and its 1-sum can be realized
as a pair $\left(f^{+},f^{-}\right)$ of functions holomorphic and
bounded by $1$ on the respective infinite sectors 
\begin{align*}
V^{\pm} & :=\left\{ z\neq0~:~\left|\arg\left(\pm z\right)\right|<\frac{5\pi}{8}\right\} 
\end{align*}
as in Figure~\ref{fig:Sectors}.
\end{enumerate}
\end{namedthm}
\begin{figure}[H]
\includegraphics[height=4cm]{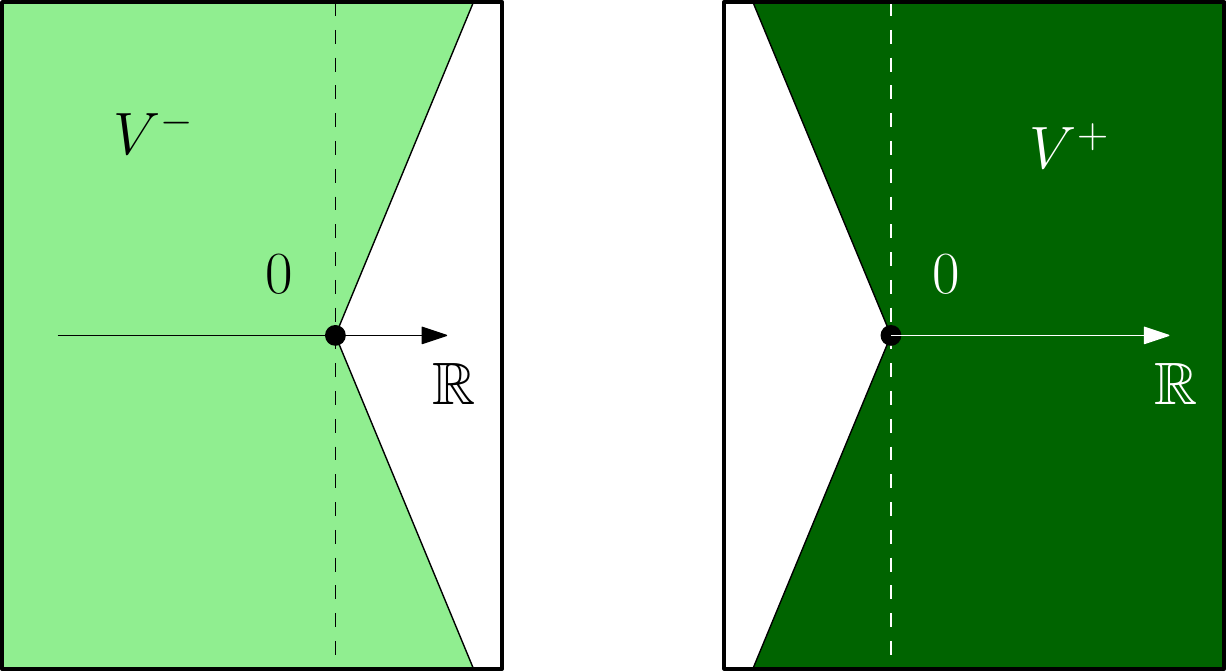}\hfill{}\includegraphics[height=4cm]{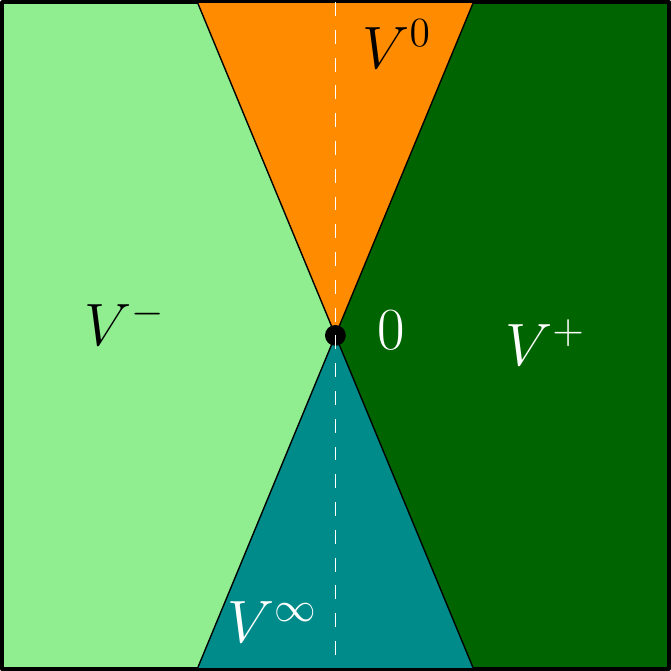}

\caption{\label{fig:Sectors}The infinite sectors $V^{\pm}$ and the components
$\protect\zero$, $\protect\infi$ of their intersection $V^{\cap}$.}
\end{figure}

A pair $f=\left(f^{+},f^{-}\right)$ of a function holomorphic on
the corresponding sector $V^{\pm}$ has order-1\textbf{ flat discrepancy}
at $0$ whenever
\begin{align*}
\limsup_{\begin{array}{l}
z\to0\\
z\in V^{\cap}
\end{array}}\left|z\right|\ln\left|f^{-}\left(z\right)-f^{+}\left(z\right)\right|\in\left]-\infty,0\right[ & ~~~,~V^{\cap}:=V^{+}\cap V^{-}.
\end{align*}
It follows from Ramis-Sibuya theorem~\cite[Theorem \#\#]{LodRich}
that there exists some $F=\sum_{n\geq0}F_{n}z^{n}\in\frml z$ which
is their common Gevrey-1 asymptotic expansion at $0$ (in the sense
of Poincaré):
\begin{align*}
\left(\exists C>0\right)\left(\forall N\in\nn\right) & \left(\forall z\in V^{\pm}\right)~~~~\left|f^{\pm}\left(z\right)-\sum_{n>0}^{N}F_{n}z^{n}\right|\leq C\left(N+1\right)!\left|z\right|^{N+1}.
\end{align*}
We then say that $F$ is \textbf{$1$-summable} with $1$-sum $f$.
The arrow $f\mapsto F$ is injective (because $V^{\pm}$ is wider
than a half-plane). This notion is compatible with the usual arithmetic
and differential operations (by taking maybe a narrower subsector).
\begin{rem}
The notion of $*$-summability has a standard, more general definition
(see \emph{e.g.~}\cite{LodRich}) allowing for more flexibility.
We will not need the whole refinement of the theory as we only rely
on this specific implementation of Ramis-Sibuya theorem.
\end{rem}

We can exhibit a value of $\lambda\left(\psi\right)$ as follows.
There exists $\mathfrak{m}_{\mu},\mathfrak{t}_{\mu}>0$ depending
only on $\mu$ (given for instance in~(\ref{eq:bounds_on_h})) for
which, if we denote by $\zero[\rho]$ and $\infi[\rho]$ the respective
radius of convergence of the Taylor series at $0$ of $\zero[\psi]$
and $\infi[\psi]$, we may define
\begin{align*}
\frac{1}{\ell\left(\psi\right)} & :=\max\left\{ 1,\mathfrak{t}_{\mu},2\pi+\ln\frac{\mathfrak{m}_{\mu}}{\min\left\{ \zero[\rho],\infi[\rho]\right\} }\right\} 
\end{align*}
 and take 
\begin{align*}
\lambda\left(\psi\right) & :=\min\left\{ \ell\left(\psi\right),\frac{1}{4\sqrt{\mathfrak{m}_{\mu}\norm[\psi]{\ell\left(\psi\right)}}}\right\} 
\end{align*}
where 
\begin{align*}
\norm[\psi]{\ell} & :=\max_{\sharp\in\left\{ 0,\infty\right\} }\sup_{\left|h\right|\leq\mathfrak{m}_{\mu}\exp\left(2\pi-\nf 1{\ell}\right)}\left|\ddd{\log\frac{{\isect[\psi]}}{\id}}h\left(h\right)\right|.
\end{align*}

\begin{rem}
~
\begin{enumerate}
\item Any holomorphic vector field $X$ with a double zero at $0$ is the
infinitesimal generator of a generic parabolic germ with modulus $\left(\ee^{4\pi^{2}\mu}\id,\id\right)$
for some $\mu\in\cc$, see~\cite{Ecal}. We say that the modulus
is \textbf{trivial}. This is particularly the case for $X_{0}$. As
a consequence of the uniqueness clause above, the only normal form
with convergent $F\in z\germ z$ is the formal model itself ($F=0$).
\item Let $X\in z^{2}\left(\cc^{\times}+z\frml z\right)\pp z$ be a formal
vector field with a double zero at $0$. It may happen that the formal
power series 
\begin{align*}
\Delta & :=\left(\exp X\right)\cdot\id
\end{align*}
is convergent at $0$, in which case it belongs to $\parab$ and we
say that $\Delta$ is the \textbf{time-1 map} of the infinitesimal
generator $X$. Beware that it does not imply the convergence of $\left(\exp tX\right)\cdot\id$
for $t\notin\zz$ even for small $\left|t\right|$: this is for instance
the case of the entire map $\Delta:z\mapsto\exp\left(z\right)-z$
(see~\cite{Bak}). If the lattice of those $t\in\cc$ for which the
power series converges is not discrete then the modulus of $\Delta$
is trivial~\cite{Ecal}.
\item There might exist other representatives of $\left(\zero[\psi],\infi[\psi]\right)$
of the form $\frac{1}{1+X_{0}\cdot F}X_{0}$ for some $1$-summable
$F$ with $1$-sum $f$ on $V^{\pm}$. Then $f$ must have large norm.
This is likely related to what F.~\noun{Loray} discusses after Theorem~1
in~\cite{Loray}.
\end{enumerate}
\end{rem}

The functions $z\in V^{\pm}\mapsto f^{\pm}\left(z\right)-\frac{1-z^{2}}{\lambda z}+\mu\log\frac{\lambda z}{1-z^{2}}$
are \textbf{Fatou coordinates} for $\Delta$ (\emph{i.e.} local conformal
coordinates $\neigh\to\left(\cbar,\infty\right)$ in which $\Delta$
acts as the translation by $1$, see Section~\ref{subsec:sectorial_dynamics}).
Hence we parameterize the analytic classes of parabolic germs by their
formal Fatou coordinates. By picking them in a class of <<global>>
formal objects we benefit from a rigidity making the parameterization
injective.

The power series $F$ is global in the sense that its $1$-sum $f=\left(f^{+},f^{-}\right)$
exists on a pair of sectors whose union covers $\cc^{\times}$. Moreover
$\Delta|_{V^{\pm}}$ is the time-1 map of the $1$-sum of its formal
infinitesimal generator
\begin{align*}
X_{}^{\pm} & :=\frac{1}{1+X_{0}\cdot f^{\pm}}X_{0}
\end{align*}
which is a meromorphic vector field on $V^{\pm}$. A quick computation
tells us that the formal model $X_{0}$ is invariant by the involution
\begin{align*}
\sigma~:~\cbar & \longto\cbar\\
z & \mapsto\frac{-1}{z}
\end{align*}
fixing $\pm\ii$, while the sectors are swapped
\begin{align*}
\sigma\left(V^{\pm}\right) & =V^{\mp}.
\end{align*}
A nice feature of the synthesis is that $f$ not only defines a parabolic
germ near $0$ but it also creates one at $\infty$, corresponding
to the companion dynamics induced there by $X^{\pm}$, since (Section~\ref{subsec:Cauchy-Heine}):
\begin{align}
f^{\pm}\circ\sigma & =-f^{\mp}.\label{eq:f_sigma_invar}
\end{align}
This is why we call these dynamical systems \emph{spherical} normal
forms, a term borrowed from J.~\noun{Écalle~\cite{EcalReal}} as
discussed in Section~\ref{subsec:=0000C9calle}. We can describe
completely the dynamics of $\Delta$ and its monodromy as a multivalued
map over the whole Riemann sphere $\cbar$.

\begin{figure}
\hfill{}\includegraphics[width=0.75\columnwidth]{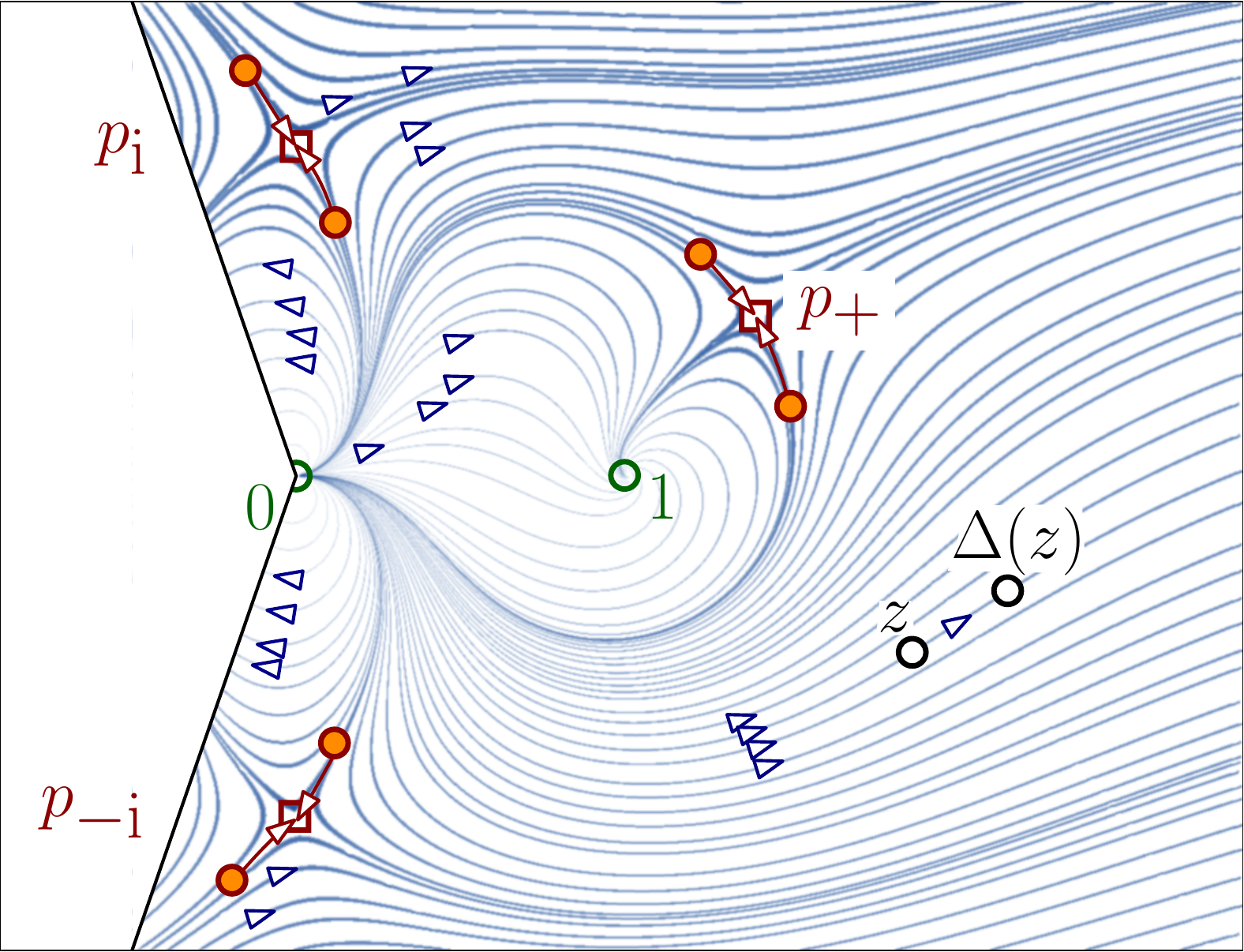}\hfill{}

\caption{\label{fig:generic_sector}Foliation induced by the real-time flow
of a typical sectorial vector field $X^{+}$ with the highlighted
6 ramification points $z_{p}$, $w_{p}$ (orange spots) of its time-1
map $\Delta$. The poles $p_{-\protect\ii},~p_{\protect\ii},~p_{+}$
of $X^{+}$ are figured by red squares.}
\end{figure}

\begin{namedthm}[Globalization Theorem]
Assume $\mu\neq0$. 
\begin{enumerate}
\item If $\lambda<\min\left\{ \frac{1}{2\left|\mu\right|}~,~\frac{1}{12\thinspace800}\right\} $
then $X_{}^{\pm}$ has exactly three (simple) poles $\left\{ p_{-\ii},p_{\ii},p_{\pm}\right\} $
which are $\OO{\sqrt{\lambda}}$-close to $-\ii$, $\ii$ and $\pm1$
respectively. This particularly means that the pole $p_{\pm\ii}$
is shared by $X^{-}$ and $X^{+}$.
\item (We refer to Figure~\ref{fig:generic_sector}.) To each one of the
four poles $p\in\left\{ p_{-\ii},p_{\ii},p_{-},p_{+}\right\} $ is
attached a unique couple of $\OO{\sqrt{\lambda}}$-close points $\left\{ z_{p},w_{p}\right\} $
lying at the intersection of the stable manifolds through $p$ of
$X_{f}^{+}$ and $X_{f}^{-}$, mapped onto $p$ by the time-1 map
of $X_{f}^{\pm}$. In other words, the analytic continuation of $\Delta$
to $\cbar$ provides a multivalued map with ramification points at
each $z_{p}$ and $w_{p}$,\textup{\emph{ to which it extends}}\emph{
}continuously by $\Delta\left(z_{p}\right)=\Delta\left(w_{p}\right)=p$.
A domain of holomorphy for $\Delta$ is given for instance by the
slit sphere
\begin{align*}
\mathcal{D}:= & \cbar\backslash\bigcup_{p\text{ pole}}\gamma_{p},
\end{align*}
where $\gamma_{p}$ is the arc of stable manifold passing through
$p$ of $X_{f}^{+}$ or $X_{f}^{-}$ (choose one) linking $z_{p}$
and $w_{p}$. In particular the radius of convergence of $\Delta$
is $1+\OO{\sqrt{\lambda}}$.
\item The analytic continuation of $\Delta$ to $\cbar$ has $8$ branch
points. The monodromy around any one of the $8$ points $\left\{ z_{p},~w_{p}~:~p\in\left\{ p_{-\ii},p_{\ii},p_{-},p_{+}\right\} \right\} $
is involutive.
\item $\Delta|_{\mathcal{D}}$ is holomorphic and injective. It sports four
fixed-points $0$, $\infty$ and $\pm1$. The first two are parabolic
while $\Delta$ admits a linearizable dynamics at $\pm1$ of multiplier
$\exp\left(\mp\frac{1}{\mu}\right)$. The dynamics at infinity $\sigma^{*}\Delta$
belongs to $\parab$ and 
\begin{align*}
\ev\left(\sigma^{*}\Delta\right) & =\ev\left(\Delta\right)^{\circ-1}.
\end{align*}
\end{enumerate}
In case $\mu=0$ the result still holds save for the fact that the
pole $z_{\pm}$ near $\pm1$ cancels the stationary point at $\pm1$
out. There only remain the parabolic fixed-points of $\Delta$ and
its ramification locus coming from $\left\{ -\ii,\ii\right\} $.
\end{namedthm}
It is remarkable that a twin fixed-point is automatically produced
at $\infty$, especially because the modulus there is given by the
reciprocal modulus. This has a simple explanation, though: the involution
$\sigma$ permutes the sectors $V^{+}$ and $V^{-}$ while it does
not change much the infinitesimal generators, therefore the identification
at the horns is the same while taking place in the reverse direction.

Another remarkable feature is that $\Delta$ has a very simple dynamics:
it is an injective, holomorphic map on a slit sphere $\mathcal{D}$,
that can be forward iterated on the open set $\cbar\backslash\gamma$
where $\gamma$ is the union of the closure of the stable manifolds
of $X^{\pm}$ passing through the poles. On each connected component
of $\cbar\backslash\gamma$ the dynamics of $\Delta$ converges uniformly
to a fixed-point (a parabolic one or the attractive point sitting
at $\pm1$ when $\mu\notin\ii\rr$). The reason behind this steady
behavior is the parameter $\lambda$, which is chosen so that no chaotic
remains of a global dynamics, usually materialized by a frontier in
the analytic continuation of the modulus~\cite{Epstein}, can be
seen from the normal form.

The obvious downside of the previous feature is precisely that the
dynamics of $\Delta$ is too simple to readily offer an insight regarding
the dynamical richness that is displayed by holomorphic iteration
on compact curves. What we prove is that any such dynamics, say of
a rational mapping with a parabolic basin $\mathcal{B}$, is locally
conjugate by some $\Psi$ to a spherical normal form near the parabolic
fixed-point $0$, but the domain of the conjugacy cannot contain the
whole component $\partial\mathcal{B}$ of the Julia set. Indeed, $\Psi\left(\partial\mathcal{B}\cap\neigh\right)$
can be pushed forward by the flow of $X^{+}$ to give a compact invariant
set $J$ which is locally conformally equivalent to the fractal set
$\partial\mathcal{B}$. But the one set $J$ has no relevance to the
dynamics of $\Delta$ whatsoever, for the latter has only Fatou components
with real-analytic boundary, and that regularity increases as $\lambda\to0$.
In that respect, an interesting question arises: can one recover part
of the original dynamics by letting $\lambda$ grow (which may cause
$X^{\pm}$ to sport more and more poles, for instance)? 

\subsubsection{About the formal model}

\begin{figure}
\hfill{}\includegraphics[width=0.45\columnwidth]{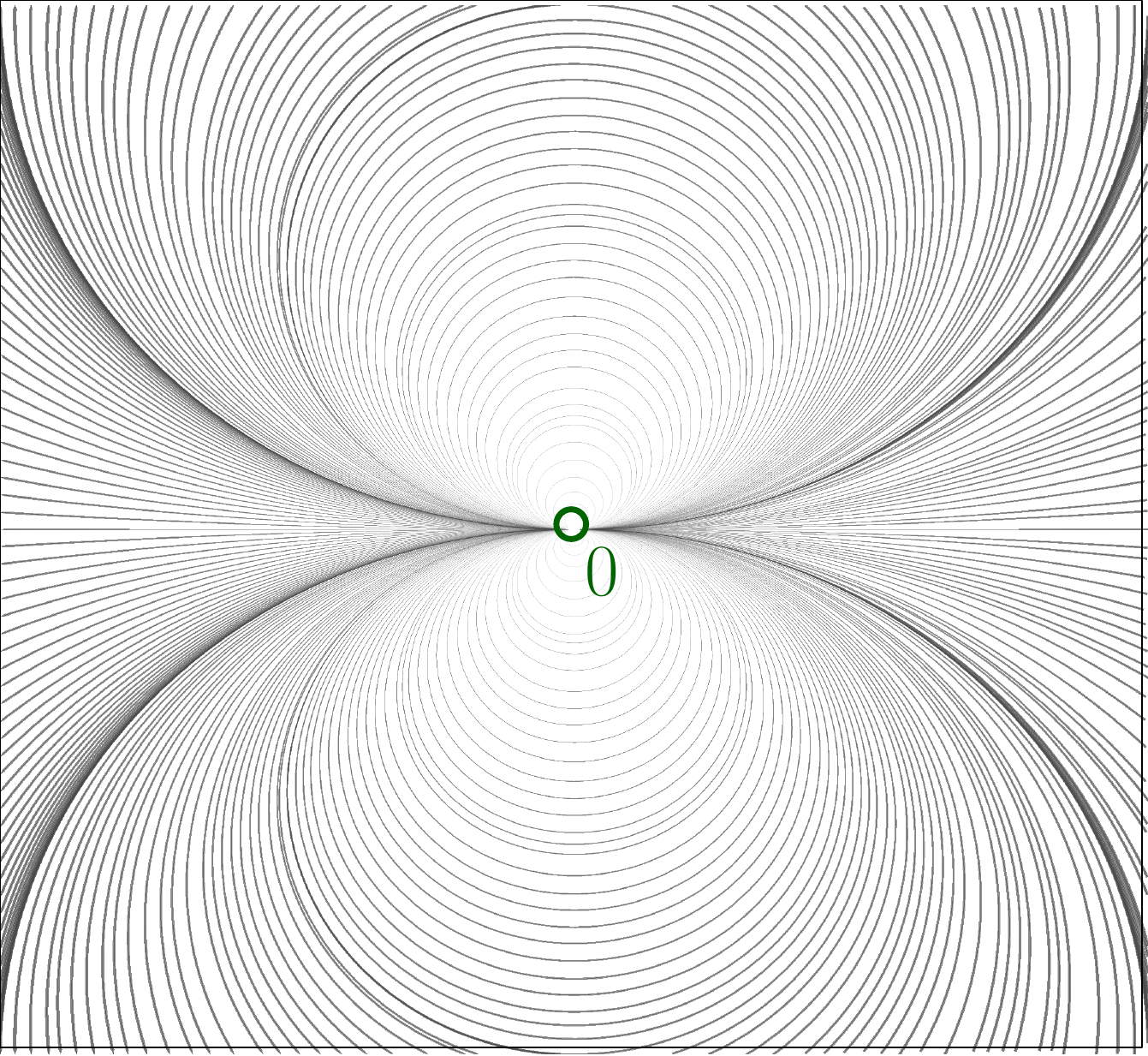}\hfill{}\includegraphics[width=0.45\columnwidth]{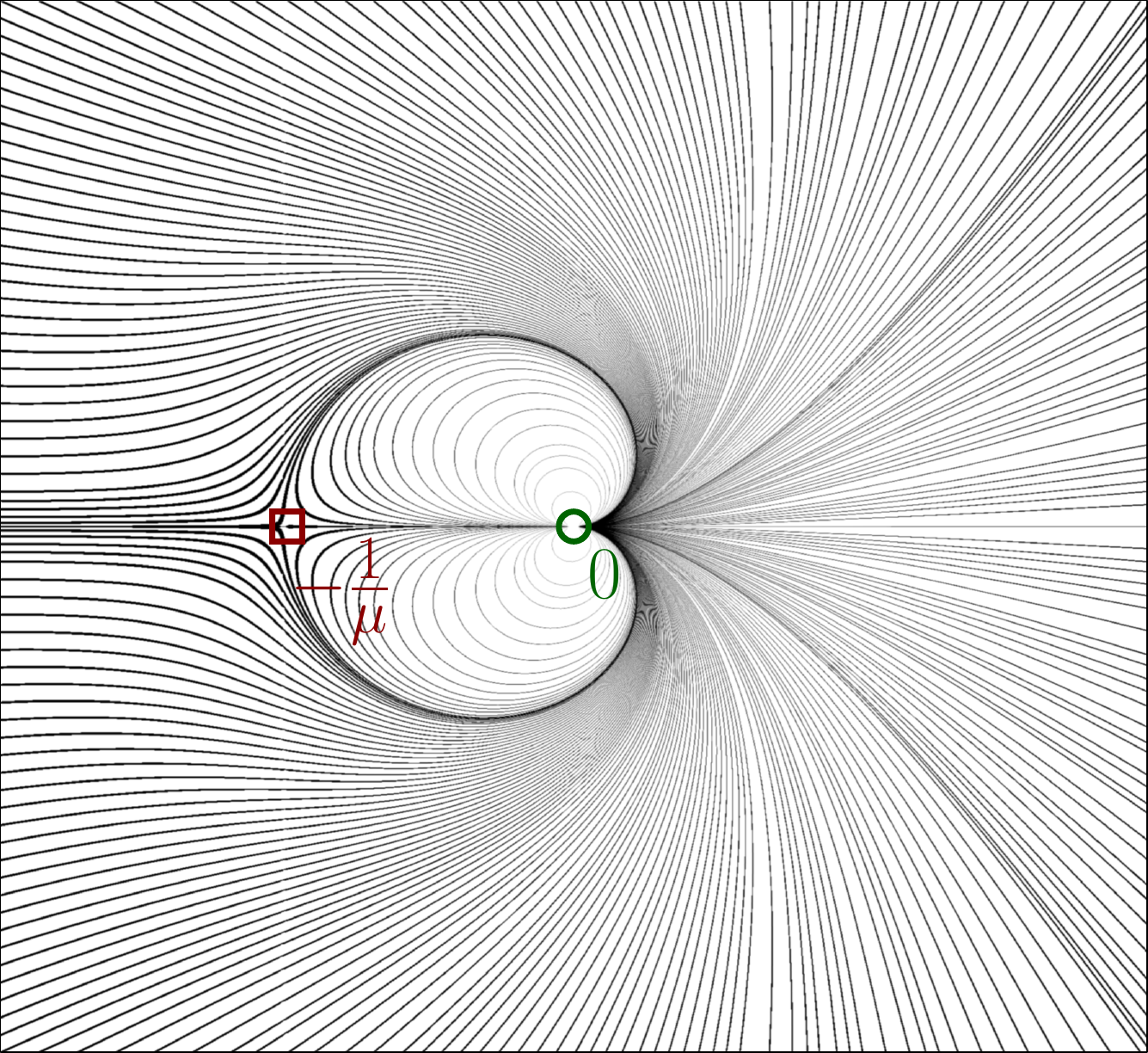}\hfill{}

\caption{\label{fig:model_standard}Foliation induced by the real-time flow
of $\frac{x^{2}}{1+\mu x}\protect\pp x$ (left $\mu:=0$, right $\mu:=2$).
There are one double stationary point at $0$ (green circle), yielding
a parabolic germ for the time-1 map, and one saddle point (red square)
corresponding to the pole $-\frac{1}{\mu}$.}
\end{figure}

The family of vector fields $X_{0}$ depending on $\lambda>0$ is
obtained from the <<usual>> formal model $\frac{x^{2}}{1+\mu x}\pp x$
(see Figure~\ref{fig:model_standard}) by performing the degree-2
pullback
\begin{align*}
\Pi~:~z & \mapsto x=x\left(z\right):=\frac{\lambda z}{1-z^{2}}
\end{align*}
(which is invertible near $0$). Observe that $\Pi$ is invariant
under the involution $\sigma$, so that $\sigma$ is a symmetry of
$X_{0}$, a fact which has been already remarked. The obvious consequence
is that $X_{0}$ has a stationary point at $\infty$ that mirrors
the one it admits at $0$.
\begin{rem}
The only normal form $X_{f}$ that can be factored through $\Pi$
(\emph{i.e. }realized as a global sectorial perturbation of the usual
formal model) is the formal model itself ($f=0$) since we must both
have $f^{\pm}\circ\sigma=f^{\mp}$ (factorization through $\Pi$)
and $f^{\pm}\circ\sigma=-f^{\mp}$ as in~(\ref{eq:f_sigma_invar}).
\end{rem}

The time-1 map of $X_{0}$ is a generic parabolic germ (Lemma~\ref{lem:.model_parab})
\begin{align*}
\Delta_{0}\left(z\right):=\flow{X_{0}}1{\left(z\right)} & =z+\lambda z^{2}+\lambda^{2}\left(1-\mu\right)z^{3}+\oo{z^{3}}
\end{align*}
with trivial modulus. Any germ $\Delta\in\parab$ with same $\mu$
is formally conjugate to $\Delta_{0}$ (whatever $\lambda$), which
explains the terminology \emph{formal model}.

The sectors  and the model $X_{0}$ are tailored in such a way that
the orbital region of $\Delta_{0}$, as seen from the intersection
$V^{\cap}$, is a neighborhood of $0$ or $\infty$ of size $\OO{\mathfrak{m}_{\mu}\ee^{-\nf 1{\lambda}}}$
as $\lambda\to0$ (\emph{cf} Section~\ref{subsec:choice_X0}). Gaining
such a control is essential: eventually the neighborhood fits within
the discs of convergence of the data $\left(\zero[\psi],\infi[\psi]\right)$
and keeps sitting there even after small perturbations.

Picking $X_{0}$ among all vector fields realizing the trivial pair
$\left(\ee^{4\pi^{2}\mu}\id,\id\right)$ is arguably canonical since
the pullback $z\mapsto\frac{\lambda z}{1-z^{2}}$ is very simple and
the pair of sectors $\left(V^{+},V^{-}\right)$ is fairly standard.
\begin{rem}
For technical reasons we also need that $\infty$ and $0$ be mapped
to $0$ by the pullback. Thereby the <<obvious>> choice of a degree-1
pullback, like $z\mapsto\frac{\lambda z}{1-z}$, cannot be used to
perform the construction presented here. Apart from this restriction
one can use pretty much any other pullback, provided the <<sectors>>
are tailored conveniently.
\end{rem}

\begin{example}
\label{exa:mu_zero}We refer to Figure~\ref{fig:model_mu_0}. Assume
here that $\mu:=0$ so that
\begin{align*}
X_{0}\left(z\right) & =\frac{\lambda z^{2}}{1+z^{2}}\pp z
\end{align*}
has only stationary (double) points at $0$ and $\infty$, as well
as two simple poles at $\pm\ii$ with residue $\pm\frac{\lambda\ii}{2}$.
These poles spawn the ramification locus of $\Delta_{0}$, as can
be seen from a direct integration of $\dot{z}=X_{0}\left(z\right)$
at time $1$ with initial value $z_{*}$:
\begin{align*}
1 & =\int_{z_{*}}^{\Delta_{0}\left(z_{*}\right)}\frac{1+z^{2}}{\lambda z^{2}}\dd z
\end{align*}
which yields
\begin{align*}
z_{*}\Delta_{0}\left(z_{*}\right)^{2}-\left(z_{*}^{2}+\lambda z_{*}-1\right)\Delta_{0}\left(z_{*}\right)-z_{*} & =0.
\end{align*}
Hence $\Delta_{0}$ is algebraic and its ramification points are the
four points $z$ for which:
\begin{align*}
\left(z^{2}+\lambda z-1\right)^{2}+4z^{2} & =0.
\end{align*}
Notice that $\Delta_{0}$ admits a limit at each one of these points
$z$, and $\Delta_{0}\left(z\right)$ lies within the polar locus
$\left\{ \pm\ii\right\} $ of $X_{0}$. Therefore $\Delta_{0}$ extends
as a multivalued function over the domain $\cbar\backslash\left\{ z^{2}+\lambda z=1\right\} $
(which contains $0$) with involutive monodromy around any one of
the ramification points given by the action of $\sigma$, which is
moreover sent to a pole of $X_{0}$ by the time-1 map. More details
can be found in Lemma~\ref{lem:polar_foliation}.
\end{example}

\begin{rem}
When $\mu\neq0$ the mapping $\Delta_{0}$ cannot be algebraic (see~\cite{Ecal}).
It seems safe to conjecture that the only algebraic normal form $\Delta$
is $\Delta_{0}$ when $\mu=0$.
\end{rem}

\begin{figure}
\hfill{}\includegraphics[height=7cm]{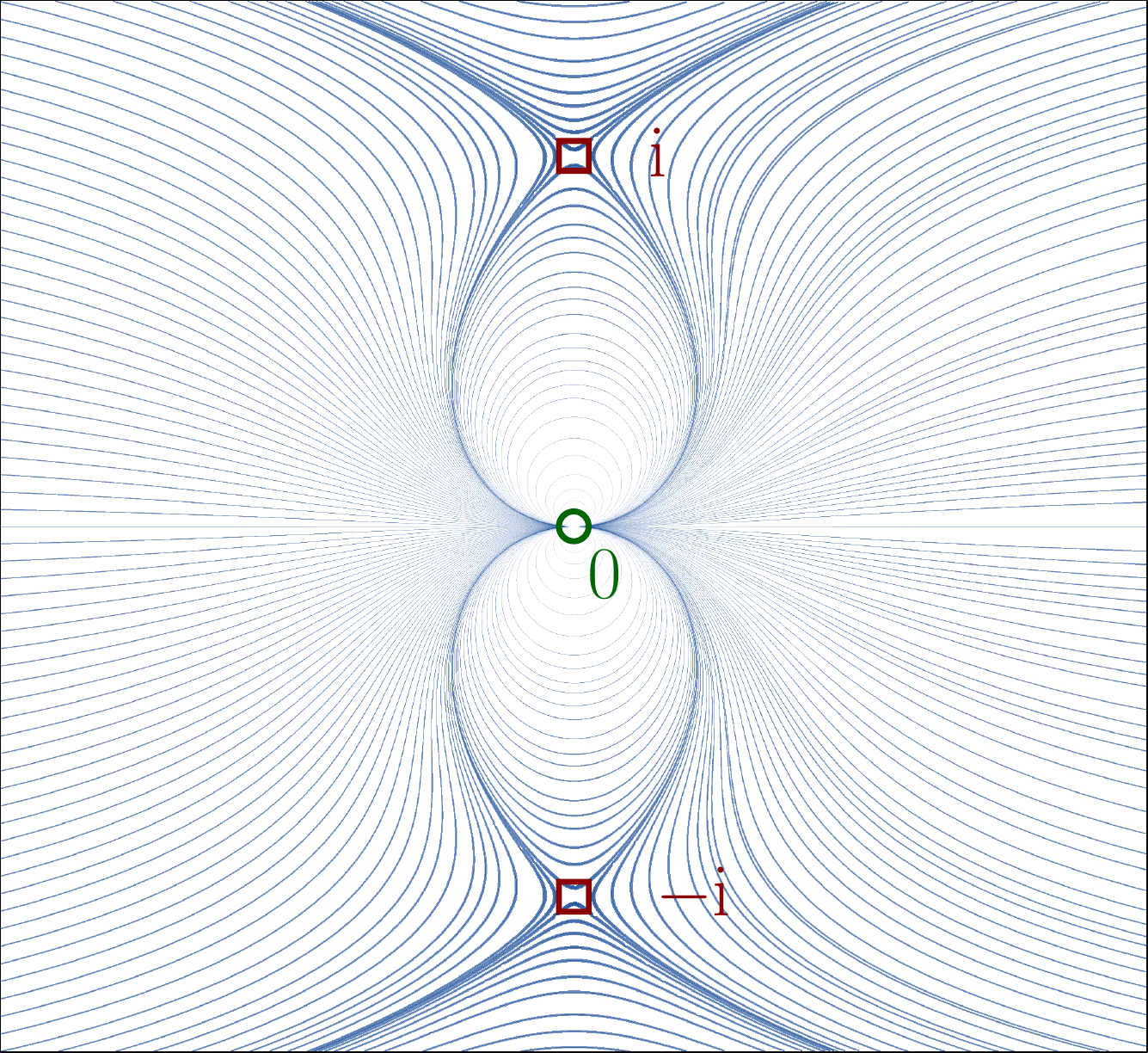}\hfill{}

\caption{\label{fig:model_mu_0}Foliation induced by the real-time flow of
$X_{0}$ for $\mu:=0$, revealing the double stationary point (green
circle) and two simple poles (red squares).}
\end{figure}

\subsubsection{\label{subsec:=0000C9calle}Link with Écalle canonical synthesis}

In~\cite{EcalReal} J.~\noun{Écalle} revisits the inverse problem
by performing the <<canonical-spherical synthesis>> using the framework
of resurgent functions and alien/mould calculus~(for the specific
context at hands see~\cite{Ecal_III}) that he developed in the wake
of his original paper~\cite{Ecal}. In that setting the modulus is
not built from dynamical considerations, as was first done by G.~\noun{Birkhoff},
himself and S.~\noun{Voronin} (the viewpoint taken here), but as
coefficients $\left(A_{n}\right)_{n\in\zz}$ coming from the associated
<<resurgence equation>> and carried by the discrete set of singularities
of the analytic continuation of the Borel transform of the formal
normalization $\Delta\widehat{\sim}\Delta_{0}$. The link between
the pair $\left(\zero[\psi],\infi[\psi]\right)$ and the collection
$\left(A_{n}\right)_{n}$ is described in~\cite[Section 5.1]{EcalReal}.

To perform the canonical-spherical synthesis J.~\noun{Écalle} introduces
a (usually large) parameter he calls the \textbf{twist} $c>0$, which
corresponds here to $\frac{1}{\lambda}$, and a family of resurgent
monomials which serves as building blocks for his synthesis. It turns
out that the normal forms produced here are very likely the objects
obtained by J.~\noun{Écalle}, as witnessed for instance by the definition
of the Cauchy-like kernel~\cite[Sections 5.1 and 10.3]{EcalReal}
which is pretty much the one involved in the present construction
(compare Definition~\ref{def:Cauchy-Heine}).
\begin{namedthm}[Canonical-Spherical Synthesis Conjecture]
Being given a data $\psi:=\left(\zero[\psi],\infi[\psi]\right)\in\diff[\cc,0]\times\diff[\cc,0]_{\id}$
and $0<\lambda\leq\lambda\left(\psi\right)$, define the twist $c:=\nf 1{\lambda}$.
The normal form $\Delta$ given by the Synthesis Theorem is the parabolic
germ coming from Écalle's canonical-spherical synthesis.
\end{namedthm}
If it were true this would allow us to invoke the effective methods
provided by~\cite{EcalReal} in the present dynamical context. Conversely,
let us discuss in which sense the approach given here completes and
sheds a dynamical light on Écalle's synthesis for parabolic germs.
To quote J.~\noun{Écalle} (beware that he works in the variable $\frac{1}{z}$):

\smallskip{}

<<\emph{As already pointed out, our twisted monomials have much the
same behavior at both poles of the Riemann sphere. The exact correspondence
has just been described {[}...{]} using the so-called antipodal involution:
in terms of the objects being produced, this means that canonical
object synthesis automatically generates two objects: the \textquotedblleft true\textquotedblright{}
object, local at $\infty$ and with exactly the prescribed invariants,
and a \textquotedblleft mirror reflection\textquotedblright , local
at 0 and with closely related invariants. Depending on the nature
of the {[}...{]} invariants (verification or non-verification of an
\textquotedblleft overlapping condition\textquotedblright ), these
two objects may or may not link up under analytic continuation on
the Riemann sphere.}>>

\smallskip{}

It could happen that both synthesis processes produce different objects,
but what is sure is that the objects built in this paper match very
precisely the above expectations and comments. In our setting the
<<mirror reflection>> comes from the degree-2 pullback $\Pi~:~z\mapsto\frac{\lambda z}{1-z^{2}}$
invariant by the symmetry $\sigma$. The Synthesis Theorem gives quantitative
bounds and a uniqueness statement that are not readily available in~\cite{EcalReal},
while the Globalization Theorem simply tells that the normal form
is defined and injective on almost all $\cc$. Although it is true
that the Taylor series of $\Delta$ at $0$, and that of its companion
near $\infty$, has a radius of convergence of order $1+\OO{\sqrt{\lambda}}$,
both germs are \emph{unconditionally} obtained by analytic continuation
one from the other, provided of course that $0<\lambda\leq\lambda\left(\psi\right)$.
In our synthesis the <<closely related>> moduli are in fact mutually
reciprocal.

\subsubsection{General parabolic and rationally indifferent germs}

Even though in this article we address in details only the case of
germs in $\parab$, our construction can be adapted straightforwardly
to the space $\parab[k]$ consisting in those germs in the form $\Delta\left(z\right)=z+*z^{k+1}+\oo{z^{k+1}}$
for some $k\in\zz_{>0}$. The inverse problem then regards the realization
of collections $\left(\zero[\psi]_{j},\infi[\psi]_{j}\right)_{j\in\zsk}$
and in that case:
\begin{itemize}
\item the formal model $X_{0,k}$ is given by the pull-back of $\frac{x^{k+1}}{1+\mu x^{k}}\pp x$
by the degree-$2k$ mapping $z\mapsto\frac{\lambda z}{1-z^{2k}}$;
\item the normal form is $\frac{1}{1+X_{0,k}\cdot F}X_{0,k}$ with a $k$-summable
$F\in z\frml z$;
\item $F$ admits bounded $k$-sums on infinite sectors $\left\{ z\neq0~:~\left|\arg\left(\pm z^{k}\right)\right|<\frac{5\pi}{8}\right\} $.
\end{itemize}
The general case of a rationally indifferent germ $\Delta\left(z\right)=\ee^{2\ii\pi\nf pq}z+\oo z$
with $p\wedge q=1$, not conjugate to a rational rotation, can also
be covered as usual by taking a $kq$ ramified covering $x\mapsto x^{kq}$
of the usual formal model and sectors.

\subsection{Parabolic renormalization}

The parabolic renormalization is the map 
\begin{align*}
\mathcal{R}~:~\parab[1] & \longto\parab[~]\\
\Delta & \longmapsto\ev\left(\Delta\right)^{\infty}
\end{align*}
which to a generic parabolic germ associates the $\infty$-component
of its invariant. It may happen that $\mathcal{R}\left(\Delta\right)$
be itself generic, in which case $\mathcal{R}$ can be iterated. The
fixed-points of $\mathcal{R}$ play an important role in iterative
complex dynamics; here we describe those in spherical normal form.
As far as I know a statement of the kind was first communicated privately
by R.~\noun{Schäfke}.
\begin{namedthm}[Parabolic Renormalization Corollary]
 Take an arbitrary $\zero[\psi]\in\diff[\cc,0]$. There exists a
bound $\widehat{\lambda}\left(\zero[\psi]\right)>0$ such that, for
all $0<\lambda\leq\widehat{\lambda}\left(\zero[\psi]\right)$ there
exists a unique normal form $\Delta$ satisfying
\begin{align*}
\ev\left(\Delta\right) & =\left(\zero[\psi],\Delta\right).
\end{align*}
\end{namedthm}
Again the bound $\widehat{\lambda}$ can be made explicit, we refer
to Corollary~\ref{cor:fixed-point_parab}.

\subsection{Real case}

If $\Delta\in\parab$ is real (that is, commutes with the complex
conjugation $\overline{\bullet}$) then $\mu\in\rr$ and its modulus
$\ev\left(\Delta\right)=\left(\zero[\psi],\infi[\psi]\right)$ satisfies
the identity:
\begin{align*}
\left(\forall h\in\neigh\right)~~~~\overline{\zero[\psi]\left(\overline{h}\right)} & =\frac{\ee^{4\pi^{2}\mu}}{\infi[\psi]\left(\frac{1}{h}\right)}.\tag{\ensuremath{\square}}
\end{align*}

\begin{namedthm}[Real Synthesis Corollary]
 Take $\mu\in\rr$, a pair $\left(\zero[\psi],\infi[\psi]\right)$
and $\lambda>0$ as in the Synthesis Theorem. If moreover the identity~$\left(\square\right)$
holds then the synthesized normal form $\Delta$ is real.
\end{namedthm}

\subsection{\label{subsec:Structure}Structure of the article and summary of
the proofs}

One of the main concerns of this paper is to provide bounds as explicit
as possible, which explains why so many efforts went into technical
lemmas. The main tools are basic calculus~/~complex analysis, and
the work is necessary anyway to obtain uniform bounds with respect
to $\lambda$.

All the constructions will take place within the space of pairs $f=\left(f^{+},f^{-}\right)$
with 1-flat difference in $V^{\cap}$, both at $0$ and $\infty$:
\begin{align*}
\mathcal{S} & :=\left\{ f\in\mathcal{S}\left(V^{+}\right)\times\mathcal{S}\left(V^{-}\right)~:~\limsup_{z\to0,\infty}\left|z\right|\ln\left|f^{-}\left(z\right)-f^{+}\left(z\right)\right|<0\text{ for }z\in V^{\cap}\right\} ,
\end{align*}
where
\begin{align*}
\mathcal{S}\left(V^{\pm}\right) & :=\left\{ f^{\pm}\begin{array}{c}
\text{ holomorphic on }V^{\pm}\\
\text{continuous on }\adh{V^{\pm}}
\end{array}~:~\begin{array}{l}
f^{\pm}\left(0\right)=0\\
f^{\pm}\left(\infty\right)=0
\end{array}~,~\norm[f^{\pm}]{}:=\sup_{z\in V^{\pm}}\left|f^{\pm}\left(z\right)\right|<\infty\right\} ,
\end{align*}
equipped with the canonical product Banach norm. We denote by $\mathcal{B}$
its unit ball.
\begin{enumerate}
\item In Section~\ref{sec:Secto_normalization} we give some basic material
about vector fields and their time-1 maps, focusing more particularly
on the local dynamics near their poles and zeroes and how they stitch
together to organize the global dynamics of the model $X_{0}$. This
will allow us to conjugate 
\begin{align*}
X_{f}^{\pm} & :=\frac{1}{1+X_{0}\cdot f^{\pm}}X_{0}
\end{align*}
to the model $X_{0}$ over the better part of $V^{\pm}$ (effective
bounds are described in Section~\ref{subsec:Quantitative-bounds}).
This sectorial mapping takes the form $\flow{X_{0}}{f^{\pm}}{}$ given
by the flow of $X_{0}$ at time $f^{\pm}$. The dynamics of the formal
model $\Delta_{0}$, studied in Section~\ref{subsec:formal-model},
is then pulled-back by $\Psi^{\pm}$ to give sectorial counterparts
for the time-1 map of $X_{f}^{\pm}$.
\item Coupled to Ramis-Sibuya theorem, the above preliminary part allows
one to reduce the problem of finding a parabolic germ $\Delta$ with
prescribed modulus to that of finding $\left(f^{+},f^{-}\right)\in\mathcal{S}$
satisfying some (non-linear) Cousin problem~$\left(\star\right)$
below; see Proposition~\ref{prop:reduction} in Section~\ref{sec:Reduction}.
\item We are equipped to tackle the main result (Section~\ref{sec:Main_th}).
We begin with fixing a formal class $\mu\in\cc$ in $\parab$. Then
$\zero[\psi]$ must be tangent to the linear map $\ee^{4\pi^{2}\mu}\id$.
Being given $\left(\zero[\psi],\infi[\psi]\right)=\left(\id\exp\left(4\pi^{2}\mu+\zero[\varphi]\right),\id\exp\infi[\varphi]\right)$
with $\isect[\varphi]$ holomorphic near $\sharp\in\left\{ 0,\infty\right\} $
we must find $f^{\pm}$ on $V^{\pm}$ such that
\begin{align*}
\begin{cases}
f^{-}-f^{+} & =\zero[\varphi]\circ H_{f}^{+}\text{~~~~ on }\zero\\
f^{-}-f^{+} & =\infi[\varphi]\circ H_{f}^{+}\text{~~~~ on }\infi
\end{cases} & \tag{\ensuremath{\star}}
\end{align*}
where $\left(\isect\right)_{\sharp\in\left\{ 0,\infty\right\} }$
are the two components of $V^{\cap}$ and $H_{f}^{+}$ is a primitive
first-integral of the sectorial time-1 map $\Delta^{+}$ of $X_{f}^{+}$
(in other words, $H_{f}^{+}$ provide preferred orbital coordinates
over the sector $V^{+}$). Condition~$\left(\star\right)$ implies
that $\Delta^{+}$ coincides with $\Delta^{-}$ in $V^{\cap}$ and
thus is the restriction to $V^{+}$ of a single  $\Delta\in\parab$.

The non-linear Cousin problem~$\left(\star\right)$ is solved using
a fixed-point method involving a classical Cauchy-Heine integral transform.
This technique was already employed in~\cite{SchaTey} (and~\cite{RT2})
to build normal forms for convergent saddle-node foliations in the
complex plane (and their unfoldings). More precisely, we build a map
(Proposition~\ref{prop:cauchy-heine}):
\begin{align*}
\cauchein~:~ & f\longmapsto\Lambda\left(f\right)
\end{align*}
such that $\Lambda\left(f\right)^{-}-\Lambda\left(f\right)^{+}=\varphi\circ H_{f}^{+}$.
Therefore $\cauchein\left(f\right)=f$ if, and only if,~$\left(\star\right)$
holds. We prove (Proposition~\ref{prop:CH_contractant}) that $\cauchein$
eventually stabilizes the unit ball $\mathcal{B}\subset\mathcal{S}$
(for $\lambda$ small enough). The bigger $\lambda$ the more contracting
$\cauchein$, so that eventually a unique fixed-point of $\cauchein$
exists in that ball (Corollary~\ref{cor:fixed-point_existence_local_uniqueness}).
\item In Section~\ref{subsec:inversion_module}, as is discussed above,
we establish the Globalization Theorem making use of the involution
$\sigma$ and the property that for the fixed-point $f$ of $CH^{\varphi}$
we have
\begin{align*}
f^{\pm}\circ\sigma & =-f^{\mp}.
\end{align*}
Prior to that we describe the dynamical properties of $\Delta$ in
Section~\ref{subsec:Sectorial_dynamics} using \emph{a priori} bounds
on the flow of $X^{\pm}$.
\item The fixed-points of the parabolic renormalization are also obtained
as a holomorphic fixed-point in Section~\ref{subsec:Parabolic}.
Let $\text{Synth}_{\lambda}~:~\psi\mapsto\Delta$ be the synthesis
map built previously and consider for given $\zero[\psi]\in\diff[\cc,0]$
the iteration $\Delta_{0}:=\id$ and $\Delta_{n+1}:=\text{Synth}_{\lambda}\left(\zero[\psi],\Delta_{n}\right)$.
It so happens that $\text{Synth}_{\lambda}$ cannot shrink too much
the radius of convergence of $\Delta_{n}$, which always remains above
$\frac{3}{16}$. By taking $\lambda$ smaller we prove next that $\text{Synth}_{\lambda}$
is a contracting self-map of some closed ball of the Banach space
$\holb[\frac{3}{16}\ww D]$.
\item The real setting is explored in Section~\ref{subsec:Real_case} (Real
Synthesis Corollary). It merely consists in checking that every step
leading to the fixed-point of $\cauchein$ preserves realness, which
is straightforward.
\end{enumerate}

\subsection{Notations}
\begin{itemize}
\item $\dot{z}=\ddd zt$ stands for differentiation with respect to the
variable $t\in\cc$.
\item For a topological space $\mathcal{M}$ and a point $p\in\mathcal{M}$
we use $\left(\mathcal{M},p\right)$ to stand for a (sufficiently
small) neighborhood of $p$ in $\mathcal{M}$.
\item The set $\overline{U}$ is the topological closure of $U\subset\mathcal{M}$.
\item We use the notation $\ww D$ for the standard open unit disk in $\cc$.
\item $\cbar$ stands for the compactification of the complex line $\cc$
as the Riemann sphere.
\item The $n^{\text{th}}$ iterate of a diffeomorphism $\Delta$ is written
$\Delta^{\circ n}$ for $n\in\zz$ 
\item $\flow Xt{}$ designates the flow at time $t$ of the vector field
$X$.
\item If $\Psi~:~U\to V$ is locally biholomorphic, we recall its action
by conjugacy (change of coordinate)
\begin{lyxlist}{00.00.0000}
\item [{on~diffeomorphisms:}] ~
\begin{align*}
\Psi^{*}\Delta & :=\Psi^{\circ-1}\circ\Delta\circ\Psi
\end{align*}
\end{lyxlist}
on~vector~fields: ~
\begin{align*}
\Psi^{*}X & :=\DD{\Psi}^{\circ-1}\left(X\circ\Psi\right).
\end{align*}
If $\Delta$ is the time-1 map of $X$ then $\Psi^{*}\Delta$ is the
time-1 map of $\Psi^{*}X$.
\end{itemize}

\section{\label{sec:Secto_normalization}Sectorial normalization}

We recall that the functional space $\mathcal{S}$ is defined in Section~\ref{subsec:Structure}
above. The main purpose of this section is to investigate in some
details the dynamics of 
\begin{align*}
X^{\pm} & :=\frac{1}{1+X_{0}\cdot f^{\pm}}X_{0}~~~,~f\in\mathcal{S},~\norm[f]{}\leq1\\
\Delta^{\pm} & :=\flow{X^{\pm}}1{}\in\parab
\end{align*}
which we deduce from two ingredients:
\begin{itemize}
\item the dynamics of the rational vector field 
\begin{align}
X_{0}\left(z\right):= & \frac{1-z^{2}}{1+z^{2}}\times\frac{\lambda z^{2}}{1+\lambda\mu z-z^{2}}\pp z\label{eq:formal_model}
\end{align}
studied in Section~\ref{subsec:formal-model};
\item the sectorial normalization between $X^{\pm}$ and $X_{0}$ obtained
in Section~\ref{subsec:secto_normalization}, that is the existence
of biholomorphisms on $V^{\pm}\backslash\left\{ \text{small neighborhhod of the poles}\right\} $
conjugating the dynamics of both vector fields.
\end{itemize}
The dynamics of a meromorphic vector field $X$ on some domain $U$
is organized by its singular locus, which splits between the \textbf{stationary}
\textbf{points} and the \textbf{poles}
\begin{align*}
\zset X & :=\left\{ z\in U~:~X\left(z\right)=0\right\} \\
\pset X & :=\left\{ z\in U~:~X\left(z\right)=\infty\right\} .
\end{align*}
Since $f^{\pm}$ is holomorphic on $V^{\pm}$ we can never have $\frac{1}{1+X_{0}\cdot f^{\pm}}=0$
except at poles of $X_{0}$, hence 
\begin{align*}
\text{Zero \ensuremath{\left(X^{+}\right)}}\cup\text{Zero \ensuremath{\left(X^{-}\right)}} & =\text{Zero \ensuremath{\left(X_{0}\right)}}.
\end{align*}
It is also true that $z\in\pset{X^{\pm}}$ if, and only if:
\begin{itemize}
\item $z\in\pset{X_{0}}$ and $\left(f^{\pm}\right)'\left(z\right)=0$;
\item or $z\notin\pset{X_{0}}$ and $1+\left(X_{0}\cdot f^{\pm}\right)\left(z\right)=0$.
\end{itemize}
We begin with fixing some notations and recall the important Lie formula
regarding holomorphic vector fields $X$, then proceed to a short
description of the local behavior of the trajectories near singularities
and finally explain how the global dynamics of rational $X$ is weaved
from these local parts.

\subsection{\label{subsec:Basic}Basic vector fields material }

\subsubsection{Flow and time-1 map}

Let $U\subset\cbar$ be a domain and $X=R\pp z$ be a holomorphic
vector field on $U$ (that is, $R$ is a holomorphic function). For
any $z_{*}\in U$ the differential system $\dot{z}=X\left(z\right)$
has a unique local solution $t\in\left(\cc,0\right)\mapsto z\left(t\right)$
with $z\left(0\right)=z_{*}$, giving rise to a germ of a holomorphic
mapping 
\begin{align*}
\flow X{}{}~:~\neigh\times U & \longto U\\
\left(t,z_{*}\right) & \longmapsto\flow Xt{\left(z_{*}\right)}:=z\left(t\right)
\end{align*}
(to be more correct, the neighborhood $\neigh$ depends on $z_{*}$
but its size is locally uniform). The \textbf{flow} of $X$ is the
maximal multivalued analytic continuation of $\flow X{}{}$. It is
probably best understood as the reciprocal of the time function
\begin{align}
\left(z_{*},\phi\right)\in U^{2} & \longmapsto\left(z_{*},\int_{z_{*}}^{\phi}\frac{\dd z}{R\left(z\right)}\right)\in U\times\cbar,\label{eq:time_function}
\end{align}
obtained by rewriting $\dot{z}=X\left(z\right)$ as $\dd t=\frac{\dd z}{R\left(z\right)}$
and integrating the \textbf{time form} 
\begin{align}
\tau_{X} & :=\frac{\dd z}{R}.\label{eq:time_form}
\end{align}
The analytic continuation of the time function is explicit, at least
when $R$ is rational.
\begin{defn}
\label{def:time-t_map}We call $\Delta~:~z\mapsto\flow X1{\left(z\right)}$
the \textbf{time-1 map} of $X$. As a multivalued mapping over $U$,
for any path $\gamma~:~\left[0,1\right]\to U$ satisfying 
\begin{align*}
1 & =\int_{\gamma}\tau_{X}
\end{align*}
a determination of $\Delta$ satisfies the identity
\begin{align*}
\Delta\left(\gamma\left(0\right)\right) & =\gamma\left(1\right),
\end{align*}
with the special case $\Delta\left(z\right)=z$ if $X\left(z\right)=0$. 
\end{defn}

\begin{rem}
For $t\in\cc^{\times}$ the time-$t$ map $\flow Xt{}$ is obtained
as the time-1 map of $\frac{1}{t}X$.
\end{rem}

\subsubsection{Lie's formula}

One also encounters the notation $\exp\left(tX\right)$ to stand for
$\flow Xt{}$. This can be understood in the light of Lie's formula
relating the action of the operator $\exp\left(tX\right)$ and of
the flow mapping. Pick a function $g\in\holf[U]$ and some $z\in U$.
For any $t\in\neigh$ one has
\begin{align}
g\circ\flow Xt{\left(z\right)} & =\sum_{n=0}^{+\infty}\frac{t^{n}}{n!}\left(X\cdot^{n}g\right)\left(z\right)\label{eq:Lie}\\
 & =:\left(\exp\left(tX\right)\cdot g\right)\left(z\right)\nonumber 
\end{align}
where the iterated Lie's derivative is defined by
\begin{align*}
X\cdot^{n}g & :=\underset{n\text{ times}}{\underbrace{X\cdot X\cdot\left(\cdots\right)\cdot X\cdot}}g.
\end{align*}

\subsubsection{Long-time behavior}

In this paper we only use the real-time flow, \emph{i.e.} for $t\in\left(\rr,0\right)$,
and designate by $t_{\text{max}}\left(z_{*}\right)\in]0,+\infty]$
the maximal time of existence of the forward trajectory $t\in\left[0,t_{\text{max}}\left(z_{*}\right)\right[\mapsto\flow Xt{\left(z_{*}\right)}$.
Without going into too much details, a consequence of Bendixson-Poincaré
theorem and of Cauchy formula applied to~(\ref{eq:time_function})
is the following classification of asymptotic behavior for the real
flow of a meromorphic vector field $X$ on some domain $U$. Starting
from $z_{*}\notin\pset X$ the forward trajectory $t\geq0\mapsto z\left(t\right)$
matches one of the mutually exclusive outcomes: 
\begin{enumerate}
\item either $t_{\text{max}}\left(z_{*}\right)=\infty$, exactly in the
following situations:
\begin{lyxlist}{00.00.0000}
\item [{\textbf{center~case}}] $z$ is periodic (non-constant), in which
case neighboring trajectories also are (with same period) and at least
one stationary point of $X$ of center type is enclosed by the trajectory;
\item [{\textbf{equilibrium~case}}] $\lim_{t\to+\infty}z\left(t\right)\in\zset X$;
\end{lyxlist}
\item or $t_{\text{max}}\left(z_{*}\right)<\infty$, exactly in the following
situations:
\begin{lyxlist}{00.00.0000}
\item [{\textbf{escape~case}}] $\lim_{t\to t_{\text{max}}\left(z_{*}\right)}z\left(t\right)\in\partial U$;
\item [{\textbf{separation~case}}] $\lim_{t\to t_{\text{max}}\left(z_{*}\right)}z\left(t\right)\in\pset X$.
\end{lyxlist}
\end{enumerate}

\subsection{Local behavior near singularities}

\subsubsection{Dynamics near a simple zero}

Let $p\in\cbar$ be a simple stationary point of a meromorphic vector
field $X=R\pp z$, and denote by $\alpha\neq0$ the derivative of
$R$ at this point. One can find a conformal coordinate $z=\Psi\left(w\right)$
near $p$ such that $X$ takes the form 
\begin{align*}
\psi^{*}X & =W:=\alpha w\pp w\,\,,\,w\in\neigh.
\end{align*}
A direct integration describes the local behavior of the dynamics
of the linear vector field around a simple zero:
\begin{align*}
\flow Wt{\left(w\right)} & =w\ee^{\alpha t}.
\end{align*}
In particular the time-1 map of $X$ is linearizable near its fixed-point
$p$, which can be of three types:
\begin{itemize}
\item attractive ($\re{\alpha}<0$) or repulsive ($\re{\alpha}>0$) in the
equilibrium case; then there exists a domain $U:=\left(\cbar,p\right)$
such that any trajectory crossing $\partial U$ converges asymptotically
to $p$ in forward or backward time (a basin of attraction/repulsion);
\item but it can also be a center case ($\re{\alpha}=0$).
\end{itemize}
\begin{example}
When $\mu\neq0$ the formal model $X_{0}$ admits a simple stationary
point at $\pm1$ with linear part $\alpha=-\nf 1{\mu}$.
\end{example}

\subsubsection{Dynamics near a double zero}

Take now a double zero of $X$ which, for the sake of convenience,
we locate at $0$ and define the constants $a\neq0$ and $b$ by
\begin{align*}
X\left(z\right) & =\left(az^{2}+bz^{3}+\OO{z^{4}}\right)\pp z.
\end{align*}

\begin{lem}
\label{lem:.model_parab}For all $a\in\cc^{\times}$ and $b\in\cc$
we have $\flow X1{}\in\parab$. More precisely the $3$-jet of the
mapping at $0$ is given by:
\begin{align*}
\flow X1{\left(z\right)} & =z+az^{2}+\left(a^{2}+b\right)z^{3}+\OO{z^{4}}.
\end{align*}
This germ is linearly conjugate to $w\mapsto w+w^{2}+\left(1+\frac{b}{a^{2}}\right)w^{3}+\oo w$
through the transform $w:=az$, hence the formal invariant of $\flow X1{}$
is $-\nf b{a^{2}}$.\\
\end{lem}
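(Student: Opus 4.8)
The plan is to compute the time-$1$ map of $X = (az^2 + bz^3 + \OO{z^4})\pp z$ directly from Lie's formula~(\ref{eq:Lie}), applied to $g = \id$, then read off the formal invariant from the normalization given in Lemma~\ref{lem:.model_parab} of the earlier excerpt on double zeroes.

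\begin{proof}
Write $R(z) = az^2 + bz^3 + \OO{z^4}$ so that $X\cdot g = R\,g'$ for any holomorphic germ $g$. We apply Lie's formula~(\ref{eq:Lie}) at $t=1$ to the coordinate function $g = \id$, for which $g' = 1$, $g'' = 0$. Thus $X\cdot\id = R$, and we need the $3$-jets of $X\cdot^2\id = X\cdot R = RR'$ and of $X\cdot^3\id = X\cdot(RR') = R(RR')'$. From $R = az^2 + bz^3 + \OO{z^4}$ we get $R' = 2az + 3bz^2 + \OO{z^3}$, hence
\begin{align*}
RR' & = 2a^2 z^3 + \OO{z^4},\\
R(RR')' & = \OO{z^4}.
\end{align*}
Plugging into Lie's formula,
\begin{align*}
\flow X1{(z)} & = z + \frac{1}{1!}(X\cdot\id)(z) + \frac{1}{2!}(X\cdot^2\id)(z) + \frac{1}{3!}(X\cdot^3\id)(z) + \OO{z^4}\\
 & = z + (az^2 + bz^3) + \tfrac12(2a^2 z^3) + \OO{z^4}\\
 & = z + az^2 + (a^2 + b)z^3 + \OO{z^4}.
\end{align*}
Since $a\neq0$, this lies in $\parab$. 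Performing the linear change of coordinate $w := az$, i.e. conjugating by $\Psi\colon w\mapsto z = w/a$, the germ becomes
\begin{align*}
w & \longmapsto a\,\flow X1{(w/a)} = w + w^2 + \frac{a^2+b}{a}w^3 + \oo{w^3} = w + w^2 + \Bigl(1 + \frac{b}{a^2}\Bigr)w^3 + \oo{w^3}.
\end{align*}
The classical normal form of a generic parabolic germ being $w\mapsto w + w^2 + (1+\rho)w^3 + \oo{w^3}$ with formal invariant $-\rho$ (see~\cite{Ecal}), we read off that the formal invariant of $\flow X1{}$ equals $-\bigl(1 + \tfrac{b}{a^2}\bigr) - \bigl(-1\bigr) = -\tfrac{b}{a^2}$; equivalently, matching $\mu = -\rho$ against the convention used here, the formal invariant is $-\nf b{a^2}$.
\end{proof}

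The computation is entirely routine; the only point requiring a modicum of care is the bookkeeping of which $3$-jets of the iterated Lie derivatives actually contribute, and the fact that the $1/3!$ term vanishes to order $4$ so that the $z^3$-coefficient of $\flow X1{}$ receives contributions only from $b$ (via $X\cdot\id$) and from $a^2$ (via $\tfrac12 X\cdot^2\id$). There is no real obstacle here: the statement is the base case feeding into the construction, and its proof is a direct expansion. The one place where a reader might want to double-check the sign convention is the final identification of $-\nf b{a^2}$ as \emph{the} formal invariant — which depends on whether one normalizes the cubic coefficient to $1+\mu$ or to $1-\mu$ — but with the convention fixed by the earlier discussion of $\parab$ and the formal model $X_0$, the sign is as stated.
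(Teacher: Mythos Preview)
Your proof is correct and follows essentially the same route as the paper's: apply Lie's formula to $g=\id$, keep track of the $3$-jets of the iterated Lie derivatives, then linearly rescale and read off the invariant from the convention that $w\mapsto w+w^{2}+cw^{3}+\OO{w^{4}}$ has formal invariant $1-c$. One small slip: in the conjugacy computation the intermediate coefficient should be $\frac{a^{2}+b}{a^{2}}$, not $\frac{a^{2}+b}{a}$; your final expression $1+\frac{b}{a^{2}}$ is nonetheless correct.
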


\begin{proof}
Using Lie formula we obtain
\begin{align*}
\flow X1{\left(z\right)} & =z+\left(az^{2}+bz^{3}\right)+a^{2}z^{3}+\OO{z^{4}}.
\end{align*}
The rest is straightforward algebra, but for the fact that the formal
invariant of $w\mapsto w+w^{2}+cw^{3}+\OO{w^{4}}$ is $1-c$.
\end{proof}
\begin{example}
In the case of $X_{0}$ we have
\begin{align*}
X_{0}\left(z\right) & =\lambda z^{2}-\lambda^{2}\mu z^{3}+\OO{z^{4}}\\
\Delta_{0}\left(z\right) & =z+\lambda z^{2}+\lambda^{2}\left(1-\mu\right)z^{3}+\OO{z^{4}}\in\parab
\end{align*}
with formal invariant $\mu$ independently on $\lambda$.
\end{example}

\begin{rem}
There exists a domain $U:=\left(\cbar,p\right)$ such that any trajectory
crossing $\partial U$ converges asymptotically to $p$ in forward-
or backward-time (a parabolic basin).
\end{rem}

\subsubsection{Dynamics near a simple pole (separation case)}

Consider here the case where $X\left(p\right)=\infty$ is a simple
pole. One can find a conformal coordinate $z=\Psi\left(w\right)$
near $p$ such that $X$ takes the form 
\begin{align*}
\psi^{*}X & =\frac{1}{w}\pp w\,\,,\,w\in\neigh.
\end{align*}
A direct integration describes the local behavior of the dynamics
of the linear vector field around a simple pole.

\begin{figure}
\hfill{}\includegraphics[height=4cm]{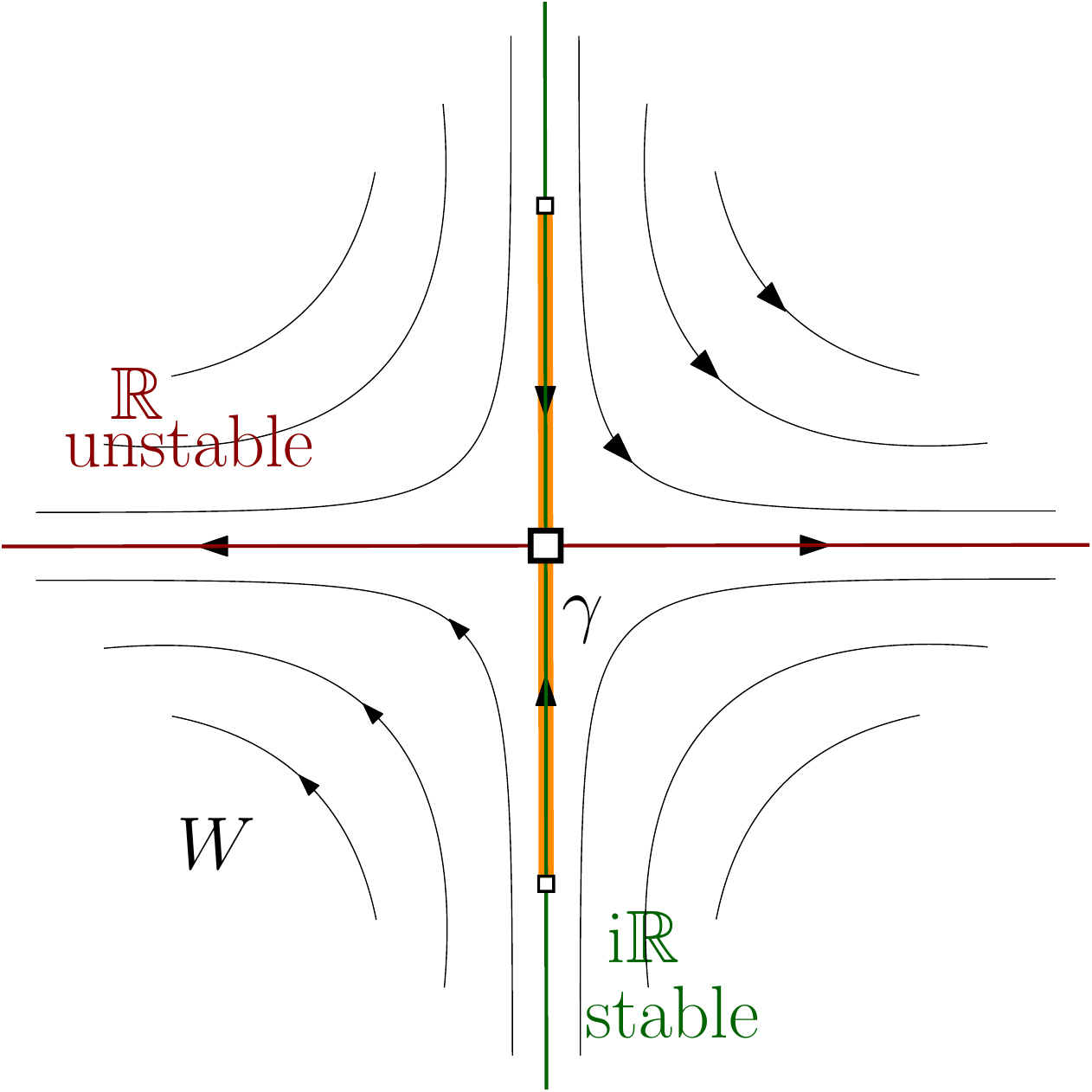}\hfill{}

\hfill{}\includegraphics[height=4cm]{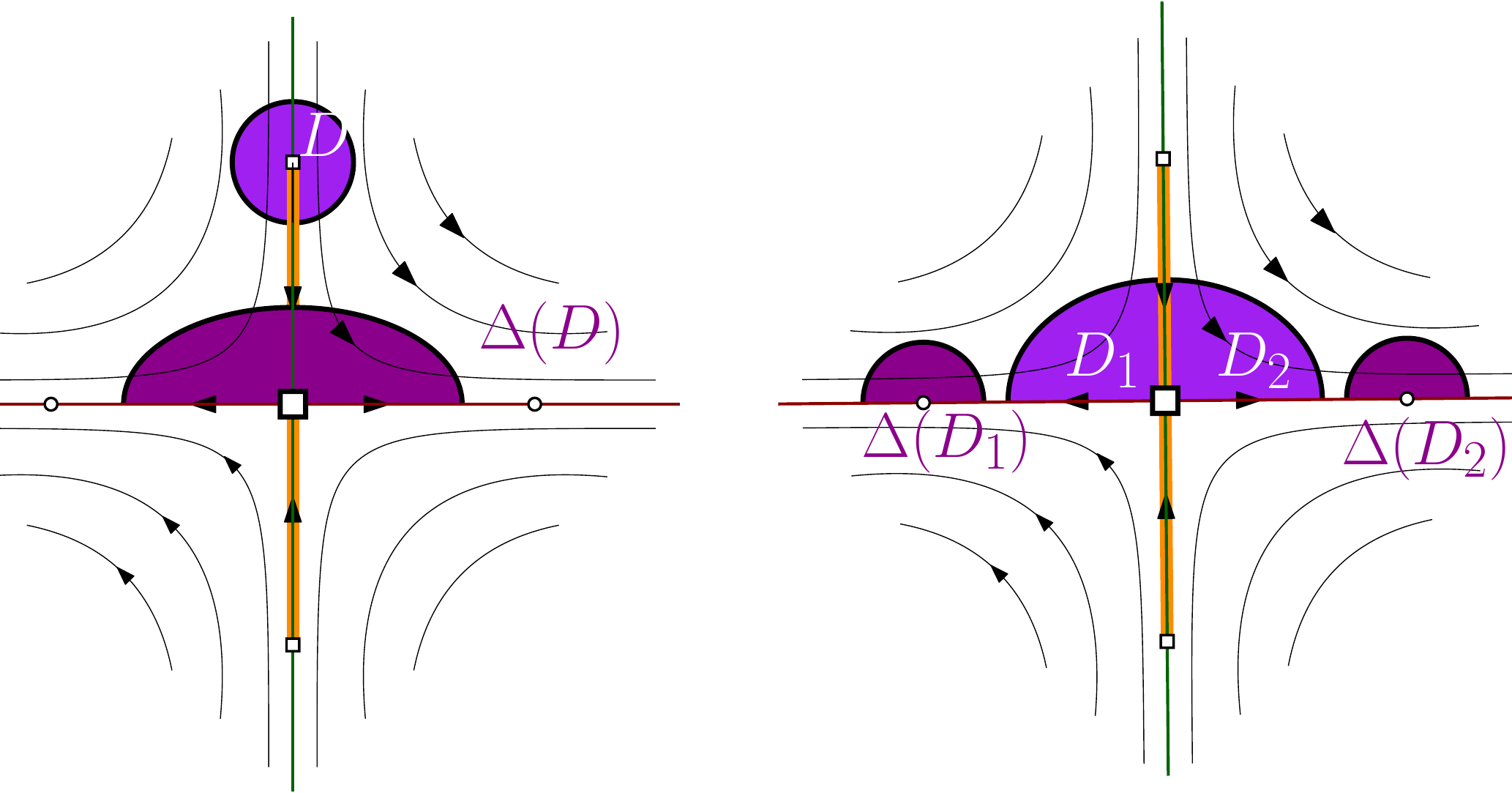}\hfill{}

\caption{\label{fig:polar_dynamics}Saddle dynamics near a simple pole of a
vector field $W$ and induced slicing dynamics of its time-1 map $\Delta$.
The latter is holomorphic on the complement of the arc $\gamma$,
included in the stable manifold and whose endpoints are sent to the
pole in time $1$.}
\end{figure}

\begin{lem}
\label{lem:polar_foliation}Let $W:=\frac{1}{w}\pp w$. We refer to
Figure~\ref{fig:polar_dynamics} for an illustration of the statement
to come.
\begin{enumerate}
\item The foliation induced by $W$ has a saddle-point at $0$ with stable
manifold $\ii\rr$ and unstable manifold $\rr$.
\item The time-1 map of $W$ is defined and holomorphic on $\cc\backslash\gamma$
with $\gamma:=\ii\sqrt{2}\left[-1,1\right]$, where it evaluates to
\begin{align*}
\flow W1{\left(w\right)} & =\pm\sqrt{2+w^{2}}.
\end{align*}
\item The analytic continuation of the above map as a multivalued function
can be realized over $\cc\backslash\left\{ \pm\ii\sqrt{2}\right\} $
with monodromy $\zsk[2]$.
\end{enumerate}
\end{lem}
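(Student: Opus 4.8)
The plan is to analyze the linear vector field $W = \frac{1}{w}\pp w$ explicitly, exploiting the fact that everything reduces to elementary integration. First I would treat item (1): the real flow satisfies $\dot w = \frac{1}{w}$, i.e. $w\dot w = 1$, so $\frac{\dd}{\dd t}(w^2) = 2$ and along real time $w(t)^2 = w_*^2 + 2t$. Thus on the real axis $w_* \in \rr^\times$ the trajectory moves to $+\infty$ in forward time and to $0$ in backward time (hitting the pole at finite negative time if $w_* \in \ii\rr$); this identifies $\rr$ as the unstable manifold and $\ii\rr$ as the stable manifold. More structurally, the first integral is $\frac{w^2}{2}$ (since $\dd(\frac{w^2}{2}) = w\,\dd w = \tau_W^{-1}$... more precisely the time form is $\tau_W = w\,\dd w = \dd(\tfrac{w^2}{2})$), and the foliation is the pull-back of the horizontal foliation $\im{\cdot} = \cst$ under $w \mapsto \frac{w^2}{2}$; the saddle at $0$ with the stated separatrices is then immediate from the branch structure of the square root.

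Next, for item (2), the time-1 condition $1 = \int_\gamma \tau_W = \int_{w_*}^{\phi} w\,\dd w = \frac{\phi^2 - w_*^2}{2}$ gives $\phi^2 = w_*^2 + 2$, hence $\flow W1{(w)} = \pm\sqrt{2 + w^2}$ formally. To make this single-valued and holomorphic I would follow Definition~\ref{def:time-t_map}: $\Delta(w)$ is well-defined precisely when there is a path $\gamma$ from $w$ with $\int_\gamma \tau_W = 1$ avoiding the pole at $0$; equivalently, one needs the straight segment in the $\zeta := \frac{w^2}{2}$-coordinate from $\frac{w^2}{2}$ to $\frac{w^2}{2} + 1$ to avoid $\zeta = 0$. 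That segment hits the origin exactly when $\frac{w^2}{2} \in [-1, 0]$, i.e. $w^2 \in [-2,0]$, i.e. $w \in \ii\sqrt2\,[-1,1] = \gamma$. On the complement $\cc \backslash \gamma$ the function $2 + w^2$ omits the segment $[0, \ldots]$... rather, $2 + w^2$ avoids $(-\infty, 0]$ away from a suitable cut, so a holomorphic branch of $\sqrt{2 + w^2}$ exists there, and the choice of sign is pinned down by continuity with the identity-like normalization coming from $\flow W0{} = \id$ (tracking which sheet contains the trajectory). I would note $\flow W1{}$ is holomorphic and injective on $\cc \backslash \gamma$, with the two endpoints $\pm\ii\sqrt2$ of $\gamma$ each sent to $0$ (the pole), consistent with the separation case.

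For item (3), the branch points of $w \mapsto \sqrt{2 + w^2}$ are exactly the zeros of $2 + w^2$, namely $\pm\ii\sqrt2$, and since it is a square root the local monodromy around each is of order $2$; globally the monodromy group of this two-valued algebraic function over $\cc \backslash\{\pm\ii\sqrt2\}$ is $\zsk[2]$ (a single nontrivial loop around either point, or around both, generating $\zz/2\zz$). I would remark the two endpoints of $\gamma$ coincide with these two branch points, so passing to the larger domain $\cc\backslash\{\pm\ii\sqrt2\}$ is exactly opening the slit $\gamma$ into its two endpoints.

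The routine parts here are genuinely routine — it is all explicit integration of $\frac{1}{w}\pp w$ — so I do not anticipate a serious obstacle. The one point requiring a little care is the bookkeeping in item (2): making precise that the \emph{single} determination of $\Delta$ selected by Definition~\ref{def:time-t_map} (the one reached by analytic continuation along admissible paths from the germ at $t=0$) is holomorphic on all of $\cc\backslash\gamma$ and does not jump sheets, i.e. verifying that no admissible path from a point of $\cc\backslash\gamma$ is forced to wind around $0$. This is handled cleanly by passing to the flattening coordinate $\zeta = \frac{w^2}{2}$, where the flow is simply $\zeta \mapsto \zeta + 1$ and admissibility becomes the manifestly convex condition ``the segment $[\zeta, \zeta+1]$ misses $0$''.
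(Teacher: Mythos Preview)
Your proposal is correct and follows the same approach as the paper, namely direct integration of $\dot w=\frac{1}{w}$ via the flattening coordinate $\zeta=\frac{w^{2}}{2}$; in fact the paper does not spell out a proof of this lemma beyond the sentence ``a direct integration describes the local behavior of the dynamics'', so your write-up simply makes that integration explicit. The only minor wobble is the aside about $2+w^{2}$ avoiding $(-\infty,0]$, which is not the right statement; the clean justification you already gave suffices: $\cc\backslash\gamma$ is simply connected and misses the branch points $\pm\ii\sqrt{2}$, so a single holomorphic branch of $\sqrt{2+w^{2}}$ exists there, pinned down by continuity from $w(0)=w_{*}$ along the real-time trajectory.
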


\begin{example}
The vector field $X_{0}$ admits two symmetric simple poles $\pm\ii$,
fixed by $\sigma$, as well as two additional simple poles 
\begin{align*}
z_{\pm} & :=\frac{\lambda\mu\pm\sqrt{\lambda^{2}\mu^{2}+4}}{2},
\end{align*}
swapped by $\sigma$, provided $\mu\neq\pm\nf{2\ii}{\lambda}$ (otherwise
it is a double pole $z_{+}=z_{-}=\mu\in\ii\rr$) and $\mu\neq0$ (else
the <<pole>> $\pm1$ cancels out the <<zero>> of $X_{0}$ located
at $\pm1$).

Near $\pm\ii$ the vector field $X_{0}$ is conjugate to $\flow W1{}$
and this conjugacy sends $\sigma$ to the involution $w\mapsto-w$,
so that the monodromy of $\Delta_{0}$ around the ramification points
attached to $\pm\ii$ is $\sigma$ (the ramification is induced by
the change of variable $\Pi$). The corresponding local involution
near $z_{\pm}$ has a different nature and comes from the involution
$\nu\neq\id$ of the usual formal model $\frac{x^{2}}{1+\mu x}\pp x$
near the pole $-\frac{1}{\mu}$, which solves
\begin{align*}
\mu\log x-\frac{1}{x} & =\mu\log\nu-\frac{1}{\nu}
\end{align*}
(observe indeed that $t\mapsto\mu\log t-\frac{1}{t}$ has a critical
point at $-\frac{1}{\mu}$).
\end{example}

\begin{rem}
The case of a $k^{\text{th}}$-order pole $\frac{1}{w^{k}}\pp w$
is similar, with time-1 map $\flow W1{\left(w\right)}=\left(k+1+w^{k+1}\right)^{\nf 1{k+1}}$,
giving rise to $k+1$ ramification points and $k+1$ determinations
of the time-1 map. The integer $k$ is a topological invariant.
\end{rem}

\subsection{\label{subsec:formal-model}Global dynamics of the formal model}

We explain why/how we choose the formal model $X_{0}$ and the sector
$V^{\pm}$ in Section~\ref{subsec:choice_X0}. For now, let us present
the global features of the dynamics of $X_{0}$ on $\cbar$ for fixed
$\lambda>0$ and $\mu\in\cc$. We explain how the local dynamics near
the singular set of $X$ we described above stitch together by studying
the real-analytic foliation of the sphere $\cbar$ induced by the
real-time flow (Figure~\ref{fig:model_generic}).
\begin{rem}
~
\begin{enumerate}
\item Whether $\mu=0$ or not leads to different dynamics, since the simpler
$X_{0}\left(z\right)=\frac{\lambda z^{2}}{1+z^{2}}\pp z$ has less
poles and zeroes. The dynamics of this particular vector field has
been studied in Example~\ref{exa:mu_zero}, so we allow ourselves
to assume that $\mu\neq0$ whenever it leads to a more straightforward
exposition.
\item In order to avoid other zero/pole cancellations we must in addition
suppose that $\mu\neq\pm\nf{2\ii}{\lambda}$ (the latter is ensured
whenever $\lambda<\frac{1}{2\left|\mu\right|}$). Under these assumptions,
the rational vector field has three stationary points located at $0$
(double) and $\pm1$ (simple), as well as four simple poles located
at $z_{\pm}:=\frac{\lambda\mu\pm\sqrt{\lambda^{2}\mu^{2}+4}}{2}$
and $\pm\ii$.
\end{enumerate}
\end{rem}

\subsubsection{Spinal graph }

\begin{figure}
\hfill{}\includegraphics[height=5cm]{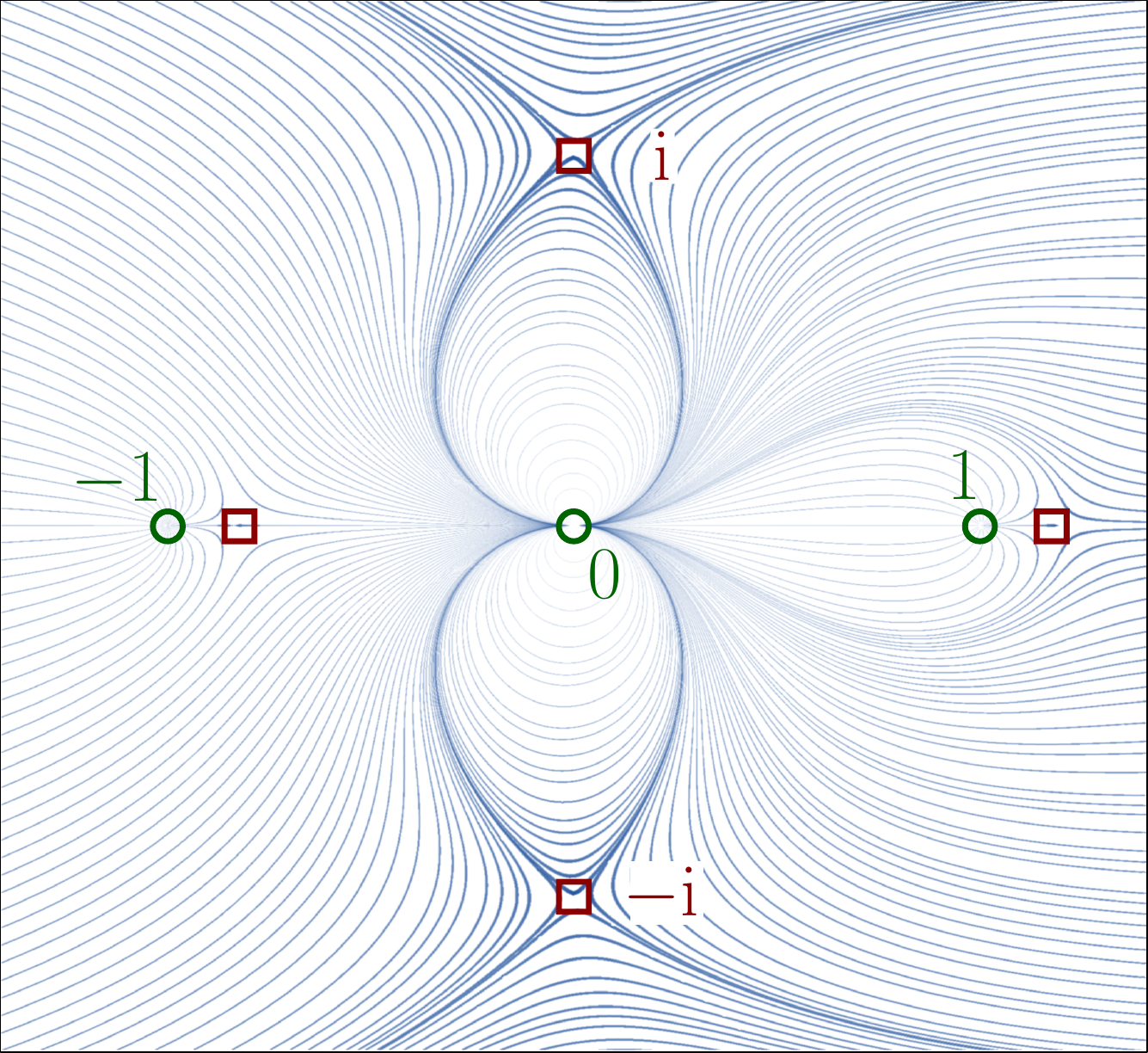}\hfill{}\includegraphics[height=5cm]{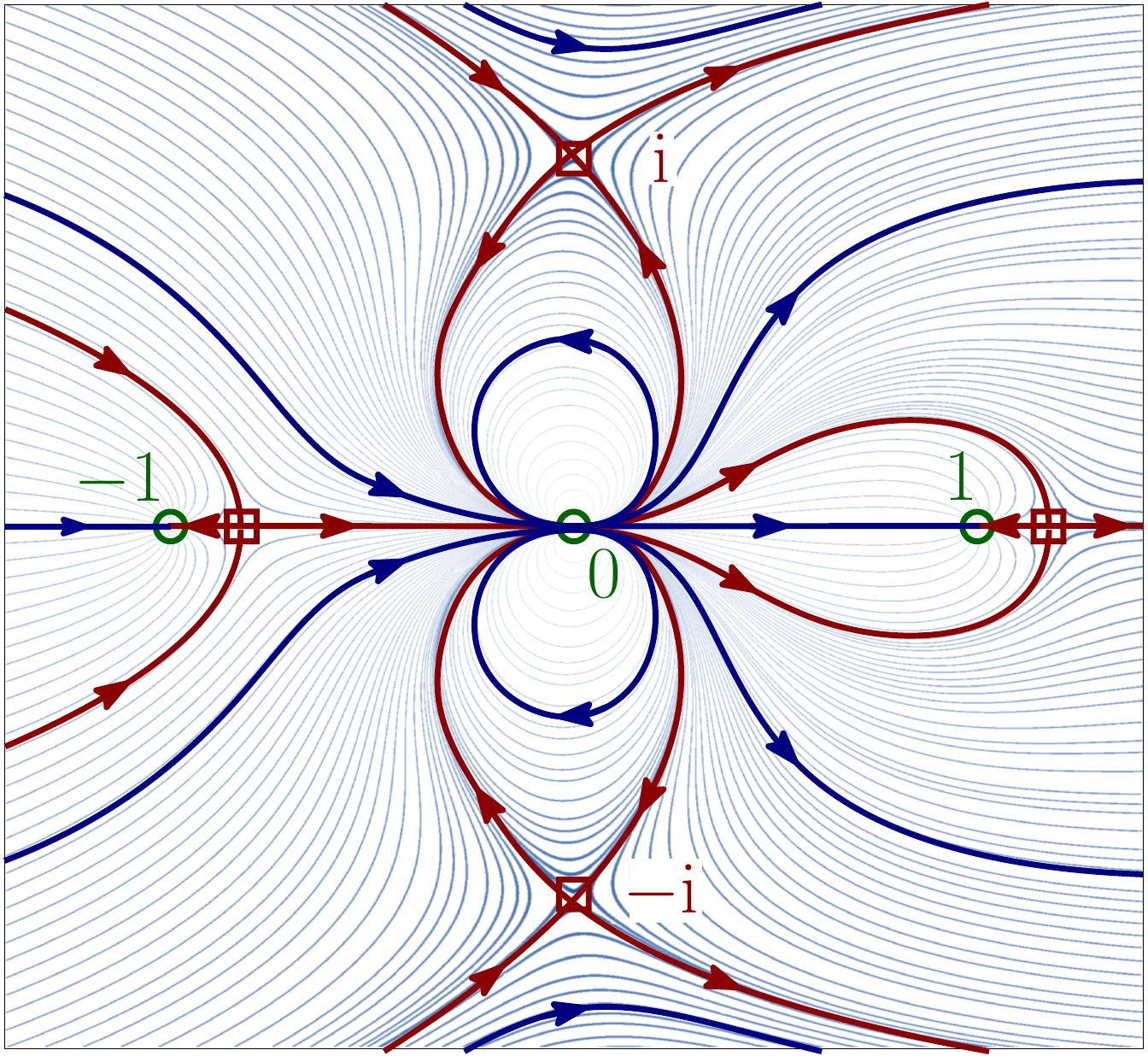}\hfill{}

\caption{\label{fig:model_generic}Foliation induced by the real-time flow
of $X_{0}$ (left) for $\lambda:=\frac{1}{2}$ and $\mu:=\frac{1}{2}$,
revealing the three stationary points (green circles) and four poles
(red squares). On the right, the spinal (blue) and separatrix (red)
graphs are depicted.}
\end{figure}

After the pioneering work of S.~\noun{Smale} and \emph{al} starting
at the beginning of the 1980's to study general numerical Newton-like
schemes for solving polynomial equations~\cite{SmallAlg,ShuTiWil},
the foliations induced by holomorphic polynomial vector fields $X$
has been thoroughly studied in the generic case by A.~\noun{Douady},
F.~\noun{Estrada} and P.~\noun{Sentenac} in an unpublished manuscript~\cite{DES}.
Then B.~\noun{Branner} and K.~\noun{Dias}~\cite{BraDia} completed
the task for every polynomial vector field, and in his thesis J.~\noun{Tomasini}
studied some rational vector fields. All these considerations result
in the following statement: the topological class of $X$ (up to orientation-preserving
homeomorphism) is completely classified by its combinatorial dynamics.
Roughly speaking, it is encoded as the spinal graph of $X$, the way
poles and stationary points of the vector field are connected by the
closure of maximal trajectories.
\begin{lem}
Let $X$ be a rational vector field. The \textbf{separatrix graph}
$\sep X$ of $X$ is the closure of the union of all stable and unstable
manifolds passing through the poles of $X$. Then $\cbar\backslash\sep{X_{}}$
consists in finitely many $X$-invariant connected components. Any
two trajectories in the same component are:
\begin{itemize}
\item either both periodic with same period;
\item or they link the same pair of stationary points $q\to p$ of $X$.
\end{itemize}
\end{lem}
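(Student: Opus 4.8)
The plan is to exploit the fact that the time form $\tau_X$ is holomorphic and nonvanishing away from $\zset X\cup\pset X$, so that the real-time foliation outside $\sep X$ is genuinely a (non-singular) real-analytic foliation of a finite-type Riemann surface, and then to apply the long-time classification recalled in Section~\ref{subsec:Basic}. First I would observe that $\sep X$ is a closed set: each stable/unstable manifold through a pole is the image of a separation-type trajectory, whose closure adds only the pole itself together with the limit points given by the equilibrium/center alternatives, hence $\sep X$ is a finite union of real-analytic arcs and points and $\cbar\setminus\sep X$ is open with finitely many connected components. On each such component $U$, no trajectory can reach a pole (that would put it in a stable manifold through that pole, hence in $\sep X$) nor escape through $\partial U$ into another component in finite time without first hitting $\sep X$; so by the trichotomy of Section~\ref{subsec:Basic} every forward and backward trajectory in $U$ is either periodic or has $\omega$-limit (resp.\ $\alpha$-limit) a stationary point of $X$ lying in $\adh U$.

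Next I would pin down that the behaviour is uniform across a component. The cleanest route is to integrate the time form: on the (connected, possibly non-simply-connected) component $U$ the multivalued primitive $T(z)=\int^z\tau_X$ is a local biholomorphism onto its image, and the real-time flow is exactly translation by $t\in\rr$ in the $T$-coordinate. A periodic trajectory corresponds to $T$ having pure-imaginary period $2\pi\ii\,\mathrm{Res}$ along some loop, and this is an \emph{open and closed} condition on $U$: if one trajectory closes up, a Cauchy-formula/residue argument (as invoked for~(\ref{eq:time_function})) forces neighbouring trajectories to close up with the same period, and the set of periodic points is clearly closed in $U$; connectedness then gives that either all trajectories in $U$ are periodic (the center case, all with a common period equal to the residue of $\tau_X$ at the enclosed center) or none are. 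In the non-periodic case, every trajectory is in the equilibrium regime both forward and backward, so it links a pair $q\to p$ of stationary points; the pair is then constant on $U$ because the map sending a point of $U$ to its $(\alpha,\omega)$-limit pair is locally constant (continuity of the flow together with the basin structure near a simple or double zero described in Section~\ref{subsec:Basic}, where each attractive/repulsive/parabolic point has a full neighbourhood-basin) and $U$ is connected.

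I expect the main obstacle to be the bookkeeping around the limit points at the boundary of $U$ rather than anything deep: one must be careful that a non-periodic trajectory really does converge to a single stationary point (not merely accumulate on $\sep X$), which is where the Bendixson–Poincaré dichotomy recalled before the trichotomy does the work — in a planar (hence sphere-local) real-analytic setting with isolated singularities, a bounded trajectory that avoids $\pset X$ and cannot be periodic must converge to a stationary point, and near every stationary point of $X$ (simple or double) the local models of Section~\ref{subsec:formal-model}'s preliminaries show the point is either a center (excluded, since we are in the non-periodic component) or attracting/repelling/parabolic with a genuine basin, forcing convergence from one side. The other point needing a line of justification is finiteness of the number of components: this follows since $\sep X$ is a finite graph (finitely many poles, each contributing finitely many separatrix branches because the pole orders are finite, see the Remark after Lemma~\ref{lem:polar_foliation}), and the complement of a finite real-analytic graph in $\cbar$ has finitely many connected components. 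Assembling these observations yields the statement.
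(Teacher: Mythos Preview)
Your proposal is correct and follows essentially the same approach as the paper: both arguments hinge on showing that ``periodic'' and ``links the fixed pair $q\to p$'' are open-and-closed conditions on a connected component, the first via the Cauchy/residue argument for the period integral and the second via the basin structure near zeroes together with the flow-box theorem (your ``continuity of the flow'' phrasing). You supply considerably more detail than the paper does---in particular you explicitly justify closedness of $\sep X$, finiteness of the components, and $X$-invariance of each component, all of which the paper leaves implicit---but the skeleton is the same.
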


\begin{proof}
Assume that $t\mapsto z\left(t\right)$ is a periodic trajectory.
Cauchy formula indicates that nearby trajectories are also periodic
with same period. A straightforward connectedness argument allows
us to conclude that every other trajectory in the component is periodic
with same period.

Assume now that $t\mapsto z\left(t\right)$ is neither a center nor
a separatrix. Then $z$ goes towards some $p\in\zset X$ in forward
time, say, and then there exists a domain $U_{p}\ni p$ (be it the
basin of attraction of an attractive fixed-point or a parabolic basin
for a multiple zero) such that any trajectory landing at $p$ eventually
hits $\partial U_{p}$ and \emph{vice versa}. Hence $z$ links $\partial U_{q}$
to $\partial U_{p}$ in finite time, and the flow-box theorem asserts
this remains the case for neighboring trajectories. Again a direct
connectedness argument brings the conclusion.
\end{proof}
\begin{defn}
We define the \textbf{spinal graph} $\spine X$ as the oriented graph
with vertices $\zset X$ and ordered edges corresponding to a trajectory
of $X$ linking those vertices $q\to p$, one per component of $\cbar\backslash\sep{X_{}}$.
Isolated vertices correspond to center stationary points.
\end{defn}

\begin{rem}
Both graphs $\spine X$ and $\sep X$ come with a canonical, non-ambiguous
geometric realization with edges as integral curves of $X$. For that
reason we identify the combinatorial data and its canonical geometric
realization as a subset of $\cbar$.\\
We can equip both graphs with an oriented length, by integrating the
time form~(\ref{eq:time_form}) along injective subpaths $\gamma$
\begin{align}
\ell_{X}\left(\gamma\right) & :=\int_{\gamma}\tau_{X}=\int_{\gamma}\frac{\dd z}{X\cdot\id}\in\rbar.\label{eq:arc_length}
\end{align}
\end{rem}

We omit the proof of the next lemma.
\begin{lem}
If $\mu\neq0$ and $\mu\neq\pm\frac{2\ii}{\lambda}$ the graph $\spine{X_{0}}$
belongs to the following list, according to the sign of $\re{\mu}$.
A center bifurcation occurs when $\re{\mu}=0$. The separatrix graph
is figured in red while the spinal graph is colored blue.

\hfill{}\includegraphics[width=0.3\columnwidth]{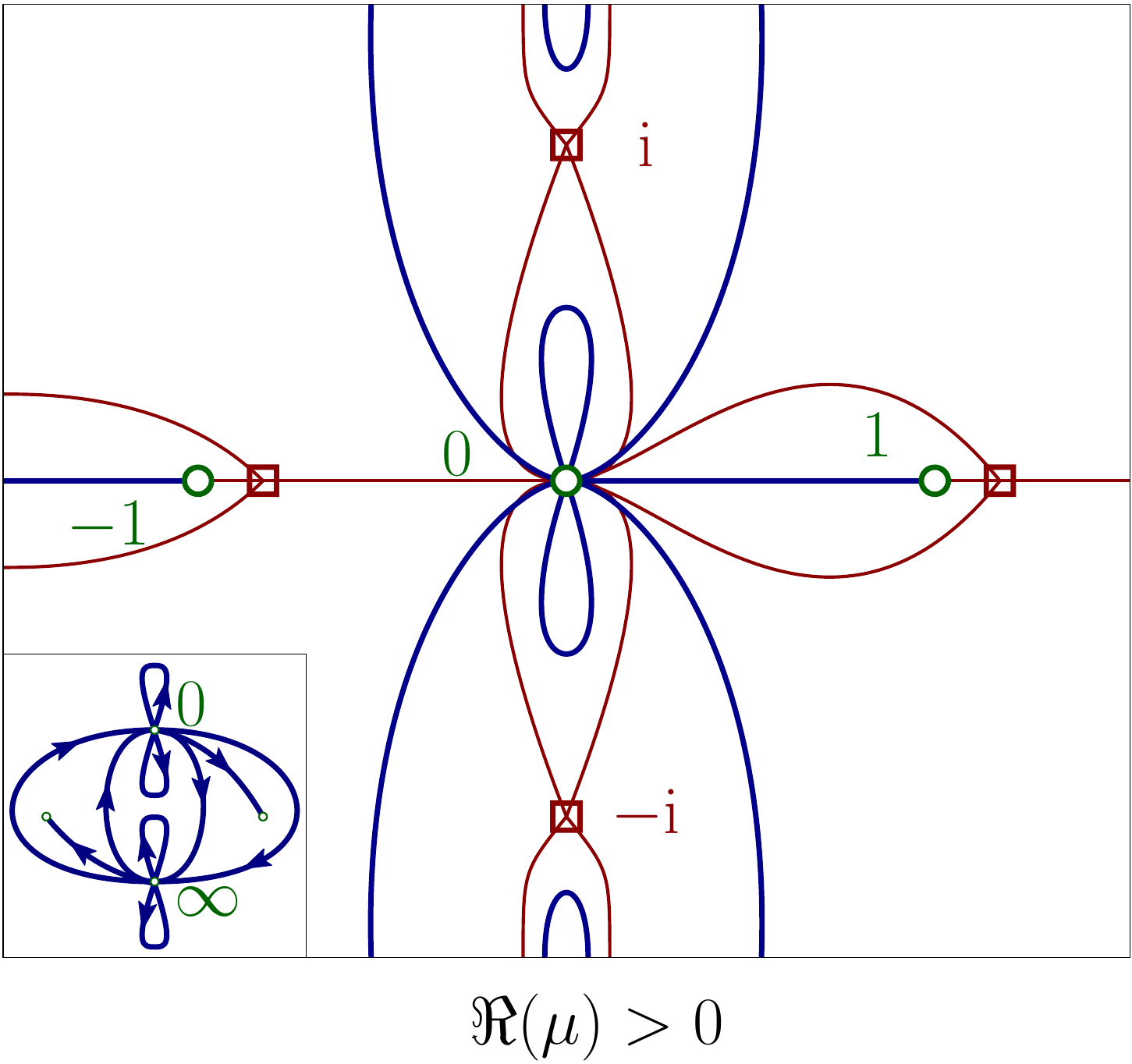}\hfill{}\includegraphics[width=0.3\columnwidth]{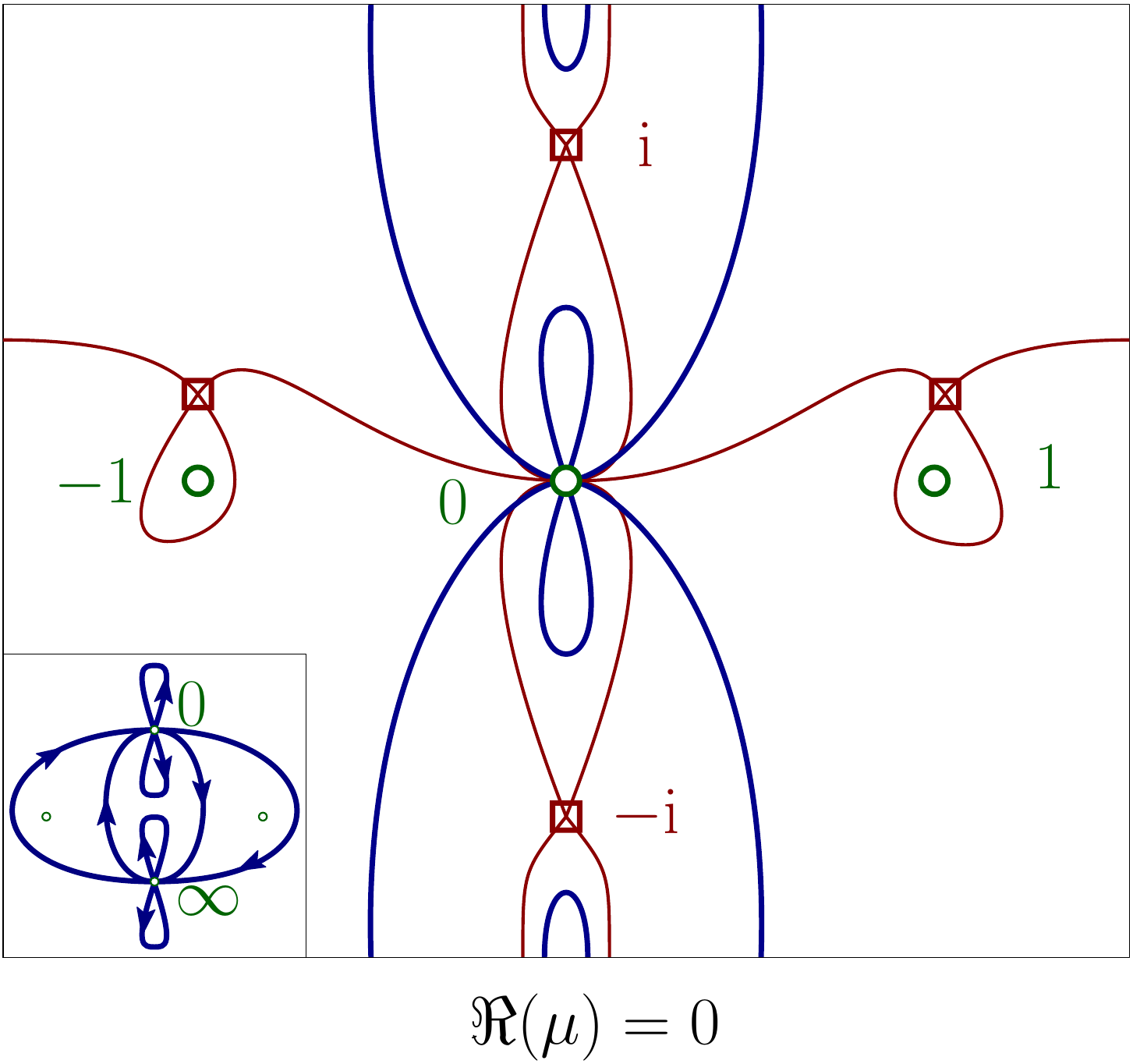}\hfill{}\includegraphics[width=0.3\columnwidth]{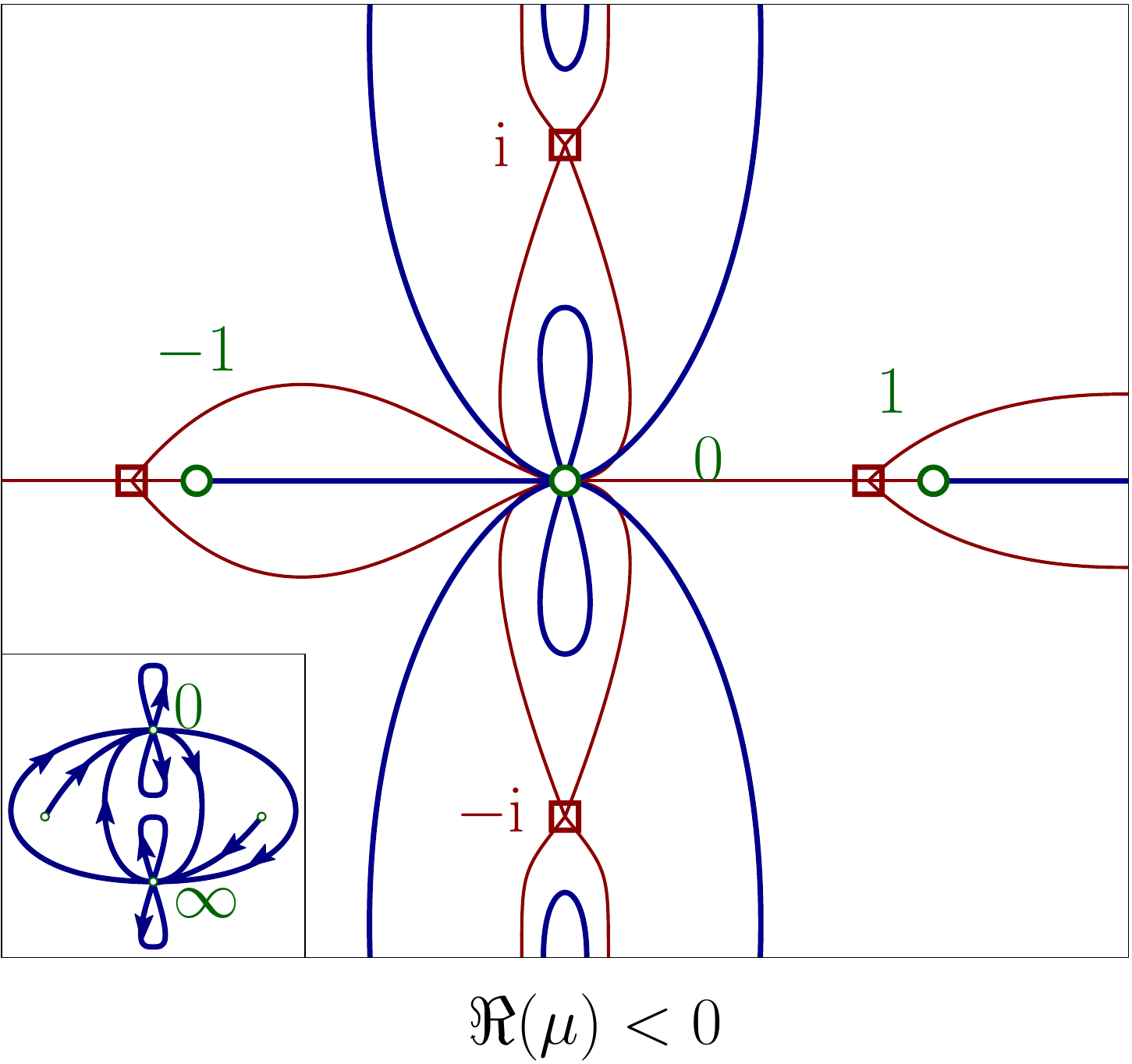}\hfill{}~
\end{lem}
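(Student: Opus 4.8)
The plan is to trace through the global dynamics of $X_0$ on $\cbar$ using the local models already established (simple zeros, the double zero at $0$, simple poles) together with the symmetry $\sigma$, and to show that the combinatorial data is rigid enough to leave only the three listed possibilities. First I would record the singular data: under the standing hypotheses $\mu\neq0$ and $\mu\neq\pm\tfrac{2\ii}{\lambda}$, the vector field $X_0$ has stationary points $0$ (double, parabolic for the time-$1$ map by Lemma~\ref{lem:.model_parab}), and $\pm1$ (simple, with linear part $\alpha=\mp\tfrac{1}{\mu}$), and four simple poles $\pm\ii$ (fixed by $\sigma$) and $z_{\pm}$ (swapped by $\sigma$). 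The sign of $\re{\alpha}=\mp\re{1/\mu}$ at $+1$ versus $-1$ is opposite, so exactly one of $\pm1$ is attracting and the other repelling when $\re{\mu}\neq0$; when $\re{\mu}=0$ both are centers, which is the announced center bifurcation. I would also note $\sum(\text{residues at poles})$ and use that $\sigma^{*}X_0=X_0$ forces the separatrix and spinal graphs to be $\sigma$-invariant as subsets of $\cbar$.

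Next I would invoke the long-time trichotomy from Section~\ref{subsec:Basic} (center / equilibrium / escape / separation) applied to the \emph{rational} (hence $U=\cbar$, no escape) case: every non-separatrix trajectory is either periodic or connects one stationary point to another in forward/backward time, which is precisely the preceding Lemma. Thus $\cbar\setminus\sep{X_0}$ has finitely many $X_0$-invariant components, each either a periodic annulus attached to a center, or a "flow band" from some $q\in\zset{X_0}$ to some $p\in\zset{X_0}$. The spinal graph then has three vertices $\{0,+1,-1\}$, and I must enumerate the possible edge sets. The parabolic vertex $0$ must absorb (in forward or backward time, depending on the sector) the trajectories of its parabolic basin; with $\mu\neq0$ the attracting and repelling petals at $0$ are genuinely present, so $0$ is an endpoint of at least one edge in each time direction. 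Counting incidences against the poles (each simple pole $p$ has a local saddle picture by Lemma~\ref{lem:polar_foliation}, contributing one stable and one unstable separatrix arc on each side), and using $\sigma$-symmetry to pair $z_+\leftrightarrow z_-$ and to fix the configuration near $\pm\ii$, pins down the graph up to the choice of which of $\pm1$ is the sink: this yields the "positive" picture, its $\sigma$-mirror the "negative" picture, and the degenerate "center" picture on the wall $\re{\mu}=0$.

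The main obstacle I anticipate is the \emph{global} combinatorial bookkeeping: ruling out spurious separatrix configurations (e.g. a stable manifold of one pole connecting to an unstable manifold of another pole in a way that would create an extra component, or a separatrix spiraling into a center). The clean way around this is to integrate the time form~(\ref{eq:arc_length}) along the candidate separatrix arcs: a separatrix connecting two poles would have to carry a \emph{real} period/length constraint that, for generic $\lambda,\mu$ off the stated walls, cannot be satisfied, so the separatrices from distinct poles do not coincide and the component count is exactly as claimed. Combined with the topological classification of rational vector fields by their spinal graph (Douady--Estrada--Sentenac, Branner--Dias, Tomasini, cited above), this forces $\spine{X_0}$ into the displayed list. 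Since the excerpt says the proof is omitted, I would simply assemble these observations into the stated trichotomy and defer the routine verification of the time-form inequalities.
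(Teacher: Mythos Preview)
The paper explicitly omits the proof of this lemma, so there is no argument to compare against; I can only evaluate your plan on its own merits.

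Your overall strategy---record the local models at zeros and poles, exploit the $\sigma$-invariance, and then assemble the components of $\cbar\setminus\sep{X_0}$ using the trichotomy of Section~\ref{subsec:Basic} together with the Douady--Estrada--Sentenac / Branner--Dias classification---is the right shape. But there is a concrete error at the heart of your trichotomy. You claim that the linear parts at $+1$ and $-1$ have opposite signs, so that one is a sink and the other a source. This is false: a direct Taylor expansion (or simply the fact that $\sigma^*X_0=X_0$ and $\sigma(1)=-1$, so the linearizations at $+1$ and $-1$ are conjugate) gives the \emph{same} eigenvalue $\alpha=-\tfrac{1}{\mu}$ at both points, as the paper itself records in the example just before Section~2.2.2. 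Hence for $\re{\mu}>0$ both $\pm1$ are attractive, for $\re{\mu}<0$ both are repulsive, and for $\re{\mu}=0$ both are centers. Your proposed mechanism ``choose which of $\pm1$ is the sink'' therefore cannot be what distinguishes the three pictures; the actual trichotomy comes from the common type of $\pm1$, with the parabolic points $0$ and $\infty$ supplying the sources (resp.\ sinks) in the attractive (resp.\ repulsive) case.

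A second issue: your plan to exclude pole-to-pole separatrix connections by a ``generic $\lambda,\mu$'' period argument does not match the statement, which is claimed for \emph{all} $\mu\neq0,\pm\tfrac{2\ii}{\lambda}$, not a generic subset. You would need either to show that such connections never occur under the stated hypotheses, or (more robustly) that the spinal graph is unchanged across any such connection because the induced component decomposition is. Once you correct the sign issue at $\pm1$ and replace the genericity argument by an honest one, the remaining bookkeeping (four saddles, $\sigma$-symmetry pairing $z_+\leftrightarrow z_-$ and fixing $\pm\ii$, parabolic petals at $0$ and $\infty$) should go through.
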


\subsubsection{Dynamics of the time-1 map $\Delta_{0}$}

From the description of the qualitative dynamics of time-1 maps of
rational vector fields, we work out more quantitative specifics for
the model $X_{0}$.
\begin{lem}
\label{lem:time-1_rational}Let $X$ be a rational vector field and
$\Delta$ be its time-1 map. Then $\Delta$ is a multivalued map over
$\cbar$ with branch points $z_{*}\in\sep X$ at time $1$ from an
element of $\pset X$, by which we mean that there exists a subpath
$\gamma\subset\sep X$ with $\gamma\left(0\right)=z_{*}$, $\gamma\left(1\right)\in\pset X$
and $\ell_{X}\left(\gamma\right)=1$ as in~(\ref{eq:arc_length}).
A pole of order $k$ gives rise to $k+1$ local determinations.
\end{lem}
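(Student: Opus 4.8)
The plan is to read $\Delta$ off the \emph{time form} $\tau_X=\dd z/R$ of~(\ref{eq:time_form}), regarded as a meromorphic $1$-form on $\cbar$, together with its (multivalued) primitive $T(z):=\int^z\tau_X$. On $\cbar\backslash\sing X$ the form $\tau_X$ is holomorphic and nowhere vanishing, so $T$ is there a local biholomorphism, and by Definition~\ref{def:time-t_map} a determination of $\Delta$ near such a point is $z\mapsto T^{\circ-1}\!\left(T(z)+1\right)$. Both $T$ and $T^{\circ-1}$ are analytic functions with only isolated singularities --- $T$ has algebraic or logarithmic branch points at the poles of $\tau_X$, that is at the zeros $\zset X$ of $R$; $T^{\circ-1}$ ramifies over the critical values of $T$, attained exactly where $\tau_X$ vanishes, i.e. over the images of the poles $\pset X$ of $X$ --- so the composite continues analytically along every path missing a discrete set, which is the asserted ``multivalued map over $\cbar$''. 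It then remains to pin down its branch points.

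First I would establish holomorphy away from the alleged branch locus. Fix a determination of $\Delta$ at $z_\ast\notin\pset X$ and a path $\gamma$ in its defining homotopy class with $\int_\gamma\tau_X=1$ that can be chosen to avoid $\pset X$, endpoints included. Its interior then automatically avoids $\zset X$, because by the long-time classification of the real flow any approach to a stationary point consumes infinite $\tau_X$-length; and if $\gamma(1)\in\zset X$ then $z_\ast=\gamma(1)\in\zset X$ is a fixed point of $\Delta$, where the local models of Section~\ref{subsec:formal-model} (linearization at a simple zero, parabolic germ at a double zero) already give $\Delta$ holomorphic. Otherwise thicken $\gamma$ to a simply connected neighborhood $\Omega\subset\cbar\backslash\pset X$ carrying a single-valued branch of $T$, biholomorphic onto its image; then $T\!\left(\Delta(z)\right)=T(z)+1$, together with $T'\!\left(\Delta(z)\right)=1/R\!\left(\Delta(z)\right)\neq0$ and the implicit function theorem, yields $\Delta$ holomorphic near $z_\ast$ along that determination.

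Next I would analyse the branching itself. Suppose a length-$1$ path $\gamma$ issued from $z_\ast$ ends at a pole $p$ of order $k$; here $z_\ast\notin\sing X$ (a pole cannot flow to a pole, and zeros are fixed by $\Delta$), so $T$ is a local biholomorphism at $z_\ast$. Near $p$, writing $R=u/(z-p)^k$ with $u(p)\neq0$ gives $T(z)-T(p)=\frac{(z-p)^{k+1}}{(k+1)\,u(p)}+\OO{(z-p)^{k+2}}$, so $T$ is $(k+1)$-to-$1$ near $p$ and $T^{\circ-1}$ has a ramification point of index $k+1$ over $T(p)$; equivalently, in the coordinate of Lemma~\ref{lem:polar_foliation} the pole carries exactly $k+1$ incoming separatrix rays. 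Hence $\Delta(z)=T^{\circ-1}\!\left(T(p)+\left(T(z)-T(z_\ast)\right)\right)$ has a branch point of index $k+1$ at $z_\ast$, i.e. $k+1$ local determinations. Moreover, since $T$ is literally the elapsed time along trajectories, the $T^{\circ-1}$-image of the real segment from $T(z_\ast)$ to $T(p)$ --- legitimate because $1\in\rr$ --- is a subarc of an incoming separatrix of $p$; it realises $\ell_X=1$ and witnesses $z_\ast\in\sep X$.

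The step I expect to be the main obstacle is showing the branch locus is \emph{exactly} this set, i.e. that the monodromy of $T$ itself around the stationary points creates no further branching of $\Delta$: looping once around $q\in\zset X$ adds the period $2\pi\ii\,\mathrm{res}_q\tau_X$ to a branch of $T$, but in $T\!\left(\Delta(z)\right)=T(z)+1$ the point $\Delta(z)$ is then forced to wind once around the stationary point it converges to, so the same period is added on both sides and cancels, leaving $\Delta(z)$ unchanged. A more clerical point is the global claim $\gamma\subset\sep X$ and the precise count of branch points; I would extract it from the combinatorial description of $\sep X$ recalled just above, which for $X=X_0$ is made fully explicit in the Globalization Theorem.
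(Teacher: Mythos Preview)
Your proof is correct and follows essentially the same route as the paper: both identify the branch locus of $\Delta$ as the set of points whose time-$1$ trajectory reaches a pole, by tracking analytic continuation through the time form $\tau_X$ and its primitive. Your treatment is in fact more explicit---writing $\Delta=T^{\circ-1}\circ(\id+1)\circ T$ lets you read off the $(k+1)$-fold ramification directly from the local expansion of $T$ at an order-$k$ pole, and you also dispose of the monodromy of $T$ around stationary points, neither of which the paper's terse argument spells out.
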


\begin{proof}
For $\Psi$ to be locally holomorphic at $z\in U$ there must exist
a path $\gamma~:~\left[0,1\right]\longto\cc$ with $\gamma\left(0\right)=0$
and $\gamma\left(1\right)=f\left(z\right)$ on a neighborhood of which
the germ $t\in\neigh\mapsto\flow Xt{\left(z\right)}$ admits an analytic
continuation. This is clearly the case as long as $\flow Xt{\left(z\right)}\notin\pset X$.
If $t\neq\gamma\left(1\right)$ but $t\in\pset X$ then one can slightly
deform $\gamma$ to avoid the pole (the different nonequivalent choices
of the deformation providing differing determinations of $\Psi$).
Hence $\Psi$ can be analytically continued around $z$ as long as
$\text{\ensuremath{\flow X{\gamma\left(1\right)}{\left(z\right)=}}}\Psi\left(z\right)\notin\pset X$.
If the trajectory issued from $z_{*}\in\cc$ reaches some pole $p$
in finite time $t_{\text{max}}\left(z_{*}\right)\in\rr>0$ then $z_{*}$
belongs to a stable manifold passing through the pole $p$. It follows
from Lemma~\ref{lem:polar_foliation} that there is only one such
smooth manifold in the neighborhood of $p$, thus $\Gamma$ as described
is a well-defined closed set of the sphere. Hence, whenever $z_{*}\notin\Gamma$
we can define a locally analytic $\Delta_{0}$ near $z_{*}$, and
that mapping can be extended to $\partial\Gamma$ continuously by
setting $\Delta_{0}\left(\partial\gamma\left(p\right)\right):=\left\{ p\right\} $.
\end{proof}
\begin{prop}
\label{prop:X0_dynamics}Assume $\mu\neq0$ and $0<\lambda<\frac{1}{2\left|\mu\right|}$
small enough.
\begin{enumerate}
\item The time-1 map $\Delta_{0}$ of $X_{0}$ is holomorphic and injective
on the dense domain $\overline{\cc}\backslash\Gamma=\neigh\cup\left(\cbar,\infty\right)$,
where $\Gamma=\bigcup_{p\in\pset{X_{0}}}\gamma\left(p\right)$ is
the union of $4$ real-analytic, forward $X_{0}$-invariant and smooth
curves passing through the poles of $X_{0}$. Each one of these curves
is the arc of the stable manifold of $X_{0}$ through $p$ joining
the two points mapped to $p$ in time 1 along $X_{0}$.
\item The holed out sphere
\begin{align*}
\mathcal{D}_{\lambda}:=\cbar\backslash\left(D_{-\ii}\cup D_{\ii}\cup D_{z_{-}}\cup D_{z_{+}}\right)
\end{align*}
is included in $\cbar\backslash\Gamma$, where $D_{p}$ is an open
disc centered at the pole $p$ whose diameter decreases to $0$ as
$\lambda$ does. In fact $\flow{X_{0}}{\tau}{}$ is holomorphic on
$\mathcal{D}_{\lambda}$ for any $\tau\in\dbar$.
\item $\Delta_{0}$ extends as a multivalued function over $\cc\backslash\bigcup_{p\in\pset{X_{0}}}\partial\gamma\left(p\right)$
with involutive monodromy around each one of the poles $p$. The endpoints
$\partial\gamma\left(p\right)$ of the arc $\gamma\left(p\right)$
are mapped to $p$ by $\Delta_{0}$.
\end{enumerate}
In case $\mu=0$ the result still holds save for the fact that the
pole at $z_{\pm}=\pm1$ cancels out the stationary point at $\pm1$.
There only remain the parabolic fixed-points of $\Delta_{0}$ and
their ramification locus coming from the two poles $\left\{ \pm\ii\right\} $
of $X_{0}$.
\end{prop}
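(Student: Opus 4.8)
The plan is to derive Proposition~\ref{prop:X0_dynamics} as a concrete specialization of Lemma~\ref{lem:time-1_rational}, combined with the global structure of the foliation established via the spinal graph and the explicit local models near singularities. First I would pin down the singular locus of $X_{0}$ under the standing assumptions $\mu\neq0$ and $0<\lambda<\frac{1}{2\left|\mu\right|}$: the three stationary points $0$ (double), $\pm1$ (simple) and the four simple poles $\left\{\pm\ii,z_{\pm}\right\}$, where $z_{\pm}=\frac{\lambda\mu\pm\sqrt{\lambda^{2}\mu^{2}+4}}{2}$, noting that the hypothesis $\mu\neq\pm\frac{2\ii}{\lambda}$ is guaranteed by $\lambda<\frac{1}{2\left|\mu\right|}$ and that $\mu\neq0$ prevents the $\pm1$ pole/zero cancellation (all of this is recorded in the Remark preceding the Spinal graph subsection). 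Then I would apply Lemma~\ref{lem:time-1_rational}: the branch points of $\Delta_{0}$ are exactly the points $z_{*}$ lying on $\sep{X_{0}}$ at time-form-length $1$ from a pole. By Lemma~\ref{lem:polar_foliation}, each simple pole $p$ has a single local stable manifold, and exactly two points on it are at distance $1$ from $p$ (one on each side, since the local model $\frac{1}{w}\pp w$ has time-1 map $\pm\sqrt{2+w^{2}}$ ramifying at $w=\pm\ii\sqrt{2}$). This produces, for each of the four poles, a well-defined arc $\gamma\left(p\right)$ of stable manifold joining its two endpoints $\partial\gamma\left(p\right)$, and $\Gamma:=\bigcup_{p}\gamma\left(p\right)$ is a finite union of real-analytic, forward-invariant smooth curves. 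Injectivity and holomorphy of $\Delta_{0}$ on $\cbar\backslash\Gamma$ then follow exactly as in the proof of Lemma~\ref{lem:time-1_rational}, the extension to $\partial\gamma\left(p\right)$ by $\Delta_{0}\left(\partial\gamma\left(p\right)\right)=p$ being the continuous prolongation already described there; injectivity uses that distinct forward trajectories cannot meet and that no trajectory outside $\Gamma$ reaches a pole within time $1$.

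For part~(2) I would argue by a size estimate as $\lambda\to0$. The key point is that the time form $\tau_{X_{0}}=\frac{1+z^{2}}{\lambda z^{2}}\cdot\frac{1+\lambda\mu z-z^{2}}{1-z^{2}}\dd z$ has a factor $\frac{1}{\lambda}$, so the $\tau_{X_{0}}$-length of any fixed-size arc blows up like $\frac{1}{\lambda}$; consequently the portion of the stable manifold at $\tau_{X_{0}}$-distance $\leq1$ from a pole $p$ shrinks in Euclidean diameter at a rate $\OO{\lambda}$ (up to the local normalizing coordinate of Lemma~\ref{lem:polar_foliation}, whose derivative at $p$ is $\OO{1}$ in $\lambda$). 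Hence there is an open disc $D_{p}$ around $p$, of diameter tending to $0$ with $\lambda$, containing the whole arc $\gamma\left(p\right)$, whence $\mathcal{D}_{\lambda}=\cbar\backslash\bigcup_{p}D_{p}\subset\cbar\backslash\Gamma$. For the stronger claim that $\flow{X_{0}}{\tau}{}$ is holomorphic on $\mathcal{D}_{\lambda}$ for every $\tau\in\dbar$, I would observe that the obstruction to holomorphy of $\flow{X_{0}}{\tau}{}$ at $z_{*}$ is the trajectory from $z_{*}$ hitting a pole at some $\tau_{X_{0}}$-time of modulus $\leq1$; by the same length estimate this forces $z_{*}$ to lie within $\OO{\lambda}$ of the pole, i.e.\ inside $D_{p}$, provided $\lambda$ is small enough that $D_{p}$ is taken slightly larger than the closed disc of radius-one-in-time around $p$.

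Part~(3) is essentially the content of Lemma~\ref{lem:polar_foliation}(3) transported to each pole: near a simple pole $X_{0}$ is conjugate to $\frac{1}{w}\pp w$, whose time-1 map continues as a two-valued function over the complement of its two branch points with monodromy in $\zsk[2]$, hence involutive; globally $\Delta_{0}$ therefore continues over $\cc\backslash\bigcup_{p}\partial\gamma\left(p\right)$ with involutive local monodromy around each $p$, and the endpoints $\partial\gamma\left(p\right)$ are mapped to $p$. Finally, the degenerate case $\mu=0$ is handled by invoking Example~\ref{exa:mu_zero} directly: there $X_{0}=\frac{\lambda z^{2}}{1+z^{2}}\pp z$ has only the double stationary point at $0$ (and its mirror at $\infty$) and the two poles $\pm\ii$, the putative poles at $\pm1$ having cancelled the stationary points there, and the stated conclusions carry over verbatim with $\Gamma$ built from the two arcs through $\pm\ii$ only. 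The main obstacle I anticipate is making the $\OO{\lambda}$ size control in part~(2) genuinely uniform and effective: one must quantify how the local linearizing coordinate near each pole depends on $\lambda$ (its domain of validity could a priori shrink), and check that the discs $D_{p}$ can be chosen with diameter going to $0$ while still capturing the full time-$\leq1$ stable segment — this is exactly the quantitative input deferred to Section~\ref{subsec:choice_X0}, and the honest version of this proof should cite those bounds rather than re-derive them here.
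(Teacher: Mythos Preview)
Your proposal is correct and follows essentially the same route as the paper: Lemma~\ref{lem:time-1_rational} for holomorphy outside $\Gamma$, the fact that two trajectories can only meet at a pole for injectivity, the $\frac{1}{\lambda}$-scaling of the time form for part~(2), and the local conjugacy to $\frac{1}{w}\pp w$ from Lemma~\ref{lem:polar_foliation} for the involutive monodromy in part~(3). One small correction: the quantitative bounds you defer to are not in Section~\ref{subsec:choice_X0} (which deals with the size of orbital regions via $H_{0}$) but in Section~\ref{subsec:Quantitative-bounds}, where Lemmas~\ref{lem:flow_control}, \ref{lem:estim_X0_i} and~\ref{lem:estim_X0_z+} give the explicit radii of $D_{p}$ via a variational argument on $\left|X_{0}\right|$ near each pole; the paper also defers the full justification of part~(3) to the proof of Proposition~\ref{prop:sectorial_dynamics}.
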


\begin{rem}
~
\begin{enumerate}
\item We give quantitative bounds on the smallness of $\lambda$ and $D_{p}$
in Section~\ref{subsec:Quantitative-bounds}.
\item Whatever the value of $\lambda>0$ and $\left|\tau\right|\leq1$,
the flow $\flow{X_{0}}{\tau}{}$ is holomorphic on $\neigh$ since
$0$ can be reached only in infinite time.
\item The complement in $\cbar$ of the closure of all stable manifolds
of $X_{0}$ through its poles is an open and dense forward $X_{0}$-invariant
set $U$, \emph{i.e.} on which $\Delta_{0}$ can be (forward-)iterated
\emph{ad lib}. It is not a neighborhood of $0$ nor of $\infty$.
\end{enumerate}
\end{rem}

\begin{proof}
~
\begin{enumerate}
\item The holomorphy of $\Delta_{0}$ outside $\Gamma$ is simply the content
of Lemma~\ref{lem:time-1_rational}. Observe next that only at a
pole $p$ can two trajectories meet in finite time. As a matter of
consequence, if $\Delta_{0}\left(z_{0}\right)=\Delta_{0}\left(z_{1}\right)=:p$
then the real trajectories of $X_{0}$ issued from $z_{0}$ and $z_{1}$
must cross each other at time 1, hence $p$ is a pole of $X_{0}$
and $z_{0},\,z_{1}$ belong to $\partial\Gamma$. In other words $\Delta_{0}$
is injective outside $\Gamma$. 
\item We wish to bound the magnitude of $\flow{X_{0}}{\tau}{\left(z_{*}\right)}$
for any $z_{*}$ close to a pole $p$ of $X_{0}$ and $\tau\in\sone$.
It is clear that the connected component of $\ell_{X_{0}}^{-1}\left(\ww D\right)$
containing $p$ shrinks to $0$ as $\lambda$ does, one can then take
$D_{p}$ containing that component. More details are given in Section~\ref{subsec:Quantitative-bounds}.
\item $X_{0}$ is conjugate to $\frac{1}{w}\pp w$ as in Lemma~\ref{lem:polar_foliation}
on a full neighborhood of each $\gamma\left(p\right)$. For a detailed
proof we refer to the proof of Proposition~\ref{prop:sectorial_dynamics}.
\end{enumerate}
\end{proof}

\subsection{\label{subsec:secto_normalization}Sectorial normalization of $X^{\pm}$}

Let us reformulate the previous study for variable-time flow $\flow Xf{}$.
\begin{prop}
\label{prop:flow_change_variable}Let $f$ be a function holomorphic
on a domain $U\subset\cc$ and $X$ be a meromorphic vector field
on $\cc$. Define the meromorphic vector field on $U$ by
\begin{align*}
X_{f} & :=\frac{1}{1+X\cdot f}X.
\end{align*}
\begin{enumerate}
\item $X$ and $X_{f}$ share the same stationary points while
\begin{align*}
\pset{X_{f}} & =\pset X\cap\left(f'\right)^{-1}\left(0\right)~~\cup~~\left(1+X\cdot f\right)^{-1}\left(0\right)\backslash\pset X.
\end{align*}
\item If the time-$f$ flow $\Psi~:~z\mapsto\flow X{f\left(z\right)}{\left(z\right)}$
along $X$ is locally holomorphic around some $z\in U$ then:
\begin{enumerate}
\item ~
\begin{align*}
\frac{X\cdot\Psi}{1+X\cdot f} & =X\circ\Psi
\end{align*}
(this particularly means that if $\Psi$ is locally biholomorphic
at $z\in U$ then $\Psi^{*}X=X_{f}$ around $z$);
\item ~
\begin{align*}
\Psi\circ\Delta_{f} & =\Delta\circ\Psi
\end{align*}
where $\Delta_{f}$ is the time-1 map of $X_{f}$ and $\Delta$ is
that of $X$ (this particularly means that if $\Psi$ is locally biholomorphic
at $z\in U$ then $\Psi^{*}\Delta=\Delta_{f}$ around $z$).
\end{enumerate}
\item $\Psi$ is locally holomorphic at all $z\in U$ except maybe for $z\in\pset{X_{f}}$.
In particular $\Psi$ is holomorphic on a neighborhood of $\zset X$
and $\Psi|_{\zset X}=\id$.
\end{enumerate}
\end{prop}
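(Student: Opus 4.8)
The plan is to treat the three items in order, each reducing to a short computation once the right viewpoint is adopted. For item (1), I would compute $X_f\cdot\id = \frac{R}{1+X\cdot f}$ where $X = R\pp z$; a zero of $X_f$ is then a zero of $R$ not cancelled by a pole of $\frac{1}{1+X\cdot f}$, and conversely. Since $f$ is holomorphic on $U$, the factor $1+X\cdot f = 1 + R f'$ is holomorphic on $U\setminus\pset X$, so $X_f\cdot\id$ vanishes exactly where $R$ does (stationary points are preserved, noting the double-check that $1+Rf'$ does not vanish at a zero of $R$, which is automatic since $R(p)=0$ forces $1+R(p)f'(p)=1\neq0$). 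For the poles: at a point of $\pset X$, $R$ has a pole, so $X_f\cdot\id = \frac{R}{1+Rf'}$ is infinite unless $f'$ has a compensating zero there — this gives the first set $\pset X\cap(f')^{-1}(0)$; away from $\pset X$, a pole of $X_f$ forces $1+X\cdot f=0$, giving the second set. One should be slightly careful about orders of vanishing at $\pset X$ (a simple pole of $R$ versus a simple zero of $f'$), but the stated equality is set-theoretic, so the bookkeeping is light.

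For item (2), the key is the defining flow identity for $\Psi(z) = \flow X{f(z)}{(z)}$. Writing $\phi(t,z):=\flow Xt{(z)}$, so $\partial_t\phi = X\circ\phi$ and $\phi(0,z)=z$, I would differentiate the composition $\Psi(z)=\phi(f(z),z)$ in $z$ (in the region where $\Psi$ is holomorphic) to get $\Psi' = (\partial_t\phi)(f(z),z)\,f'(z) + (\partial_z\phi)(f(z),z)$. The term $(\partial_t\phi)(f(z),z) = (X\circ\Psi)(z)$ is immediate; the term $\partial_z\phi$ solves the variational equation $\partial_z\partial_t\phi = (X'\circ\phi)\partial_z\phi$ with $\partial_z\phi|_{t=0}=1$, which one recognizes as $\partial_z\phi(t,z) = \exp\bigl(\int_0^t (X'\circ\phi)(s,z)\,\dd s\bigr)$. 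Rather than carry this integral around, the cleaner route for (2a) is to use the standard formula for the derivative of a variable-time flow, or equivalently to observe that $\Psi$ is by construction the time-$1$ map of a suitable reparametrization; a slick proof computes $X\cdot\Psi$ directly via Lie's formula~(\ref{eq:Lie}) applied to the flow at time $f(z)$ and then uses the chain rule $X\cdot(g\circ\phi) = (X\cdot g)\circ\phi \cdot \ldots$, yielding $\frac{X\cdot\Psi}{1+X\cdot f} = X\circ\Psi$ after rearrangement. Once (2a) holds, $\Psi^*X = X_f$ follows wherever $\Psi$ is biholomorphic because $\DD\Psi\cdot(X_f\cdot\id) = (X_f\cdot\id)\cdot\Psi' = \frac{R}{1+X\cdot f}\cdot\frac{X\cdot\Psi}{X\cdot\id} = \ldots = X\circ\Psi$; and (2b) is then the general principle recalled in the Notations (``if $\Psi^*X = X_f$ then $\Psi$ conjugates the time-$1$ maps''), or can be seen directly from the additivity of the flow: $\flow X1{(\Psi(z))} = \flow X1{(\flow X{f(z)}{(z)})} = \flow X{1+f(z)}{(z)}$, and one checks this equals $\Psi(\Delta_f(z))$ by comparing the defining time-integrals.

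For item (3), the statement that $\Psi$ is locally holomorphic away from $\pset{X_f}$ is where the real content sits, and I expect this to be the main obstacle — everything before it is bookkeeping. The flow $\phi(t,z)$ extends analytically in $t$ along any path avoiding $\pset X$ (this is the long-time behavior discussion: the only obstruction to continuing $\flow Xt{}$ is running into a pole in finite time, the ``separation case''). So $\Psi(z) = \phi(f(z),z)$ is holomorphic at $z_0$ provided one can continue $t\mapsto\phi(t,z)$ from $t=0$ to $t=f(z)$ for $z$ near $z_0$ without the trajectory hitting $\pset X$ at the terminal time — and by the analysis of item (1) and Lemma~\ref{lem:polar_foliation}, the trajectory terminates at a pole exactly when $z\in\pset{X_f}$ (the ``$1+X\cdot f = 0$'' locus is precisely where the variable-time trajectory lands on a pole at the prescribed time). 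Passing a pole \emph{transversally} at an intermediate time is harmless — one deforms the integration path, which changes the determination but not the local holomorphy, exactly as in the proof of Lemma~\ref{lem:time-1_rational}. Finally, $\Psi$ is holomorphic near $\zset X$ because a stationary point is reached only in infinite time, so no trajectory issued near a zero can hit a pole in bounded time; and $\Psi|_{\zset X}=\id$ since $\flow Xt{(p)}=p$ for all $t$ when $X(p)=0$. The delicate point to get right is the interaction between ``$f(z)$ finite'' and ``trajectory reaches a pole'', i.e. verifying that $(1+X\cdot f)^{-1}(0)$ is exactly the set of $z$ for which the time-$f(z)$ trajectory lands on $\pset X$ — this is a direct consequence of (2a), since $1+X\cdot f$ is the Jacobian factor relating $\tau_{X_f}$ and $\tau_X$ and it blows up precisely when $\Psi$ hits a pole of $X$.
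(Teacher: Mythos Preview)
Your Item~1 analysis at $\pset X$ is backwards. If $p\in\pset X$ and $f'(p)\neq0$, then numerator and denominator of $\frac{R}{1+Rf'}$ have poles of equal order and the quotient tends to $\frac{1}{f'(p)}$ --- finite and nonzero. It is precisely when $f'(p)=0$ that the denominator loses its pole and $X_f$ retains one. The paper's one-line argument is exactly this: $X_f(p)=\frac{1}{f'(p)}\pp z$ at $p\in\pset X$. You state the correct formula but your supporting sentence (``infinite unless $f'$ has a compensating zero'') would give $\pset X\setminus(f')^{-1}(0)$ instead.

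For Item~2 the paper's route is shorter than either of your sketches: it writes $\Psi=\sum_{n\geq0}\frac{f^n}{n!}X\cdot^n\id$ from Lie's formula and computes $X\cdot\Psi$ termwise with the product rule, obtaining $(1+X\cdot f)\sum_{n}\frac{f^n}{n!}X\cdot^{n+1}\id=(1+X\cdot f)(X\circ\Psi)$ in three lines. For~(2b) it applies Lie's formula again to $\Psi\circ\Delta_f=\sum_n\frac{1}{n!}X_f\cdot^n\Psi$ and uses the inductive consequence $X_f\cdot^n\Psi=(X\cdot^n\id)\circ\Psi$ of~(2a). Your flow-additivity approach (via $\tau_{X_f}=\tau_X+\dd f$) is a valid alternative for~(2b), but you never actually carry out the Lie computation you allude to for~(2a), and the variational-equation route you start is both longer and left unfinished.

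Your Item~3 contains a genuine gap: the claim that $(1+X\cdot f)^{-1}(0)$ coincides with the set of $z$ for which the time-$f(z)$ trajectory lands on $\pset X$ is false. Take $X=\frac{1}{z}\pp z$ and $f\equiv1$; then $X_f=X$, $1+X\cdot f\equiv1$ never vanishes, yet $\Psi(z)=\sqrt{z^2+2}$ lands on the pole $0$ at $z=\pm\ii\sqrt2$ and ramifies there. (This example also shows Item~3 of the Proposition is imprecise as a fully general statement; the paper's own ``proof'' simply refers back to Proposition~\ref{prop:X0_dynamics}, i.e.\ to the concrete situation of $X_0$ with neighborhoods of the poles already excised.) What survives, and what suffices for every application in the paper, is the final clause: near $\zset X$ the flow exists for all bounded time, so $\Psi$ is holomorphic there with $\Psi|_{\zset X}=\id$.
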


\begin{proof}
~
\begin{enumerate}
\item If $p\in\pset X$ then $X_{f}\left(p\right)=\frac{1}{f'\left(p\right)}\pp z$,
else a pole in $X_{f}$ can only come from a zero of $1+X\cdot f$.
\item ~
\begin{enumerate}
\item Using Lie's formula $\Psi=\sum_{n=0}^{\infty}\frac{f^{n}}{n!}X\cdot^{n}\id$
we derive formally
\begin{align*}
X_{f}\cdot\Psi & =\frac{1}{1+X\cdot f}X\cdot\sum_{n=0}^{\infty}\frac{f^{n}}{n!}X\cdot^{n}\id\\
 & =\frac{1}{1+X\cdot f}\left(\sum_{n=0}^{\infty}\frac{f^{n}}{n!}X\cdot^{n+1}\id+\left(X\cdot f\right)\frac{f^{n-1}}{\left(n-1\right)!}X\cdot^{n}\id\right)\\
 & =\sum_{n=0}^{\infty}\frac{f^{n}}{n!}X\cdot^{n+1}\id\\
 & =X\circ\Psi.
\end{align*}
\item The Lie formula again implies that
\begin{align*}
\Psi\circ\Delta_{f}=\Psi\circ\flow{X_{f}}1{} & =\sum_{n=0}^{\infty}\frac{1}{n!}X_{f}\cdot^{n}\Psi
\end{align*}
but a direct recursion yields $X_{f}\cdot^{n}\Psi=\left(X\cdot^{n}\id\right)\circ\Psi$.
\end{enumerate}
\item See Proposition~\ref{prop:X0_dynamics} and subsequent remark.
\end{enumerate}
\end{proof}
\begin{rem}
Item~2. remains true for formal $f\in z\frml z$ and formal $X\in z\frml z\pp z$
(in that case $\Psi$ is invertible as a formal transform at $0$).
\end{rem}

Following the previous discussion we define for $f\in\mathcal{S}$
with $\norm[f]{}\leq1$
\begin{align*}
\Psi^{\pm} & ~:~z\longmapsto\flow{X_{0}}{f^{\pm}\left(z\right)}{\left(z\right)}.
\end{align*}
Whenever $\Psi^{\pm}$ is locally invertible we have 
\begin{align*}
\left(\Psi^{\pm}\right)^{*}X_{0} & =X^{\pm}
\end{align*}
by Proposition~\ref{prop:flow_change_variable}~1 with $X:=X_{0}$.
\begin{lem}
\label{lem:secto_normalization}Invoking the notations of Lemma~\ref{prop:X0_dynamics},
the mapping $\Psi^{\pm}$ is locally biholomorphic on the holed out
domain $V^{\pm}\cap\mathcal{D}_{\lambda}$ and therefore conjugate
$X_{0}$ with $X^{\pm}=X_{f^{\pm}}$.
\end{lem}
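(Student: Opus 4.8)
The plan is in three steps: show $\Psi^{\pm}$ is holomorphic on $V^{\pm}\cap\mathcal{D}_{\lambda}$; read its derivative off Proposition~\ref{prop:flow_change_variable} and check it has neither zeros nor poles there; then harvest the conjugacy from the same Proposition. For the first step, $f\in\mathcal{S}$ with $\norm[f]{}\leq1$ gives $f^{\pm}\left(V^{\pm}\right)\subseteq\dbar$, while Proposition~\ref{prop:X0_dynamics}~(2) makes $\flow{X_{0}}{\tau}{}$ holomorphic on $\mathcal{D}_{\lambda}$ for every $\tau\in\dbar$; being the inverse of the explicit time function~(\ref{eq:time_function}) the flow is jointly holomorphic in $\left(\tau,z\right)$, so composing with $z\mapsto\left(f^{\pm}\left(z\right),z\right)$ makes $\Psi^{\pm}~:~z\mapsto\flow{X_{0}}{f^{\pm}\left(z\right)}{\left(z\right)}$ holomorphic --- and, since $\infty\in\zset{X_{0}}$ is reached only in infinite time, $\cc$-valued --- on $V^{\pm}\cap\mathcal{D}_{\lambda}$.

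For the second step, recall that $\mathcal{D}_{\lambda}$ carries no pole of $X_{0}$ and that the two zeros $0,\infty$ of $X_{0}$ lie on $\partial V^{\pm}$; granting in addition that $V^{\pm}\cap\mathcal{D}_{\lambda}$ contains neither the simple zeros $\pm1$ of $X_{0}$ nor the poles of $X^{\pm}$ (this is where the quantitative position of those singularities, hence Section~\ref{subsec:Quantitative-bounds}, enters), Proposition~\ref{prop:flow_change_variable}~(2a) applied with $X:=X_{0}$, $f:=f^{\pm}$ gives there $\left(\Psi^{\pm}\right)'=\left(X_{0}\circ\Psi^{\pm}\right)/X^{\pm}$ (identifying a vector field with its generating function, as throughout). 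The denominator is finite and non-zero, the zeros of $X^{\pm}$ lying among those of $X_{0}$ and its poles being excluded; the numerator is non-zero because the flow of $X_{0}$ preserves $\zset{X_{0}}$ (no stationary point being reached in finite time, as the simple- and double-zero models recalled above show), so $\Psi^{\pm}\left(z\right)\in\zset{X_{0}}$ would force $z\in\zset{X_{0}}$; and it is finite, for otherwise $\left(\Psi^{\pm}\right)'$ would not be. Hence $\left(\Psi^{\pm}\right)'$ vanishes nowhere and $\Psi^{\pm}$ is locally biholomorphic on $V^{\pm}\cap\mathcal{D}_{\lambda}$. Should $\pm1$, or a pole $p$ of $X^{\pm}$ that is not a pole of $X_{0}$, happen to lie in the domain anyway, biholomorphy persists: near $\pm1$ the linearizing chart $X_{0}\simeq-\nf 1{\mu}\,w\pp w$ turns $\Psi^{\pm}$ into $w\mapsto w\exp\left(-\nf{f^{\pm}\left(w\right)}{\mu}\right)$, while near $p$ one has $\Psi^{\pm}\left(p\right)\in\pset{X_{0}}$, both $X^{\pm}$ near $p$ and $X_{0}$ near $\Psi^{\pm}\left(p\right)$ are conjugate to $\frac{1}{w}\pp w$ (Lemma~\ref{lem:polar_foliation}), and the conjugacy $\Psi^{\pm}$ between them is biholomorphic because $\pm\id$ are the only self-conjugacies of $\frac{1}{w}\pp w$.

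The last step is immediate: local biholomorphy of $\Psi^{\pm}$ is exactly the hypothesis under which the closing clause of Proposition~\ref{prop:flow_change_variable}~(2a) yields $\left(\Psi^{\pm}\right)^{*}X_{0}=X_{f^{\pm}}=X^{\pm}$ on $V^{\pm}\cap\mathcal{D}_{\lambda}$. I expect the only real work to be the geometric bookkeeping underlined in the second step --- deciding, from the effective bounds of Section~\ref{subsec:Quantitative-bounds}, whether the poles of $X^{\pm}$ and the fixed point $\pm1$ lie outside the holed-out sphere or must be dealt with through the local singularity models of this section; everything else is a direct consequence of Propositions~\ref{prop:flow_change_variable} and \ref{prop:X0_dynamics}.
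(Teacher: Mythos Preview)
Your proof is correct and follows essentially the same route as the paper: establish holomorphy of $\Psi^{\pm}$ on $V^{\pm}\cap\mathcal{D}_{\lambda}$ from $\norm[f]{}\leq1$ together with Proposition~\ref{prop:X0_dynamics}~(2), then invoke Proposition~\ref{prop:flow_change_variable} for the conjugacy. The paper's own argument is a two-sentence sketch that simply observes the poles of $X_{0}$ lie outside $\mathcal{D}_{\lambda}$ and cites Proposition~\ref{prop:flow_change_variable}~(3); you instead extract the derivative formula $\left(\Psi^{\pm}\right)'=\left(X_{0}\circ\Psi^{\pm}\right)/X^{\pm}$ from Proposition~\ref{prop:flow_change_variable}~(2a) and check directly that numerator and denominator are finite and nonzero. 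This is more careful than the paper (which arguably conflates ``locally holomorphic'' with ``locally biholomorphic'' in its appeal to~(3)), and your additional casework for $\pm1$ and for possible poles of $X^{\pm}$ inside the domain is sound but in fact unnecessary: for $\mu\neq0$ the estimate~(\ref{eq:estim_z+}) places $\pm1$ within $D_{z_{\pm}}$, hence outside $\mathcal{D}_{\lambda}$, and the poles of $X^{\pm}$ are likewise confined to the excised discs by the bounds of Section~\ref{subsec:Quantitative-bounds}.
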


\begin{proof}
Because $\left|f^{\pm}\left(z\right)\right|\leq1$ for all $z\in V^{\pm}$
the construction of $\mathcal{D}_{\lambda}$ guarantees that $\flow{X_{0}}{f^{\pm}}{}$
is well-defined and holomorphic on $V^{\pm}\cap\mathcal{D}_{\lambda}$.
But the poles of $X_{0}$ lie outside $\mathcal{D}_{\lambda}$ therefore
the conclusion follows from Proposition~\ref{prop:flow_change_variable}~3.
\end{proof}
From this Lemma we deduce that the dynamics of $X^{\pm}$ (governed
by its <<sectorial spinal graph>>) is very close to that of $X_{0}|_{V^{\pm}}$.
A more quantitative analysis is conducted in Sections~\ref{sec:Globalization-Theorem}
and~\ref{subsec:Parabolic}.

\section{\label{sec:Reduction}Reduction}

The proof of the next proposition, which reduces the problem of finding
$\Delta$ to that of finding the pair $\left(f^{+},f^{-}\right)$
in orbits space, is performed in Section~\ref{subsec:Reduction}.
The definition of the functional space $\mathcal{S}$ is given in
Definition~\ref{def:function_space} and the orbits space of the
model vector field $X_{0}$ is studied in details in Section~\ref{subsec:model_primitive_function}
below.
\begin{defn}
~
\begin{enumerate}
\item We say that a holomorphic $L~:~U\to\cc$ is a \textbf{first-integral}
of $\Delta$ when 
\begin{align*}
L\circ\Delta & =L.
\end{align*}
\item A function $H\neq0$ holomorphic on $V$ is a \textbf{primitive function}
of $X$ whenever 
\begin{align*}
X\cdot H & =2\ii\pi H.
\end{align*}
\end{enumerate}
\end{defn}

Primitive functions $H$ (actually unique up to a multiplicative constant)
play a central role in the present study, because the Cousin problem~$\left(\star\right)$
below comes from the structure of the ring of first-integrals of the
map $\Delta$. When $U$ is a component of $V^{\cap}$ the primitive
function $H$ turns out to be a functional generator of the ring (Corollary~\ref{cor:primitive_first-integral}):
every first-integral $L$ of $\Delta$ factors holomorphically as
$\phi\circ H$. The latter property can be reworded equivalently as:
$H$ gives a preferred coordinate on the space of orbits of $\Delta$.
\begin{rem}
A primitive function $H$ of $X$ is indeed a first-integral of $\Delta$
(Lie formula):
\begin{align*}
H\circ\Delta & =\sum_{n=0}^{+\infty}\frac{1}{n!}X\cdot^{n}H=\left(\sum_{n=0}^{+\infty}\frac{\left(2\ii\pi\right)^{n}}{n!}\right)H=H,
\end{align*}
hence any function of the form $\phi\circ H$ is a first-integral
of $\Delta$.
\end{rem}

\begin{prop}
\label{prop:reduction}Take $f\in\mathcal{S}$ and fix $\lambda>0$.
Let 
\begin{align*}
X_{}^{\pm} & :=\frac{1}{1+X_{0}\cdot f^{\pm}}X_{0}
\end{align*}
 and $\Delta^{\pm}$ be the time-1 map of $X_{}^{\pm}$. We write
$\Delta_{0}$ for the time-1 map of $X_{0}$.
\begin{enumerate}
\item ~
\begin{enumerate}
\item $\Delta^{\pm}$ is holomorphic and bounded on $V^{\pm}\cap\neigh$
and $\Delta^{\pm}\left(z\right)=\Delta_{0}\left(z\right)+\oo{z^{3}}$
near $0\in\adh{V^{\pm}}$.
\item Its space of orbits over $V^{\pm}\cap\neigh$ is canonically given
by the range of the primitive function $H_{}^{\pm}:=H_{0}\exp\left(2\ii\pi f^{\pm}\right)$,
where
\begin{align*}
H_{0}\left(z\right) & :=\left(\frac{\lambda z}{1-z^{2}}\right)^{2\ii\pi\mu}\exp\left(-2\ii\pi\frac{1-z^{2}}{\lambda z}\right)
\end{align*}
is a primitive function of $X_{0}$. Moreover 
\begin{align*}
\quot{{W^{\pm}}\cap{\neigh}}{\Delta^{\pm}} & \simeq H_{}^{\pm}\left(V^{\pm}\cap\neigh\right)=\cc^{\times}.
\end{align*}
\item For $\sharp\in\left\{ 0,\infty\right\} $ one has
\begin{align*}
\lim_{\begin{array}{c}
z\to0\\
z\in\isect
\end{array}}H_{}^{\pm}\left(z\right) & =\sharp,
\end{align*}
so that $H_{}^{\pm}\left(\adh{\isect}\cap\neigh\right)=\left(\cbar,\sharp\right)$.
\end{enumerate}
\item The following properties are equivalent.
\begin{enumerate}
\item $\Delta^{\pm}$ is the restriction to $V^{\pm}\cap\neigh$ of a parabolic
germ in $\parab$;
\item $\Delta^{+}=\Delta^{-}$ on $V^{\cap}\cap\neigh$;
\item the difference $f^{-}-f^{+}$ is a first-integral of $\Delta^{+}$;
\item there exists $\isect[\varphi]\in\holf[\cbar,\sharp]$, $\sharp\in\left\{ 0,\infty\right\} $,
such that 
\begin{align*}
\begin{cases}
f^{-}-f^{+} & =\zero[\varphi]\circ H_{f}^{+}\text{~~~~ on }\zero\cap\neigh\\
f^{-}-f^{+} & =\infi[\varphi]\circ H_{f}^{-}\text{~~~~ on }\infi\cap\neigh
\end{cases} & .\tag{\ensuremath{\star}}
\end{align*}
\end{enumerate}
\item Assume here that~$\left(\star\right)$ holds, so that $\Delta^{\pm}=\Delta\in\parab$.
\begin{enumerate}
\item The modulus of $\Delta$ is $\ev\left(\Delta\right)=\left(\id\exp\left(4\pi^{2}\mu+\zero[\varphi]\right),\id\exp\infi[\varphi]\right)$.
\item $\left(f^{+},f^{-}\right)$ is the 1-sum of a formal power series
$F\in z\frml z$ and $\Delta$ is the time-1 map of $\frac{1}{1+X_{0}\cdot F}X_{0}$.
\end{enumerate}
\end{enumerate}
\end{prop}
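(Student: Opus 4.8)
The plan is to prove Proposition~\ref{prop:reduction} by systematically unravelling the three claims, each of which follows from combining the sectorial normalization of Section~\ref{subsec:secto_normalization} with the explicit description of the orbit space of $X_0$ near its double zero and an application of Ramis-Sibuya.

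\medskip

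\textbf{Step 1 (item 1).} Near $0$ the double zero of $X_0$ is never destroyed by the perturbation (Proposition~\ref{prop:flow_change_variable}~1), so $X^{\pm}$ again has a double zero at $0$ and $\Delta^{\pm}=\flow{X^{\pm}}1{}\in\parab$ by Lemma~\ref{lem:.model_parab}; since $f^{\pm}(0)=0$ and $f^{\pm}$ is bounded, comparing the Lie series of $\flow{X^{\pm}}1{}$ and $\flow{X_0}1{}$ term by term gives $\Delta^{\pm}(z)=\Delta_0(z)+\oo{z^3}$, establishing 1(a). For 1(b), one checks by a direct Lie-derivative computation that $X_0\cdot H_0=2\ii\pi H_0$, i.e.\ $H_0$ is a primitive function of $X_0$ (this just says $H_0=\exp(2\ii\pi\, t_0)$ where $t_0=-\frac{1-z^2}{\lambda z}+\mu\log\frac{\lambda z}{1-z^2}$ is a time coordinate for $X_0$, so $X_0\cdot t_0=1$). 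Then $H^{\pm}:=H_0\exp(2\ii\pi f^{\pm})$ satisfies $X^{\pm}\cdot H^{\pm}=2\ii\pi H^{\pm}$ because pulling back by the conjugacy $\Psi^{\pm}$ of Lemma~\ref{lem:secto_normalization} turns $H_0$ into $H_0\circ\Psi^{\pm}=H^{\pm}$ (using Lie's formula $g\circ\flow{X_0}{f^{\pm}}{}=\exp(f^{\pm}X_0)\cdot g$ applied to $g=H_0$, whence the exponential factor $\exp(2\ii\pi f^{\pm})$). By the Remark following the Proposition, $H^{\pm}$ is a first-integral of $\Delta^{\pm}$; that it generates the ring of first-integrals, and hence parametrizes the orbit space, is Corollary~\ref{cor:primitive_first-integral}. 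The surjectivity onto $\cc^{\times}$ and the boundary limits in 1(c) are read off from the explicit formula for $H_0$: as $z\to0$ inside the component $\zero$ (resp.\ $\infi$) of $V^{\cap}$ the time coordinate $t_0$ tends to $+\ii\infty$ or $-\ii\infty$ depending on which side, so $H_0\to0$ or $\infty$ accordingly; the exponential factor $\exp(2\ii\pi f^{\pm})\to1$ since $f^{\pm}(0)=0$.

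\medskip

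\textbf{Step 2 (item 2, the equivalences).} The implication (a)$\Rightarrow$(b) is immediate, and (b)$\Rightarrow$(a) follows because $V^{+}\cup V^{-}\supset\neigh\setminus\{0\}$ and a holomorphic function agreeing across the overlap glues to a single germ, which lies in $\parab$ by 1(a). For (b)$\Leftrightarrow$(c): by Proposition~\ref{prop:flow_change_variable}~2(b), $\Psi^{\pm}$ conjugates $\Delta^{\pm}$ to $\Delta_0$, so on $V^{\cap}\cap\neigh$ one has $\Delta^{+}=\Delta^{-}$ iff $\Psi^{-}\circ(\Psi^{+})^{-1}$ commutes with $\Delta_0$, iff (passing to orbit coordinates $H_0$) the transition map in orbit space is the identity; writing $H^{-}=H^{+}\exp(2\ii\pi(f^{-}-f^{+}))$ and using that $\Delta^{+}$ acts trivially on $H^{+}$, one sees $\Delta^{+}=\Delta^{-}$ iff $f^{-}-f^{+}$ is constant along orbits of $\Delta^{+}$, i.e.\ a first-integral. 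Finally (c)$\Leftrightarrow$(d): since $H^{+}$ (resp.\ $H^{-}$) is a generating first-integral on $\zero\cap\neigh$ (resp.\ $\infi\cap\neigh$) by Corollary~\ref{cor:primitive_first-integral}, any first-integral factors as $\varphi^{\sharp}\circ H^{\pm}$ with $\varphi^{\sharp}$ holomorphic; the fact that $f^{-}-f^{+}$ has $1$-flat discrepancy (it lies in $\mathcal{S}$) forces $\varphi^{\sharp}$ to extend holomorphically across $\sharp$, giving $\varphi^{\sharp}\in\holf[\cbar,\sharp]$, and conversely any such data defines a first-integral via $(\star)$. (One must also observe $H_f^{+}$ and $H_f^{-}$ coincide on the overlap once (b) holds, so it does not matter which is used.)

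\medskip

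\textbf{Step 3 (item 3).} Assuming $(\star)$, $\Delta^{\pm}$ glues to $\Delta\in\parab$. Its modulus is computed exactly as in the classical Birkhoff--\'Ecalle--Voronin picture: the Fatou coordinates of $\Delta$ on $V^{\pm}$ are $f^{\pm}(z)-\frac{1-z^2}{\lambda z}+\mu\log\frac{\lambda z}{1-z^2}$ (as announced in the text after the Synthesis Theorem), and the horn maps are obtained by comparing these two coordinates in the two components of the overlap; the relation $(\star)$ says precisely that the transition is $\id\exp(4\pi^2\mu+\zero[\varphi])$ near $0$ and $\id\exp\infi[\varphi]$ near $\infty$ in the logarithmic chart $h=H$, which after normalization is the asserted pair $\ev(\Delta)$. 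The factor $4\pi^2\mu$ comes from the $\mu\log$ term crossing the branch cut between the two components. For 3(b): the pair $(f^{+},f^{-})$ has $1$-flat difference on $V^{\cap}$ both at $0$ and $\infty$ (that is the defining condition of $\mathcal{S}$), and $V^{\pm}$ is an infinite sector of opening $\tfrac{5\pi}{4}>\pi$, so by Ramis-Sibuya (the statement recalled in the excerpt, \cite{LodRich}) there is a unique $F\in z\frml z$ which is the common Gevrey-$1$ asymptotic expansion, and $F$ is $1$-summable with $1$-sum $(f^{+},f^{-})$; compatibility of $1$-summation with the arithmetic and differential operations entering $\frac{1}{1+X_0\cdot F}X_0$ and the exponential $\exp$ shows $\Delta$ is the time-$1$ map of that formal vector field.

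\medskip

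\textbf{Main obstacle.} The delicate point is the orbit-space dictionary: precisely verifying that $H^{\pm}$ is not merely \emph{a} first-integral but a \emph{generator} of the ring of first-integrals on each component $\isect$ of $V^{\cap}$ near $0$ (Corollary~\ref{cor:primitive_first-integral}), and then matching the resulting transition map with the normalized horn maps $(\zero[\psi],\infi[\psi])$ including the correct appearance of the linear part $\ee^{4\pi^2\mu}$. This is where the geometry of the sectors and the choice of branches of $\log$ must be pinned down carefully; the rest is bookkeeping with Lie's formula and a routine invocation of Ramis-Sibuya.
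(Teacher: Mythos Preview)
Your overall strategy matches the paper's: item~1 is Lemma~\ref{lem:secto_orbit_space}, item~2 is proved by cycling through the equivalences using the primitive first-integral property (Corollary~\ref{cor:primitive_first-integral}), and item~3 is a direct reading of the horn-map transition plus Ramis--Sibuya. Most of what you write is correct and essentially what the paper does.

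There is, however, a genuine gap in your treatment of (c)$\Rightarrow$(b). You assert that $\Delta^{+}=\Delta^{-}$ is equivalent to ``$\Psi^{-}\circ(\Psi^{+})^{-1}$ commutes with $\Delta_{0}$'', which is fine, and then claim this is in turn equivalent to ``the transition map in orbit space is the identity''. That second equivalence is false: commuting with $\Delta_{0}$ only means the transition \emph{descends} to a well-defined map on the orbit space, not that it is the identity there (e.g.\ the time-$\tfrac12$ map of $X_{0}$ commutes with $\Delta_{0}$ but acts as $h\mapsto -h$ on orbit space). Likewise, the sentence ``one sees $\Delta^{+}=\Delta^{-}$ iff $f^{-}-f^{+}$ is constant along orbits of $\Delta^{+}$'' correctly yields (b)$\Rightarrow$(c) but not the converse: from (c) you only get that $H_{f}^{-}$ is a first-integral of \emph{both} $\Delta^{+}$ and $\Delta^{-}$, hence that $\Delta^{+}(z)$ and $z$ lie on the same $\Delta^{-}$-orbit. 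This gives $\Delta^{+}=(\Delta^{-})^{\circ\ell}$ for some locally constant integer $\ell$, and one must still argue $\ell=1$. The paper does exactly this extra step: since $\Delta^{+}(z)=\Delta_{0}(z)+\oo{z^{3}}=\Delta^{-}(z)+\oo{z^{3}}$ by item~1(a), while $(\Delta^{-})^{\circ\ell}(z)=z+\ell\lambda z^{2}+\oo{z^{2}}$, comparison of $z^{2}$-coefficients forces $\ell=1$. Without this, sharing the same orbit partition is strictly weaker than being equal as maps. Add this short $3$-jet comparison and your argument for item~2 is complete.
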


\subsection{\label{subsec:model_primitive_function}Primitive function and space
of orbits near $0$}

\begin{figure}
\hfill{}\includegraphics[width=3.5cm]{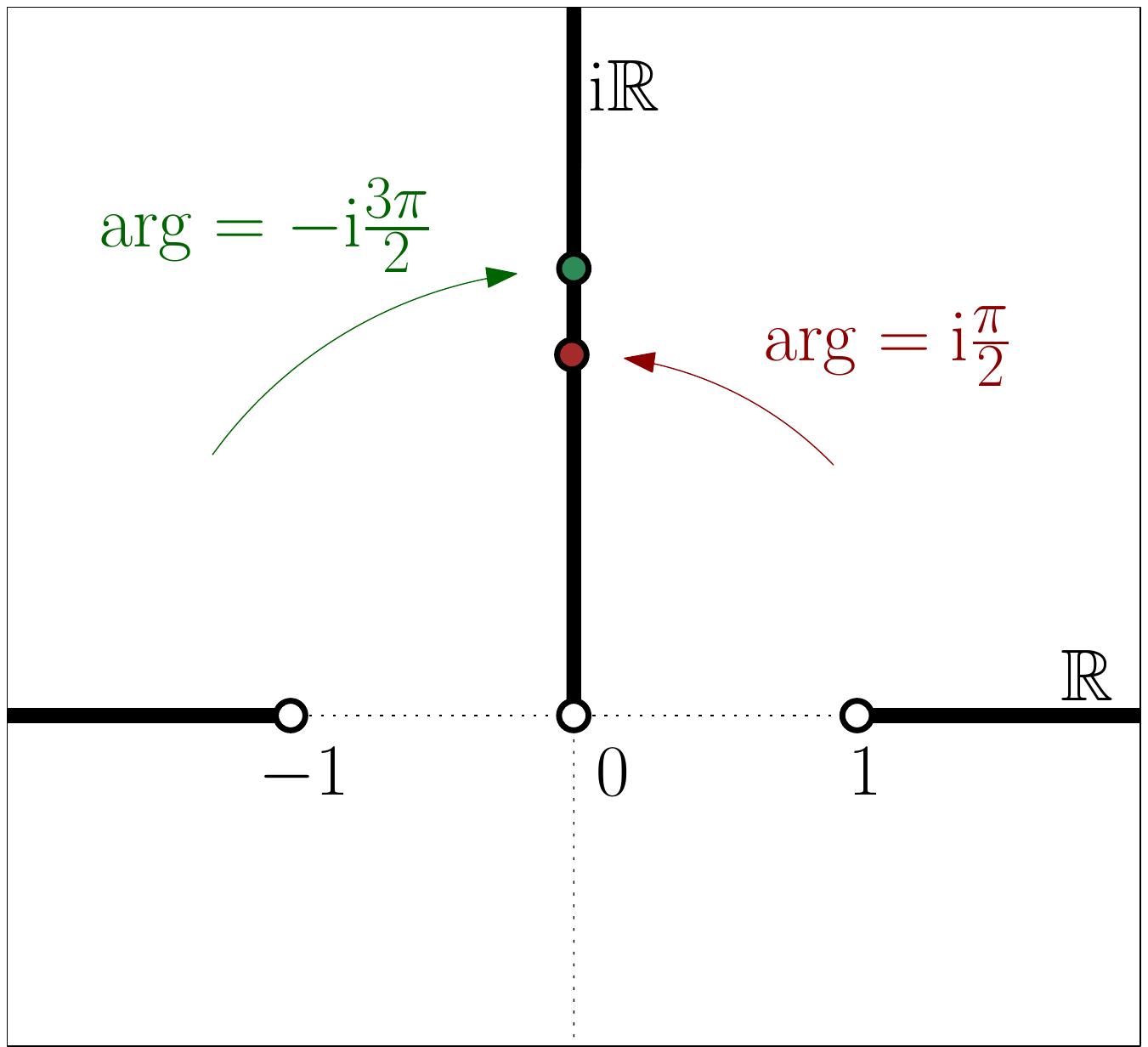}\hfill{}

\hfill{}

\caption{\label{fig:H0_branchcut}Branch-cut scheme of $H_{0}$ (cuts along
fat lines).}
\end{figure}

Define, by analytic continuation, the multivalued function over $\cc\backslash\left\{ 0,\pm1\right\} $
\begin{align*}
H_{0}\left(z\right): & =\exp\left(2\ii\pi t\left(z\right)\right)=\left(\frac{\lambda z}{1-z^{2}}\right)^{2\ii\pi\mu}\exp\left(-2\ii\pi\frac{1-z^{2}}{\lambda z}\right),
\end{align*}
with branch-cuts and determination indicated in Figure~\ref{fig:H0_branchcut}.
By construction it is a primitive function of $X_{0}$:
\begin{align*}
X_{0}\cdot H_{0} & =2\ii\pi H_{0}
\end{align*}
and the real-flow of $X_{0}$ corresponds to level curves of $\left|H_{0}\right|$.
\begin{defn}
\label{def:cut_sectors}The \textbf{cut sector} $\widehat{V}$ is
the complement
\begin{align*}
\widehat{V}^{\pm} & :=V^{\pm}\backslash\pm\rr_{\geq1}
\end{align*}
on which $H_{0}$ is given the holomorphic determination induced by
Figure~\ref{fig:H0_branchcut} and written $H_{0}|_{\widehat{V}}$.
Notice that 
\begin{align*}
\widehat{V}^{-}\cap\widehat{V}^{+} & =V^{-}\cap V^{+}=V^{\cap}=\zero\cup\infi.
\end{align*}
\end{defn}

For every $z\in\widehat{V}$ such that $\Delta_{0}\left(z\right)\in\widehat{V}$
one has 
\begin{align*}
H_{0}\left(\Delta_{0}\left(z\right)\right) & =H_{0}\left(z\right).
\end{align*}
Moreover, the orbits of $\Delta_{0}|_{\widehat{V}}$ are in 2-to-$1$
correspondence with level sets of $H_{0}|_{\widehat{V}}$, but the
correspondence becomes 1-to-1 on $\neigh$. As $z\to0$ in $V^{\pm}$
we have
\begin{align*}
H_{0}\left(z\right) & \sim_{0}\left(\lambda z\right)^{2\ii\pi\mu}\ee^{\nf{-2\ii\pi}{\left(\lambda z\right)}}
\end{align*}
hence
\begin{align}
\begin{cases}
\left|H_{0}\left(z\right)\right| & =\OO{\ee^{-\frac{1}{\left|*z\right|}}}\,\text{as }z\underset{\zero[V]}{\longto}0\\
\left|\frac{1}{H_{0}\left(z\right)}\right| & =\OO{\ee^{-\frac{1}{\left|*z\right|}}}\,\text{as }z\underset{\infi[V]}{\longto}0
\end{cases} & .\label{eq:estim_H0_0}
\end{align}
This particularly shows that $H_{0}\left(\widehat{V}^{\pm}\right)=\cc^{\times}$:
the sectorial space of orbits near $0$ of $\Delta_{0}$ is a doubly-punctured
sphere,
\begin{align}
\quot{{V^{\pm}}\cap{\neigh}}{\Delta_{0}} & \simeq\cc^{\times},\label{eq:model_secto_orbit_space}
\end{align}
the whole sphere $\cbar$ being obtained by taking the closure $\adh{\isect}\cap\neigh$
for $\sharp\in\left\{ 0,\infty\right\} $.
\begin{rem}
As $z\to\infty$ in $V^{\pm}$ we have
\begin{align*}
H_{0}\left(z\right) & \sim_{\infty}\left(-\nf z{\lambda}\right)^{-2\ii\pi\mu}\ee^{2\ii\pi\nf z{\lambda}}
\end{align*}
so that
\begin{align}
\begin{cases}
\left|H_{0}\left(z\right)\right| & =\OO{\ee^{-\left|*z\right|}}\,\text{as }z\underset{\zero[V]}{\longto}\infty\\
\left|\frac{1}{H_{0}\left(z\right)}\right| & =\OO{\ee^{-\left|*z\right|}}\,\text{as }z\underset{\infi[V]}{\longto}\infty
\end{cases} & .\label{eq:estim_H0_inf}
\end{align}
\end{rem}

The (direct) monodromy of $H_{0}$ around $0$ is generated by the
linear map
\begin{align*}
h & \mapsto\ee^{-4\pi^{2}\mu}h.
\end{align*}
Therefore, for the choice of the determination and branch-cuts we
made, we have
\begin{align}
\begin{cases}
H_{0}|_{V^{-}}=\ee^{4\pi^{2}\mu}H_{0}|_{V^{+}}=:\zero[\psi]\circ H_{0}|_{V^{+}} & \text{on }V^{0}\\
H_{0}|_{V^{-}}=H_{0}|_{V^{+}}=:\infi[\psi]\circ H_{0}|{}_{V^{+}} & \text{on }V^{\infty}
\end{cases} & ,\label{eq:model_orbital_gluing}
\end{align}
which gives us a representation of the space of orbits of $\Delta_{0}|_{\neigh}$
as
\begin{align*}
\quot{\neigh}{\Delta_{0}} & \simeq\quot{\cbar\sqcup\cbar}{\left(\psi^{0},\psi^{\infty}\right)}.
\end{align*}

\subsection{\label{subsec:sectorial_dynamics}Sectorial orbits space and first-integrals}

Here we are particularly interested in the structure of the space
of orbits of the sectorial time-1 maps $\Delta^{\pm}$ of $X_{}^{\pm}$.
We wish to prove that these perturbations of $X_{0}$ still possess
the properties underlined in Section~\ref{subsec:model_primitive_function}
for the model, which is obtained \emph{via }the sectorial normalization
(Section~\ref{subsec:secto_normalization}).
\begin{example}
A \textbf{Fatou coordinate} of $X$ is a locally biholomorphic mapping
$\Psi$ such that
\begin{align*}
\Psi^{*}\pp z & =X.
\end{align*}
This condition is equivalent to $X\cdot\Psi=1$. Obviously the Abel
equation $\Psi^{*}\Delta=\id+1$ holds whenever the time-1 map is
defined, and we also speak of a Fatou coordinate for $\Delta$.

Let $H_{}^{\pm}:=H_{0}\exp\left(2\ii\pi f^{\pm}\right)$ for $f\in\mathcal{S}$.
By construction $H_{}^{\pm}$ is a primitive function of $X_{}^{\pm}$:
\begin{align*}
X_{}^{\pm}\cdot H_{}^{\pm} & =\frac{1}{1+X_{0}\cdot f^{\pm}}X_{0}\cdot H_{}^{\pm}=\frac{1}{1+X_{0}\cdot f^{\pm}}\left(X_{0}\cdot H_{0}+2\ii\pi H_{0}X_{0}\cdot f^{\pm}\right)\exp\left(2\ii\pi f^{\pm}\right)\\
 & =2\ii\pi H_{}^{\pm}.
\end{align*}
Hence $\frac{1}{2\ii\pi}\log H_{}^{\pm}$ is a (sectorial) Fatou coordinate
of $X_{}^{\pm}$ on $\widehat{V}^{\pm}$.
\end{example}

\subsubsection{Sectorial orbits space}

According to Section~\ref{subsec:secto_normalization} we have near
$\neigh$
\begin{align*}
X_{}^{\pm} & =\left(\flow{X_{0}}{f^{\pm}}{}\right)^{*}X_{0}
\end{align*}
and the sectorial dynamics of $X_{}^{\pm}$ (\emph{resp}. of $\Delta^{\pm}$)
are conjugate to that of $X_{0}|_{V^{\pm}}$ (\emph{resp}. of $\Delta_{0}|_{V^{\pm}}$).
Because $X_{0}\left(0\right)=0$ the mapping $\flow{X_{0}}{f^{\pm}}{}$
is tangent-to-identity near $0$. We deduce at once the following
result from the study performed above Section~\ref{subsec:formal-model}
by pull-back. In particular, observe that the primitive function $H_{}^{\pm}$
of $X_{}^{\pm}$ (which provides the preferred coordinate on the orbit
space of $\Delta$) is the pull-back of $H_{0}|_{V^{\pm}}$ by $\flow{X_{0}}{f^{\pm}}{}$
according to Lie's formula~(\ref{eq:Lie}):
\begin{align*}
H_{0}\circ\flow{X_{0}}{f^{\pm}}{} & =\sum_{n=0}^{\infty}\frac{\left(f^{\pm}\right)^{n}}{n!}X_{0}\cdot^{n}H_{0}=H_{0}\sum_{n=0}^{\infty}\frac{\left(2\ii\pi f^{\pm}\right)^{n}}{n!}=H_{}^{\pm}.
\end{align*}
In that sense $\Delta^{\pm}$ and $\Delta_{0}|_{V^{\pm}}$ share the
same canonical orbital coordinate, a fact we summarize below.
\begin{lem}
\label{lem:secto_orbit_space}Let $f^{\pm}$ be holomorphic on $V^{\pm}$
with continuous extension to $\cbar$. Define $X_{}^{\pm}:=\frac{1}{1+X_{0}\cdot f^{\pm}}X_{0}$
and $H_{}^{\pm}:=H_{0}\times\exp\left(2\ii\pi f^{\pm}\right)$. There
exists $\mathcal{V}:=\neigh$ such that the following assertions hold. 
\begin{enumerate}
\item The time-1 map $\Delta^{\pm}$ of $X_{}^{\pm}$ is holomorphic and
injective on $V^{\pm}\cap\mathcal{V}$. Moreover $\Delta^{\pm}\left(z\right)=\Delta_{0}\left(z\right)+\oo{z^{3}}$
near $0$ in $V^{\pm}$.
\item $H^{\pm}\left(V^{\pm}\cap\mathcal{V}\right)=\cc^{\times}$ and $H_{}^{\pm}\left(\adh{\isect}\cap\mathcal{V}\right)=\left(\cbar,\sharp\right)$
for $\sharp\in\left\{ 0,\infty\right\} $.
\item There exists a bijection between orbits of $\Delta^{\pm}$ on $V^{\pm}\cap\mathcal{V}$
and level sets of $H^{\pm}|_{V^{\pm}\cap\mathcal{V}}$.
\end{enumerate}
\end{lem}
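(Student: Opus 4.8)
The plan is to transport every assertion in the statement from the model triple $\left(X_{0},\Delta_{0},H_{0}\right)$ of Sections~\ref{subsec:formal-model} and~\ref{subsec:model_primitive_function} through the sectorial normalizing map
\begin{align*}
\Psi^{\pm}~:~z & \longmapsto\flow{X_{0}}{f^{\pm}\left(z\right)}{\left(z\right)}.
\end{align*}
First I would verify that $\Psi^{\pm}$ is well-defined and biholomorphic on a genuine sectorial neighborhood $V^{\pm}\cap\mathcal{V}$ of $0$. Because $X_{0}$ vanishes to order $2$ at $0$ while $\pset{X_{0}}$ sits at distance $\approx1$ from $0$, the flow $\left(z,t\right)\mapsto\flow{X_{0}}{t}{\left(z\right)}$ is holomorphic on $\left\{ \left|z\right|<\delta\right\} \times\left\{ \left|t\right|\leq M\right\}$ once $\delta$ is small compared to $1/M$, with $M:=\sup_{\left|z\right|<\delta}\left|f^{\pm}\left(z\right)\right|<\infty$; moreover $\flow{X_{0}}{t}{}$ fixes $0$ with unit derivative there, so $\Psi^{\pm}\left(z\right)=z+\oo z$ is locally biholomorphic at $0$ and hence on $V^{\pm}\cap\mathcal{V}$ after shrinking. (This is merely the localization near $0$ of Lemma~\ref{lem:secto_normalization}, the poles of $X_{0}$ being irrelevant that far from them; in particular $\norm[f]{}\leq1$ plays no role here.) Proposition~\ref{prop:flow_change_variable} (items~2(a)--(b), applied with $X:=X_{0}$) then yields $\left(\Psi^{\pm}\right)^{*}X_{0}=X^{\pm}$ and the conjugacy $\Psi^{\pm}\circ\Delta^{\pm}=\Delta_{0}\circ\Psi^{\pm}$ on $V^{\pm}\cap\mathcal{V}$, while Lie's formula~(\ref{eq:Lie}) applied to $H_{0}$ gives $H_{0}\circ\Psi^{\pm}=H_{0}\exp\left(2\ii\pi f^{\pm}\right)=H^{\pm}$, as noted just above the statement.

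Items~1 and~3 then follow by pull-back. Since $\Psi^{\pm}$ maps $V^{\pm}\cap\mathcal{V}$ biholomorphically onto a set $\Omega^{\pm}:=\Psi^{\pm}\left(V^{\pm}\cap\mathcal{V}\right)$ which, for $\mathcal{V}$ small, avoids $\pset{X_{0}}$, the relation $\left(\Psi^{\pm}\right)^{*}X_{0}=X^{\pm}$ shows $X^{\pm}$ has no poles on $V^{\pm}\cap\mathcal{V}$ and keeps its double zero at $0$, whence $\Delta^{\pm}$ is holomorphic there; injectivity is inherited from $\Psi^{\pm}\circ\Delta^{\pm}=\Delta_{0}\circ\Psi^{\pm}$ and the injectivity of $\Delta_{0}$ near $0$ (Proposition~\ref{prop:X0_dynamics}~1), after shrinking $\mathcal{V}$. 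The refinement $\Delta^{\pm}\left(z\right)=\Delta_{0}\left(z\right)+\oo{z^{3}}$ follows from the time-form identity $\tau_{X^{\pm}}=\tau_{X_{0}}+\dd{f^{\pm}}$ together with the Cauchy bound $z\left(f^{\pm}\right)'\left(z\right)=\oo 1$ near $0$ on subsectors (valid since $f^{\pm}\left(0\right)=0$): $X^{\pm}$ then has the same $3$-jet at $0$ as $X_{0}$, and one concludes with Lemma~\ref{lem:.model_parab}. As for item~3, $H^{\pm}\circ\Delta^{\pm}=H_{0}\circ\Delta_{0}\circ\Psi^{\pm}=H_{0}\circ\Psi^{\pm}=H^{\pm}$ (using $H_{0}\circ\Delta_{0}=H_{0}$) makes $H^{\pm}$ a first-integral of $\Delta^{\pm}$, so its level sets are unions of orbits; conversely $\tfrac{1}{2\ii\pi}\log H^{\pm}$ is a Fatou coordinate of $\Delta^{\pm}$ (see the Example in Section~\ref{subsec:sectorial_dynamics}), so the $1$-to-$1$ correspondence between orbits of $\Delta_{0}|_{\widehat{V}}$ and level sets of $H_{0}$ valid over $\neigh$ (Section~\ref{subsec:model_primitive_function}) pulls back to give that distinct orbits of $\Delta^{\pm}$ carry distinct values of $H^{\pm}$.

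For item~2 the main thing is to control $\Omega^{\pm}$. Being tangent to the identity at $0$, $\Psi^{\pm}$ distorts the opening of $V^{\pm}$ near $0$ by an arbitrarily small amount, so $\Omega^{\pm}$ contains $\widetilde{V}^{\pm}\cap\left\{ \left|z\right|<\delta\right\}$ for a subsector $\widetilde{V}^{\pm}$ still of opening $>\pi$ (the opening $\tfrac{5\pi}{4}$ of $V^{\pm}$ leaves a whole $\tfrac{\pi}{4}$ of slack) and is in turn contained in a comparable, slightly wider region. On any sectorial neighborhood of $0$ of opening $>\pi$, both the surjectivity of $H_{0}$ onto $\cc^{\times}$ and the flatness estimates~(\ref{eq:estim_H0_0}) of Section~\ref{subsec:model_primitive_function} remain valid, their proof using only that $-\tfrac{2\ii\pi}{\lambda z}$ then covers a region containing a half-plane. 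Pulling back through $H^{\pm}=H_{0}\circ\Psi^{\pm}$ gives $H^{\pm}\left(V^{\pm}\cap\mathcal{V}\right)=\cc^{\times}$, and~(\ref{eq:estim_H0_0}) together with the boundedness of $\exp\left(2\ii\pi f^{\pm}\right)$ near $0$ (this is where continuity of $f^{\pm}$ up to $\cbar$ is used) forces $\left|H^{\pm}\right|\to0$ on $\adh{\zero}\cap\mathcal{V}$ and $\left|1/H^{\pm}\right|\to0$ on $\adh{\infi}\cap\mathcal{V}$, that is $H^{\pm}\left(\adh{\isect}\cap\mathcal{V}\right)=\left(\cbar,\sharp\right)$ for $\sharp\in\left\{ 0,\infty\right\}$. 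The only genuinely delicate point in all this is precisely this sectorial-image control: one must make sure that conjugating by the tangent-to-identity $\Psi^{\pm}$ does not shrink the opening of $V^{\pm}$ below the threshold $\pi$ needed for the model surjectivity of $H_{0}$ — which is exactly why the sectors were endowed with the comfortable half-opening $\tfrac{5\pi}{8}$.
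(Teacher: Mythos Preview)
Your proof is correct and follows exactly the approach the paper itself takes: everything is obtained from the model triple $(X_0,\Delta_0,H_0)$ by pull-back through the sectorial normalization $\Psi^{\pm}=\flow{X_0}{f^{\pm}}{}$, with the identity $H_0\circ\Psi^{\pm}=H^{\pm}$ coming from Lie's formula. The paper in fact does not write out a proof at all, merely stating that the lemma is deduced ``at once'' from Section~\ref{subsec:formal-model} by pull-back together with the Lie computation; your account fills in precisely those details, including the Cauchy argument for $z\left(f^{\pm}\right)'\!\left(z\right)=\oo 1$ behind the $3$-jet claim and the observation that the opening $\tfrac{5\pi}{8}>\tfrac{\pi}{2}$ survives the tangent-to-identity distortion by $\Psi^{\pm}$, which is left implicit in the paper.
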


\subsubsection{Sectorial primitive first-integrals}
\begin{cor}
\label{cor:primitive_first-integral} Let $U$ be an open subsector
of $V^{\pm}$ attached to $0$. The primitive function $H_{}^{\pm}$
of $X$ is a primitive first-integral of $\Delta^{\pm}$, in the sense
that any first-integral $L$ of $\Delta^{\pm}$ on $U$ factors as
\begin{align*}
L & =g\circ H_{}^{\pm}
\end{align*}
for some $g$ holomorphic on $H_{}^{\pm}\left(U\right)$. (The converse
is trivial.)
\end{cor}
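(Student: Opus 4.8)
The plan is to factor $L$ through the sectorial Fatou coordinate $\Phi^{\pm}:=\tfrac{1}{2\ii\pi}\log H_{}^{\pm}$, which (Section~\ref{subsec:sectorial_dynamics}) is single-valued holomorphic on the simply connected $\widehat{V}^{\pm}$, conjugates $\Delta^{\pm}$ to the unit translation $T:w\mapsto w+1$ and conjugates $H_{}^{\pm}$ to $w\mapsto\exp\left(2\ii\pi w\right)$. After shrinking $U$ we may assume $U\subset V^{\pm}\cap\mathcal{V}\cap\widehat{V}^{\pm}$, where $\mathcal{V}$ comes from Lemma~\ref{lem:secto_orbit_space}; on such a $U$ the field $X_{}^{\pm}$ is holomorphic and nowhere zero (Proposition~\ref{prop:flow_change_variable}: near $0$ the sole stationary point is $0\in\partial U$, and $1+X_{0}\cdot f^{\pm}\to1$, so there is no pole either), so $X_{}^{\pm}\cdot\Phi^{\pm}=1$ and $\left(\Phi^{\pm}\right)'=1/\left(X_{}^{\pm}\cdot\id\right)$ never vanishes on $U$. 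Furthermore $\Phi^{\pm}$ is injective on $V^{\pm}\cap\mathcal{V}$: if $\Phi^{\pm}(z)=\Phi^{\pm}(z')$ then $H_{}^{\pm}(z)=H_{}^{\pm}(z')$, hence $z'=\left(\Delta^{\pm}\right)^{\circ n}(z)$ for some $n\in\zz$ by Lemma~\ref{lem:secto_orbit_space}(3), forcing $\Phi^{\pm}(z')=\Phi^{\pm}(z)+n$, i.e.\ $n=0$ and $z=z'$. Thus $\Phi^{\pm}|_{U}$ is a biholomorphism onto $\Omega:=\Phi^{\pm}(U)$. One should however keep in mind that $H_{}^{\pm}|_{U}$ is \emph{not} injective: its fibres are the subsets of $U$ whose $\Phi^{\pm}$-images differ by integers --- equivalently, the $\Delta^{\pm}$-orbits meeting $U$.

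Transporting everything by $w=\Phi^{\pm}(z)$, it then suffices to show that $\widetilde{L}:=L\circ\left(\Phi^{\pm}|_{U}\right)^{\circ-1}$ --- a holomorphic function on the connected open set $\Omega$, satisfying $\widetilde{L}\circ T=\widetilde{L}$ wherever both sides are defined because $L$ is a first-integral of $\Delta^{\pm}$ --- descends through $w\mapsto\exp\left(2\ii\pi w\right)$, that is, $\widetilde{L}(w)=\widetilde{L}(w')$ whenever $w,w'\in\Omega$ with $w-w'\in\zz$. Granting this, $g\bigl(\exp\left(2\ii\pi w\right)\bigr):=\widetilde{L}(w)$ defines an unambiguous holomorphic function on $H_{}^{\pm}(U)=\exp\left(2\ii\pi\Omega\right)$ with $g\circ H_{}^{\pm}=L$; the reverse implication (that any $g\circ H_{}^{\pm}$ is a first-integral) is the Lie-formula computation recalled just before Proposition~\ref{prop:reduction}.

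The descent is the only real content, and it rests on the shape of $\Omega$. From $H_{0}(z)\sim\left(\lambda z\right)^{2\ii\pi\mu}\exp\left(-2\ii\pi/\left(\lambda z\right)\right)$ (Section~\ref{subsec:model_primitive_function}) together with $\norm[f^{\pm}]{}\leq1$ one gets $\Phi^{\pm}(z)=-\tfrac{1}{\lambda z}+\OO{\log z}$ as $z\to0$ in $V^{\pm}$; hence, for $U$ a small subsector of $V^{\pm}$ at $0$, the image $\Omega$ is, up to a logarithmically small perturbation, a genuine sector lying far from the origin. In such a region any two points $w$ and $w+n$ ($n\in\zz$) are joined by the horizontal segment $[w,w+n]\subset\Omega$, so applying $\widetilde{L}\circ T=\widetilde{L}$ successively along $w,w+1,\dots,w+n$ yields $\widetilde{L}(w)=\widetilde{L}(w+n)$. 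In the original coordinate this merely says that the $\Delta^{\pm}$-orbit through a point of $U$ stays inside $U$ long enough for the relation $L\circ\Delta^{\pm}=L$ to propagate along a whole fibre of $H_{}^{\pm}$, so that $L$ is constant on those fibres. The bulk of the work --- and the main obstacle --- is thus the quantitative control of $\Phi^{\pm}(U)$, equivalently of the $\tau_{X^{\pm}}$-length of the orbit arcs of $\Delta^{\pm}$ contained in a prescribed subsector, which is precisely the type of bound obtained for the model in Section~\ref{subsec:model_primitive_function} and carried over to $X_{}^{\pm}$ via the sectorial normalization of Section~\ref{subsec:secto_normalization}.
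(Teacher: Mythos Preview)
Your proof is correct and follows the same idea as the paper's three-line argument: both observe that a first-integral $L$ descends to the orbit space of $\Delta^{\pm}$ on $U$, which Lemma~\ref{lem:secto_orbit_space} identifies with $H^{\pm}(U)$, so that $L$ factors through $H^{\pm}$. You simply unpack this in explicit Fatou coordinates, thereby making visible the geometric step the paper leaves implicit --- namely that two points of $U$ sharing the same $H^{\pm}$-value lie on a single $\Delta^{\pm}$-orbit \emph{within} $U$.

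One small quibble: the claim that the horizontal segment $[w,w+n]$ always stays in $\Omega$ is not literally true near the inner boundary of the image sector (when the segment crosses the bisecting ray the modulus can dip slightly below the threshold $R$, to roughly $R\cos\tfrac{\pi}{8}$). This is harmless, since one may either shrink the radius of $U$ a little further or use that $\Phi^{\pm}$, and hence $\widetilde{L}$, extend holomorphically past $\overline{\Omega}$ because $U$ sits strictly inside $V^{\pm}\cap\mathcal{V}$.
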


\begin{proof}
Let $L$ be such that $L\circ\Delta=L$. Then it induces a holomorphic
function $g~:~h\mapsto g\left(h\right)$ on the orbit space $\quot{U\cap{\neigh}}{\Delta}$.
According to the previous lemma this space embeds as $H_{}^{\pm}\left(U\right)\subset\cc^{\times}$.
Therefore we can assume that the coordinate $h$ is given by $H_{}^{\pm}$,
\emph{i.e}. $L=g\circ H_{}^{\pm}$.
\end{proof}

\subsection{\label{subsec:Reduction}Reduction: proof of Proposition~\ref{prop:reduction}}

\subsubsection{Item 1: sectorial dynamics and orbits space}

This is the content of Lemma~\ref{lem:secto_orbit_space}, hence
the item is proved.

\subsubsection{Item 2: gluing condition}
\begin{lyxlist}{00.00.0000}
\item [{(a)$\Rightarrow$(b)}] This is trivial.
\item [{(b)$\Rightarrow$(a)}] The condition $\Delta^{-}=\Delta^{+}$ guarantees
that both sectorial functions glue to form a holomorphic germ, still
called $\Delta$, on a punctured neighborhood of $0$. As $\Delta$
is bounded, Riemann's theorem on removable singularity applies: $\Delta$
extends holomorphically to $\neigh$. Finally, since $\Delta_{0}\left(z\right)=z+\lambda z^{2}+\oo{z^{2}}$
(Lemma~\ref{lem:.model_parab}) and $\Psi$ fixes $0$ we conclude
$\Delta\in\parab$.
\item [{(c)$\Rightarrow$(b)}] The time-1 map of $X_{}^{\pm}$ is $\Delta^{-}$
with primitive first-integral $H_{f}^{-}=H_{0}\exp\left(2\ii\pi f^{-}\right)=H_{f}^{+}\times\exp\left(2\ii\pi\left(f^{-}-f^{+}\right)\right)$.
Hence $H_{f}^{-}$ is a first-integral of $\Delta^{+}$ as well as
of $\Delta^{-}$. Take $z\in V^{\cap}$ a point where both $\Delta^{-}$
and $\Delta^{+}$ are defined (this property holds on some bounded
subsector $V^{\sharp}\cap\neigh$). Then 
\begin{align*}
H_{f}^{-}\left(\Delta^{+}\left(z\right)\right) & =H_{f}^{-}\left(z\right)\\
 & =H_{f}^{-}\left(\Delta^{-}\left(z\right)\right),
\end{align*}
which means that $\Delta^{+}=\left(\Delta^{-}\right)^{\circ\ell}$
for some $\ell\in\zz$, since orbits of $\Delta^{-}$ are in $1$-to-$1$
correspondence with level sets of $H_{f}^{-}$. The integer $\ell$
may depend on $z$, but Baire's category theorem and the principle
of analytic continuation assert that $\ell$ does actually not.\\
On the one hand $\Delta^{+}$ and $\Delta^{-}$ coincide with $\Delta_{0}$
up to $\oo{z^{3}}$, while on the other hand $\Delta_{0}^{\circ\ell}\left(z\right)=z+\ell\lambda z^{2}+\oo{z^{2}}$
for all $z\in\neigh$. Thus $\ell=1$ and $\Delta^{+}=\Delta^{-}$
on $V^{\cap}$.
\item [{(b)$\Rightarrow$(c)}] The previous argument can actually be read
backwards too.
\item [{(c)$\Leftrightarrow$(d)}] This is the content of Corollary~\ref{cor:primitive_first-integral}.
\end{lyxlist}

\subsubsection{Item 3: realization}
\begin{enumerate}
\item[(a)] By construction the Écalle--Voronin modulus of $\Delta$ satisfies
\begin{align*}
\begin{cases}
H_{f}^{-}=\zero[\psi]\circ H_{f}^{+} & \text{on }\zero\cap\neigh\\
H_{f}^{-}=\infi[\psi]\circ H_{f}^{+} & \text{on }\infi\cap\neigh
\end{cases} & .
\end{align*}
Replacing $H_{}^{\pm}$ by $H_{0}\exp\left(2\ii\pi f^{\pm}\right)$,
then $\zero[\psi]$ by $\id\exp\left(4\pi^{2}\mu+\zero[\varphi]\right)$
and $\infi[\psi]$ by $\id\exp\left(\infi[\varphi]\right)$ brings
the previous system into~$\left(\star\right)$.
\item[(b)] Near $0$ in $\zero$ the function $H_{}^{\pm}$ is 1-flat according
to~(\ref{eq:estim_H0_0}):
\begin{align*}
\left|H_{}^{\pm}\left(z\right)\right| & \leq\left|H_{0}\left(z\right)\right|\exp\left(2\pi\left|f^{\pm}\left(z\right)\right|\right)\\
 & \leq C\exp\frac{-C'}{\left|z\right|}
\end{align*}
with $0<C'<2\pi\cos\frac{\pi}{8}$. Because $\zero[\varphi]\left(0\right)=0$
the composition $\zero[\varphi]\circ H_{f}^{+}$ is also flat at $0$.
Therefore $f^{-}-f^{+}$ is 1-flat at $0$ in $\zero$. The same argument
also applies in $\infi$ (beware that $\infi[\varphi]$ is embodied
as a convergent power series in $\frac{1}{H_{f}^{+}}$). Then Ramis--Sibuya
theorem~\cite[Theorem \#\#]{LodRich} asserts exactly that $\left(f^{+},f^{-}\right)$
is the 1-sum of some $F\in z\frml z$. Being a $1$-sum is stable
by differentiation so that $\left(X_{0}\cdot f^{+},X_{0}\cdot f^{-}\right)$
is a $1$-sum of $X_{0}\cdot F$. Hence $\left(\flow{X_{0}}{f^{+}}{},\flow{X_{0}}{f^{-}}{}\right)$
is locally\footnote{Here the notion of $1$-sum differs from that stated in the introduction
in that we only have the holomorphy of $\Psi^{\pm}$ on a small sectorial
region near $0$. This is sufficient, though.} a $1$-sum of $\Psi:=\flow{X_{0}}F{}$ (according to Lemma~\ref{prop:flow_change_variable}),
from which follows that $\left(\Delta^{+},\Delta^{-}\right)$ is a
$1$-sum of 
\begin{align*}
\flow X1{} & =\Psi^{*}\Delta_{0}.
\end{align*}
Since $\Delta^{+}=\Delta^{-}$ is a convergent power series at $0$,
it can only mean that $\Delta=\flow X1{}$.
\end{enumerate}

\section{\label{sec:Main_th}Synthesis}

We begin with fixing a formal class $\mu\in\cc$ in $\parab$ (then
$\zero[\psi]$ must be tangent to the linear map $\ee^{4\pi^{2}\mu}\id$)
and pick an analytic data
\begin{align*}
\zero[\psi]~:~\left(\cbar,0\right) & \longto\left(\cbar,0\right)\\
h & \longmapsto h\exp\left(4\pi^{2}\mu+\zero[\varphi]\left(h\right)\right)~~~~~,~\zero[\varphi]\left(0\right)=0~,\\
\infi[\psi]~:~\left(\cbar,\infty\right) & \longto\left(\cbar,\infty\right)\\
h & \longmapsto h\exp\left(\infi[\varphi]\left(h\right)\right)~~~~~,~\infi[\varphi]\left(\infty\right)=0~.
\end{align*}
We wish to incarnate the abstract variable $h$ as a concrete coordinate
on a sectorial space of orbits of a germ $\Delta\in\parab$. For this
we seek a pair of functions $\left(H^{+},H^{-}\right)$
\begin{align*}
H^{\pm}~:~V^{\pm} & \longto\cc\subset\cbar
\end{align*}
whose range is biholomorphic to $\quot{V^{\pm}}{\Delta}$ and such
that $\left(\zero[\psi],\infi[\psi]\right)$ gives the transition
between $H^{+}$ and $H^{-}$:
\begin{align*}
H^{-} & =\isect[\psi]\circ H^{+}~~~~\text{on }\isect\,\,\text{for\,}\sharp\in\left\{ 0,\infty\right\} .
\end{align*}
We recall that the two connected components of the intersection $V^{\cap}$
are sectors defined by

\begin{multicols}{2}
\hfill{}\includegraphics[width=0.5\columnwidth]{images/inter_sectors.pdf}\hfill{}

\columnbreak

\begin{align*}
\zero & :=\left\{ z~:~\im z>0\right\} \cap V^{\cap},\\
\infi & :=\left\{ z~:~\im z<0\right\} \cap V^{\cap}.
\end{align*}
\end{multicols}

Starting from a pair 
\begin{align*}
\varphi & :=\left(\zero[\varphi],\infi[\varphi]\right),
\end{align*}
the idea is to obtain $H^{\pm}$ as a perturbation 
\begin{align*}
H_{}^{\pm} & :=H_{0}\times\exp\left(2\ii\pi f^{\pm}\right)~~,~f^{\pm}\in\mathcal{S}
\end{align*}
of a primitive first-integral of the time-1 map $\Delta_{0}$ of the
formal model $X_{0}$ (see Section~\ref{subsec:formal-model}). According
to Proposition~\ref{prop:reduction} we need to solve the non-linear
Cousin problem 
\begin{align*}
\begin{cases}
f^{-}-f^{+} & =\zero[\varphi]\circ H_{f}^{+}\text{~~~~ on }\zero\cap\neigh\\
f^{-}-f^{+} & =\infi[\varphi]\circ H_{f}^{+}\text{~~~~ on }\infi\cap\neigh
\end{cases} & \tag{\ensuremath{\star}}.
\end{align*}

We introduce the Cauchy-Heine operator $\cauchein[\varphi]$ in Section~\ref{subsec:Cauchy-Heine},
then prove it admits a unique fixed point in the unit ball of $\mathcal{S}$
(Section~\ref{subsec:Convergence}). The latter is the sought solution
of~$\left(\star\right)$.
\begin{rem}
The proofs of all the technical lemmas involved below are to be found
in Section~\ref{subsec:proofs}.
\end{rem}

\subsection{\label{subsec:choice_X0}Choice of the formal model and of the sectors}

Before even considering solving the Cousin problem $\left(\star\right)$,
it should be well-posed. In particular the compositions $\zero[\varphi]\circ H_{f}^{+}$
and $\infi[\varphi]\circ H_{f}^{-}$ must make sense in the respective
intersections $\zero$ and $\infi$. This condition is not technical:
for a genuine parabolic germ $\Delta$ a horn map is defined at least
on the maximal domain of orbits which are sent by the local dynamics
from one end of the sector $V^{+}$ to the corresponding end of $V^{-}$.

Roughly speaking, this orbital domain is the range of the corresponding
sectorial first-integral $H_{f}^{+}$ of the respective intersection's
component $\zero$ or $\infi$. In order to be able to gain control
on its size, we impose the technical restriction
\begin{align*}
\norm[H_{f}^{+}]{\zero}:=\sup_{z\in\zero}\left|H_{f}^{+}\left(z\right)\right| & <\zero[\rho]\\
\norm[\frac{1}{H_{f}^{+}}]{\infi} & <\infi[\rho]
\end{align*}
where $\zero[\rho]$ and $\infi[\rho]$ are the radius of convergence
of $\zero[\varphi]$ and $\infi[\varphi]$, the former as a power
series in $h\in\neigh$ and the latter in $\frac{1}{h}\in\neigh$.

The natural idea that comes up is to use the flatness of sectorial
first-integrals at $0$: on a small pair of sectors near $0$ the
above condition can be easily enforced. One then synthesizes $\Delta$
on $\neigh$ in her preferred fashion (invoking Ahlfors-Bers theorem
or directly by a holomorphic fixed-point). Treading on that path involves
technical complications and other shortcomings, least of all the complete
lack of a control on the form of the resulting germ $\Delta$. This
in turn prevents any foreseeable statement about the uniqueness of
$\Delta$.

To address the uniqueness question one is therefore led to synthesize
a \emph{global} object $\Delta$. But one has to pay a price for it:
any given model first-integral has a given range $\zero[\Omega]$
over $\zero$ which cannot fit within the domain of \emph{every} germ
$\zero[\varphi]$. Admittedly one could contract the coordinate $h$
by a linear map, but this would automatically increase the observed
size of the orbital domain $\infi[\Omega]$ near $\infty$. Hence
only rare functions with $\zero[\rho]\infi[\rho]$ bounded from below
by the <<size>> of $\zero[\Omega]$ and $\infi[\Omega]$ can be
realized as a perturbation of this first-integral.
\begin{rem}
In particular when the data is unilateral (in Écalle terminology,
meaning $\zero[\varphi]=0$ or $\infi[\varphi]=0$) it is possible
to play the rescaling game on the orbits sphere and obtain a realization
for any $\lambda>0$. The same holds if one of the component of $\varphi$
is entire.
\end{rem}

To illustrate the point we can obtain bounds for the primitive function
\begin{align*}
\widehat{H}\left(x\right) & :=\exp\left(-\frac{2\ii\pi}{x}\right)x^{2\ii\pi\mu}
\end{align*}
 of the usual formal model $\frac{x^{2}}{1+\mu x}\pp x$.
\begin{lem}
\label{lem:usual_model_orbits_size}Define for $\tau\in\cc^{\times}$
\begin{align*}
\mathfrak{h}\left(\tau\right) & :=\exp\left(-2\ii\pi\left(\tau+\mu\log\tau\right)\right).
\end{align*}
For every $0<\delta<\frac{\pi}{2}$ the following estimates hold:
\begin{align*}
\left(\forall\tau~:~\left|\arg\tau+\frac{\pi}{2}\right|=\delta,~\mathfrak{t}<\left|\tau\right|\right)~~~~~~\left|\mathfrak{H}\left(\tau\right)\right| & <\mathfrak{m}\\
\left(\forall\tau~:~\left|\arg\tau-\frac{\pi}{2}\right|=\delta,~\mathfrak{t}<\left|\tau\right|\right)~~~~~~\left|\mathfrak{H}\left(\tau\right)\right| & >\frac{1}{\mathfrak{m}}
\end{align*}
where
\begin{align*}
\mathfrak{m} & =\mathfrak{m}_{\mu,\delta}:=\exp\left(2\pi^{2}\left|\re{\mu}\right|+2\pi\im{\mu}\ln\frac{\left|\im{\mu}\right|}{\ee\cos\delta}\right)>0\\
\mathfrak{t} & =\mathfrak{t}_{\mu,\delta}:=\max\left\{ 1,\frac{\ln\mathfrak{m}}{2\pi\cos\delta}\right\} .
\end{align*}
\end{lem}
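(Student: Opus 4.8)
The plan is to turn both estimates into elementary one-variable inequalities. Writing $\tau=r\ee^{\ii\theta}$ with $r=\left|\tau\right|$ and $\log\tau=\ln r+\ii\theta$, the modulus of $\mathfrak{h}$ is governed by a single linear/logarithmic expression:
\begin{align*}
\ln\left|\mathfrak{h}\left(\tau\right)\right| & =\re{-2\ii\pi\left(\tau+\mu\log\tau\right)}=2\pi\im{\tau+\mu\log\tau}\\
 & =2\pi\left(r\sin\theta+\re{\mu}\,\theta+\im{\mu}\,\ln r\right).
\end{align*}
Hence, setting $m:=\frac{\ln\mathfrak{m}}{2\pi}=\pi\left|\re{\mu}\right|+\im{\mu}\ln\frac{\left|\im{\mu}\right|}{\ee\cos\delta}$, the first claim amounts to $r\sin\theta+\re{\mu}\,\theta+\im{\mu}\,\ln r<m$ on the rays $\theta\in\{-\frac{\pi}{2}\pm\delta\}$, and the second to $r\sin\theta+\re{\mu}\,\theta+\im{\mu}\,\ln r>-m$ on the rays $\theta\in\{\frac{\pi}{2}\pm\delta\}$, both for $r>\mathfrak{t}$.

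I would first neutralise the argument term. On the first family $\sin\theta=-\cos\delta$ and $\left|\theta\right|\leq\frac{\pi}{2}+\delta<\pi$, whence $\re{\mu}\,\theta<\pi\left|\re{\mu}\right|$; on the second family $\sin\theta=\cos\delta$ and symmetrically $\re{\mu}\,\theta>-\pi\left|\re{\mu}\right|$ --- this is where the summand $2\pi^{2}\left|\re{\mu}\right|$ of $\ln\mathfrak{m}$ comes from. After cancelling $\pi\left|\re{\mu}\right|$, it remains to prove, for $r>\mathfrak{t}$, the purely real inequalities
\[
-r\cos\delta+\im{\mu}\,\ln r\ \leq\ \im{\mu}\ln\frac{\left|\im{\mu}\right|}{\ee\cos\delta}\qquad\text{and}\qquad r\cos\delta+\im{\mu}\,\ln r\ \geq\ -\im{\mu}\ln\frac{\left|\im{\mu}\right|}{\ee\cos\delta}.
\]
The core is the optimisation of $r\mapsto\pm r\cos\delta+\im{\mu}\,\ln r$ over $r>0$: by concavity of $\ln$ one has $\ln r\leq\frac{r}{c}+\ln c-1$ for every $c>0$, and taking $c:=\frac{\left|\im{\mu}\right|}{\cos\delta}$ (equivalently, using the critical radius $r^{\star}=\frac{\left|\im{\mu}\right|}{\cos\delta}$) produces exactly the value $\pm\im{\mu}\ln\frac{\left|\im{\mu}\right|}{\ee\cos\delta}$ as the extremum. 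This is precisely why that expression appears in $\mathfrak{m}$.

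The point requiring care is the sign of $\im{\mu}$. When $\im{\mu}\geq0$ the two inequalities come out cleanly: for the first, $r\mapsto-r\cos\delta+\im{\mu}\ln r$ reaches its maximum $\im{\mu}\ln\frac{\im{\mu}}{\ee\cos\delta}$ at $r^{\star}$, valid for all $r>0$; for the second, $r\mapsto r\cos\delta+\im{\mu}\ln r$ is increasing, so it exceeds its value at $r=\mathfrak{t}$, bounded below by $\mathfrak{t}\cos\delta\geq\cos\delta$ via $\mathfrak{t}\geq1$, which in turn dominates $-\im{\mu}\ln\frac{\im{\mu}}{\ee\cos\delta}$ (a quantity bounded by $\cos\delta$ over $\im{\mu}\geq0$). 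The case $\im{\mu}<0$ is where the full cutoff $\mathfrak{t}=\max\{1,\frac{\ln\mathfrak{m}}{2\pi\cos\delta}\}$ is genuinely needed: the relevant one-variable function is then monotone on $]0,\infty[$, so one compares its supremum/infimum over $\{r>\mathfrak{t}\}$ --- the limiting value at $r=\mathfrak{t}$ --- against $\pm\frac{\ln\mathfrak{m}}{2\pi}$, using $\mathfrak{t}\geq1$ to fix the sign of $\im{\mu}\ln\mathfrak{t}$ and $\mathfrak{t}\cos\delta\geq\frac{\ln\mathfrak{m}}{2\pi}$ to absorb the linear term $\mp r\cos\delta$. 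Throughout, $0<\delta<\frac{\pi}{2}$ keeps $\cos\delta>0$, so every division and logarithm above is legitimate. I expect this $\im{\mu}<0$ branch, and the bookkeeping needed to confirm that the stated $\mathfrak{t}$ is large enough in the monotone regime, to be the only real difficulty; everything else is the elementary calculus sketched above.
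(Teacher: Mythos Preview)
Your strategy is the paper's own: write $\ln|\mathfrak{h}(\tau)|=2\pi(r\sin\theta+\re{\mu}\,\theta+\im{\mu}\ln r)$, absorb the $\re{\mu}\,\theta$ term into $\pi|\re{\mu}|$, and reduce each ray to a one-variable optimisation in $r$, splitting on the sign of $\im{\mu}$ and using either the critical radius $r^{\star}=\frac{|\im{\mu}|}{\cos\delta}$ or monotonicity together with the cutoff $\mathfrak{t}$. The paper parametrises $\tau=\ii t\theta$ with $\arg\theta=\pm\delta$ and carries $(\tfrac{\pi}{2}\pm\delta)\re{\mu}$ explicitly, but the calculus is identical.

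One mix-up to fix before writing details: for $\im{\mu}<0$ the second function $r\mapsto r\cos\delta+\im{\mu}\ln r$ is \emph{not} monotone on $]0,\infty[$ --- it has a minimum at $r^{\star}=-\im{\mu}/\cos\delta$, just as your first function has a maximum at $r^{\star}=\im{\mu}/\cos\delta$ when $\im{\mu}>0$. So the four subcases pair up symmetrically: the \emph{extremum} argument (evaluate at $r^{\star}$, where the $\im{\mu}$-term in $\mathfrak{m}$ is tailored to match exactly) handles the first inequality for $\im{\mu}>0$ and the second for $\im{\mu}<0$; the \emph{monotonicity $+$ cutoff} argument handles the first inequality for $\im{\mu}\leq0$ and the second for $\im{\mu}\geq0$. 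In the paper's organisation it is the case $\im{\mu}>0$ (second inequality) that explicitly invokes $\mathfrak{t}\geq\frac{\ln\mathfrak{m}}{2\pi\cos\delta}$, with the remark that ``the case $t<0$ is taken care of similarly'' covering the dual.
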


We may think of $\tau$ as $\frac{1}{x}$ because $\widehat{H}\left(x\right)=\mathfrak{h}\left(\frac{1}{x}\right)$.
Hence the above estimate tells us that the size (in the coordinate
$\widehat{H}$) of the sectorial orbit spaces $\Omega_{0}$ and $\Omega_{\infty}$
shrinks to $0$ over smaller and smaller sectors of radius $\frac{1}{\mathfrak{t}}\to0$.
But if we want to work on unbounded sectors covering $\cc^{\times}$
we cannot act on the <<smallness>> of the sectors anymore, we must
thereby find another way to control the size of the orbits domains
to accommodate a given $\left(\zero[\psi],\infi[\psi]\right)$. This
is done by performing the pullback of $\frac{x^{2}}{1+\mu x}\pp x$
by a $1$-parameter family of rational maps $x:=\Pi\left(z\right)$
sending $V^{\cap}$ into sectors of size $\frac{1}{\mathfrak{t}}=\OO{\lambda}$
for which the previous lemma gives a bound $\mathfrak{m}\sim\exp\left(-\nf{\cst}{\lambda}\right)$.

\bigskip{}

Changing the variable $z\mapsto x=\Pi\left(z\right)$ in the usual
formal model $\frac{x^{2}}{1+\mu x}\pp x$ results in a vector field
whose time-$1$ flow has a first-integral of the form $\mathfrak{H}\left(\tau\right)$
for $\tau=\tau\left(z\right):=\frac{1}{\Pi\left(z\right)}$. Namely
if 
\begin{align*}
\Pi\left(z\right) & =\frac{\lambda z}{1-z^{2}}
\end{align*}
then $\frac{x^{2}}{1+\mu x}\pp x$ is transformed into the $\sigma$-invariant
vector field
\begin{align*}
X_{0}\left(z\right) & :=\frac{1-z^{2}}{1+z^{2}}\times\frac{\lambda z^{2}}{1+\mu\lambda z-z^{2}}\pp z
\end{align*}
and $\tau$ into 
\begin{align*}
\tau=\tau_{\lambda}\left(z\right):= & \frac{1-z^{2}}{\lambda z}
\end{align*}
yielding the first-integral 
\begin{align}
H_{0}\left(z\right) & =\widehat{H}\left(\frac{\lambda z}{1-z^{2}}\right)=\mathfrak{H}\left(\tau_{\lambda}\left(z\right)\right)\nonumber \\
 & =\exp\left(-2\ii\pi\frac{1-z^{2}}{\lambda z}-2\ii\pi\mu\log\frac{1-z^{2}}{\lambda z}\right).\label{eq:model_first-int}
\end{align}
Because $\tau\circ\sigma=\tau$, all the above objects are $\sigma$-invariant.
We establish now that this first-integral fulfills the expected properties
on the pair of unbounded sectors $V^{\pm}$. Firstly, we ensure that
$\tau\left(V^{\cap}\right)$ is included in an open sector of opening
$\frac{\pi}{2}$ with vertex at $\infty$.
\begin{lem}
\label{lem:model_Fatou_size_shrink}Fix $\lambda\leq1$ and set $\tau=\tau_{\lambda}\left(z\right):=\frac{1-z^{2}}{\lambda z}$.
Then
\begin{align*}
\begin{cases}
\inf_{z\in V^{\cap}}\left|\tau\right| & \geq\frac{1}{\lambda}\\
\sup_{z\in\zero}\left|\arg\tau+\frac{\pi}{2}\right| & =\frac{3\pi}{8}\\
\sup_{z\in\infi}\left|\arg\tau-\frac{\pi}{2}\right| & =\frac{3\pi}{8}
\end{cases} & .
\end{align*}
\end{lem}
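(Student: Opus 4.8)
The plan is to reduce everything to elementary estimates on the rational function $\tau_\lambda(z)=\frac{1-z^2}{\lambda z}=\frac{1}{\lambda}\left(\frac{1}{z}-z\right)$ restricted to the two components $\zero$ and $\infi$ of $V^\cap$. First I would dispose of the modulus estimate. Writing $z=r\ee^{\ii\theta}$, one has $\tau_\lambda(z)=\frac{1}{\lambda}\left(\frac{\ee^{-\ii\theta}}{r}-r\ee^{\ii\theta}\right)$, so $\lambda\left|\tau_\lambda(z)\right|=\left|\frac{1}{z}-z\right|\geq\left|\,\left|\tfrac{1}{z}\right|-\left|z\right|\,\right|$ --- but that crude bound degenerates when $\left|z\right|=1$, so instead I would use $\left|\frac{1}{z}-z\right|^2=\frac{1}{r^2}+r^2-2\cos 2\theta$. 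On $V^\cap=V^+\cap V^-$ we have $\left|\arg z\right|<\frac{5\pi}{8}$ and $\left|\arg(-z)\right|<\frac{5\pi}{8}$ simultaneously, which forces $\frac{3\pi}{8}<\left|\theta\right|<\frac{5\pi}{8}$; in that range $\cos 2\theta\leq\cos\frac{3\pi}{4}=-\frac{1}{\sqrt 2}<0$, hence $\left|\frac{1}{z}-z\right|^2\geq\frac{1}{r^2}+r^2+\sqrt 2\geq 2+\sqrt 2\geq 1$, giving $\left|\tau_\lambda(z)\right|\geq\frac{1}{\lambda}$ since $\lambda\leq 1$. (In fact one gets a slightly better constant, but $\frac{1}{\lambda}$ suffices.)

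Next, the two argument estimates. Because $\tau_\lambda=\frac{1}{\lambda}g$ with $g(z)=\frac{1}{z}-z$ and $\lambda>0$, we have $\arg\tau_\lambda(z)=\arg g(z)$, so $\lambda$ plays no role here. By the symmetry $z\mapsto\overline z$ (which sends $\zero$ to $\infi$ and conjugates $g$), it is enough to treat $\zero=\{\im z>0\}\cap V^\cap$ and show $\sup_{z\in\zero}\left|\arg g(z)+\frac{\pi}{2}\right|=\frac{3\pi}{8}$. The key observation is that $g$ maps the region $\{\im z>0\}$ into the lower half-plane: indeed $\im g(z)=\im\!\left(\frac{1}{z}\right)-\im z=-\frac{\im z}{\left|z\right|^2}-\im z<0$ when $\im z>0$. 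So $\arg g(z)\in\left]-\pi,0\right[$ on $\zero$, i.e. $\left|\arg g(z)+\frac{\pi}{2}\right|<\frac{\pi}{2}$ automatically; the content of the lemma is the sharper bound $\frac{3\pi}{8}$ and the fact that it is attained. For this I would parametrize the boundary rays of $V^\cap$ --- the relevant ones bounding $\zero$ are the arcs $\arg z=\frac{3\pi}{8}$ and $\arg z=\frac{5\pi}{8}$ --- and compute $\arg g$ along them: on the ray $z=r\ee^{\ii\alpha}$ one has $g(z)=\frac{1}{r}\ee^{-\ii\alpha}-r\ee^{\ii\alpha}$, and a direct computation of $\arg\!\left(\frac{1}{r}\ee^{-\ii\alpha}-r\ee^{\ii\alpha}\right)$ as a function of $r\in\left]0,\infty\right[$ shows it ranges over a closed interval; combining the contributions of the two boundary rays and the supremum over the interior (by the maximum principle applied to the harmonic function $\arg g$, the extremum on the open region $\zero$ is attained on $\partial\zero$) yields $\left|\arg g+\frac{\pi}{2}\right|\leq\frac{3\pi}{8}$ with equality in the limit $r\to 0$ or $r\to\infty$ along $\arg z=\frac{5\pi}{8}$, where $g(z)\sim\frac{1}{z}$ resp. $g(z)\sim-z$ so $\arg g\to -\frac{5\pi}{8}$ resp. $\arg g\to\frac{3\pi}{8}-\pi=-\frac{5\pi}{8}$, i.e. $\arg g+\frac{\pi}{2}\to-\frac{3\pi}{8}$.

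The main obstacle is making the argument computation on the boundary genuinely rigorous rather than a picture: one must check that $r\mapsto\arg g(r\ee^{\ii\alpha})$ is monotone (or at least that its extrema over $r$ occur at $0$ and $\infty$) for each fixed $\alpha\in\left\{\frac{3\pi}{8},\frac{5\pi}{8}\right\}$, so that the supremum over $\zero$ really reduces to the two limiting values $\arg\frac{1}{z}$ and $\arg(-z)$ at the corners of the sector. This is a one-variable calculus exercise --- differentiate $\arg g=\im\log g$ in $r$ and check the sign --- but it is the one place where care is needed; everything else is substitution. The opening $\frac{3\pi}{8}$ then comes out precisely because $\frac{5\pi}{8}-\frac{\pi}{2}=\frac{\pi}{8}$ and $\pi-\frac{5\pi}{8}-\frac{\pi}{2}=-\frac{\pi}{8}$ conspire with the two dominant terms $\frac{1}{z}$ and $-z$ of $g$ near $0$ and near $\infty$ to give the same extreme value $-\frac{3\pi}{8}$ for $\arg g+\frac{\pi}{2}$, which is exactly why the sectors $V^\pm$ were chosen with half-opening $\frac{5\pi}{8}$.
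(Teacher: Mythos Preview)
Your modulus estimate is correct and essentially matches the paper's: both compute $|\tau|$ by expanding $\left|\frac{1}{z}-z\right|$ (the paper writes it as $\frac{|1+t^{2}\ee^{2\ii\eta}|}{\lambda t}\geq\frac{1+\frac{1}{\sqrt{2}}t^{2}}{\lambda t}$, you as $\frac{1}{r^{2}}+r^{2}-2\cos 2\theta$), and both land above $\frac{1}{\lambda}$.

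The argument part, however, has a genuine arithmetic slip that makes the conclusion wrong. Along the boundary ray $\arg z=\frac{5\pi}{8}$ you get, as $r\to 0$, $\arg g\to-\frac{5\pi}{8}$ and hence $\arg g+\frac{\pi}{2}\to-\frac{\pi}{8}$; as $r\to\infty$, $g\sim-z$ gives $\arg g\to\frac{5\pi}{8}-\pi=-\frac{3\pi}{8}$ and hence $\arg g+\frac{\pi}{2}\to+\frac{\pi}{8}$. (You wrote $\frac{3\pi}{8}-\pi=-\frac{5\pi}{8}$ and then $-\frac{3\pi}{8}$ for the final limit; both steps are off.) Doing the same on the other boundary ray $\arg z=\frac{3\pi}{8}$ also yields limits $\pm\frac{\pi}{8}$. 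So your own approach shows $\sup_{z\in\zero}\left|\arg\tau+\frac{\pi}{2}\right|=\frac{\pi}{8}$, not $\frac{3\pi}{8}$, and you will never be able to exhibit the value $\frac{3\pi}{8}$ as claimed. In fact the lemma as stated with an equals sign is imprecise; only the inequality $\leq\frac{3\pi}{8}$ is used downstream (in Corollary~\ref{cor:model_orbits_size} with $\delta=\frac{3\pi}{8}$), and that you do obtain.

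On method: the paper avoids the maximum principle and the boundary/monotonicity analysis altogether. It parametrizes $z=t\,\ii\ee^{\ii\eta}$ with $|\eta|\leq\frac{\pi}{8}$, observes that $\frac{\tau}{-\ii}=\frac{1}{\lambda t}(1+t^{2}\ee^{2\ii\eta})\ee^{-\ii\eta}$, and uses only the elementary fact $\left|\arg(1+t^{2}\ee^{2\ii\eta})\right|\leq 2|\eta|$ to conclude $\left|\arg\frac{\tau}{-\ii}\right|\leq 3|\eta|\leq\frac{3\pi}{8}$ in one line. Your reduction to the boundary plus the worry about monotonicity in $r$ is more machinery than needed; the clean factoring of $1+t^{2}\ee^{2\ii\eta}$ is the step worth adopting.
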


Secondly, we adapt the bounds of Lemma~\ref{lem:usual_model_orbits_size}
for $\delta:=\frac{3\pi}{8}$ to obtain the sought values of $\mathfrak{m}_{\mu}$
and $\mathfrak{t}_{\mu}$ (we recall that $2\cos\frac{3\pi}{8}=\sqrt{2-\sqrt{2}}$):
\begin{align}
\mathfrak{m}_{\mu} & :=\exp\left(2\pi^{2}\left|\re{\mu}\right|+2\pi\im{\mu}\ln\frac{4\left|\im{\mu}\right|}{\ee\sqrt{2-\sqrt{2}}}\right)\label{eq:bounds_on_h}\\
\mathfrak{t}_{\mu} & :=\max\left\{ 1,\frac{\ln\mathfrak{m}_{\mu}}{\pi\sqrt{2-\sqrt{2}}}\right\} \nonumber 
\end{align}
as testified by the concluding result of this section.
\begin{cor}
\label{cor:model_orbits_size}Pick $0<\lambda<\frac{1}{\mathfrak{t}_{\mu}}$
and define
\begin{align*}
R_{\lambda} & :=\inf_{z\in\infi}\left|H_{0}\left(z\right)\right|,\\
r_{\lambda} & :=\sup_{z\in\zero}\left|H_{0}\left(z\right)\right|.
\end{align*}
The following estimate holds:
\begin{align*}
r_{\lambda} & \leq\frac{\mathfrak{m}_{\mu}}{\exp\nf 1{\lambda}}\leq\frac{\exp\nf 1{\lambda}}{\mathfrak{m}_{\mu}}\leq R_{\lambda}.
\end{align*}
In fact, one has more precisely
\begin{align*}
\begin{cases}
\left|H_{0}\left(z\right)\right|<\mathfrak{m}_{\mu}\ee^{-\left|\tau_{\lambda}\left(z\right)\right|} & \text{for }z\in\zero\\
\left|H_{0}\left(z\right)\right|>\frac{1}{\mathfrak{m}_{\mu}}\ee^{\left|\tau_{\lambda}\left(z\right)\right|} & \text{for }z\in\infi
\end{cases} & .
\end{align*}
\end{cor}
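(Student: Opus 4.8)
The plan is to reduce the whole statement to pointwise size estimates for the auxiliary function $\mathfrak{H}\left(\tau\right)=\exp\left(-2\ii\pi\left(\tau+\mu\log\tau\right)\right)$, which by~(\ref{eq:model_first-int}) is related to $H_{0}$ through $H_{0}=\mathfrak{H}\circ\tau_{\lambda}$ with $\tau_{\lambda}\left(z\right):=\frac{1-z^{2}}{\lambda z}$. The first step is to feed in the geometry of the sectors: as $0<\lambda<\nf 1{\mathfrak{t}_{\mu}}\leq1$, Lemma~\ref{lem:model_Fatou_size_shrink} places $\tau_{\lambda}\left(\zero\right)$ inside the truncated sector $\left\{ \tau\ne0~:~\left|\arg\tau+\frac{\pi}{2}\right|\leq\frac{3\pi}{8},~\left|\tau\right|\geq\nf 1\lambda\right\}$ and $\tau_{\lambda}\left(\infi\right)$ inside its mirror image $\left\{ \left|\arg\tau-\frac{\pi}{2}\right|\leq\frac{3\pi}{8},~\left|\tau\right|\geq\nf 1\lambda\right\}$; since moreover $\nf 1\lambda>\mathfrak{t}_{\mu}$, all the values of $\tau$ in play lie beyond the threshold of Lemma~\ref{lem:usual_model_orbits_size}.

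The analytic core is the harmonic identity $\ln\left|\mathfrak{H}\left(\tau\right)\right|=2\pi\im{\tau}+2\pi\re{\mu}\arg\tau+2\pi\im{\mu}\ln\left|\tau\right|$ (the real part of $-2\ii\pi\left(\tau+\mu\log\tau\right)$). On the first truncated sector $\sin\left(\arg\tau\right)\leq-\cos\frac{3\pi}{8}=-\frac{1}{2}\sqrt{2-\sqrt{2}}$ (the bound being attained on the two boundary rays), whence $2\pi\im{\tau}\leq-\pi\sqrt{2-\sqrt{2}}\left|\tau\right|$, while $\left|\arg\tau\right|\leq\frac{7\pi}{8}<\pi$ gives $2\pi\re{\mu}\arg\tau\leq2\pi^{2}\left|\re{\mu}\right|$. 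The numerical fact that makes everything fit is $\pi\sqrt{2-\sqrt{2}}\geq2$: it allows one to peel off a full summand $-\left|\tau\right|$ of decay and swallow the logarithmic term with what remains, by the elementary one-variable inequality $\max_{t>0}\bigl(2\pi\im{\mu}\ln t-(\pi\sqrt{2-\sqrt{2}}-1)t\bigr)\leq2\pi\im{\mu}\ln\frac{4\left|\im{\mu}\right|}{\ee\sqrt{2-\sqrt{2}}}$, whose right-hand side is precisely what the definition~(\ref{eq:bounds_on_h}) of $\mathfrak{m}_{\mu}$ supplies (the numerator $4$, rather than the $2$ coming directly from Lemma~\ref{lem:usual_model_orbits_size} at $\delta=\frac{3\pi}{8}$, being exactly the extra room one needs); its verification reduces once more to $\pi\sqrt{2-\sqrt{2}}\geq2$ when $\im{\mu}>0$, the case $\im{\mu}\leq0$ being handled separately since there $2\pi\im{\mu}\ln\left|\tau\right|\leq0$. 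Adding the three estimates yields $\ln\left|\mathfrak{H}\left(\tau\right)\right|\leq\ln\mathfrak{m}_{\mu}-\left|\tau\right|$, i.e. $\left|H_{0}\left(z\right)\right|<\mathfrak{m}_{\mu}\ee^{-\left|\tau_{\lambda}\left(z\right)\right|}$ for $z\in\zero$. Performing the same computation for $\nf 1{\mathfrak{H}}=\exp\left(2\ii\pi\left(\tau+\mu\log\tau\right)\right)$, where now $\sin\left(\arg\tau\right)\geq\cos\frac{3\pi}{8}$ on the mirror sector $\left|\arg\tau-\frac{\pi}{2}\right|\leq\frac{3\pi}{8}$, gives $\left|H_{0}\left(z\right)\right|>\frac{1}{\mathfrak{m}_{\mu}}\ee^{\left|\tau_{\lambda}\left(z\right)\right|}$ for $z\in\infi$.

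The coarse chain then falls out. Since $\inf_{z\in V^{\cap}}\left|\tau_{\lambda}\left(z\right)\right|\geq\nf 1\lambda$ (Lemma~\ref{lem:model_Fatou_size_shrink}), taking the supremum over $\zero$ and the infimum over $\infi$ of the two pointwise bounds gives $r_{\lambda}\leq\mathfrak{m}_{\mu}\ee^{-\nf 1\lambda}$ and $R_{\lambda}\geq\frac{1}{\mathfrak{m}_{\mu}}\ee^{\nf 1\lambda}$, which are the two outer links; the middle one, $\mathfrak{m}_{\mu}\ee^{-\nf 1\lambda}\leq\frac{1}{\mathfrak{m}_{\mu}}\ee^{\nf 1\lambda}$, is just $\ln\mathfrak{m}_{\mu}\leq\nf 1\lambda$, which holds under the standing hypothesis $\lambda<\nf 1{\mathfrak{t}_{\mu}}$.

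The step I expect to demand the most care is the constant book-keeping in the second paragraph: one must make sure that the term $2\pi\im{\mu}\ln\left|\tau\right|$, which over the \emph{unbounded} sectors is unbounded above when $\im{\mu}>0$, is dominated by the linear decay, \emph{and} that a full $\ee^{\mp\left|\tau\right|}$ is left over once it has been absorbed, in such a way that the remaining constant matches the absolute values built into~(\ref{eq:bounds_on_h}). Everything is calibrated by the choice of half-opening $\frac{3\pi}{8}$, which yields $2\cos\frac{3\pi}{8}=\sqrt{2-\sqrt{2}}$ together with the thin inequality $\pi\sqrt{2-\sqrt{2}}\geq2$ that is invoked twice; a short case split on the sign of $\im{\mu}$ is what closes the optimization in every case.
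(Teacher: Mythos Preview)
Your proof is correct and follows essentially the same route as the paper's: both use Lemma~\ref{lem:model_Fatou_size_shrink} for the sector geometry and the numerical inequality $\pi\sqrt{2-\sqrt{2}}>2$ to peel off the factor $\ee^{\mp|\tau|}$, controlling what remains by the constants of~(\ref{eq:bounds_on_h}). The only cosmetic difference is that the paper squares $|\mathfrak{H}(\tau)|$ and writes it as $|\exp(-2\ii\pi\tau)|\cdot|\exp(-2\ii\pi(\tau+2\mu\log\tau))|$, invoking Lemma~\ref{lem:usual_model_orbits_size} with parameter $2\mu$ for the second factor, whereas you expand the harmonic function $\ln|\mathfrak{H}|$ directly and optimize term by term.
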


\begin{proof}
The constants $\mathfrak{m}_{\mu}^{2}$ and $\mathfrak{t}_{\mu}$
are related to that of Lemma~\ref{lem:usual_model_orbits_size} but
for $2\mu$. Indeed, we have for $\left|\arg\frac{\tau}{\ii}\right|<\frac{3\pi}{8}$:
\begin{align*}
\left|\mathfrak{H}\left(\tau\right)\right|^{2} & =\left|\exp\left(-2\ii\pi\tau\right)\right|\times\left|\exp\left(-2\ii\pi\left(\tau+2\mu\log\tau\right)\right)\right|\\
 & \geq\exp\left(\left|\tau\right|\pi\sqrt{2-\sqrt{2}}\right)\times\left|\exp\left(-2\ii\pi\left(\tau+2\mu\log\tau\right)\right)\right|
\end{align*}
which becomes (since $\pi\sqrt{2-\sqrt{2}}>2$):
\begin{align*}
\left|\mathfrak{H}\left(\tau\right)\right|^{2} & \geq\exp\left(2\left|\tau\right|\right)\frac{1}{\mathfrak{m}_{\mu}^{2}}
\end{align*}
whenever $\left|\tau\right|>\mathfrak{t}_{\mu}$. The latter condition
is ensured as soon as $\frac{1}{\lambda}>t_{\mu}$ thanks to Lemma~\ref{lem:model_Fatou_size_shrink}.
Under this hypothesis we finally obtain:
\begin{align*}
\left|H_{0}\left(z\right)\right|=\left|\mathfrak{h}\left(\tau\right)\right| & \geq\frac{1}{\mathfrak{m}_{\mu}}\exp\left|\tau\right|\geq\frac{\exp\nf 1{\lambda}}{\mathfrak{m}_{\mu}}.
\end{align*}
The case $\left|\arg\frac{\tau}{-\ii}\right|<\frac{3\pi}{8}$ is similar.
\end{proof}

\subsection{\label{subsec:Adapted_functions}Functions adapted to a data}

Let us transport in functions space the previous discussion.
\begin{defn}
\label{def:function_space}As in the beginning of the section we define
the intersection
\begin{align*}
V^{\cap} & :=V^{+}\cap V^{-}=\zero\sqcup\infi.
\end{align*}
\begin{enumerate}
\item Let $U\subset\cc$ be a domain. We introduce the Banach space
\begin{align*}
\holb[U]
\end{align*}
of holomorphic, bounded functions $f$ on $U$ with continuous extension
to the closure $\adh U$, equipped with the sup norm
\begin{align*}
\norm[f]U & :=\sup\left|f\left(U\right)\right|.
\end{align*}
\item As a particular case we will be interested in 
\begin{align*}
\mathcal{S}\left(V^{\pm}\right) & :=\left\{ f^{\pm}\in\holb[V^{\pm}]~:~\begin{array}{l}
f^{\pm}\left(0\right)=0\\
f^{\pm}\left(\infty\right)=0
\end{array}~,~\norm[f^{\pm}]{}:=\sup_{z\in V^{\pm}}\left|f^{\pm}\left(z\right)\right|<\infty\right\} .
\end{align*}
\item We define $\mathcal{S}$ as the space of pairs $f=\left(f^{+},f^{-}\right)$
with 1-flat difference in $V^{\cap}$, both at $0$ and $\infty$:
\begin{align*}
\mathcal{S} & :=\left\{ f\in\mathcal{S}\left(V^{+}\right)\times\mathcal{S}\left(V^{-}\right)~:~\lim_{z\to0,\infty}\left|z\right|\ln\left|f^{-}\left(z\right)-f^{+}\left(z\right)\right|<0\text{ for }z\in V^{\cap}\right\} 
\end{align*}
equipped with the canonical product Banach norm. We denote by $\mathcal{B}$
its unit ball.
\item For $f\in\mathcal{S}$ we define the associated \textbf{sectorial
first-integral} $H_{f}=\left(H_{f}^{+},H_{f}^{-}\right)$ given by
\begin{align*}
H_{}^{\pm} & :=H_{0}\exp\left(2\ii\pi f^{\pm}\right).
\end{align*}
\item Let $\varphi=\left(\zero[\varphi],\infi[\varphi]\right)$ be given.
We say that $\left(\lambda,f\right)$ is \textbf{adapted} to $\varphi$
whenever $\adh{H_{f}^{+}\left({\isect}\right)}$ is included in the
(open) disc of convergence of $\isect[\varphi]$ for $\sharp\in\left\{ 0,\infty\right\} $.
We define for all $\lambda>0$
\begin{align*}
\adapt & :=\left\{ f\in\mathcal{S}~:~\left(\lambda,f\right)\text{ is adapted to }\varphi\right\} .
\end{align*}
\end{enumerate}
\end{defn}

We have, with a corresponding estimate for $\infi[\varphi]$:
\begin{align}
\left(\forall h\in\neigh\right)~~~~~\left|\zero[\varphi]\left(h\right)\right| & \leq\left|h\right|\norm[\ddd{\zero[\varphi]}h]{\neigh}<+\infty.\label{eq:estim_phi}
\end{align}

\begin{prop}
\label{lem:adapted_open_condition}Choose a data $\varphi=\left(\zero[\varphi],\infi[\varphi]\right)$.
The subspace $\adapt$ is an open set of $\mathcal{S}$ and, if $\lambda$
is small enough, it contains the unit ball:
\begin{align*}
\mathcal{B} & \subset\adapt.
\end{align*}
More generally, being given $f\in\mathcal{S}$ it is always possible
to take $\lambda$ small enough to ensure that $f\in\adapt$. (The
following remark gives quantitative bounds.)
\end{prop}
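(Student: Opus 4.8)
The plan is to turn the adaptedness condition of Definition~\ref{def:function_space} into two plain sup-norm inequalities on the sectorial first-integral $H_{f}^{+}$ and then run a routine continuity/estimate argument, the quantitative input being Corollary~\ref{cor:model_orbits_size}. First I would reformulate. Recall $H_{f}^{\pm}=H_{0}\exp\left(2\ii\pi f^{\pm}\right)$ and that $\zero[\varphi]$ (resp. $\infi[\varphi]$) converges exactly on the open disc $\left\{ \left|h\right|<\zero[\rho]\right\}$ (resp., in the chart at $\infty$, on $\left\{ \left|h\right|>\frac{1}{\infi[\rho]}\right\} \cup\left\{ \infty\right\}$). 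On $\zero$ the function $H_{0}$ is holomorphic and, by the flatness estimates~(\ref{eq:estim_H0_0})--(\ref{eq:estim_H0_inf}), extends continuously to the compact set $\adh{\zero}\subset\cbar$ with vanishing limits at $0$ and $\infty$; symmetrically $\frac{1}{H_{0}}\in\holb[\infi]$. Hence $H_{f}^{+}\in\holb[\zero]$ and $\frac{1}{H_{f}^{+}}\in\holb[\infi]$ for every $f\in\mathcal{S}$, the sets $\adh{H_{f}^{+}\left(\zero\right)}$, $\adh{H_{f}^{+}\left(\infi\right)}$ are compact in $\cbar$, and --- reading the inclusion at $\infty$ inside $\cbar$ as disjointness from the closed set $\left\{ \left|h\right|\leq\frac{1}{\infi[\rho]}\right\}$ --- one gets
\[ f\in\adapt\iff\norm[H_{f}^{+}]{\zero}<\zero[\rho]\ \text{ and }\ \norm[\frac{1}{H_{f}^{+}}]{\infi}<\infi[\rho]. \]

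With this reformulation, openness reduces to continuity (in fact local Lipschitzness) of the maps $f\mapsto H_{f}^{+}\in\holb[\zero]$ and $f\mapsto\frac{1}{H_{f}^{+}}\in\holb[\infi]$. This follows from the elementary bound $\left|\exp\left(2\ii\pi a\right)-\exp\left(2\ii\pi b\right)\right|\leq2\pi\left|a-b\right|\exp\left(2\pi\max\left(\left|a\right|,\left|b\right|\right)\right)$ together with the identities $H_{f}^{+}=H_{0}\exp\left(2\ii\pi f^{+}\right)$, $\frac{1}{H_{f}^{+}}=\left(\frac{1}{H_{0}}\right)\exp\left(-2\ii\pi f^{+}\right)$ and the boundedness of $H_{0}$ on $\zero$ (resp. of $\frac{1}{H_{0}}$ on $\infi$). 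Then $\adapt$ is the preimage of the open set $\left(-\infty,\zero[\rho]\right)\times\left(-\infty,\infi[\rho]\right)$ under the continuous map $f\mapsto\left(\norm[H_{f}^{+}]{\zero},\norm[\frac{1}{H_{f}^{+}}]{\infi}\right)$, hence open.

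For the quantitative clauses I would feed in Corollary~\ref{cor:model_orbits_size}: for $0<\lambda<\frac{1}{\mathfrak{t}_{\mu}}$ one has $\norm[H_{0}]{\zero}\leq\mathfrak{m}_{\mu}\exp\left(-\frac{1}{\lambda}\right)$ and $\norm[\frac{1}{H_{0}}]{\infi}\leq\mathfrak{m}_{\mu}\exp\left(-\frac{1}{\lambda}\right)$. Since $\left|\exp\left(\pm2\ii\pi f^{+}\right)\right|\leq\exp\left(2\pi\norm[f]{}\right)$ pointwise, this yields
\[ \max\left\{ \norm[H_{f}^{+}]{\zero},\norm[\frac{1}{H_{f}^{+}}]{\infi}\right\} \leq\mathfrak{m}_{\mu}\exp\left(2\pi\norm[f]{}-\frac{1}{\lambda}\right). \]
For $f\in\mathcal{B}$ we have $\norm[f]{}\leq1$, so the right-hand side is $<\min\left\{ \zero[\rho],\infi[\rho]\right\}$ as soon as $\frac{1}{\lambda}>2\pi+\ln\frac{\mathfrak{m}_{\mu}}{\min\left\{ \zero[\rho],\infi[\rho]\right\} }$; combined with $\frac{1}{\lambda}>\mathfrak{t}_{\mu}$ and $\lambda\leq1$ this is precisely $\lambda\leq\ell\left(\psi\right)$, and therefore $\mathcal{B}\subset\adapt$ for all such $\lambda$. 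For a fixed $f\in\mathcal{S}$ the displayed bound tends to $0$ as $\lambda\to0^{+}$, so $f\in\adapt$ whenever $\frac{1}{\lambda}>\max\left\{ 1,\mathfrak{t}_{\mu},2\pi\norm[f]{}+\ln\frac{\mathfrak{m}_{\mu}}{\min\left\{ \zero[\rho],\infi[\rho]\right\} }\right\}$.

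I do not expect a serious difficulty here. The two places calling for care are (i) reading the inclusion $\adh{H_{f}^{+}\left(\infi\right)}\subset\left\{ \text{disc of convergence of }\infi[\varphi]\right\}$ correctly \emph{inside} $\cbar$ --- it is a neighbourhood of $\infty$, handled in the chart $\frac{1}{h}$ --- and (ii) checking that $f\mapsto H_{f}^{+}$ and $f\mapsto\frac{1}{H_{f}^{+}}$ genuinely take values in, and depend continuously on $f$ into, the Banach spaces $\holb[\zero]$, $\holb[\infi]$, which rests on the boundedness of $H_{0}$ and $\frac{1}{H_{0}}$ furnished by Corollary~\ref{cor:model_orbits_size}.
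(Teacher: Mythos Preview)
Your argument is correct and follows essentially the same route as the paper: reformulate adaptedness as the pair of sup-norm inequalities $\norm[H_{f}^{+}]{\zero}<\zero[\rho]$ and $\norm[\frac{1}{H_{f}^{+}}]{\infi}<\infi[\rho]$, use continuity of $f\mapsto H_{f}^{+}$ for openness, and feed in Corollary~\ref{cor:model_orbits_size} for the quantitative bounds. Your write-up is in fact more explicit than the paper's, which simply asserts continuity and then quotes the preceding Remark for the inequality $2\pi\norm[f]{}<\frac{1}{\lambda}+\ln\frac{\min\{\zero[\rho],\infi[\rho]\}}{\mathfrak{m}_{\mu}}$.
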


\begin{rem}
Denote by $\rho^{\sharp}\in]0,+\infty]$ for $\sharp\in\left\{ 0,\infty\right\} $
the radius of convergence of $\varphi^{\sharp}$. Quantitatively the
following conditions ensure that $f\in\adapt$
\begin{align*}
\lambda & <\frac{1}{\mathfrak{t}_{\mu}}\\
2\pi\norm[f]{} & <\frac{1}{\lambda}+\ln\frac{\min\left\{ \zero[\rho],\infi[\rho]\right\} }{\mathfrak{m}_{\mu}},
\end{align*}
since according to Corollary~\ref{cor:model_orbits_size} this implies
\begin{align*}
\inf\left|H_{f}^{+}\left(\infi\right)\right| & >\frac{1}{\infi[\rho]}\\
\sup\left|H_{f}^{+}\left(\zero[V]\right)\right| & <\zero[\rho]~.
\end{align*}
In particular $\isect[\varphi]\circ H_{f}^{+}\in\holb[{\isect}]$.
\end{rem}

\begin{proof}
The mapping $f\in\mathcal{S}\mapsto H_{f}^{+}\in\holb[{\isect}]$
is continuous, and for each $f\in\adapt$ the set $\adh{H_{f}^{+}\left({\isect}\right)}$
is compact in $\mathcal{S}{}_{\isect[\varphi]}$. Thus $\adapt$ is
open in $\mathcal{S}$. The remark just above precisely states that
for any $r>0$ the ball $r\mathcal{B}$ is included in $\adapt$ whenever
\begin{align*}
\frac{1}{\lambda}+\ln\frac{\min\left\{ \zero[\rho],\infi[\rho]\right\} }{\mathfrak{m}_{\mu}} & >2\pi r.
\end{align*}
\end{proof}

\subsection{\label{subsec:Cauchy-Heine}Cauchy-Heine transform}

\begin{wrapfigure}{l}{2.5cm}%
\hfill{}\includegraphics[width=2cm]{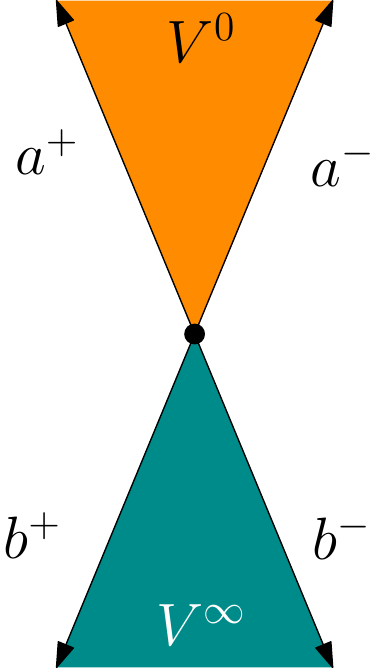}\hfill{}

\end{wrapfigure}%
Now that $H_{0}$ and the sectors have been defined we tackle the
Cousin problem itself with data $\varphi:=\left(\zero[\varphi],\infi[\varphi]\right)$
as in~$\left(\star\right)$. Let us define the integral transform
which is the key to the construction.
\begin{defn}
Let $\varphi:=\left(\zero[\varphi],\infi[\varphi]\right)$ be given.
Assume that $\left(\lambda,f\right)$ is adapted to it. Let $a^{\pm},~b^{\pm}$
be the outward-going half-lines making up the boundary $\partial V^{\pm}$
as in the side figure. We define $\Lambda_{f}:=\left(\Lambda_{f}^{+},\Lambda_{f}^{-}\right)$
where
\begin{align*}
\Lambda_{f}^{\pm}\left(z\right) & :=\frac{\sqrt{z}}{2\ii\pi}\int_{a^{\pm}}\frac{\zero[\varphi]\left(H_{f}^{+}\left(\xi\right)\right)}{\sqrt{\xi}\left(\xi-z\right)}\dd{\xi}-\frac{\sqrt{z}}{2\ii\pi}\int_{b^{\pm}}\frac{\infi[\varphi]\left(H_{f}^{+}\left(\xi\right)\right)}{\sqrt{\xi}\left(\xi-z\right)}\dd{\xi}.
\end{align*}
\end{defn}

Before stating anything about $\Lambda_{f}$ we need to get convinced
that it is well-defined. For the sake of example, let us deal with
the $\int_{a}$-part. According to the identity~(\ref{eq:estim_phi})
we may find some $C=\norm[\ddd{\varphi}h]{H_{f}^{+}\left(V^{\cap}\right)}>0$
such that for all $h\in H_{f}^{+}\left(a^{\pm}\right)$ we have
\begin{align*}
\left|\zero[\varphi]\left(h\right)\right| & \leq C\left|h\right|.
\end{align*}
Since we have for all $\xi\in a$
\begin{align}
\left|H_{f}^{+}\left(\xi\right)\right| & \leq\left|H_{0}\left(\xi\right)\right|\exp\left(2\pi\norm[f^{+}]{\zero[V]}\right),\label{eq:estim_H}
\end{align}
and because $\left|H_{0}\right|$ is flat at $0$ and $\infty$ along
$a^{\pm}$ and $b^{\pm}$, the integrals defining $\Lambda_{f}^{\pm}$
are absolutely convergent. That being said, as we want to control
the magnitude of $\left|\Lambda_{f}^{\pm}\right|$ with respect to
both the point $z\in V^{\pm}$ and the parameter $\lambda$, we need
to work a little bit more.
\begin{lem}
\label{lem:model_integrable}The model first-integral $H_{0}$ as
given in~(\ref{eq:model_first-int}) satisfies
\begin{align*}
\left(\forall z~:~\re z\geq0\right)~~~~~~\int_{a^{+}}\left|\frac{\sqrt{z}H_{0}\left(\xi\right)\dd{\xi}}{\sqrt{\xi}\left(\xi-z\right)}\right| & <\frac{3\mathfrak{m}_{\mu}}{2}\lambda^{2}.
\end{align*}
Identical bounds for $\int_{a^{-}}$ and $\int_{b^{\pm}}$ also hold.
\end{lem}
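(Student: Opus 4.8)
The plan is to estimate the integral head-on: bound the Cauchy kernel by a constant multiple of $\frac{1}{t}$ along the ray, bound the factor $H_{0}$ by the super-exponentially small quantity furnished by Corollary~\ref{cor:model_orbits_size}, and then reduce to a one-dimensional integral. Parametrise $a^{+}$ by $\xi=t\ee^{\ii\theta_{0}}$, $t\in\left(0,+\infty\right)$, with $\theta_{0}=\frac{5\pi}{8}$ (the value $\theta_{0}=-\frac{5\pi}{8}$ being handled by complex conjugation), so that $a^{+}$ is the edge of $V^{+}$ bordering the upper component $\zero$ of $V^{\cap}$. Since $\re z\geq0$ forces $\left|\arg z\right|\leq\frac{\pi}{2}$ while $\arg\xi=\frac{5\pi}{8}$, the arguments of $\xi$ and $z$ differ by at least $\frac{5\pi}{8}-\frac{\pi}{2}=\frac{\pi}{8}$; this is precisely why the sectors were given the half-opening $\frac{\pi}{2}+\frac{\pi}{8}$. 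From $\re{\xi}=-t\sin\frac{\pi}{8}$ and $\re z\geq0$ one gets $\left|\xi-z\right|\geq t\sin\frac{\pi}{8}$, and since the distance from $z$ to the ray $\rr_{\geq0}\ee^{\ii\theta_{0}}$ is $\geq\left|z\right|\sin\frac{\pi}{8}$ one also gets $\left|\xi-z\right|\geq\left|z\right|\sin\frac{\pi}{8}$; hence $\left|\xi-z\right|\geq\sin\frac{\pi}{8}\sqrt{t\left|z\right|}$ and
\begin{align*}
\left|\frac{\sqrt{z}}{\sqrt{\xi}\left(\xi-z\right)}\right| & \leq\frac{1}{\sin\frac{\pi}{8}}\cdot\frac{1}{t}.
\end{align*}

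Next comes the flatness of $H_{0}$ on $a^{+}$: there $\tau_{\lambda}\left(\xi\right)=\frac{1}{\lambda}\left(t^{-1}\ee^{-\ii\theta_{0}}-t\ee^{\ii\theta_{0}}\right)$, so
\begin{align*}
\left|\tau_{\lambda}\left(\xi\right)\right|^{2} & =\frac{1}{\lambda^{2}}\left(t^{2}+t^{-2}-2\cos2\theta_{0}\right)=\frac{1}{\lambda^{2}}\left(t^{2}+t^{-2}+\sqrt{2}\right),
\end{align*}
which exceeds $\frac{1}{\lambda^{2}}\geq\mathfrak{t}_{\mu}^{2}$ everywhere (this is the computation underlying Lemma~\ref{lem:model_Fatou_size_shrink}, now on a boundary ray of $\zero$, under the standing assumption $\lambda<\frac{1}{\mathfrak{t}_{\mu}}$). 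As $a^{+}\subset\adh{\zero}$, Corollary~\ref{cor:model_orbits_size} applies up to the boundary by continuity and gives $\left|H_{0}\left(\xi\right)\right|\leq\mathfrak{m}_{\mu}\ee^{-\left|\tau_{\lambda}\left(\xi\right)\right|}$; together with $t^{2}+t^{-2}\geq\frac{1}{2}\left(t+t^{-1}\right)^{2}$ this yields the working bound $\left|H_{0}\left(\xi\right)\right|\leq\mathfrak{m}_{\mu}\exp\left(-\frac{t+t^{-1}}{\sqrt{2}\lambda}\right)$.

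Combining the two estimates and integrating in $t$,
\begin{align*}
\int_{a^{+}}\left|\frac{\sqrt{z}\,H_{0}\left(\xi\right)\dd{\xi}}{\sqrt{\xi}\left(\xi-z\right)}\right| & \leq\frac{\mathfrak{m}_{\mu}}{\sin\frac{\pi}{8}}\int_{0}^{+\infty}\frac{1}{t}\exp\left(-\frac{t+t^{-1}}{\sqrt{2}\lambda}\right)\dd t=\frac{2\mathfrak{m}_{\mu}}{\sin\frac{\pi}{8}}K_{0}\left(\frac{\sqrt{2}}{\lambda}\right),
\end{align*}
where $K_{0}$ is the modified Bessel function (equivalently, the integrand being symmetric under $t\mapsto\frac{1}{t}$, a direct Laplace-type estimate on $\left(0,1\right]$ gives the same); the elementary bound $K_{0}\left(x\right)\leq\sqrt{\frac{\pi}{2x}}\ee^{-x}$ then reduces the right-hand side to $\frac{\sqrt{\pi\sqrt{2}\,\lambda}}{\sin\frac{\pi}{8}}\,\mathfrak{m}_{\mu}\,\ee^{-\sqrt{2}/\lambda}$. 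Finally, throughout the admissible range $0<\lambda<\frac{1}{\mathfrak{t}_{\mu}}\leq1$ one has $\frac{\sqrt{\pi\sqrt{2}}}{\sin\frac{\pi}{8}}\sqrt{\lambda}\,\ee^{-\sqrt{2}/\lambda}<\frac{3}{2}\lambda^{2}$ (a one-variable check in $s=\frac{1}{\lambda}\geq1$), so the asserted bound $\int_{a^{+}}\left|\cdots\right|<\frac{3\mathfrak{m}_{\mu}}{2}\lambda^{2}$ holds.

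The remaining three integrals $\int_{a^{-}}$, $\int_{b^{\pm}}$, over the edges of $V^{\pm}$ at angles $\pm\frac{3\pi}{8}$ and $-\frac{5\pi}{8}$, are treated verbatim. For the edges $a^{-},b^{-}$ of $V^{-}$ one simply replaces the half-plane $\left\{\re z\geq0\right\}$ by $\left\{\re z\leq0\right\}$, which is again exactly the region lying at angular distance $\geq\frac{\pi}{8}$ from those rays; for the edges $b^{\pm}$, which border the lower component $\infi$, the relevant (and bounded) integral is $\int_{b^{\pm}}\bigl|\tfrac{\sqrt{z}\,\dd{\xi}}{\sqrt{\xi}\left(\xi-z\right)H_{0}\left(\xi\right)}\bigr|$, the flatness invoked being now that of $\frac{1}{H_{0}}$ on $\infi$, the other half of Corollary~\ref{cor:model_orbits_size} (equivalently, one conjugates the $a^{+}$-case by the symmetry $\sigma$ of $H_{0}$, which swaps $V^{+}$ and $V^{-}$). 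The only genuinely delicate point in the whole argument is making the estimate uniform in $\lambda$ and in the unbounded variable $z$ simultaneously; this is handled entirely by the flatness bound, which decays like $\ee^{-c/\lambda}$ and therefore, after the Bessel estimate, beats the target $\lambda^{2}$ with room to spare.
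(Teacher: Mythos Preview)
Your proof is correct and follows the same overall strategy as the paper's: bound the Cauchy kernel by $\frac{C}{t}$ along the ray, invoke the flatness estimate on $H_{0}$ from Corollary~\ref{cor:model_orbits_size}, and reduce to a one-variable integral in $t$. The execution differs only in two technical details. For the kernel, the paper reduces to $z\in\ii\rr_{>0}$ (using $\left|\xi-z\right|\geq\left|\xi-\ii\left|z\right|\right|$) and then computes $\sup_{\xi\in a}\frac{\sqrt{\left|\xi\right|}}{\left|\xi-z\right|}=\frac{1}{\sqrt{2\left|z\right|}\sqrt{1-\cos\frac{\pi}{8}}}$ directly, whereas you use the cleaner geometric-mean bound $\left|\xi-z\right|\geq\sin\frac{\pi}{8}\,\max\!\left(t,\left|z\right|\right)\geq\sin\frac{\pi}{8}\sqrt{t\left|z\right|}$; the two constants differ by a factor $\cos\frac{\pi}{16}$. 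For the $t$-integral, the paper splits $\frac{1}{t}\ee^{-1/(\lambda t)-t/(\sqrt{2}\lambda)}\leq\max_{t>0}\frac{1}{t\ee^{1/(\lambda t)}}\cdot\ee^{-t/(\sqrt{2}\lambda)}$ and integrates the Laplace tail, landing on $\frac{\sqrt{2}\lambda^{2}}{\ee}$, while you recognise the symmetric integral as $2K_{0}\!\left(\frac{\sqrt{2}}{\lambda}\right)$ and apply the standard bound $K_{0}\left(x\right)\leq\sqrt{\frac{\pi}{2x}}\ee^{-x}$. Both routes deliver the required $\frac{3}{2}\mathfrak{m}_{\mu}\lambda^{2}$ with a small margin at $\lambda=1$. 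Your reading of the last sentence of the lemma---that on $b^{\pm}$ the ``identical bound'' is for the integral with $\frac{1}{H_{0}}$ in place of $H_{0}$---is the intended one, as the application in Proposition~\ref{prop:cauchy-heine} makes clear.
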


We are now ready to prove the main properties of the operator $\Lambda_{f}$.
\begin{prop}
\label{prop:cauchy-heine}Let $\varphi:=\left(\zero[\varphi],\infi[\varphi]\right)$
be given and assume that $\left(\lambda,f\right)\in\rr_{>0}\times\mathcal{S}$
is adapted to it. Let $\Lambda_{f}^{\pm}$ be as in the definition
and recall that $\sigma$ is the involution $z\mapsto\frac{-1}{z}$.
\begin{enumerate}
\item For all $z\in V^{\pm}$ we have
\begin{align*}
\Lambda_{f}^{\mp}\circ\sigma\left(z\right) & =-\Lambda_{f\circ\sigma}^{\pm}\left(z\right).
\end{align*}
\item For all $z\in V^{\cap}$ we have
\begin{align*}
\Lambda_{f}^{-}-\Lambda_{f}^{+} & =\begin{cases}
\zero[\varphi]\circ H_{f}^{+} & \text{on }\zero\\
\infi[\varphi]\circ H_{f}^{+} & \text{on }\infi
\end{cases}.
\end{align*}
\item $\Lambda_{f}^{\pm}\in\holb[V^{\pm}]$ and 
\begin{align*}
\norm[\Lambda_{f}^{\pm}]{V^{\pm}} & \leq4\mathfrak{m}_{\mu}\lambda^{2}\times\ee^{2\pi\norm[f^{+}]{V^{\cap}}}\times\max\left\{ \norm[\ddd{\zero[\varphi]}h]{H_{f}^{+}\left({\zero[V]}\right)},\norm[\ddd{\infi[\varphi]}h]{H_{f}^{+}\left({\infi}\right)}\right\} .
\end{align*}
\end{enumerate}
\end{prop}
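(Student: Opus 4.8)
The plan is to establish the three items in the order (2), (1), (3): item~(2) carries the content, while (1) is a change of variable and (3) an estimate resting on Lemma~\ref{lem:model_integrable}. Recall first that the four integrals defining $\Lambda_{f}^{\pm}$ converge absolutely, since along $a^{\pm}$ and $b^{\pm}$ the kernel contains the integrable factor $1/\sqrt\xi$ near $0$ and decays like $\xi^{-3/2}$ near $\infty$, while the densities $\zero[\varphi]\circ H_{f}^{+}$ and $\infi[\varphi]\circ H_{f}^{+}$ are bounded there --- indeed flat at both ends, by Corollary~\ref{cor:model_orbits_size} together with $\zero[\varphi](0)=\infi[\varphi](\infty)=0$ --- exactly as checked just before the statement.

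For item~(2), fix $z$ in one component of $V^{\cap}$, say $z\in\zero$, and group $\Lambda_{f}^{-}(z)-\Lambda_{f}^{+}(z)$ according to which component of $\varphi$ each term carries. The two $\zero[\varphi]$-integrals, over $a^{-}$ (from $\Lambda_{f}^{-}$) and over $a^{+}$ (from $\Lambda_{f}^{+}$), have the \emph{same} integrand $\frac{\sqrt z}{\sqrt\xi}\cdot\frac{\zero[\varphi]\circ H_{f}^{+}(\xi)}{\xi-z}$, and the rays $a^{-},a^{+}$ are precisely the two outward sides of the wedge $\zero$. Closing $a^{-}-a^{+}$ with circular arcs of radii $\varepsilon\to0$ and $R\to\infty$ into the positively oriented boundary of $\zero$ --- the arcs dropping out in the limit by the $\sqrt z$-factor and the flatness of $\zero[\varphi]\circ H_{f}^{+}$ on $\overline{\zero}$ --- the residue theorem gives $\zero[\varphi]\bigl(H_{f}^{+}(z)\bigr)$, the prefactor $\sqrt z/\sqrt\xi$ being $1$ at $\xi=z$. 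The two $\infi[\varphi]$-integrals, over $b^{-},b^{+}$, combine likewise into an integral over $\partial\infi$, which vanishes by Cauchy's theorem since $z\in\zero$ lies outside $\infi$. Hence $\Lambda_{f}^{-}-\Lambda_{f}^{+}=\zero[\varphi]\circ H_{f}^{+}$ on $\zero$, and the computation on $\infi$ is symmetric, the roles of the $a$- and $b$-contours and of $\zero[\varphi],\infi[\varphi]$ being exchanged. The one delicate point is the branch of $\sqrt{\cdot}$: a holomorphic branch exists on each $V^{\pm}$ (whose opening $\tfrac{5\pi}{4}$ is below $2\pi$), but those on $V^{+}$ and on $V^{-}$ need not coincide on all of $V^{\cap}$; this is harmless, because the integrand sees the branch only through the ratio $\sqrt z/\sqrt\xi=\exp\tfrac12(\log z-\log\xi)$, which is branch-independent once $z,\xi$ lie in a common simply connected region, and in each $\Lambda_{f}^{\pm}$ the outer $\sqrt z$ is taken in the same branch as the inner $\sqrt\xi$.

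For item~(1), apply to all four integrals the substitution $\xi=\sigma(\eta)=-\nf1\eta$: one has $\dd{\eta}$-form $\dd{\xi}=\eta^{-2}\dd{\eta}$, the involution $\sigma$ exchanges $a^{+}\leftrightarrow a^{-}$ and $b^{+}\leftrightarrow b^{-}$ reversing orientation, and $H_{0}\circ\sigma=H_{0}$ by the $\sigma$-invariance of the model recorded in Section~\ref{subsec:choice_X0}, so that the density $\zero[\varphi]\circ H_{f}^{+}$ is carried to the corresponding density for $f\circ\sigma$ (the obvious involution of $\mathcal{S}$ obtained by precomposing the two components with $\sigma$ and swapping them). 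A one-line computation gives the kernel identity $\dfrac{\sqrt{\sigma(z)}\,\dd{\xi}}{\sqrt\xi\,(\xi-\sigma(z))}=\dfrac{\sqrt z\,\dd{\eta}}{\sqrt\eta\,(\eta-z)}$; feeding this in, the lone orientation reversal produces the overall minus sign and the identity $\Lambda_{f}^{\mp}\circ\sigma=-\Lambda_{f\circ\sigma}^{\pm}$. For item~(3), holomorphy of $\Lambda_{f}^{\pm}$ on the open sector $V^{\pm}$ is immediate --- $z$ stays off $a^{\pm}\cup b^{\pm}$, so one differentiates under the integral sign with domination uniform on compacta --- while continuity up to $\overline{V^{\pm}}$ is the classical boundary behaviour of Cauchy-type integrals along rays, and $\Lambda_{f}^{\pm}(0)=\Lambda_{f}^{\pm}(\infty)=0$ because the prefactor $\sqrt z$ kills the value at $0$ and $|\sqrt z/(\xi-z)|=\OO{|z|^{-1/2}}$ uniformly along the rays as $z\to\infty$ in $V^{\pm}$; thus in fact $\Lambda_{f}^{\pm}\in\mathcal{S}(V^{\pm})$, a fortiori in $\holb[V^{\pm}]$. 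For the norm, bound the $a^{\pm}$-density for $\xi\in a^{\pm}\subset\overline{\zero}$ by $|\zero[\varphi](H_{f}^{+}(\xi))|\leq\norm[\ddd{\zero[\varphi]}h]{H_{f}^{+}(\zero[V])}\,\ee^{2\pi\norm[f^{+}]{V^{\cap}}}|H_{0}(\xi)|$ using~(\ref{eq:estim_phi}) and~(\ref{eq:estim_H}), pull the constants out, and invoke Lemma~\ref{lem:model_integrable} (using item~(1) to reach the part of $V^{\pm}$ with $\re z<0$) to bound $\tfrac1{2\pi}\int_{a^{\pm}}|\sqrt z\,H_{0}(\xi)\,\dd{\xi}/(\sqrt\xi(\xi-z))|$ by $\tfrac{3}{4\pi}\mathfrak{m}_{\mu}\lambda^{2}$; the same for the $b^{\pm}$-integral with $\infi[\varphi]$ and $\norm[\ddd{\infi[\varphi]}h]{H_{f}^{+}(\infi)}$, and adding the two contributions gives $\norm[\Lambda_{f}^{\pm}]{V^{\pm}}\leq\tfrac{3}{2\pi}\mathfrak{m}_{\mu}\lambda^{2}\,\ee^{2\pi\norm[f^{+}]{V^{\cap}}}\max\{\cdots\}$, comfortably inside the asserted $4\mathfrak{m}_{\mu}\lambda^{2}\,\ee^{2\pi\norm[f^{+}]{V^{\cap}}}\max\{\cdots\}$.

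The main obstacle is the contour bookkeeping in item~(2): tracking the orientations of $a^{\pm},b^{\pm}$ against the wedges $\zero,\infi$, keeping the $\sqrt{\cdot}$-branches consistent between the two half-integrals, and above all verifying that the closing arcs at $0$ and $\infty$ truly vanish --- which is exactly where the tailored choice of $X_{0}$ and of the sectors, i.e. the exponential flatness of $H_{0}$ from Corollary~\ref{cor:model_orbits_size}, is used. Once these are pinned down, the residue computation and the estimates are routine, and Proposition~\ref{prop:reduction} can then be fed this $\Lambda_{f}$ as its Cauchy--Heine operator.
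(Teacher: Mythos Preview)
Your arguments for items~(1) and~(2) are correct and coincide with the paper's: the substitution $\xi=\sigma(\eta)$ for the $\sigma$-covariance, and closing the contours $a^{-}-a^{+}$ and $b^{+}-b^{-}$ into $\partial\zero$ and $\partial\infi$ using the flatness of $\isect[\varphi]\circ H_{f}^{+}$ at $0$ and $\infty$. Your discussion of the branch of $\sqrt{\cdot}$ is in fact more careful than the paper's.

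There is, however, a genuine gap in item~(3). Lemma~\ref{lem:model_integrable} controls $\int_{a^{+}}$ and $\int_{b^{+}}$ only for $\re z\geq0$, and by the evident reflection $\int_{a^{-}}$ and $\int_{b^{-}}$ only for $\re z\leq0$; note for instance that $a^{-}$ has argument $\tfrac{3\pi}{8}$ and lies inside $\{\re z>0\}$, so for such $z$ the quantity $\sup_{\xi\in a^{-}}\sqrt{|\xi|}/|\xi-z|$ is unbounded. This yields the estimate for $\Lambda_{f}^{+}$ only on $V^{+}\cap\{\re z\geq0\}$ and for $\Lambda_{f}^{-}$ only on $V^{-}\cap\{\re z\leq0\}$. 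You invoke item~(1) to reach the remaining half $V^{+}\cap\{\re z<0\}$, but it does not help: since $\re{\sigma(z)}=-\re z/|z|^{2}$, the involution sends $V^{+}\cap\{\re z<0\}$ to $V^{-}\cap\{\re z>0\}$, which is again the \emph{uncontrolled} half for $\Lambda^{-}$. Item~(1) merely trades one hard region for another.

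The paper closes this gap with item~(2), not item~(1). For $z\in V^{+}$ with $\re z<0$ one has $z\in V^{\cap}$, hence $\Lambda_{f}^{+}(z)=\Lambda_{f}^{-}(z)-\isect[\varphi]\bigl(H_{f}^{+}(z)\bigr)$. The first term is now controlled since $z\in V^{-}\cap\{\re z\leq0\}$; the second is bounded, via~(\ref{eq:estim_phi}),~(\ref{eq:estim_H}) and Corollary~\ref{cor:model_orbits_size}, by $\norm[\ddd{\isect[\varphi]}h]{}\,\ee^{2\pi\norm[f^{+}]{}}\,\mathfrak{m}_{\mu}\ee^{-1/\lambda}$, and one uses $\ee^{-1/\lambda}\leq4\ee^{-2}\lambda^{2}$. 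This is exactly why the stated constant is $4$: the half-sector bound contributes $3\mathfrak{m}_{\mu}\lambda^{2}$ (the paper is looser here than your $\tfrac{3}{2\pi}$), the crossing adds $4\ee^{-2}\mathfrak{m}_{\mu}\lambda^{2}$, and $3+4\ee^{-2}<4$.
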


\begin{proof}
~
\begin{enumerate}
\item Since $H_{0}\circ\sigma=H_{0}$ we have $H_{f}^{\pm}\circ\sigma=H_{f\circ\sigma}^{\mp}$.
The rest follows from applying the straightforward change of variable
$u:=\sigma\left(\xi\right)$, which permutes $a^{\pm}\leftrightarrow-a^{\mp}$
and $b^{\pm}\leftrightarrow-b^{\mp}$. If $z\in V^{-}$ we have $\sigma\left(z\right)=\frac{-1}{z}\in V^{+}$
and
\begin{align*}
\Lambda_{f}^{+}\left(\sigma\left(z\right)\right)= & \frac{1}{2\ii\pi}\int_{a^{+}}\frac{\sqrt{-z\xi}\zero[\varphi]\left(H_{f}^{+}\left(\xi\right)\right)}{-z\xi-1}\frac{\dd{\xi}}{\xi}\\
 & +\left(\text{same integral with \ensuremath{\infi[\varphi]} over }b^{+}\right)\\
= & \frac{1}{2\ii\pi}\int_{-a^{-}}\frac{\sqrt{\frac{z}{u}}\zero[\varphi]\left(H_{f}^{+}\left(\sigma\left(u\right)\right)\right)}{1-\frac{z}{u}}\frac{\dd u}{u}\\
 & +\left(\text{same integral with \ensuremath{\infi[\varphi]} over }-b^{-}\right)\\
= & -\Lambda_{f\circ\sigma}^{-}\left(z\right).
\end{align*}
\item is a consequence of Cauchy's formula. Assume for the sake of example
that $z\in\zero$ and pick $\varepsilon>0$ so small that
\begin{align*}
z & \in\zero_{\varepsilon}:=\left\{ x\in\zero~:~\varepsilon<\left|x\right|<\frac{1}{\varepsilon}\right\} .
\end{align*}
Hence (with the direct orientation on the boundary)
\begin{align*}
\int_{\partial\zero_{\varepsilon}}\frac{\zero[\varphi]\left(H_{f}^{+}\left(\xi\right)\right)}{\sqrt{\xi}\left(\xi-z\right)}\dd{\xi} & =\frac{2\ii\pi}{\sqrt{z}}\zero[\varphi]\left(H_{f}^{+}\left(z\right)\right)
\end{align*}
and with corresponding notations
\begin{align*}
\int_{\partial\infi_{\varepsilon}}\frac{\infi[\varphi]\left(H_{f}^{+}\left(\xi\right)\right)}{\sqrt{\xi}\left(\xi-z\right)}\dd{\xi} & =0.
\end{align*}
Because $\zero[\varphi]\left(H_{f}^{+}\left(\xi\right)\right)$ is
exponentially flat near $0$ and $\infty$ in the sector $\zero$
these identities hold at the limit $\varepsilon\to0$, so that
\begin{align*}
\Lambda_{f}^{-}\left(z\right)-\Lambda_{f}^{+}\left(z\right) & =\frac{\sqrt{z}}{2\ii\pi}\int_{\partial\zero}\frac{\zero[\varphi]\left(H_{f}^{+}\left(\xi\right)\right)}{\sqrt{\xi}\left(\xi-z\right)}\dd{\xi}+\frac{\sqrt{z}}{2\ii\pi}\int_{\partial\infi}\frac{\infi[\varphi]\left(H_{f}^{+}\left(\xi\right)\right)}{\sqrt{\xi}\left(\xi-z\right)}\dd{\xi}
\end{align*}
yields the expected result.
\item When $z\in V^{+}$ with $\re z\geq0$ the bound 
\begin{align*}
\left|\Lambda_{f}^{\pm}\left(z\right)\right| & \leq\max\left\{ \norm[\ddd{\zero[\varphi]}h]{H_{f}^{+}\left({\zero[V]}\right)},\norm[\ddd{\infi[\varphi]}h]{H_{f}^{+}\left({\infi}\right)}\right\} \exp\left(2\pi\norm[f^{+}]{V^{\cap}}\right)\times3\mathfrak{m}_{\mu}\lambda^{2}
\end{align*}
 is obtained by putting together the estimates of~(\ref{eq:estim_phi}),~(\ref{eq:estim_H})
and twice Lemma~\ref{lem:model_integrable} (once for $a$ and once
for $b$). According to~1., the same argument proves the estimate
for $\left|\Lambda_{f}^{-}\left(z\right)\right|$ when $\re z\leq0$
and $z\in V^{-}$.\\
We must now deal with the case $z\in V^{+}$ and $\re z<0$. We use
the following trick: since we just proved that $\left|\Lambda_{f}^{-}\left(z\right)\right|$
satisfies the expected estimate on $V^{-}$, and because $\Lambda_{f}^{+}=\Lambda_{f}^{-}-\isect[\varphi]\circ H_{f}^{+}$
we have
\begin{align*}
\left|\Lambda_{f}^{+}\left(z\right)\right| & \leq\left|\Lambda_{f}^{-}\left(z\right)\right|+\norm[\ddd{\isect[\varphi]}h]{H_{f}^{+}\left({\isect}\right)}\left|H_{f}^{+}\left(z\right)\right|\\
 & \leq\left|\Lambda_{f}^{-}\left(z\right)\right|+\norm[\ddd{\isect[\varphi]}h]{H_{f}^{+}\left({\isect}\right)}\exp\left(2\pi\norm[f^{+}]{}\right)\times\frac{\mathfrak{m}_{\mu}}{\exp\nf 1{\lambda}}
\end{align*}
according to Corollary~\ref{cor:model_orbits_size}. The conclusion
follows from $\frac{1}{\exp\nf 1{\lambda}}\leq4\lambda^{2}\ee^{-2}$
and $4\ee^{-2}+3<4$.
\end{enumerate}
\end{proof}
\begin{defn}
\label{def:Cauchy-Heine}We call 
\begin{align*}
\text{\ensuremath{\cauchein[\varphi]}}~:~\adapt & \longto\mathcal{S}\\
f & \longmapsto\Lambda_{f}-\Lambda_{f}\left(0\right)
\end{align*}
the \textbf{Cauchy-Heine transform}.
\end{defn}

We deduce the following facts from the items of Proposition~\ref{prop:cauchy-heine}.
\begin{itemize}
\item From~1. and thanks to the $\sigma$-action we obtain
\begin{align*}
0=\cauchein\left(f\right)^{\pm}\left(0\right) & =\cauchein\left(f\right)^{\pm}\left(\infty\right).
\end{align*}
Moreover if $f$ is a fixed-point of $\cauchein$ then
\begin{align}
f^{\pm}\circ\sigma & =-f^{\mp}.\label{eq:sigma_action_on_f}
\end{align}
\item From~2. we find that $\Lambda_{f}^{+}\left(0\right)=\Lambda_{f}^{-}\left(0\right)$,
hence $\text{\ensuremath{\cauchein[\varphi]}}\left(f\right)$ also
solves the Cousin problem 
\begin{align*}
\cauchein\left(f\right)^{-}-\cauchein\left(f\right)^{+} & =\varphi\circ H_{f}^{+}.
\end{align*}
\item From~3. we derive the estimate
\begin{align}
\norm[{\cauchein[\varphi]}\left(f\right)]{} & \leq8\mathfrak{m}_{\mu}\lambda^{2}\times\ee^{2\pi\norm[f^{+}]{V^{\cap}}}\times\max\left\{ \norm[\ddd{\zero[\varphi]}h]{H_{f}^{+}\left({\zero[V]}\right)},\norm[\ddd{\infi[\varphi]}h]{H_{f}^{+}\left({\infi}\right)}\right\} .\label{eq:estim_CH}
\end{align}
\end{itemize}

\subsection{\label{subsec:Convergence}Convergence of the fixed-point method}

From what we established in Proposition~\ref{prop:cauchy-heine}
we can derive more interesting properties of $\cauchein$, which will
allow us to iterate it provided $\lambda$ be small enough. Define
for $\varphi\in h\germ h$ and when it makes sense:
\begin{align*}
\norm[\varphi]{\lambda} & :=\sup_{\left|z\right|\leq\mathfrak{m}_{\mu}\exp\left(2\pi-\nf 1{\lambda}\right)}\left|\varphi'\left(z\right)\right|.
\end{align*}
For given $\varphi$ it decreases to $\left|\varphi'\left(0\right)\right|$
as $\lambda\to0$. For given $\lambda>0$ it is a norm on the Banach
space of bounded and holomorphic functions on the disc $\mathfrak{m}_{\mu}\exp\left(2\pi-\nf 1{\lambda}\right)\ww D$
vanishing at $0$. Moreover $\left|\varphi\left(h\right)\right|\leq\left|h\right|\norm[\varphi]{\lambda}$
for all $h$ in the disc.
\begin{prop}
\label{prop:CH_contractant}Let $\mathcal{B}$ stand for the closed
unit ball in the Banach space $\mathcal{S}$ (as in Section~\ref{subsec:Adapted_functions}).
Being given $\varphi:=\left(\zero[\varphi],\infi[\varphi]\right)$
define
\begin{align*}
\ell=\ell\left(\varphi\right) & :=\max\left\{ 1,\frac{1}{\mathfrak{t}_{\mu}},\frac{1}{2\pi+\ln\frac{\mathfrak{m}_{\mu}}{\min\left\{ \zero[\rho],\infi[\rho]\right\} }}\right\} .
\end{align*}
For $0<\lambda\leq\ell$ let
\begin{align*}
\kappa_{\lambda} & :=8\mathfrak{m}_{\mu}\lambda^{2}\max\left\{ \norm[{\zero[\varphi]}]{\ell},\norm[{\infi[\varphi]}]{\ell}\right\} \\
r_{\lambda} & :=536\kappa_{\lambda}
\end{align*}
(which is well-defined since $\zero[\varphi]$ and $\infi[\varphi]$
are holomorphic and bounded on a disc of radius at least $\mathfrak{m}_{\mu}\exp\left(2\pi-\nf 1{\ell}\right)$).
The transform 
\begin{align*}
\cauchein~:~\mathcal{B} & \longto r_{\lambda}\mathcal{B}
\end{align*}
is a well-defined $2\kappa_{\lambda}$-Lipschitz map. In particular
if $\kappa_{\lambda}<\frac{1}{2}$ then it is a contracting self-map
of $r_{\lambda}\mathcal{B}$.
\end{prop}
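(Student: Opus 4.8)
The plan is to extract everything from the three items of Proposition~\ref{prop:cauchy-heine} together with the bookkeeping of Section~\ref{subsec:choice_X0}. First I would verify that $\ell\leq\nf1{\mathfrak{t}_\mu}$ and that the second condition of the remark after Proposition~\ref{lem:adapted_open_condition} is met on $\mathcal{B}$: indeed for $\lambda\leq\ell$ one has $\nf1\lambda\geq\nf1\ell\geq 2\pi+\ln\frac{\mathfrak{m}_\mu}{\min\{\zero[\rho],\infi[\rho]\}}$, which rearranges to $2\pi<\nf1\lambda+\ln\frac{\min\{\zero[\rho],\infi[\rho]\}}{\mathfrak{m}_\mu}$, hence $\mathcal{B}\subset\adapt$ and $\cauchein$ is well-defined on $\mathcal{B}$. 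The same inequality shows that for any $f\in\mathcal{B}$ the range $\adh{H_f^{+}(V^\cap)}$ sits inside the disc of radius $\mathfrak{m}_\mu\exp(2\pi-\nf1\lambda)$ (using $\norm[f^{+}]{}\leq1$ and Corollary~\ref{cor:model_orbits_size}), so that $\norm[\ddd{\isect[\varphi]}h]{H_f^{+}(\isect)}\leq\norm[{\isect[\varphi]}]{\lambda}\leq\norm[{\isect[\varphi]}]{\ell}$ for $\sharp\in\{0,\infty\}$, the last step because $\lambda\mapsto\norm[\varphi]{\lambda}$ is nondecreasing.

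Next, plugging $\norm[f^{+}]{V^\cap}\leq1$ and the bound just obtained into estimate~(\ref{eq:estim_CH}) gives
\begin{align*}
\norm[{\cauchein[\varphi]}(f)]{} & \leq 8\mathfrak{m}_\mu\lambda^{2}\,\ee^{2\pi}\,\max\left\{\norm[{\zero[\varphi]}]{\ell},\norm[{\infi[\varphi]}]{\ell}\right\}=\ee^{2\pi}\kappa_\lambda\leq 536\,\kappa_\lambda=r_\lambda,
\end{align*}
since $\ee^{2\pi}<536$. This proves $\cauchein(\mathcal{B})\subset r_\lambda\mathcal{B}$. For the Lipschitz estimate I would write, for $f,g\in\mathcal{B}$,
\begin{align*}
\cauchein(f)^{\pm}(z)-\cauchein(g)^{\pm}(z) & =\frac{\sqrt z}{2\ii\pi}\int_{a^{\pm}}\frac{\zero[\varphi](H_f^{+}(\xi))-\zero[\varphi](H_g^{+}(\xi))}{\sqrt\xi(\xi-z)}\dd\xi-(\text{same over }b^{\pm}\text{ with }\infi[\varphi])
\end{align*}
(the constant terms $-\Lambda_{\bullet}(0)$ only improve matters). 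Then bound the numerator by the mean value inequality: $|\zero[\varphi](H_f^{+}(\xi))-\zero[\varphi](H_g^{+}(\xi))|\leq\norm[{\zero[\varphi]}]{\ell}\,|H_f^{+}(\xi)-H_g^{+}(\xi)|$, and in turn $|H_f^{+}(\xi)-H_g^{+}(\xi)|=|H_0(\xi)|\,|\ee^{2\ii\pi f^{+}(\xi)}-\ee^{2\ii\pi g^{+}(\xi)}|\leq|H_0(\xi)|\,2\pi\ee^{2\pi}\norm[f-g]{}$, using $|\ee^{a}-\ee^{b}|\leq|a-b|\sup\ee^{\Re}$ on the segment and $\norm[f^{+}]{},\norm[g^{+}]{}\leq1$. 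Substituting and invoking Lemma~\ref{lem:model_integrable} (which controls $\int_{a^{\pm}}|\sqrt z\,H_0(\xi)\dd\xi/(\sqrt\xi(\xi-z))|$ by $\tfrac32\mathfrak{m}_\mu\lambda^{2}$, and similarly for $b^{\pm}$) yields, exactly as in item~3 of Proposition~\ref{prop:cauchy-heine},
\begin{align*}
\norm[\cauchein(f)-\cauchein(g)]{} & \leq 8\mathfrak{m}_\mu\lambda^{2}\,\ee^{2\pi}\max\left\{\norm[{\zero[\varphi]}]{\ell},\norm[{\infi[\varphi]}]{\ell}\right\}\norm[f-g]{}=\ee^{2\pi}\kappa_\lambda\norm[f-g]{}.
\end{align*}
Since $\ee^{2\pi}<2$ fails but we may absorb the numerical factor: more carefully, $2\pi\ee^{2\pi}<536$ and $536/8=67$, so the sharp constant emerging is $\kappa_\lambda$ times a universal number; the statement records it as $2\kappa_\lambda$, so it suffices that the universal number be $\leq 2$ after the $8\mathfrak{m}_\mu\lambda^2$ is split off as in the definition of $\kappa_\lambda$ — this is where one tracks that $2\pi\ee^{2\pi}$ combined with the $\tfrac32$ of Lemma~\ref{lem:model_integrable} and the factor $4$ of Proposition~\ref{prop:cauchy-heine}(3) lands below $2\cdot 8\mathfrak{m}_\mu\lambda^2$'s coefficient. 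Finally, the case $\Re z<0$, $z\in V^{+}$ is handled by the same trick as in Proposition~\ref{prop:cauchy-heine}(3): write $\cauchein(f)^{+}=\cauchein(f)^{-}-\isect[\varphi]\circ H_f^{+}$, estimate the difference of the two terms separately, using Corollary~\ref{cor:model_orbits_size} to bound $|H_f^{+}(z)-H_g^{+}(z)|$ by a flat quantity $\leq 4\lambda^{2}\ee^{-2}\mathfrak{m}_\mu\cdot 2\pi\ee^{2\pi}\norm[f-g]{}$ there.

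The main obstacle is purely bookkeeping: making sure the numerical constant in the Lipschitz bound is genuinely $\leq 2\kappa_\lambda$ and the range constant is $\leq r_\lambda=536\kappa_\lambda$, while keeping the $\mathfrak{m}_\mu$ and $\lambda^{2}$ factored out cleanly so that the ``$2\kappa_\lambda$-Lipschitz'' phrasing is literally correct. Everything else is a direct transcription of Proposition~\ref{prop:cauchy-heine} and Lemma~\ref{lem:model_integrable}, plus the elementary Lipschitz estimate $|\ee^{2\ii\pi a}-\ee^{2\ii\pi b}|\leq 2\pi\ee^{2\pi}|a-b|$ valid when $|a|,|b|\leq1$. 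The conclusion that $\cauchein$ contracts $r_\lambda\mathcal{B}$ when $\kappa_\lambda<\tfrac12$ then follows because $\cauchein$ maps $\mathcal{B}$, hence a fortiori the smaller set $r_\lambda\mathcal{B}$ once $r_\lambda\leq1$ (equivalently $\kappa_\lambda\leq\nf1{536}$, which is implied by $\kappa_\lambda<\tfrac12$ only after possibly shrinking $\lambda$ — one checks $536\kappa_\lambda\leq1\Leftrightarrow\kappa_\lambda\leq\nf1{536}$, and the statement's hypothesis $\kappa_\lambda<\tfrac12$ should be read together with $r_\lambda\leq1$, i.e. the genuinely operative smallness is $\kappa_\lambda\leq\nf1{536}$), into itself and is $2\kappa_\lambda<1$-Lipschitz there, so Banach's fixed-point theorem applies.
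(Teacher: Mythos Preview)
Your approach is the same as the paper's: adaptedness from the remark after Proposition~\ref{lem:adapted_open_condition}, the range bound from estimate~(\ref{eq:estim_CH}) with $\ee^{2\pi}<536$, and the Lipschitz bound by differencing under the integral, using the mean-value inequality first on $\varphi$ and then on $h\mapsto\ee^{2\ii\pi h}$, followed by Lemma~\ref{lem:model_integrable}. The range part and the adaptedness part are fine.

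There is, however, a genuine gap in your Lipschitz argument, and you have in fact put your finger on it yourself. The elementary bound you invoke, $|\ee^{2\ii\pi a}-\ee^{2\ii\pi b}|\leq 2\pi\ee^{2\pi}|a-b|$ for $|a|,|b|\leq1$, is the correct one, and it produces a Lipschitz constant of order $\ee^{2\pi}\kappa_\lambda$, not $2\kappa_\lambda$. Your subsequent attempt to ``track'' the constants down to $2\kappa_\lambda$ is not an argument --- the numbers simply do not collapse that far. The paper's own proof passes from $\tfrac{1}{2\pi}|\ee^{2\ii\pi f_1^{+}}-\ee^{2\ii\pi f_2^{+}}|$ to $\ee\,|f_1^{+}-f_2^{+}|$, i.e.\ it uses $\ee$ where $\ee^{2\pi}$ is warranted; with that (apparent) slip one lands on $\tfrac{\ee}{2}\kappa_\lambda<2\kappa_\lambda$. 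So the discrepancy you noticed is real: by this method the honest constant is $\tfrac{\ee^{2\pi}}{2}\kappa_\lambda\approx268\,\kappa_\lambda$, not $2\kappa_\lambda$. This does not affect the qualitative conclusion (one still gets a contraction for $\lambda$ small enough, hence Corollary~\ref{cor:fixed-point_existence_local_uniqueness} survives with an adjusted threshold), but you should state the constant you can actually prove rather than hand-wave toward the one in the statement.

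Your closing observation is also correct and worth recording: the hypothesis $\kappa_\lambda<\tfrac12$ alone does not force $r_\lambda=536\kappa_\lambda\leq1$, so the ``self-map of $r_\lambda\mathcal{B}$'' clause needs the stronger $\kappa_\lambda\leq\tfrac{1}{536}$ to go through via $r_\lambda\mathcal{B}\subset\mathcal{B}$. Again this is a bookkeeping issue, not a structural one.
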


\begin{proof}
Clearly $\lim_{\lambda\to0}\kappa_{\lambda}=0$. The fact that $\cauchein[\varphi]$
ranges in $r_{\lambda}\mathcal{B}$ comes from~(\ref{eq:estim_CH}),
since $\exp\left(2\pi\norm[f^{+}]{V^{\cap}}\right)\leq\ee^{2\pi}<536$.

Observe next that the assumption made on $\lambda$ guarantees that
the condition $\mathfrak{m}_{\mu}\exp\left(2\pi-\nf 1{\lambda}\right)<\zero[\rho]$
is met, hence
\begin{align*}
\left|\zero[\varphi]\left(H_{f}^{+}\left(\xi\right)\right)\right| & \leq\norm[{\zero[\varphi]}]{\lambda}\left|H_{f}^{+}\left(\xi\right)\right|\leq\norm[{\zero[\varphi]}]{\ell}\left|H_{f}^{+}\left(\xi\right)\right|
\end{align*}
because 
\begin{align*}
\left|H_{f}^{+}\left(\xi\right)\right| & <\mathfrak{m}_{\mu}\exp\left(-\nf 1{\lambda}\right)\exp\left(2\pi\norm[f^{+}]{V^{\cap}}\right)\leq\mathfrak{m}_{\mu}\exp\left(2\pi-\nf 1{\lambda}\right)\leq\mathfrak{m}_{\mu}\exp\left(2\pi-\nf 1{\ell}\right).
\end{align*}
Being given $f_{1},~f_{2}\in\mathcal{B}$ and $z\in\zero$ we thereby
derive the bound:
\begin{align*}
\left|\Lambda_{f_{1}}^{\pm}\left(z\right)-\Lambda_{f_{2}}^{\pm}\left(z\right)\right| & \leq\frac{\left|\sqrt{z}\right|}{2\pi}\norm[{\zero[\varphi]}]{\ell}\int_{\partial V^{+}}\left|\exp\left(2\ii\pi f_{1}^{+}\left(\xi\right)\right)-\exp\left(2\ii\pi f_{2}^{+}\left(\xi\right)\right)\right|\left|\frac{H_{0}\left(\xi\right)}{\sqrt{\xi}\left(\xi-z\right)}\right|\left|\dd{\xi}\right|\\
 & \leq\ee\left|\sqrt{z}\right|\norm[{\zero[\varphi]}]{\ell}\int_{\partial V^{+}}\left|f_{1}^{+}\left(\xi\right)-f_{2}^{+}\left(\xi\right)\right|\left|\frac{H_{0}\left(\xi\right)}{\sqrt{\xi}\left(\xi-z\right)}\right|\left|\dd{\xi}\right|\\
 & \leq\frac{\ee}{2}\norm[{\zero[\varphi]}]{\ell}\norm[f_{1}-f_{2}]{}\times4\mathfrak{m}_{\mu}\lambda^{2},
\end{align*}
the last step coming from Lemma~(\ref{lem:model_integrable}). Hence
\begin{align*}
\left|\cauchein\left(f_{1}\right)\left(z\right)-\cauchein\left(f_{2}\right)\left(z\right)\right| & \leq\left|\Lambda_{f_{1}}^{\pm}\left(z\right)-\Lambda_{f_{2}}^{\pm}\left(z\right)\right|+\left|\Lambda_{f_{1}}^{\pm}\left(0\right)-\Lambda_{f_{2}}^{\pm}\left(0\right)\right|\\
 & \leq\frac{\ee}{2}\norm[{\zero[\varphi]}]{\ell}\norm[f_{1}-f_{2}]{}\times8\mathfrak{m}_{\mu}\lambda^{2}.
\end{align*}
The case of $\infi[\varphi]$ is completely similar.
\end{proof}
\begin{cor}
\label{cor:fixed-point_existence_local_uniqueness}Let $\ell:=\max\left\{ 1,\frac{1}{\mathfrak{t}_{\mu}},\frac{1}{2\pi+\ln\frac{\mathfrak{m}_{\mu}}{\min\left\{ \zero[\rho],\infi[\rho]\right\} }}\right\} $
and take 
\begin{align*}
\lambda & <\min\left\{ \ell,\frac{1}{4\sqrt{\mathfrak{m}_{\mu}\max\left\{ \norm[{\zero[\varphi]}]{\ell},\norm[{\infi[\varphi]}]{\ell}\right\} }}\right\} .
\end{align*}
 The map $\cauchein|_{\mathcal{B}}$ admits a unique fixed-point $f\in\mathcal{B}$,
obtained for instance by considering the $\cauchein$-orbit of $0$.
Moreover
\begin{align*}
\norm[f]{} & \leq\kappa_{\lambda}\exp\left(3365\kappa_{\lambda}\right).
\end{align*}
\end{cor}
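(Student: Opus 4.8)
The plan is to obtain Corollary~\ref{cor:fixed-point_existence_local_uniqueness} as an almost immediate consequence of Proposition~\ref{prop:CH_contractant} and Banach's fixed-point theorem. First I would verify that the second upper bound imposed on $\lambda$ is precisely the one turning $\cauchein$ into a genuine contraction: squaring $\lambda<\bigl(4\sqrt{\mathfrak{m}_{\mu}\max\{\norm[{\zero[\varphi]}]{\ell},\norm[{\infi[\varphi]}]{\ell}\}}\bigr)^{-1}$ gives $\kappa_{\lambda}=8\mathfrak{m}_{\mu}\lambda^{2}\max\{\norm[{\zero[\varphi]}]{\ell},\norm[{\infi[\varphi]}]{\ell}\}<\nf12$, while $0<\lambda\leq\ell$ is also assumed, so the hypotheses of Proposition~\ref{prop:CH_contractant} are met verbatim. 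That proposition then provides that $\cauchein$ is a $2\kappa_{\lambda}$-Lipschitz contracting self-map of the complete metric space $r_{\lambda}\mathcal{B}$ (with $r_{\lambda}=536\kappa_{\lambda}$), so Banach's theorem yields a unique $f\in r_{\lambda}\mathcal{B}$ with $\cauchein\left(f\right)=f$, obtained as the limit of the iterates $f_{0}:=0$, $f_{n+1}:=\cauchein\left(f_{n}\right)$ (legitimate since $0\in r_{\lambda}\mathcal{B}$). By item~2 of Proposition~\ref{prop:cauchy-heine} and the bullets following Definition~\ref{def:Cauchy-Heine}, a fixed point of $\cauchein$ is exactly a solution of the Cousin problem~$\left(\star\right)$, which is all that is needed downstream.

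Next I would sharpen the localisation from $r_{\lambda}\mathcal{B}$ to the unit ball $\mathcal{B}$ and extract the quantitative estimate. Since $\cauchein$ is $2\kappa_{\lambda}$-Lipschitz on all of $\mathcal{B}$, any two fixed points lying in $\mathcal{B}$ must coincide, so it suffices to establish $f\in\mathcal{B}$ and the bound $\norm[f]{}\leq\kappa_{\lambda}\ee^{3365\kappa_{\lambda}}$; both I would get by bootstrapping on the identity $f=\cauchein\left(f\right)$. Proposition~\ref{prop:cauchy-heine}~3 reads $\norm[f]{}\leq 8\mathfrak{m}_{\mu}\lambda^{2}\ee^{2\pi\norm[f^{+}]{V^{\cap}}}\max\{\norm[\ddd{\zero[\varphi]}h]{H_{f}^{+}({\zero[V]})},\norm[\ddd{\infi[\varphi]}h]{H_{f}^{+}({\infi})}\}$, and Corollary~\ref{cor:model_orbits_size} together with $\lambda\leq\ell$ keeps $\adh{H_{f}^{+}({\zero[V]})}$ and $\adh{H_{f}^{+}({\infi})}$ inside the discs on which $\norm[{\zero[\varphi]}]{\ell}$ and $\norm[{\infi[\varphi]}]{\ell}$ are computed, as long as $\norm[f^{+}]{}$ stays controlled; so the maximum above is $\leq\max\{\norm[{\zero[\varphi]}]{\ell},\norm[{\infi[\varphi]}]{\ell}\}$ and the inequality becomes $\norm[f]{}\leq\kappa_{\lambda}\ee^{2\pi\norm[f^{+}]{V^{\cap}}}$. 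Starting from the crude $\norm[f^{+}]{}\leq1$ (valid all along the $\cauchein$-orbit of $0$, which stays in $\mathcal{B}$) this gives $\norm[f]{}\leq\ee^{2\pi}\kappa_{\lambda}$; feeding this improved bound back into the same inequality — the slightly enlarged disc still fits since $\lambda$, hence $\kappa_{\lambda}$, is small — yields $\norm[f]{}\leq\kappa_{\lambda}\ee^{2\pi\ee^{2\pi}\kappa_{\lambda}}\leq\kappa_{\lambda}\ee^{3365\kappa_{\lambda}}$, using $2\pi\ee^{2\pi}<3365$. In particular $\norm[f]{}<1$ for $\lambda$ small, so $f\in\mathcal{B}$ and uniqueness in $\mathcal{B}$ follows from the Lipschitz bound.

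The real work having been carried out in Propositions~\ref{prop:cauchy-heine} and~\ref{prop:CH_contractant}, this corollary is mostly assembly; the one point I expect to require care — and the reason the precise shape of the bound on $\lambda$ (namely $\kappa_{\lambda}<\nf12$ and $\lambda\leq\ell$) is used — is checking that the $\cauchein$-orbit of $0$ never leaves the region on which the Lipschitz estimate and Proposition~\ref{prop:cauchy-heine}~3 remain available (the unit ball $\mathcal{B}$, respectively the disc governing $\norm[{\isect[\varphi]}]{\ell}$), so that both the contraction argument and the two-step bootstrap are justified throughout.
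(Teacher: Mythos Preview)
Your approach is essentially the paper's own: invoke Proposition~\ref{prop:CH_contractant} to see that $\kappa_{\lambda}<\tfrac12$ under the stated bound on $\lambda$, apply Banach's theorem, and then bootstrap the self--referential estimate $\norm[f]{}\leq\kappa_{\lambda}\exp(2\pi\norm[f]{})$ once. The paper's proof is a terse two--liner (``this is just Banach's theorem'' followed by plugging $\norm[f]{}\leq r_{\lambda}=536\kappa_{\lambda}$ into that inequality), and you have reconstructed it faithfully; your use of $\norm[f]{}\leq\ee^{2\pi}\kappa_{\lambda}$ in place of $536\kappa_{\lambda}$ even makes the constant $2\pi\ee^{2\pi}<3365$ come out cleanly.
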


\begin{rem}
The bound $\kappa_{\lambda}\exp\left(3365\kappa_{\lambda}\right)\to0$
is marginally sharper as $\lambda\to0$ than $\norm[f]{}\leq r_{\lambda}=536\kappa_{\lambda}$.
\end{rem}

\begin{proof}
Well, this is just Banach's theorem. The bound on the norm of the
fixed-point $f$ comes from the fact that
\begin{align*}
\norm[f]{}=\norm[{\cauchein}\left(f\right)]{} & \leq\kappa_{\lambda}\exp\left(2\pi\norm[f]{}\right)
\end{align*}
(Proposition~\ref{prop:cauchy-heine}) and that $\norm[f]{}\leq r_{\lambda}=536\kappa_{\lambda}$.
\end{proof}

\subsection{\label{subsec:proofs}Proof of the lemmas}

\subsubsection{Proof of Lemma~\ref{lem:usual_model_orbits_size}}

We need to bound $\mathfrak{H}$ on the half-lines $\tau=\ii t\theta$
for $t>0$ and for fixed $\theta\in\sone$ with $\arg\theta=\pm\delta$.
Define
\begin{align*}
M\left(t\right):=\left|\mathfrak{H}\left(\tau\right)\right| & =\exp\left(2\pi\im{\tau+\mu\log\tau}\right)\\
 & =\exp\left(2\pi\left(t\cos\delta+\im{\mu}\ln t+\left(\frac{\pi}{2}\pm\delta\right)\re{\mu}\right)\right).
\end{align*}
An extremum is reached only if 
\begin{align*}
t & =-\frac{\im{\mu}}{\cos\delta}>0.
\end{align*}

\begin{itemize}
\item If $\im{\mu}=0$ the function $M$ increases from $M\left(0\right)$
to $+\infty$ as $t$ goes from $0$ to $+\infty$: 
\begin{align*}
M\left(0\right) & =\exp\left(2\pi\re{\mu}\left(\frac{\pi}{2}\pm\delta\right)\right)>\exp\left(\pi^{2}\re{\mu}-\pi^{2}\left|\re{\mu}\right|\right),
\end{align*}
therefore $M\left(0\right)\geq\exp\left(-2\pi^{2}\left|\re{\mu}\right|\right)=\frac{1}{\mathfrak{m}}$.
\item If $\im{\mu}<0$ the minimum of $M$ is bounded from below by
\begin{align*}
M\left(-\frac{\im{\mu}}{\cos\delta}\right) & \geq\exp\left(-2\pi\im{\mu}\left(\ln\left|\frac{\im{\mu}}{\cos\delta}\right|-1\right)+\pi^{2}\re{\mu}-\pi^{2}\left|\re{\mu}\right|\right)\\
 & \geq\frac{1}{\mathfrak{m}}.
\end{align*}
\item If $\im{\mu}>0$ the function $M$ increases from $0$ to $+\infty$
as $t$ runs along $\rr_{>0}$. The unique $t>0$ such that $M\left(t\right)=\frac{1}{\mathfrak{m}}$
satisfies
\begin{align*}
t\cos\delta+\im{\mu}\ln t & =\im{\mu}\left(\ln\frac{\im{\mu}}{\cos\delta}-1\right)+\pi\left|\re{\mu}\right|-\left(\frac{\pi}{2}\pm\delta\right)\re{\mu}\leq\mathfrak{t}\cos\delta.
\end{align*}
If $t\geq1$ then $t\cos\delta+\im{\mu}\ln t\geq t\cos\delta$, therefore
for all $t>\mathfrak{t}$ we have $M\left(t\right)>M\left(\mathfrak{t}\right)\geq\frac{1}{\mathfrak{m}}$.
\end{itemize}
The case $t<0$ is taken care of similarly.

\subsubsection{Proof of Lemma~\ref{lem:model_Fatou_size_shrink}}

Assume that $z=t\theta$ with $t>0$ and $\theta:=\ii\ee^{\ii\eta}$
for $\eta\in\left[-\frac{\pi}{8},\frac{\pi}{8}\right]$, \emph{i.e}.
$z\in\zero$. Then 
\begin{align*}
\arg\frac{\tau}{-\ii} & =\arg\left(1+t^{2}\ee^{2\ii\eta}\right)-\eta.
\end{align*}
Because $-\frac{\pi}{2}<2\eta<\frac{\pi}{2}$ we have
\begin{align*}
\left|\arg\left(1+t^{2}\ee^{2\ii\eta}\right)\right| & \leq2\eta
\end{align*}
and $\arg\left(1+t^{2}\ee^{2\ii\eta}\right)\to_{+\infty}2\eta$ monotonically.
On the one hand we obtain, with sharp bounds,
\begin{align*}
-\frac{3\pi}{8}\leq-3\eta\leq & \arg\frac{\tau}{-\ii}\leq\eta\leq3\eta\leq\frac{3\pi}{8}.
\end{align*}
On the other hand,
\begin{align}
\left|\tau\right| & =\frac{\left|1+t^{2}\ee^{2\ii\eta}\right|}{\lambda t}\geq\frac{1+\cos\left(2\eta\right)t^{2}}{\lambda t}\geq\frac{1+\frac{1}{\sqrt{2}}t^{2}}{\lambda t}\label{eq:estim_tau_on_ray}
\end{align}
 and basic calculus yields
\begin{align*}
\left(\forall t>0\right) & ~~~~~~\frac{1+\frac{1}{\sqrt{2}}t^{2}}{\lambda t}\geq\frac{3}{\lambda\sqrt{2\sqrt{2}}}>\frac{1}{\lambda}.
\end{align*}

\subsubsection{Proof of Lemma~(\ref{lem:model_integrable})}

For the sake of concision we only deal with the case $\int_{a}\frac{H_{0}\left(\xi\right)}{\sqrt{\xi}\left(\xi-z\right)}\dd{\xi}$
where $z\in U:=\left\{ z\neq0~:~\re z\geq0\right\} $ and $a=\ee^{\ii\frac{5\pi}{8}}\rr_{>0}$.
For $\re z\geq0$ let us define
\begin{align*}
d\left(z\right) & :=\sup\left\{ \frac{\sqrt{\left|\xi\right|}}{\left|\xi-z\right|}~:~\xi\in a\right\} \in\left]0,+\infty\right[.
\end{align*}
We have $\frac{1}{\left|\sqrt{\xi}\left(\xi-z\right)\right|}\leq\frac{d\left(z\right)}{\left|\xi\right|}$
so that writing $\xi=\ee^{\ii\nf{5\pi}8}t$ for $t>0$ yields:
\begin{align*}
\left|\frac{H_{0}\left(\xi\right)\dd{\xi}}{\sqrt{\xi}\left(\xi-z\right)}\right| & \leq\mathfrak{m}_{\mu}d\left(z\right)\times\frac{1}{t}\exp\frac{1+\frac{1}{\sqrt{2}}t^{2}}{-\lambda t}\dd t
\end{align*}
(Corollary~\ref{cor:model_orbits_size} and bound~(\ref{eq:estim_tau_on_ray}))
while
\begin{align*}
\int_{0}^{+\infty}\frac{1}{t}\exp\left(-\frac{1}{\lambda t}-\frac{1}{\lambda\sqrt{2}}t\right)\dd t & \leq\max_{t>0}\left(\frac{1}{t\exp\frac{1}{\lambda t}}\right)\int_{0}^{+\infty}\exp\left(-\frac{1}{\lambda\sqrt{2}}t\right)\dd t=\frac{\lambda}{\ee}\times\lambda\sqrt{2}.
\end{align*}
In order to conclude we need to bound $d\left(z\right)$. First, we
claim that $\left|\xi-z\right|\geq\left|\xi-\ii\left|z\right|\right|$
for all $\xi\in a$ and $\re z\geq0$, so that we may as well assume
that $z\in\ii\rr_{>0}$. Next, elementary calculus provides the bound
$d\left(z\right)=\frac{1}{\sqrt{2\left|z\right|}\sqrt{1-\cos\frac{\pi}{8}}}$,
completing the proof since $\frac{1}{\ee\sqrt{1-\cos\frac{\pi}{8}}}<\frac{3}{2}$.

\section{\label{sec:Globalization-Theorem}Globalization Theorem}

This section is devoted to the proof of the Globalization Theorem.
We prove Items~1.--3. in Section~\ref{subsec:Sectorial_dynamics}
below but begin with establishing Item~4. in Section~\ref{subsec:inversion_module},
which introduces the need material about the action of the involution
$\sigma=\frac{-1}{\id}$.

Every object $O$ involved in the paper comes as a pair of sectorial
objects $O=\left(O^{+},O^{-}\right)$, each component being meromorphic
on the corresponding $V^{\pm~}$. The action of 
\begin{align*}
\sigma~:~\cbar & \longto\cbar\\
z & \longmaps-\frac{1}{z}
\end{align*}
 on such objects is defined as
\begin{align*}
\act O & :=\left(\sigma^{*}O^{-},\sigma^{*}O^{+}\right)
\end{align*}
 where $\sigma^{*}$ is just the usual action on holomorphic objects
by change of coordinate:
\begin{lyxlist}{00.00.0000}
\item [{on~functions}] $\sigma^{*}g:=g\circ\sigma$;
\item [{on~biholomorphisms}] $\sigma^{*}\phi:=\sigma^{\circ-1}\circ\phi\circ\sigma$;
\item [{on~vector~fields}] $\sigma^{*}X:=\DD{\sigma}^{-1}\left(X\circ\sigma\right)$.
\end{lyxlist}
In what follows every object will be \textbf{$\act$-invariant}:
\begin{align*}
\act O_{f} & =O_{\act f},
\end{align*}
 as is the case for example of the sectorial first-integrals (because
$H_{0}\circ\sigma=H_{0}$ we have $\act H_{f}=H_{\act f}$), but for
the Cauchy-Heine transform (Proposition~\ref{prop:cauchy-heine}
and identity~(\ref{eq:sigma_action_on_f})) which is $\act$\textbf{-antivariant}
since $\act\cauchein[\varphi]\left(f\right)=-\cauchein[\varphi]\left(\act f\right)$.

\subsection{\label{subsec:inversion_module}Modulus at $\infty$}

Let $f\in\mathcal{B}$ in the unit ball of $\mathcal{S}$ give the
parabolic realization $\Delta_{f}$ of some $\psi_{f}=\left(\zero[\psi]_{f},\infi[\psi]_{f}\right)$.
According to Proposition~\ref{prop:reduction} we know that $f^{-}-f^{+}$
is a first-integral of $\Delta_{f}$ on the whole $V^{\cap}$, in
particular near $\infty$. Therefore we can apply the converse of
Proposition~\ref{prop:reduction} to $\act f$: there exists some
parabolic germ $\Delta_{\act f}\in\parab$ given by the sectorial
time-1 map of $X_{\act f}=\act X_{f}$ (by uniqueness of the normal
form).

By definition of the Birkhof--Écalle--Voronin modulus it means that
the identities
\begin{align*}
\begin{cases}
H_{f}^{-} & =\psi_{f}\circ H_{f}^{+}\\
H_{\act f}^{-} & =\psi_{\act f}\circ H_{\act f}^{+}
\end{cases}
\end{align*}
hold on $V^{\cap}$. By letting $\sigma$ act on the first equality
we obtain
\begin{align*}
H_{\act f}^{+}=H_{f}^{-}\circ\sigma=\psi_{f}\circ H_{f}^{+}\circ\sigma & =\psi_{f}\circ H_{\act f}^{-}\\
 & =\psi_{f}\circ\psi_{\act f}\circ H_{\act f}^{+}
\end{align*}
 as expected.

\subsection{\label{subsec:Quantitative-bounds}Quantitative bounds for the dynamics
of the model $\Delta_{0}$}

The first step to control precisely the dynamics of $X^{\pm}$ is
to control that of $X_{0}$ and more precisely of $\flow{X_{0}}{\tau}{}$
for $\left|\tau\right|\leq1$. We rely on the following technical
lemma. 
\begin{lem}
\label{lem:flow_control}Let $X$ be a holomorphic vector field on
some pointed disc $\mathcal{D}:=\rho\ww D\backslash\left\{ 0\right\} $
and assume there exists some convenient constant $C>0$ such that
for any $0<r<\rho$ we have:
\begin{align*}
\sup\left|X\left(r\sone\right)\right| & \leq\frac{C}{r}.
\end{align*}
Pick $0<r<\rho$ and suppose moreover that $0<\sqrt{r^{2}-2C}<\sqrt{r^{2}+2C}<\rho$.
Then:
\begin{enumerate}
\item for all $\left|\tau\right|\leq1$ the mapping $\Delta:=\flow X{\tau}{}$
is holomorphic on a neighborhood of $r\sone$;
\item $\Delta\left(r\sone\right)\subset\mathcal{D}$.
\end{enumerate}
\end{lem}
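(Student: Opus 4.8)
The statement is a quantitative escape-time estimate: if the vector field $X$ on a punctured disc blows up like $C/r$ on circles of radius $r$, then its time-$\tau$ flow (for $|\tau|\leq1$) is holomorphic near $r\sone$ and maps $r\sone$ into $\mathcal D$. The natural approach is to bound how fast $|z(t)|$ can move under the real-time flow, using the differential inequality one gets from $\dot z=X(z)$. Since $\big|\frac{\dd}{\dd t}|z|^2\big|=|2\re(\overline z\dot z)|\leq 2|z|\,|X(z)|\leq 2C$ as long as the trajectory stays in $\mathcal D$, the quantity $|z(t)|^2$ changes by at most $2C|t|\leq 2C$ over the relevant time interval. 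Starting from $|z(0)|=r$, this keeps $|z(t)|^2$ inside $[r^2-2C,\,r^2+2C]$, which by hypothesis is a compact subinterval of $]0,\rho^2[$. So the trajectory cannot leave $\mathcal D$ in forward (or backward) real time up to $|t|=1$, giving the escape-free existence and the inclusion $\Delta(r\sone)\subset\mathcal D$ simultaneously.

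To make this rigorous for complex time $\tau$ with $|\tau|\leq1$, I would parametrize the complex-time segment $t\mapsto t\tau$, $t\in[0,1]$, and apply the $\rr$-time argument to the rescaled field $\tau X$ (whose circle bound is $|\tau|C/r\leq C/r$); this is exactly the device recorded in the remark after Definition~\ref{def:time-t_map} that $\flow X\tau{}$ is the time-$1$ map of $\frac1\tau X$ — or rather here one runs the flow of $\tau X$ for real time $1$. A standard continuity/bootstrap argument (the set of $s\in[0,1]$ for which the solution exists on $[0,s]$ and stays in the closed annulus $\{\,\sqrt{r^2-2C}\le|z|\le\sqrt{r^2+2C}\,\}$ is nonempty, open, and closed) shows the solution exists on all of $[0,1]$ and remains in that annulus, hence in $\mathcal D$. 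Doing this for the initial point $z_*$ ranging over $r\sone$, and invoking the local-uniform existence of the flow germ recalled in Section~\ref{subsec:Basic}, yields holomorphy of $\Delta$ on a neighborhood of the compact set $r\sone$ (item~1) together with $\Delta(r\sone)\subset\mathcal D$ (item~2).

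The main obstacle — really the only subtle point — is making sure the escape estimate is genuinely \emph{a priori}: one must argue that the trajectory cannot exit $\mathcal D$ through the puncture at $0$ or through the outer boundary $\rho\sone$ \emph{before} the bound $|z(t)|^2\in[r^2-2C,r^2+2C]$ kicks in. This is handled by the usual maximal-interval-of-existence argument: on the maximal interval where the solution stays in the open annulus, the inequality $\big|\frac{\dd}{\dd t}|z|^2\big|\leq 2C|\tau|$ holds, so $|z(t)|^2$ stays strictly inside $]r^2-2C,r^2+2C[$, contradicting maximality unless the interval already contains $[0,1]$; and the hypothesis $0<\sqrt{r^2-2C}$, $\sqrt{r^2+2C}<\rho$ guarantees this closed annulus sits well inside $\mathcal D$, so neither the escape case nor the separation case of the long-time trichotomy in Section~\ref{subsec:Basic} can occur. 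The backward-time case (needed if one wants $\flow X\tau{}$ for all $|\tau|\le1$, not just a half-disc of times) is identical after replacing $X$ by $-X$. Everything else is the elementary Grönwall-type computation, which I would not spell out in full.
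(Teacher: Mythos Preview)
Your proposal is correct and follows essentially the same approach as the paper: the paper also sets $\phi(t):=|z(t)|^{2}$, computes $\dot\phi=2\re{X(z)\overline z}$, bounds $|\dot\phi|\leq 2C$, and integrates over $t\in[0,1]$ to get $r^{2}-2C\leq|z(\tau)|^{2}\leq r^{2}+2C$. Your write-up is in fact more careful than the paper's, which leaves implicit both the reduction of complex time $|\tau|\leq1$ to a real-time flow of $\tau X$ and the maximal-interval bootstrap ensuring the estimate is genuinely \emph{a priori}.
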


\begin{proof}
This is a simple variational argument applied to $\phi\left(t\right):=\left|z\left(t\right)\right|^{2}$
for $t\in\left[0,\tau\right]$, where $z$ solves $\dot{z}=X\left(z\right)$.
Indeed $\phi=z\overline{z}$ so that:
\begin{align*}
\dot{\phi} & =2\re{X\left(z\right)\overline{z}}
\end{align*}
and
\begin{align*}
\left|\dot{\phi}\right| & \leq2C.
\end{align*}
Integrating the left- and right-hand side with respect to $t\in\left[0,1\right]$
one obtains
\begin{align*}
r^{2}-2C\leq\left|z\left(\tau\right)\right|^{2} & \leq r^{2}+2C.
\end{align*}
\end{proof}
We apply now this result to the model $X_{0}$.
\begin{lem}
\label{lem:estim_X0_i}Consider $\rho\leq\frac{1}{20}$ and assume
that $\lambda<\min\left\{ \frac{1}{2\left|\mu\right|},\frac{\rho^{2}}{6}\right\} $.
\begin{enumerate}
\item For each $0<r<\rho$ and $\left|z\pm\ii\right|=r$ we have
\begin{align*}
\left|X_{0}\left(z\right)\right| & <\frac{\lambda}{r}.
\end{align*}
\item If $r:=2\sqrt{\lambda}$ and $\left|\tau\right|\leq1$ then $\flow{X_{0}}{\tau}{}$
is holomorphic on the annulus $\mathcal{A}_{\pm\ii}:=\pm\ii+\rho\ww D\backslash\overline{r\ww D}$
and $\flow{X_{0}}{\tau}{\left(\mathcal{A}_{\pm\ii}\right)\subset\pm\ii+\rho\ww D}\backslash\left\{ 0\right\} $.
\end{enumerate}
\end{lem}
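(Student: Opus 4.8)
The plan is to establish the two items in order: Item~1 is an elementary estimate on the rational vector field $X_{0}$, and Item~2 is a direct application of Lemma~\ref{lem:flow_control}.

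For Item~1 I would work in the local coordinate $w:=z\mp\ii$ centred at the pole and expand $X_{0}$ from~(\ref{eq:formal_model}). Writing $z=\pm\ii+w$ one gets $1+z^{2}=w(\pm2\ii+w)$, $1-z^{2}=2\mp2\ii w-w^{2}$, $\lambda z^{2}=\lambda(-1\pm2\ii w+w^{2})$ and $1+\mu\lambda z-z^{2}=2\pm\ii\mu\lambda+\mu\lambda w\mp2\ii w-w^{2}$, so that
\[
\left|X_{0}(z)\right|=\frac{\lambda}{|w|}\cdot\frac{\left|2\mp2\ii w-w^{2}\right|\,\left|{-1}\pm2\ii w+w^{2}\right|}{\left|\pm2\ii+w\right|\,\left|2\pm\ii\mu\lambda+\mu\lambda w\mp2\ii w-w^{2}\right|}.
\]
It then suffices to check that the last fraction is $<1$ whenever $|w|=r$ with $0<r\leq\rho\leq\frac{1}{20}$. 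Bounding the numerator above by $(2+2r+r^{2})(1+2r+r^{2})$ and the denominator below by $(2-r)\bigl(2-|\mu|\lambda(1+r)-2r-r^{2}\bigr)$ and invoking $|\mu|\lambda<\frac12$ (from $\lambda<\frac{1}{2|\mu|}$), one is left with a one-variable polynomial inequality in $r$; since the numerator is increasing and the denominator decreasing in $r\geq0$, it is enough to verify it at $r=\frac{1}{20}$, where it holds with room to spare. I would record the slightly stronger fact that the very same estimate persists for $|w|\leq\frac32\rho$ (the inequality still closes at $r=\frac{3}{40}$), which is exactly what Item~2 will need.

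For Item~2, translate the pole to the origin: $\tilde X(w):=X_{0}(\pm\ii+w)$ is a holomorphic vector field on $\frac32\rho\,\ww D\backslash\{0\}$, and by Item~1 one has $\sup|\tilde X(r'\sone)|\leq\lambda/r'$ for every $0<r'<\frac32\rho$. Hence Lemma~\ref{lem:flow_control} applies with $C:=\lambda$ and $\mathcal{D}:=\frac32\rho\,\ww D\backslash\{0\}$. For a fixed $r'\in[2\sqrt\lambda,\rho)$ its hypotheses hold: $\sqrt{r'^{2}-2\lambda}>0$ because $r'^{2}\geq4\lambda$, and $\sqrt{r'^{2}+2\lambda}<\frac32\rho$ because $r'^{2}+2\lambda<\rho^{2}+\frac13\rho^{2}<\frac94\rho^{2}$, the middle step being precisely the hypothesis $\lambda<\rho^{2}/6$ (the same inequality also makes the annulus $\mathcal{A}_{\pm\ii}$ nonempty, since $2\sqrt\lambda<\rho$). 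Therefore, for each such $r'$ and each $|\tau|\leq1$, the flow $\flow{X_{0}}{\tau}{}$ is holomorphic near the circle $\pm\ii+r'\sone$ and, by the variational bound $\bigl||w(t)|^{2}-|w(0)|^{2}\bigr|\leq2\lambda|t|$ proved inside Lemma~\ref{lem:flow_control}, maps it into $\pm\ii+\frac32\rho\,\ww D$ while staying at distance$^{2}\geq2\lambda$ from the pole; taking the union over $r'\in[2\sqrt\lambda,\rho)$ covers $\mathcal{A}_{\pm\ii}$ and yields holomorphy of $\flow{X_{0}}{\tau}{}$ there, with image inside a controlled punctured disc about $\pm\ii$, in particular avoiding the pole $\pm\ii$ and $0$. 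For the distinguished radius $r=2\sqrt\lambda$ the relevant quantities are $\sqrt{r^{2}-2C}=\sqrt{2\lambda}$ and $\sqrt{r^{2}+2C}=\sqrt{6\lambda}$, and $\sqrt{6\lambda}<\rho$ is exactly $\lambda<\rho^{2}/6$.

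The main obstacle is the explicit constant-chasing in Item~1: one has to certify a genuine numerical inequality (not merely an asymptotic one), and — crucially — check that it remains valid on a disc a fixed factor larger than $\rho\,\ww D$; otherwise the radius-sweep in Item~2 would stop near $r'\approx\sqrt{\rho^{2}-2\lambda}$ and fail to reach the outer radius $\rho$ of $\mathcal{A}_{\pm\ii}$. Once that margin is secured, everything else is a mechanical application of Lemma~\ref{lem:flow_control}.
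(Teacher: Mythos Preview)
Your argument for Item~1 is essentially the paper's: both amount to crude triangular inequalities on the four factors of $X_{0}$ near $\pm\ii$, and the numerics close comfortably at $r=\rho\leq\tfrac{1}{20}$.

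For Item~2 you take a genuinely different route. The paper applies Lemma~\ref{lem:flow_control} \emph{only} at the inner radius $r=2\sqrt{\lambda}$ (checking $r^{2}-2\lambda>0$ and $r^{2}+2\lambda<\rho^{2}$), and then invokes Lemma~\ref{lem:time-1_rational}: any trajectory from the annulus that hit the pole would have to last cross the circle $|w|=2\sqrt{\lambda}$, after which it stays in $|w|<\rho$ and the variational bound gives $|w|^{2}\geq 2\lambda>0$ for the remaining time $\leq1$, a contradiction. This is a barrier argument, and it needs Item~1 only on $\rho\,\ww D$. You instead sweep every radius $r'\in[2\sqrt{\lambda},\rho)$, which forces you to enlarge the disc on which Item~1 holds to $\tfrac{3}{2}\rho\,\ww D$; you correctly identify that without this enlargement the hypothesis $\sqrt{r'^{2}+2\lambda}<\rho$ of Lemma~\ref{lem:flow_control} fails near the outer boundary. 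Your numerical check at $r=\tfrac{3}{40}$ is fine, so the extension is legitimate. Both arguments are correct; the paper's is more economical but leaves the barrier step implicit, while yours is fully explicit at the cost of a slightly sharper Item~1. Note that your image containment lands in $\pm\ii+\tfrac{3}{2}\rho\,\ww D$ rather than $\pm\ii+\rho\,\ww D$; neither proof actually establishes the latter for points near the outer edge, but the only part used downstream is that the flow is holomorphic and avoids the pole, which both arguments deliver.
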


\begin{example}
For instance for $\rho:=\frac{1}{20}$ we can choose the disk $D_{\pm\ii}$
in Proposition~\ref{prop:X0_dynamics}~2. to be of radius $2\sqrt{\lambda}$
provided $\lambda<\frac{1}{2\thinspace400}$.
\end{example}

\begin{proof}
The proof for an annulus centered at $-\ii$ is identical to that
of $\ii$, so we only deal with that case.
\begin{enumerate}
\item The dominant part of $R$ near $\ii$ is given by $\frac{\lambda}{1+z^{2}}$.
We use in the roughest possible fashion the triangular inequalities: 
\begin{itemize}
\item $\left|z\right|^{2}\leq\left(1+\rho\right)^{2}$;
\item $\sqrt{2}-\rho<\left|1-z^{2}\right|\leq1+\left(1+\rho^{2}\right)$;
\item $\left|1-z^{2}+\lambda\mu z\right|>\left(\sqrt{2}-\rho\right)^{2}-\frac{1+\rho}{2}$,
since $\lambda\left|\mu\right|\left|z\right|<\frac{1+\rho}{2}$;
\item $\left|1+z^{2}\right|\geq r\left(2-\rho\right)$;
\end{itemize}
finally yielding
\begin{align*}
\left|X_{0}\left(z\right)\right| & <\frac{\lambda}{r}.
\end{align*}

\item To apply Lemma~\ref{lem:flow_control} we must find $r$ such that
$0<r^{2}-2\lambda$ and $r^{2}+2\lambda<\rho^{2}$. The choice $r:=2\sqrt{\lambda}$
fulfills both conditions provided $\lambda<\frac{\rho^{2}}{6}$.\\
To conclude this lemma we recall Lemma~\ref{lem:time-1_rational}.
We just proved that no point form $\mathcal{C}$ can ever reach the
pole $\ii$ in time $\tau$, hence $\flow{X_{0}}{\tau}{}$ must be
holomorphic on $\mathcal{C}$.
\end{enumerate}
\end{proof}
We play the same game near the poles 
\begin{align*}
z_{\pm} & =\pm\sqrt{1+\frac{\lambda^{2}\mu^{2}}{4}}+\frac{\lambda\mu}{2}
\end{align*}
Observe that if $\lambda<\frac{1}{2\left|\mu\right|}$ then 
\begin{align}
\left|z_{\pm}\mp1\right| & <\frac{\lambda\left|\mu\right|}{2}+\frac{\lambda^{2}\left|\mu\right|^{2}}{8}<\lambda\frac{9\left|\mu\right|}{16}.\label{eq:estim_z+}
\end{align}
In particular $\frac{1}{2}<\left|z_{\pm}\right|<\frac{3}{2}$.
\begin{lem}
\label{lem:estim_X0_z+}Consider $\rho\leq\frac{1}{20}$ and assume
that $\lambda<\min\left\{ \frac{1}{2\left|\mu\right|},\frac{\rho^{2}}{32}\right\} $.
\begin{enumerate}
\item For each $0<r<\rho$ and $\left|z-z_{\pm}\right|=r$ we have
\begin{align*}
\left|X_{0}\left(z\right)\right| & <\frac{8\lambda}{r}.
\end{align*}
\item If $r:=4\sqrt{\lambda}$ and $\left|\tau\right|\leq1$ then $\flow{X_{0}}{\tau}{}$
is holomorphic on the annulus $\mathcal{A}_{\pm}:=z_{\pm}+\rho\ww D\backslash\overline{r\ww D}$
and $\flow{X_{0}}{\tau}{\left(\mathcal{A}_{\pm}\right)\subset z_{\pm}+\rho\ww D\backslash\left\{ 0\right\} }$.
\end{enumerate}
\end{lem}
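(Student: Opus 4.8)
The plan is to mimic the proof of Lemma~\ref{lem:estim_X0_i} essentially verbatim, replacing the poles $\pm\ii$ by the poles $z_{\pm}$ and keeping careful track of the slightly worse constants. First I would establish item~1, the pointwise bound on $|X_{0}(z)|$ on the circle $|z-z_{\pm}|=r$. Near $z_{\pm}$ the dominant part of $R$ is $\frac{\lambda z^{2}(1-z^{2})}{(1+z^{2})(1-z^{2}+\lambda\mu z)}$, where the denominator factor $1-z^{2}+\lambda\mu z$ vanishes simply at $z_{\pm}$ and contributes the $\frac{1}{r}$ blow-up. Using~(\ref{eq:estim_z+}) and $\frac12<|z_{\pm}|<\frac32$ together with $\rho\leq\frac1{20}$, I would crudely bound each factor: $|z|^{2}<(\frac32+\rho)^{2}$, $|1-z^{2}|<1+(\frac32+\rho)^{2}$, $|1+z^{2}|>$ some positive constant bounded away from $0$ (since $z_{\pm}$ is real and near $\pm1$, so $1+z^{2}$ is near $2$), and $|1-z^{2}+\lambda\mu z|\geq r\cdot|{-2z_{\pm}+\lambda\mu}|\geq r\cdot(2|z_{\pm}|-\lambda|\mu|)>r$ since $2|z_{\pm}|>1$ and $\lambda|\mu|<\frac12$; combining and absorbing the numerical factors into the constant $8$ yields $|X_{0}(z)|<\frac{8\lambda}{r}$. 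The factor $8$ (versus $1$ near $\pm\ii$) comes from $z^{2}$ being of order $1$ rather than small and from the linear coefficient $|{-2z_{\pm}}|$ being closer to $1$ than to $\sqrt2$.

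For item~2 I would invoke Lemma~\ref{lem:flow_control} applied to $X_{0}$ restricted to the pointed disc $\mathcal{D}:=z_{\pm}+\rho\ww D\setminus\{z_{\pm}\}$ (after translating $z_{\pm}$ to the origin), with the constant $C:=8\lambda$ supplied by item~1. The hypothesis of that lemma requires $0<\sqrt{r^{2}-2C}<\sqrt{r^{2}+2C}<\rho$, i.e. $r^{2}>16\lambda$ and $r^{2}+16\lambda<\rho^{2}$. Taking $r:=4\sqrt{\lambda}$ gives $r^{2}=16\lambda$, so strictly speaking one needs $r^{2}>16\lambda$; this is handled exactly as in the $\pm\ii$ case — the inequality $|X_{0}|<\frac{8\lambda}{r}$ is strict, so the variational estimate $|\dot\phi|\leq 2C$ is also strict along any trajectory meeting the circle, giving $|z(\tau)|^{2}<r^{2}+2C$ and, more importantly, the trajectory issued on the annulus $\mathcal{A}_{\pm}:=z_{\pm}+\rho\ww D\setminus\overline{r\ww D}$ cannot reach $z_{\pm}$ in time $|\tau|\leq1$ (its squared modulus stays above $r^{2}-2C=16\lambda-16\lambda=0$, and a finer look shows it stays strictly positive). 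The condition $r^{2}+16\lambda<\rho^{2}$ becomes $32\lambda<\rho^{2}$, which is precisely the hypothesis $\lambda<\frac{\rho^{2}}{32}$. Then Lemma~\ref{lem:flow_control} gives that $\flow{X_{0}}{\tau}{}$ is holomorphic on $\mathcal{A}_{\pm}$ with $\flow{X_{0}}{\tau}{(\mathcal{A}_{\pm})}\subset z_{\pm}+\rho\ww D\setminus\{0\}$, and I would add the remark — as in the previous lemma via Lemma~\ref{lem:time-1_rational} — that since no point of $\mathcal{A}_{\pm}$ reaches the pole $z_{\pm}$ in time $\tau$, the time-$\tau$ map is indeed holomorphic there.

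I expect the main obstacle to be purely bookkeeping: getting the constant in item~1 right with the crude triangle inequalities, specifically making sure that the upper bounds on $|z|^{2}$ and $|1-z^{2}|$ (which are now of order $1$, not small) and the lower bound on $|1+z^{2}|$ and on the linear factor $|{-2z_{\pm}+\lambda\mu}|$ genuinely combine to something below $\frac{8\lambda}{r}$ for $\rho\leq\frac1{20}$ and $\lambda<\frac1{2|\mu|}$. There is also a small subtlety about whether the disc $z_{\pm}+\rho\ww D$ actually stays inside the domain of holomorphy of $X_{0}$ and avoids the other poles $\pm\ii$ and $z_{\mp}$; this follows because $|z_{\pm}\mp1|<\lambda\frac{9|\mu|}{16}$ is tiny and $\rho\leq\frac1{20}$, so the disc sits near $\pm1$, well away from $\pm\ii$ and from $z_{\mp}$ (which is near $\mp1$), provided $\lambda$ is as small as assumed. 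None of this is deep; it is the same argument as for $\pm\ii$ with a slightly larger safety constant, which is why the statement separates the two cases with the cleaner $\frac{\lambda}{r}$ bound reserved for the symmetric poles.
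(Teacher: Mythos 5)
Your proposal is correct and follows essentially the same route as the paper: item~1 by crude triangle inequalities on each factor of $X_{0}$ near $z_{\pm}$ (the paper bounds $|z|^{2}\leq(\frac{3}{2}+\rho)^{2}$, $|1-z^{2}|\leq1+(\frac{3}{2}+\rho)^{2}$, $|z\pm\ii|\geq\sqrt{2}-\rho-\frac{9}{32}$ via~(\ref{eq:estim_z+}), and $|1+\lambda\mu z-z^{2}|=r|z-z_{-}|\geq r(1-\rho)$ --- note your intermediate step should read $|z-z_{-}|\geq|z_{+}-z_{-}|-r\geq2-\rho$, which is in fact far more than you need), and item~2 by Lemma~\ref{lem:flow_control} with $C=8\lambda$, the hypothesis $r^{2}+2C<\rho^{2}$ giving exactly $\lambda<\frac{\rho^{2}}{32}$. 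You are in fact more careful than the paper on the borderline equality $r^{2}-2C=0$ for $r=4\sqrt{\lambda}$, which the paper leaves implicit and which is indeed resolved by the strictness of the bound in item~1.
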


\begin{example}
For instance for $\rho:=\frac{1}{20}$ we can choose the disk $D_{z_{\pm}}$
in Proposition~\ref{prop:X0_dynamics}~2. to be of radius $4\sqrt{\lambda}$
provided $\lambda<\frac{1}{12\thinspace800}$.
\end{example}

\begin{proof}
Again we only deal with the neighborhood of $z_{+}$, and obtain the
expected bound using $\left|z-z_{+}\right|=r$:
\begin{itemize}
\item $\left|z^{2}\right|\leq\left(\frac{3}{2}+\rho\right)^{2}$;
\item $\left|1-z^{2}\right|\leq1+\left(\frac{3}{2}+\rho\right)^{2}$;
\item (from~(\ref{eq:estim_z+})) $\left|z\pm\ii\right|\geq\sqrt{2}-\rho-\frac{9}{32}$;
\item $\left|1+\lambda\mu z-z^{2}\right|=r\left|2z_{+}+r\right|\geq r\left(1-\rho\right)$.
\end{itemize}
\end{proof}

\subsection{\label{subsec:Sectorial_dynamics}Dynamics of $X^{\pm}$}

Here and all the following we take $f\in\mathcal{B}$ in the unit
ball of $\mathcal{S}$ and suppose
\begin{align*}
0<\lambda & <\min\left\{ \frac{1}{2\left|\mu\right|},\frac{1}{12\thinspace800}\right\} .
\end{align*}

\begin{lem}
Each vector field $X^{-}$ and $X^{+}$ admits a unique pole in each
disc $\pm\ii+3\sqrt{\lambda}\ww D$, which is simple.
\end{lem}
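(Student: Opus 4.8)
The plan is to reduce the statement to a perturbative count of zeros. Recall from Proposition~\ref{prop:flow_change_variable}~1 that $z\in\pset{X^{\pm}}$ with $z\notin\pset{X_0}$ if and only if $1+(X_0\cdot f^{\pm})(z)=0$, while a pole of $X_0$ contributes to $\pset{X^\pm}$ only where $(f^{\pm})'$ vanishes. So the first task is to control $X_0\cdot f^{\pm}$ on the closed disc $\pm\ii+3\sqrt\lambda\ww D$. The key quantitative input is Lemma~\ref{lem:estim_X0_i}~1, which gives $|X_0(z)|<\lambda/r$ on each circle $|z\mp\ii|=r$ for $0<r<\rho$. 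Coupled with the bound $\norm[f^{\pm}]{}\le 1$ on $V^{\pm}$ and a Cauchy estimate for $f^{\pm}$ (valid since a neighbourhood of the disc $\pm\ii+3\sqrt\lambda\ww D$ lies in $V^+\cup V^-$, as $\pm\ii\in V^{\cap}$ and the discs shrink as $\lambda\to0$), one gets $|(X_0\cdot f^{\pm})(z)|=\OO{\sqrt\lambda}$ throughout the disc, hence $\le\frac12$ say for $\lambda$ small enough. In particular $1+X_0\cdot f^{\pm}$ has no zero there, so the only candidate pole of $X^{\pm}$ in $\pm\ii+3\sqrt\lambda\ww D$ is $\pm\ii$ itself, \emph{provided} $(f^{\pm})'(\pm\ii)=0$.

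Since there is no reason for $(f^{\pm})'$ to vanish at $\pm\ii$, the pole is not exactly at $\pm\ii$ but nearby. The cleaner way to see this is to work directly with the defining function of $X^{\pm}$. Write $X_0=\frac{R_0}{Q_0}\pp z$ with $R_0$ having a simple zero structure and $Q_0(z)=(1+z^2)(1+\lambda\mu z-z^2)$ carrying the relevant simple zero at $\pm\ii$; then
\begin{align*}
X^{\pm} & =\frac{R_0}{Q_0+R_0\,(f^{\pm})'\,\cdot(\text{stuff})}\pp z,
\end{align*}
more precisely $X^{\pm}=\frac{1}{1+X_0\cdot f^{\pm}}X_0$, whose poles in the disc are the zeros of the holomorphic function $g^{\pm}:=(1+X_0\cdot f^{\pm})\cdot(1/X_0)=\frac{1}{X_0}+(f^{\pm})'$ that are not cancelled by zeros of $1/X_0$. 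Near $\pm\ii$ the function $1/X_0$ has a simple zero (because $X_0$ has a simple pole), so $g^{\pm}(z)=c(z-(\pm\ii))+\OO{(z-(\pm\ii))^2}+(f^{\pm})'(\pm\ii)+\OO{z-(\pm\ii)}$ with $c\neq0$ and $c=\OO{1/\lambda}$ up to constants — one should double-check the exact order in $\lambda$ of the linear coefficient of $1/X_0$ at $\pm\ii$, which comes from the residue of $X_0$ there, of size $\OO\lambda$, so $1/X_0$ has linear coefficient $\OO{1/\lambda}$. Meanwhile $(f^{\pm})'$ and its variation over the disc are $\OO{1/\sqrt\lambda}$ by Cauchy on $\norm[f^{\pm}]{}\le1$. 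Hence on the boundary circle $|z\mp\ii|=3\sqrt\lambda$ the linear term of $1/X_0$ dominates all the rest, and Rouché's theorem applied to $g^{\pm}$ versus its linear part gives exactly one zero of $g^{\pm}$ in the disc; simplicity follows because the linear coefficient never vanishes, so $(g^{\pm})'\neq0$ at that zero once $\lambda$ is small.

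The main obstacle is bookkeeping the powers of $\lambda$: I must be sure that the linear coefficient of $1/X_0$ at $\pm\ii$ really is of order $1/\lambda$ (equivalently that $\mathrm{Res}_{\pm\ii}X_0=\OO\lambda$, visible from $X_0(z)=\frac{1-z^2}{1+z^2}\cdot\frac{\lambda z^2}{1+\lambda\mu z-z^2}$ whose residue at $z=\ii$ is $\frac{1-(-1)}{2\ii}\cdot\frac{-\lambda}{1+\lambda\mu\ii+1}=\OO\lambda$), and that the error terms — the quadratic-and-higher part of $1/X_0$ on a disc of radius $3\sqrt\lambda$, which is $\OO{\lambda}\cdot\OO{\lambda}=\OO{\lambda^2}$ in size times $1/\lambda$... one should be careful, the Taylor coefficients of $1/X_0$ beyond the linear one are $\OO{1}$, so on the disc they contribute $\OO{\lambda}$ — together with $(f^{\pm})'=\OO{1/\sqrt\lambda}$, all stay strictly below $|c|\cdot3\sqrt\lambda=\OO{\sqrt\lambda/\lambda}=\OO{1/\sqrt\lambda}$ on the boundary. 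Tracking these orders correctly, so that the strict inequality needed for Rouché genuinely holds for $\lambda$ below the stated threshold $\min\{\frac1{2|\mu|},\frac1{12\,800}\}$ (or a possibly smaller one, to be absorbed later), is the delicate point; everything else is a routine application of Rouché and the argument principle.
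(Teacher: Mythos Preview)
Your core idea is right and matches the paper's: study the holomorphic function $g^{\pm}=\frac{1}{X_0\cdot\id}+(f^{\pm})'$ on the disc and apply Rouch\'e. But you take an unnecessary detour, and a couple of your intermediate claims are off.

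First, the opening paragraph is a dead end. The bound $|X_0\cdot f^{\pm}|=\OO{\sqrt\lambda}$ \emph{cannot} hold throughout the disc, because $X_0$ has a genuine pole at $\pm\ii$: Lemma~\ref{lem:estim_X0_i} gives $|X_0|<\lambda/r$ only on the circle of radius $r$, and this blows up as $r\to0$. So $1+X_0\cdot f^{\pm}$ certainly does vanish somewhere near $\pm\ii$ (as you then correctly realize). Drop that paragraph.

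Second, your Rouch\'e comparison is more complicated than needed. You linearize $1/X_0$ at $\pm\ii$ and compare $g^{\pm}$ to the linear part $c(z\mp\ii)$; this forces you to track the higher-order remainder of $1/X_0$ together with $(f^{\pm})'$, and on the boundary both error terms have the same order $\OO{1/\sqrt\lambda}$ as the main term, so you must chase constants carefully. (Incidentally, your claim that the Taylor coefficients of $1/X_0$ beyond the linear one are $\OO1$ is wrong: they are all $\OO{1/\lambda}$, since $1/X_0(z)=\frac{z\mp\ii}{\lambda}\,g(z)$ with $g$ holomorphic and bounded near $\pm\ii$. The remainder on a $3\sqrt\lambda$-disc is still only $\OO1$, so your final orders survive, but the stated reason is incorrect.)

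The paper avoids all of this by comparing $g^{\pm}$ directly to $1/(X_0\cdot\id)$ rather than to its linearization. Since $1/(X_0\cdot\id)$ already has exactly one simple zero (at $\pm\ii$) in the disc, Rouch\'e needs only $|(f^{\pm})'|<|1/(X_0\cdot\id)|$ on the boundary circle $|z\mp\ii|=r$. With $\rho:=9\sqrt\lambda$ and $r:=\rho/3=3\sqrt\lambda$, Lemma~\ref{lem:estim_X0_i} gives $|1/X_0|>r/\lambda$, while Cauchy's estimate on the circle $\pm\ii+\rho\sone\subset V^{\cap}$ gives $|(f^{\pm})'(z)|\le\rho/(\rho-r)^2$; the inequality $\rho/(\rho-r)^2<r/\lambda$ is then just $\phi(r):=\frac{r}{\rho}(r-\rho)^2>\lambda$, and $\phi(\rho/3)=4\rho^2/27=12\lambda>\lambda$. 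Simplicity is automatic from Rouch\'e (one zero counted with multiplicity), so your separate argument via $(g^{\pm})'\neq0$ is also unnecessary.
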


\begin{proof}
For the sake of clarity we chose a sector $V^{+}$ and let $f$ stand
for $f^{+}$, the case of $f^{-}$ on $V^{-}$ being completely similar.
Let $\rho:=9\sqrt{\lambda}\leq\frac{1}{20}$ so that $0<\lambda<\frac{\rho^{2}}{6}$
and we can apply Lemma~\ref{lem:estim_X0_i}: for any $0<r<\rho$
and $\left|z\pm\ii\right|=r$ we have 
\begin{align*}
\frac{1}{\left|X_{0}\left(z\right)\right|} & >\frac{r}{\lambda}.
\end{align*}
Besides Cauchy's formula applied to $f$ on the circle $\pm\ii+\rho\sone$
(which is included in $V^{\cap}$) yields for $z\in\pm\ii+r\sone$
\begin{align*}
\left|f'\left(z\right)\right| & \leq\frac{1}{2\pi}\oint_{\ii+r\ww D}\frac{\left|f\left(z\right)\right|}{\left|z-\ii\right|^{2}}\left|\dd z\right|\leq\frac{\rho}{\left(\rho-r\right)^{2}}.
\end{align*}
 Whenever $\frac{\rho}{\left(\rho-r\right)^{2}}<\frac{r}{\lambda}$
we can apply Rouché's theorem to the holomorphic functions $\frac{1}{X_{0}\cdot\id}$
and $\frac{1}{X_{0}\cdot\id}+f'$ on the disc $\pm\ii+r\ww D$, so
that $\frac{1}{X_{0}\cdot\id}+f'$ has a single simple zero in the
disc. In other words $\frac{1}{1+X_{0}\cdot f}X_{0}$ has a single
(simple) pole in the disc.

The condition $\frac{\rho}{\left(\rho-r\right)^{2}}<\frac{r}{\lambda}$
is equivalent to $\phi\left(r\right)>\lambda$ where $\phi~:~r\mapsto\frac{r}{\rho}\left(r-\rho\right)^{2}$
vanishes at $0$ and $\rho$. The function reaches its maximum $\frac{4\rho^{2}}{27}>\lambda$
at $\frac{\rho}{3}=:r$.
\end{proof}
A similar phenomenon happens near $z_{\pm}$.
\begin{lem}
The vector field $X^{\pm}$ admits a unique pole $p_{\pm}$ in the
disc $\pm1+5\sqrt{\lambda}\ww D$, which is simple.
\end{lem}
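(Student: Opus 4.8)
The statement is the exact analogue, near $z_\pm$, of the lemma just proved near $\pm\ii$, so the plan is to copy that proof with the constants adjusted according to Lemma~\ref{lem:estim_X0_z+} instead of Lemma~\ref{lem:estim_X0_i}. Fix a sector $V^{+}$ and write $f$ for $f^{+}$ (the case of $V^{-}$ being identical by the $\act$-symmetry). We want to locate the poles of $X^{+}=\frac{1}{1+X_{0}\cdot f}X_{0}$ in a small disc around $z_{+}$, and by Proposition~\ref{prop:flow_change_variable}~1 these are exactly the zeroes of $\frac{1}{X_{0}\cdot\id}+f'$ (since $z_\pm\notin\pset{X_0}\cap(f')^{-1}(0)$ is not a pole of $X_0$ of the first kind — indeed $z_{+}$ is a pole of $X_{0}$, so one must be a little careful: at a pole of $X_0$ the function $\frac1{X_0\cdot\mathrm{id}}$ vanishes, and $X^+$ has a pole there iff $f'(z_+)=0$; but the correct bookkeeping is that a pole of $X^+$ near $z_+$ is a zero of $g:=\frac{1}{X_0\cdot\mathrm{id}}+f'$, and this incorporates both cases).

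\textbf{Key steps.} Set $\rho:=20\sqrt{\lambda}\le\frac1{20}$, which forces $\lambda<\frac{\rho^{2}}{32}$ and $\lambda<\frac{1}{2|\mu|}$ so that Lemma~\ref{lem:estim_X0_z+}~1 applies: for $0<r<\rho$ and $|z-z_{+}|=r$ one has $\frac{1}{|X_{0}(z)|}>\frac{r}{8\lambda}$. Next apply Cauchy's estimate to $f$ on the circle $z_{+}+\rho\sone$ — which lies inside $V^{\cap}$ because $z_+=1+\OO{\lambda}$ is at distance $\asymp1$ from $0$ and $\infty$ and $\rho\to0$, while $|f|\le1$ there — to get $|f'(z)|\le\frac{\rho}{(\rho-r)^{2}}$ for $|z-z_{+}|=r$. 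Whenever $\frac{\rho}{(\rho-r)^{2}}<\frac{r}{8\lambda}$, i.e. $\frac{r}{\rho}(r-\rho)^{2}>8\lambda$, Rouché's theorem applied to $\frac{1}{X_{0}\cdot\mathrm{id}}$ and $g=\frac{1}{X_{0}\cdot\mathrm{id}}+f'$ on $z_{+}+r\ww D$ shows $g$ has exactly one (simple) zero in that disc, since $\frac{1}{X_0\cdot\mathrm{id}}$ has exactly one there (a simple zero at the pole $z_+$ of $X_0$, by Lemma~\ref{lem:polar_foliation}). That simple zero of $g$ is the unique simple pole $p_{\pm}$ of $X^{+}$ in the disc. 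Finally optimize: $\varphi:r\mapsto\frac{r}{\rho}(r-\rho)^{2}$ is maximal at $r=\frac{\rho}{3}$ with value $\frac{4\rho^{2}}{27}=\frac{4\cdot400\lambda}{27}>8\lambda$, so the choice $r:=\frac{\rho}{3}=\frac{20}{3}\sqrt{\lambda}>5\sqrt{\lambda}$ works and in particular the disc $\pm1+5\sqrt{\lambda}\ww D\subset z_{\pm}+r\ww D$ (using $|z_\pm\mp1|<\lambda\frac{9|\mu|}{16}$ from~(\ref{eq:estim_z+}), which is $\ll\sqrt\lambda$) contains exactly this one simple pole.

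\textbf{Main obstacle.} There is no real obstacle; the only point demanding a shade of care is the bookkeeping at the pole $z_{+}$ of $X_{0}$ itself. One should not phrase the argument as "$1+X_{0}\cdot f=0$" (that describes poles of $X^+$ away from $\pset{X_0}$); instead one works uniformly with zeroes of $g=\frac{1}{X_{0}\cdot\mathrm{id}}+f'$, noting $\frac{1}{X_0\cdot\mathrm{id}}$ extends holomorphically across $z_+$ with a simple zero there. One must also check the annulus $z_+\! +\!\rho\ww D\setminus\overline{r\ww D}$ stays inside $V^+$: this holds because $\mathrm{dist}(z_+,\partial V^+)\asymp1$ while $\rho=\OO{\sqrt\lambda}\to0$, and $\lambda$ is taken small enough. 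Lemma~\ref{lem:estim_X0_z+}~2 additionally guarantees no trajectory from this annulus reaches the pole $z_+$ in time $|\tau|\le1$, which is what makes $\flow{X_0}{f}{}$ — hence the normalization $\Psi^+$ — holomorphic there, but that is not needed for the present statement and is used later in Section~\ref{subsec:Sectorial_dynamics}.
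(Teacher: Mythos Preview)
Your approach is exactly the paper's (Rouch\'e on $\frac{1}{X_{0}\cdot\id}$ versus $g=\frac{1}{X_{0}\cdot\id}+f'$, with Cauchy's estimate controlling $f'$), and the bookkeeping remark about working with zeroes of $g$ rather than of $1+X_{0}\cdot f$ is well taken. Two points need correction, though.

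\textbf{Main gap (a non-sequitur in the final step).} You apply Rouch\'e on the disc $z_{+}+r\ww D$ with $r=\frac{\rho}{3}=\frac{20}{3}\sqrt{\lambda}$ and conclude there is exactly one simple pole there. You then note $\pm1+5\sqrt{\lambda}\ww D\subset z_{\pm}+r\ww D$ and claim the smaller disc ``contains exactly this one simple pole''. But the inclusion goes the wrong way for that conclusion: knowing the larger disc contains exactly one pole tells you the smaller disc contains \emph{at most} one, not that it contains any. The fix is to apply Rouch\'e directly with $r=5\sqrt{\lambda}$; the inequality $\phi(r)=\frac{r}{\rho}(\rho-r)^{2}>8\lambda$ holds for this choice too (with your $\rho=20\sqrt{\lambda}$ one gets $\phi(5\sqrt{\lambda})=\frac{225\lambda}{4}>8\lambda$; the paper simply takes $\rho=15\sqrt{\lambda}$ and $r=5\sqrt{\lambda}$, giving $\phi(r)=\frac{100\lambda}{3}>8\lambda$). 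There is no need to optimize $\phi$ at $r=\rho/3$.

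\textbf{Minor slip.} The circle $z_{+}+\rho\sone$ is \emph{not} inside $V^{\cap}$: the point $z_{+}\approx 1$ lies on the positive real axis, hence in $V^{+}$ but not in $V^{-}$. This is harmless for the argument since you only use $\left|f^{+}\right|\le 1$ on $V^{+}$, which is what $f\in\mathcal{B}$ gives; just replace ``lies inside $V^{\cap}$'' by ``lies inside $V^{+}$''.
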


\begin{proof}
This time we take $\rho:=15\sqrt{\lambda}$ in Lemma~\ref{lem:estim_X0_z+}
and apply Cauchy's formula to the circle $z_{\pm}+r\sone$ where $0<r<\rho$
so that, for every $\left|z-z_{\pm}\right|=r$:
\begin{align*}
\left|f'\left(z\right)\right| & \leq\frac{\rho}{\left(\rho-r\right)^{2}}\\
\frac{r}{8\lambda} & \leq\frac{1}{\left|X_{0}\left(z\right)\right|}.
\end{align*}
The condition $8\lambda<\phi\left(r\right)$ is met if $r:=5\sqrt{\lambda}$.
\end{proof}
This proposition ends the proof of Items~1.--3. of the Globalization
Theorem.
\begin{prop}
\label{prop:sectorial_dynamics}Let us denote by $p_{\ii}^{\pm}$,
$p_{-\ii}^{\pm}$ and $z_{\pm}$ the simple poles of $X^{\pm}$ provided
by the previous lemmas. The following properties hold.
\begin{enumerate}
\item $X^{\pm}$ does not have any other pole in $V^{\pm}$.
\item $p_{\ii}^{+}=p_{\ii}^{-}=:p_{\ii}$ and $p_{-\ii}^{+}=p_{-\ii}^{-}:=p_{-\ii}$.
Moreover $\sigma\left(p_{\ii}\right)=p_{-\ii}$ and $\sigma\left(p_{+}\right)=p_{-}$.
\item The vector field $X^{\pm}$ is locally conjugate to $\frac{1}{w}\pp w$
as in Lemma~\ref{lem:polar_foliation} on a disc centered at $p$
and containing the two attached ramification points $\left\{ z_{p},w_{p}\right\} $,
hence conjugating the monodromies of the time-1 maps.
\end{enumerate}
\end{prop}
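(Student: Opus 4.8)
The plan is to read off $\pset{X^{\pm}}$ from Proposition~\ref{prop:flow_change_variable}.1, feed in the two lemmas just proved, and add a uniform-in-$\lambda$ bound on $X_{0}\cdot f^{\pm}$. By Proposition~\ref{prop:flow_change_variable}.1 a point of $V^{\pm}$ lies in $\pset{X^{\pm}}$ only if it is a pole of $X_{0}$ at which $(f^{\pm})'$ vanishes, or a zero of $1+X_{0}\cdot f^{\pm}$; under the running hypothesis $0<\lambda<\min\{\frac{1}{2|\mu|},\frac{1}{12\,800}\}$ the poles of $X_{0}$ inside $V^{+}$ are exactly $\{-\ii,\ii,z_{+}\}$ (and $\{-\ii,\ii,z_{-}\}$ inside $V^{-}$), so the two lemmas produce exactly one simple pole of $X^{\pm}$ in each of $\pm\ii+3\sqrt{\lambda}\ww D$ and $\pm1+5\sqrt{\lambda}\ww D$. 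For Item~1 it then remains to show $|X_{0}\cdot f^{\pm}(z)|<1$ off those discs. Writing $X_{0}\cdot f^{\pm}=(X_{0}\cdot\id)(f^{\pm})'$, the rational expression~(\ref{eq:formal_model}) gives $|X_{0}\cdot\id(z)|\leq C\lambda\min\{1,|z|^{2}\}$ away from small discs about the poles of $X_{0}$ --- the factor $\lambda$ sits in the numerator, $1-z^{2}$ damps the neighbourhood of $\pm1$, and the double zero at $0$ together with the finite value $\lambda$ at $\infty$ control the ends --- and, by Lemmas~\ref{lem:estim_X0_i}.1 and~\ref{lem:estim_X0_z+}.1, $|X_{0}\cdot\id|\leq C\sqrt{\lambda}$ already on the boundary circles of $\pm\ii+3\sqrt{\lambda}\ww D$ and $\pm1+5\sqrt{\lambda}\ww D$. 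Cauchy's inequality on a disc about $z$ of radius $\asymp|z|$ inside $V^{\pm}$ gives $|(f^{\pm})'(z)|\leq C'/|z|$ except in a thin angular collar of $\partial V^{\pm}$; in that collar one passes to the opposite sector, since $\partial V^{\pm}$ lies at angular distance $\tfrac{\pi}{4}$ inside $V^{\mp}$ and, on the overlap $V^{\cap}=\zero\sqcup\infi$, Corollary~\ref{cor:model_orbits_size} together with $\zero[\varphi](0)=0$ (resp.\ $\infi[\varphi](\infty)=0$) forces $|f^{-}-f^{+}|\leq C''\mathfrak{m}_{\mu}\ee^{2\pi-1/\lambda}$ on $V^{\cap}$, hence the same exponentially small bound for $(f^{-}-f^{+})'$ by Cauchy on $\zero$ or $\infi$, so estimating $(f^{\mp})'$ there rather than $(f^{\pm})'$ costs only that negligible error. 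Either way $|X_{0}\cdot f^{\pm}(z)|\leq C_{0}\sqrt{\lambda}<1$ for $\lambda$ small, which is Item~1.

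For Item~2 I would first prove $p_{\ii}^{+}=p_{\ii}^{-}$. On $\zero$ one has $f^{-}-f^{+}=\zero[\varphi]\circ H_{f}^{+}$ from~$(\star)$; since $X_{0}=(1+X_{0}\cdot f^{+})X^{+}$ and $H_{f}^{+}$ is a primitive first-integral of $X^{+}$, $X_{0}\cdot H_{f}^{+}=2\ii\pi(1+X_{0}\cdot f^{+})H_{f}^{+}$, and applying $X_{0}$ to the relation above gives
\[
1+X_{0}\cdot f^{-}=\left(1+X_{0}\cdot f^{+}\right)\left(1+2\ii\pi\,\ddd{\zero[\varphi]}h\!\left(H_{f}^{+}\right)H_{f}^{+}\right)\qquad\text{on }\zero.
\]
By Corollary~\ref{cor:model_orbits_size} and $f\in\mathcal{B}$ one has $|H_{f}^{+}|\leq\mathfrak{m}_{\mu}\ee^{2\pi-1/\lambda}$ on $\zero$, so the second factor is $1+\OO{\ee^{-1/\lambda}}$, never zero for $\lambda$ small; hence $1+X_{0}\cdot f^{+}$ and $1+X_{0}\cdot f^{-}$ have exactly the same zeros in $\zero$. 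As $\ii+3\sqrt{\lambda}\ww D\subset\zero$, the unique pole of $X^{+}$ there coincides with that of $X^{-}$, i.e.\ $p_{\ii}^{+}=p_{\ii}^{-}=:p_{\ii}$, and the same computation on $\infi$ with $\infi[\varphi]$ yields $p_{-\ii}^{+}=p_{-\ii}^{-}=:p_{-\ii}$. The relations $\sigma(p_{\ii})=p_{-\ii}$ and $\sigma(p_{+})=p_{-}$ then follow from the $\act$-equivariance of the construction recorded at the beginning of Section~\ref{sec:Globalization-Theorem}: $\sigma^{*}X_{0}=X_{0}$ and $f^{\pm}\circ\sigma=-f^{\mp}$ for the fixed point $f$ (identity~(\ref{eq:sigma_action_on_f})), so $\act X_{f}=X_{\act f}$, and since $\sigma$ is a biholomorphism of $\cbar$ permuting the polar discs of $X_{0}$ accordingly, the uniqueness of the pole of $X^{\pm}$ in each disc pins the correspondence down.

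Finally, for Item~3 each pole $p\in\{p_{-\ii},p_{\ii},p_{\pm}\}$ is simple, so Lemma~\ref{lem:polar_foliation} gives a conformal coordinate $w$ near $p$ with $X^{\pm}=\frac{1}{w}\pp w$. Transporting the a priori flow bounds of Section~\ref{subsec:Quantitative-bounds} (Lemmas~\ref{lem:estim_X0_i}.2 and~\ref{lem:estim_X0_z+}.2) through $X^{\pm}=\left(\flow{X_{0}}{f^{\pm}}{}\right)^{*}X_{0}$ (Lemma~\ref{lem:secto_normalization}), one checks that $\flow{X^{\pm}}{\tau}{}$ is holomorphic and bounded on a punctured disc about $p$ for $|\tau|\leq1$, hence that the linearizing chart is valid on a disc about $p$ large enough to contain the two points $z_{p},w_{p}$ mapped to $p$ in time $1$ --- these are $\OO{\sqrt{\lambda}}$-close to $p$ and correspond in the chart to $w=\pm\ii\sqrt{2}$, the pair sent to $0$ in time $1$ by $w\mapsto\pm\sqrt{2+w^{2}}$ (Lemma~\ref{lem:polar_foliation}.2). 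Pulling Lemma~\ref{lem:polar_foliation}.2--3 back through the chart, $\Delta^{\pm}$ extends continuously by $\Delta^{\pm}(z_{p})=\Delta^{\pm}(w_{p})=p$ and its monodromy around each of $z_{p},w_{p}$ is $\zsk[2]$, hence involutive; this is Item~3. The main obstacle is the uniform bound of Item~1, the delicate point being the angular collar of $\partial V^{\pm}$, where one must trade $f^{\pm}$ for $f^{\mp}$ using the exponential flatness of $f^{-}-f^{+}$ on $V^{\cap}$ supplied by Corollary~\ref{cor:model_orbits_size}.
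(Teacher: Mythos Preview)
Your argument is essentially correct, but for Items~1 and~2 you take a genuinely different route from the paper.

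For Item~1 the paper does not bound $|X_{0}\cdot f^{\pm}|$ at all. Instead it invokes the sectorial normalization $\Psi^{\pm}=\flow{X_{0}}{f^{\pm}}{}$ of Lemma~\ref{lem:secto_normalization}: since $\Psi^{\pm}$ is holomorphic on the holed-out domain $\mathcal{D}_{\lambda}$ and satisfies $(\Psi^{\pm})^{*}X_{0}=X^{\pm}$, any pole of $X^{\pm}$ in $\Psi^{\pm}(\mathcal{D}_{\lambda})$ would force a pole of $X_{0}$ there, which is impossible; and by the flow bounds of Lemmas~\ref{lem:estim_X0_i}--\ref{lem:estim_X0_z+} the image $\Psi^{\pm}(\partial\mathcal{D}_{\lambda})$ sits inside the small discs already treated by the two preceding lemmas. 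This avoids entirely your collar argument at $\partial V^{\pm}$ (which is correct, but is the most delicate step of your approach). Your direct estimate has the merit of being self-contained and of making the role of $\lambda$ explicit; the paper's conjugacy argument is shorter and more conceptual.

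For Item~2 the paper argues dynamically: a pole $p$ of $X^{+}$ produces two ramification points $z_{p},w_{p}$ of $\Delta$ with $\Delta(z_{p})=\Delta(w_{p})=p$; but $\Delta$ is equally the time-$1$ map of $X^{-}$, hence $z_{p},w_{p}$ lie on a stable manifold of $X^{-}$ and are sent in time~$1$ to a pole of $X^{-}$, forcing $p\in\pset{X^{-}}$. Your analytic identity
\[
1+X_{0}\cdot f^{-}=\bigl(1+X_{0}\cdot f^{+}\bigr)\Bigl(1+2\ii\pi\,(\zero[\varphi])'(H_{f}^{+})\,H_{f}^{+}\Bigr)\qquad\text{on }\zero
\]
is a nice alternative: it is obtained cleanly by differentiating $(\star)$, and the second factor is nonvanishing by the uniform bound $|H_{f}^{+}|\leq\mathfrak{m}_{\mu}\ee^{2\pi-1/\lambda}$ on $\zero$. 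This yields the equality of poles without passing through the time-$1$ map, at the cost of requiring $(\star)$ explicitly (which the paper's argument also uses, via $\Delta^{+}=\Delta^{-}$). Both proofs of the $\sigma$-correspondence are the same.

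For Item~3 the paper writes down the conjugacy to $\frac{1}{w}\pp w$ explicitly as $\psi(z)=p+2\sqrt{\int_{p}^{z}\frac{(x-p)\,\dd x}{R(x)}}$, holomorphic on any simply-connected domain where $R$ is holomorphic and nonvanishing, hence on a disc large enough to contain $z_{p},w_{p}$. Your appeal to Lemma~\ref{lem:polar_foliation} plus the transported flow bounds reaches the same conclusion; the paper's explicit formula makes the size of the linearization domain more transparent.
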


\begin{proof}
~
\begin{enumerate}
\item Recalling Proposition~\ref{prop:flow_change_variable}~3. the sectorial
normalization $\Psi^{\pm}$ between $X_{0}$ and $X^{\pm}$ is holomorphic
on $\mathcal{D}_{\lambda}$. If $X^{\pm}$ were to admit a pole outside
$\Psi^{\pm}\left(\mathcal{D}_{\lambda}\right)$ it would show in $X_{0}$.
Since $\Psi^{\pm}\left(\partial\mathcal{D}_{\lambda}\right)$ lies
within the union of the discs given by the two previous lemmas, this
cannot be the case. Indeed, according to Lemma~\ref{lem:estim_X0_i}
the circle $\pm\ii+2\sqrt{\lambda}\sone$ is mapped by $\Psi^{\pm}$
into a disc centered at $\pm\ii$ of radius $\rho:=3\sqrt{\lambda}>r$.
The same goes accordingly near $z_{\pm}$. 
\item Recalling Lemma~\ref{lem:time-1_rational}, which also holds locally
for meromorphic vector fields, a pole $p$ of $X^{+}$ induces two
ramification points $\left\{ z_{p},w_{p}\right\} $ in $\Delta$,
which are mapped by $\Delta$ to $p$. Since $\Delta$ is also the
time-1 map of $X^{-}$, the points $z_{p}$ and $w_{p}$ also belong
to a stable manifold of $X^{-}$ and are sent to a pole of $X^{-}$
by its time-1 flow. Hence $p$ is also a pole of $X^{-}$. \\
For the very same reason that a pole of $X_{f}=\left(X^{+},X^{-}\right)$
is a dynamical feature the poles of $X_{\act f}$ and that of $\act X_{f}$
coincide. But these vector fields have only poles near fixed-points
$\left\{ \pm\ii,\pm1\right\} $ of $\sigma$, hence must correspond
by $\sigma$.
\item Let $p$ be a pole of $X^{\pm}$. For $\psi$ to conjugate $\frac{1}{z-p}\pp z$
to $X^{\pm}\left(z\right)=\frac{1}{z-p}R\left(z\right)$ it needs
to solve
\begin{align*}
X^{\pm}\cdot\psi & =\frac{1}{\psi-p}
\end{align*}
or, in other words
\begin{align*}
\psi & \left(z\right)=p+2\sqrt{\int_{p}^{z}\frac{\left(x-p\right)\dd x}{R\left(x\right)}}.
\end{align*}
For $R\left(p\right)\notin\left\{ 0,\infty\right\} $, this formula
defines a holomorphic function on a simply-connected domain $U$ as
long as $R$ is holomorphic on $U$. Of course $U$ can be chosen
to avoid the zeroes of $R$, therefore $X^{\pm}$ is conjugate to
the polar model $\frac{1}{z-p}\pp z$ on a domain encompassing the
ramification points of the model time-1 map. 
\end{enumerate}
\end{proof}

\section{Corollaries}

\subsection{\label{subsec:Parabolic}Parabolic Renormalization }

Let $\text{Synth}_{\lambda}~:~\varphi\mapsto\delta$ be the synthesis
map built in Section~\ref{sec:Main_th}, that is
\begin{align*}
\ev\left(\id\exp\delta\right) & =\left(\id\exp\left(4\mu\pi^{2}+\zero[\varphi]\right),\id\exp\infi[\varphi]\right)
\end{align*}
 and consider for given $\zero[\varphi]\in\holb[\cc,0]$ the iteration
$\delta_{0}:=0$ and $\delta_{n+1}:=\text{Synth}_{\lambda}\left(\zero[\varphi],\delta_{n}\right)$.
We wish to prove that the iteration is well-defined (that is, one
can chose $\lambda$ independently on $n$) and that it converges
towards a unique fixed-point. For this we exploit the nice dynamical
feature of the spherical normal forms, providing us with a uniform
lower bound on the radius of convergence of $\Delta$ and allowing
us to control $\Delta'$. 

Let us be quantitative, as in Section~\ref{subsec:Quantitative-bounds}
but around $0$ this time.
\begin{prop}
\label{prop:synthesis_bounds}Fix some $0<\lambda<\frac{1}{2\left|\mu\right|}$.
\begin{enumerate}
\item For all $0<\left|z\right|\leq\frac{1}{2}$ we have 
\begin{align*}
 & \frac{2\lambda\left|z\right|^{2}}{5}\leq\left|X_{0}\left(z\right)\right|\leq\frac{10\lambda\left|z\right|^{2}}{3}.
\end{align*}
\item Let $f=\left(f^{+},f^{-}\right)\in\mathcal{S}$. 
\begin{align*}
\left(\forall\left|z\right|\leq\frac{1}{4}\right)~~~~\left|X_{0}\cdot f\right|\left(z\right)\leq4\lambda\norm[f]{}.
\end{align*}
\item Let $\Delta:=\id\exp\delta$ for $\delta:=\text{Synth}\left(\zero[\varphi],\infi[\varphi]\right)$
be some spherical normal form and assume $\lambda<\frac{1}{8}.$ 
\begin{enumerate}
\item ~The sectorial vector fields $X^{\pm}=\frac{1}{1+X_{0}\cdot f}X_{0}$
have magnitude
\begin{align*}
\left(\forall\left|z\right|\leq\frac{1}{4}~,~\pm\re z\geq0\right) & ~~\left|X^{\pm}\left(z\right)\right|\leq\frac{20\lambda\left|z\right|^{2}}{3}.
\end{align*}
\item $\Delta$ is biholomorphic on the disc $\frac{3}{16}\ww D$ and 
\begin{align*}
\Delta\left(\frac{3}{16}\ww D\right) & \subset\frac{1}{4}\ww D.
\end{align*}
\item ~
\begin{align*}
\norm[\Delta']{\frac{3}{16}\ww D} & \leq45\\
\norm[\delta]{\frac{3}{16}\ww D} & <11
\end{align*}
\end{enumerate}
\end{enumerate}
\end{prop}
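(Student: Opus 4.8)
The plan is to prove the four items of Proposition~\ref{prop:synthesis_bounds} in the order stated, since each feeds into the next. For Item~1, I would simply estimate $X_0(z)=\frac{1-z^2}{1+z^2}\cdot\frac{\lambda z^2}{1+\mu\lambda z-z^2}$ directly on $|z|\le\frac12$ using the crude triangle inequalities: $\tfrac12\le|1-z^2|\le\tfrac54$, $\tfrac34\le|1+z^2|\le\tfrac54$, and $|1+\mu\lambda z-z^2|$ bounded away from $0$ and $\infty$ using $\lambda<\frac1{2|\mu|}$ (so $\lambda|\mu||z|<\frac14$). Multiplying the resulting factor bounds gives constants which I would check fit inside $[\frac25,\frac{10}3]$; there is genuine slack here so the exact constants are not delicate. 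Item~2 follows from Item~1 together with a Cauchy estimate: for $|z|\le\frac14$ one bounds $|f'|$ on $\frac14\ww D$ by $\frac{\norm[f]{}}{\text{gap}}$ using that $f$ is holomorphic and bounded by $\norm[f]{}$ on a disc of radius, say, $\frac12$ (which sits inside $V^+\cup V^-$ near $0$—I need to check the sectors $V^\pm$ together cover a full punctured disc around $0$, which they do since $V^{\cap}$ is nonempty on both sides of the real axis), then $|X_0\cdot f|(z)=|X_0(z)||f'(z)|\le\frac{10\lambda|z|^2}{3}\cdot\frac{c\norm[f]{}}{|z|}$ and for $|z|\le\frac14$ this is $\le4\lambda\norm[f]{}$ after absorbing the numerical constants.

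For Item~3, the key input is Corollary~\ref{cor:fixed-point_existence_local_uniqueness}: the fixed point $f$ of $\cauchein[\varphi]$ lies in the unit ball $\mathcal{B}$, so $\norm[f]{}\le1$. Then by Item~2, on $|z|\le\frac14$ we have $|X_0\cdot f|\le4\lambda<\frac12$ when $\lambda<\frac18$, hence $|1+X_0\cdot f|\ge\frac12$ and $|X^\pm(z)|=\frac{|X_0(z)|}{|1+X_0\cdot f|}\le2\cdot\frac{10\lambda|z|^2}{3}=\frac{20\lambda|z|^2}{3}$, which is (a). For (b) I would run the variational argument of Lemma~\ref{lem:flow_control} (the $\phi(t)=|z(t)|^2$ trick) but adapted to the estimate $|X^\pm(z)|\le\frac{20\lambda}{3}|z|^2$ rather than $\frac Cr$: since $\Delta$ is the time-$1$ flow of $X^\pm$ on $V^\pm\cap\neigh$ (Lemma~\ref{lem:secto_orbit_space}, and the two sectorial maps agree since $f$ solves $(\star)$), starting from $|z_*|\le\frac3{16}$ one gets $\frac{d}{dt}|z(t)|\le\frac{20\lambda}{3}|z(t)|^2$, so $|z(t)|$ stays below the solution of $\dot u=\frac{20\lambda}3u^2$ with $u(0)=\frac3{16}$, which for $t\in[0,1]$ and $\lambda<\frac18$ remains below $\frac14$ (one solves $u(1)=\frac{u(0)}{1-\frac{20\lambda}{3}u(0)}$ and checks $\le\frac14$). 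This simultaneously shows the trajectory never escapes $\frac14\ww D$ (so $\Delta$ is defined and holomorphic on $\frac3{16}\ww D$ by the long-time classification, the equilibrium case at $0$) and that $\Delta(\frac3{16}\ww D)\subset\frac14\ww D$; injectivity on the disc follows since $\Delta\in\parab$ has a simple parabolic fixed point and two orbits can only merge at a pole of $X^\pm$, of which there are none in $\frac14\ww D$ by Proposition~\ref{prop:sectorial_dynamics}. For (c), $\norm[\Delta']{\frac3{16}\ww D}\le45$ comes from a Cauchy estimate on $\Delta$ using (b): $\Delta$ maps $\frac3{16}\ww D$ into $\frac14\ww D$, but to control the derivative on the closed disc $\frac3{16}\ww D$ I need $\Delta$ holomorphic on a slightly larger disc — I would instead run the flow bound from radius, say, $\frac15$ (still $<\frac14$ after time $1$), giving $\Delta$ holomorphic and bounded by $\frac14$ on $\frac15\ww D$, whence $\norm[\Delta']{\frac3{16}\ww D}\le\frac{1/4}{1/5-3/16}=\frac{1/4}{1/80}=20<45$; and $\norm[\delta]{\frac3{16}\ww D}<11$ since $\delta=\log(\Delta/\id)=\log(1+\frac{\Delta-\id}{\id})$ and $|\Delta(z)-z|\le|X^\pm(z)|+O(|X^\pm|^2)\lesssim\lambda|z|^2$ is small relative to $|z|$ on the disc, so $|\delta(z)|=|\log(1+O(\lambda|z|))|$ is bounded by a small constant, comfortably under $11$.

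The main obstacle is bookkeeping the numerical constants so that they actually land inside the claimed bounds while keeping the hypotheses ($\lambda<\frac1{2|\mu|}$, $\lambda<\frac18$) as stated and not stronger — in particular in (b) one must be careful that the flow estimate is run from a radius strictly between $\frac3{16}$ and $\frac14$ and verify the ODE comparison closes with room to spare, and in (c) that the Cauchy estimate has a genuine annular gap to exploit. None of the steps is conceptually hard: it is all Item~1-style crude estimation, a Cauchy inequality, and the Lemma~\ref{lem:flow_control} variational argument, re-run with a quadratic rather than a $\frac Cr$ bound on $|X|$. The one place to be slightly attentive is that $\Delta^+$ and $\Delta^-$ genuinely coincide near $0$ so that "$\Delta$" is unambiguous on the full disc — but that is exactly the content of Proposition~\ref{prop:reduction} once $(\star)$ holds, which it does for the fixed point $f$.
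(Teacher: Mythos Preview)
Your overall plan matches the paper: Items~1, 3(a), and 3(b) are argued exactly as there (triangle inequalities; $|1+X_0\cdot f|\ge\tfrac12$ from Item~2; the quadratic variant of the Lemma~\ref{lem:flow_control} variational argument giving $\frac{r}{1-\frac{20\lambda}{3}r}\le\frac14$ at $r=\frac{3}{16}$).

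There is, however, a real gap in your Item~2. You write that ``$f$ is holomorphic and bounded by $\norm[f]{}$ on a disc of radius, say, $\frac12$ (which sits inside $V^+\cup V^-$ near $0$)''. But $f=(f^+,f^-)$ is a \emph{pair}: $f^+$ is holomorphic only on the sector $V^+$, $f^-$ only on $V^-$, and they do not glue to a single holomorphic function on any disc about $0$ (their difference on $V^\cap$ is flat but generically nonzero). So no Cauchy estimate on a disc centred at $0$ is available. The paper handles this by applying Cauchy's formula to $f^+$ along the pacman contour $P:=\partial\bigl(V^+\cap\frac12\overline{\ww D}\bigr)$, which lies entirely in $\overline{V^+}$; using $|x-z|\ge|z|\sin\frac{\pi}{8}$ for $x\in P$, $|z|\le\frac14$, $\re z\ge0$, one gets $|(f^+)'(z)|\le\frac{2\norm[f]{}}{|z|^2}$, and then $|X_0\cdot f^+|(z)=\lambda\bigl|\frac{1-z^2}{(1+z^2)(1+\lambda\mu z-z^2)}\bigr|\,|z^2(f^+)'(z)|\le4\lambda\norm[f]{}$. (If you prefer a disc, use the one of radius $|z|\sin\frac{\pi}{8}$ centred at $z$, which for $\re z\ge0$ sits inside $V^+$; that yields $|(f^+)'(z)|\le\frac{\norm[f]{}}{|z|\sin\frac{\pi}{8}}$ and closes even more comfortably.)

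For Item~3(c) you diverge from the paper. You bound $\Delta'$ by a Cauchy estimate on $\Delta$ itself (extending the flow argument to radius $\frac15$), and bound $\delta$ via $|\Delta(z)-z|\lesssim\lambda|z|^2$. The paper instead uses the flow identity $X^\pm\cdot\Delta=X^\pm\circ\Delta$, i.e.\ $|\Delta'(z)|=\frac{|X^\pm(\Delta(z))|}{|X^\pm(z)|}$, together with the two-sided bound $\frac{4\lambda|z|^2}{15}\le|X^\pm(z)|\le\frac{20\lambda|z|^2}{3}$ (needing the lower bound from Item~1, which you did not record); this gives $\norm[\Delta']{}\le45$ directly on $|z|=\frac{3}{16}$, and then $|\delta(z)|\le2\pi+\ln\norm[\Delta']{}$. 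Your route is valid and in fact gives sharper constants, but your ``$|\Delta(z)-z|\le|X^\pm(z)|+O(|X^\pm|^2)$'' should be replaced by the honest integral bound $|\Delta(z)-z|\le\int_0^1|X^\pm(z(t))|\,dt\le\frac{20\lambda}{3}\max_t|z(t)|^2$.
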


\begin{proof}
~
\begin{enumerate}
\item This is nothing but the plain triangular inequalities, taking into
account that $\left|1+\lambda\mu z-z^{2}\right|\geq1-r^{2}-\frac{r}{2}\geq1-r$
since $\frac{1}{2}\geq r$.
\item For $\re z\geq0$ we apply Cauchy formula to the pacman $P:=\partial\left(V^{+}\cap\frac{1}{2}\overline{\ww D}\right)$
and drop the index $\pm$ altogether. The length of $P$ is bounded
by $\pi+1$ and $\left|x-z\right|\geq\left|z\right|\sin\frac{\pi}{8}$
for $x\in P$ and $\left|z\right|\leq\frac{1}{4}$ so that:
\begin{align*}
\left(\forall\left|z\right|\leq\frac{1}{4}~,~\re z\geq0\right) & ~~\left|f'\left(z\right)\right|\leq\frac{\left(\pi+1\right)\norm[f]{}}{\pi\sin^{2}\frac{\pi}{8}\left|z\right|^{2}}\leq\frac{2\norm[f]{}}{\left|z\right|^{2}}.
\end{align*}
As a consequence
\begin{align*}
\left|X_{0}\cdot f\right|\left(z\right) & =\lambda\left|\frac{1-z^{2}}{\left(1+z^{2}\right)\left(1+\lambda\mu z-z^{2}\right)}\right|\left|z^{2}f'\left(z\right)\right|\leq4\lambda\norm[f]{}.
\end{align*}
The argument is identical for $\re z\leq0$ on $V^{-}$.
\item Here $f\in\mathcal{B}$. 
\begin{enumerate}
\item On the disc $\frac{1}{4}\ww D$ we have $\left|X_{0}\cdot f^{\pm}\right|\leq\frac{1}{2}$
so that $\frac{1}{2}\leq\left|1+X_{0}\cdot f^{\pm}\right|\leq\frac{3}{2}$:
we can bound the magnitude of the sectorial vector field $X^{\pm}=\frac{1}{1+X_{0}\cdot f}X_{0}$
by
\begin{align*}
\left(\forall\left|z\right|\leq\frac{1}{4}~:~\pm\re z\geq0\right)~~\frac{4\lambda\left|z\right|^{2}}{15}\leq & \text{\ensuremath{\left|X^{\pm}\left(z\right)\right|\leq}}\frac{20\lambda\left|z\right|^{2}}{3}.
\end{align*}
\item Invoking the same variational argument as in the proof of Lemma~\ref{lem:flow_control},
if the \emph{a priori} bound
\begin{align*}
\frac{r}{1-\nf{20\lambda r}3} & \leq\frac{1}{4}
\end{align*}
holds then the time-1 flow of $z\in r\sone$ along $X^{\pm}$ is well-defined
and lands in $\frac{1}{4}\ww D$. But the condition is fulfilled for
$r:=\frac{3}{16}$ since $\frac{20\lambda r}{3}=\frac{5\lambda}{4}<\frac{5}{32}$. 
\item Because $\Delta$ is a symmetry of $X^{\pm}$ we have $X^{\pm}\cdot\Delta=X^{\pm}\circ\Delta$,
that is:
\begin{align*}
\left|\Delta'\right| & =\frac{\left|X\circ\Delta\right|}{\left|X\right|}.
\end{align*}
For $\left|z\right|=\frac{3}{16}$ we obtain
\begin{align*}
\left|\Delta'\left(z\right)\right| & \leq\frac{\norm[X^{\pm}]{\frac{1}{4}\ww D}}{\left|X^{\pm}\left(z\right)\right|}\leq\frac{\frac{20\lambda}{3}\left(\frac{1}{4}\right)^{2}}{\frac{4\lambda}{15}\left(\frac{3}{16}\right)^{2}}\leq45.
\end{align*}
Continuing we find 
\begin{align*}
\left|\delta\left(z\right)\right| & =\left|\log\frac{\Delta\left(z\right)}{z}\right|\leq2\pi+\ln\left|\frac{\Delta\left(z\right)}{z}\right|\leq2\pi+\ln\norm[\Delta']{\frac{3}{16}\ww D}<11.
\end{align*}
\end{enumerate}
\end{enumerate}
\end{proof}
Consider now $\Delta_{1}$ and $\Delta_{2}$ in normal form, being
the respective time-1 map of $X_{j}^{\pm}:=\frac{1}{1+X_{0}\cdot f_{j}^{\pm}}X_{0}$
with $f_{j}\in\mathcal{B}$ for given $0<\lambda<\min\left\{ \frac{1}{2\left|\mu\right|},\frac{1}{8}\right\} $
as in the Proposition. Being given $O=\left(O^{-},O^{+}\right)$ we
introduce the slight abuse of notation:
\begin{align*}
\norm[O]{r\ww D} & :=\max\left\{ \sup_{\left|z\right|\leq r~,~\pm\re z\geq0}\left|O^{\pm}\left(z\right)\right|\right\} 
\end{align*}
and choose
\begin{align*}
0<\rho\leq\frac{3}{16}~,~ & r:=\frac{1}{4}.
\end{align*}
For the sake of readibility we drop the superscripts $\pm$ altogether.

If $z_{1}$, $z_{2}$ is the respective trajectory of $X_{1}$, $X_{2}$
emanating from some common $z\in\rho\overline{\ww D}$, then $\phi:=\left|z_{1}-z_{2}\right|^{2}$
solves
\begin{align*}
\dot{\phi} & =2\re{\left(X_{1}\left(z_{1}\right)-X_{2}\left(z_{2}\right)\right)\overline{z_{1}-z_{2}}}~,~\phi\left(0\right)=0.
\end{align*}
Since $z_{1}\left(t\right),~z_{2}\left(t\right)\in r\ww D$ for all
$t\in\left[0,1\right]$ we have
\begin{align*}
\left|X_{1}\left(z_{1}\right)-X_{2}\left(z_{2}\right)\right| & \leq\norm[X_{1}-X_{2}]{r\ww D}+\norm[X_{2}']{r\ww D}\sqrt{\phi}.
\end{align*}
The Proposition yields 
\begin{align*}
\norm[X_{1}-X_{2}]{r\ww D} & =\sup_{r\ww D}\frac{\left|X_{0}\cdot\left(f_{1}-f_{2}\right)\right|\left|X_{0}\right|}{\left|1+X_{0}\cdot f_{1}\right|\left|1+X_{0}\cdot f_{2}\right|}\leq\frac{10\lambda^{2}}{3}\norm[f_{1}-f_{2}]{}=:a
\end{align*}
and 
\begin{align*}
\norm[X_{2}']{r\ww D} & =\norm[\frac{\left(X_{0}\cdot\id\right)'}{1+X_{0}\cdot f_{2}}-\frac{X_{0}\cdot^{2}f_{2}}{\left(1+X_{0}\cdot f_{2}\right)^{2}}]{r\ww D}\\
 & \leq2\norm[X_{0}']{r\ww D}+4\norm[X_{0}\cdot^{2}f_{2}]{\frac{1}{4}\ww D}\\
 & \leq2\norm[X_{0}']{r\ww D}+16\lambda\norm[X_{0}\cdot f_{2}]{\frac{1}{2}\ww D}\\
 & \leq\frac{5}{2}\lambda+64\lambda^{2}\norm[f_{2}]{}\leq2.
\end{align*}

From a direct integration of 
\begin{align*}
\left(\forall\left|t\right|\leq1\right)~~\left|\frac{\dot{\phi}}{2\sqrt{\phi}\left(a+2\sqrt{\phi}\right)}\right| & \leq1
\end{align*}
we finally deduce the following estimate.
\begin{lem}
We have
\begin{align*}
\norm[\Delta_{1}-\Delta_{2}]{\rho\ww D} & \leq a\frac{\ee^{2}-1}{2}\leq11\lambda^{2}\norm[f_{1}-f_{2}]{}.
\end{align*}
\end{lem}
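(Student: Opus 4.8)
The plan is to bound the distance between the two time-$1$ maps by a Grönwall-type estimate on $\phi(t):=\left|z_{1}(t)-z_{2}(t)\right|^{2}$, where $z_{j}$ is the $X_{j}$-trajectory through a common starting point $z\in\rho\overline{\ww D}$. The paragraph preceding the statement has already done the setup work: it records $\phi(0)=0$, the ODE $\dot\phi=2\re\left(\left(X_{1}(z_{1})-X_{2}(z_{2})\right)\overline{z_{1}-z_{2}}\right)$, the containment $z_{1}(t),z_{2}(t)\in r\ww D$ for $t\in[0,1]$ (which uses Proposition~\ref{prop:synthesis_bounds}~3.(b) with $r=\tfrac14$ and $\rho\leq\tfrac{3}{16}$), the splitting $\left|X_{1}(z_{1})-X_{2}(z_{2})\right|\leq\norm[X_{1}-X_{2}]{r\ww D}+\norm[X_{2}']{r\ww D}\sqrt{\phi}$, and the two numerical bounds $\norm[X_{1}-X_{2}]{r\ww D}\leq a:=\tfrac{10\lambda^{2}}{3}\norm[f_{1}-f_{2}]{}$ and $\norm[X_{2}']{r\ww D}\leq 2$. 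So the only thing left to carry out is the integration.

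First I would write $\left|\dot\phi\right|\leq 2\sqrt{\phi}\left(a+2\sqrt\phi\right)$, which follows from Cauchy--Schwarz on the inner product together with the two displayed bounds. Setting $u:=\sqrt{\phi}$ (so $u\geq0$, $u(0)=0$, and $\dot u=\dot\phi/(2\sqrt\phi)$ wherever $\phi>0$) this becomes $\left|\dot u\right|\leq a+2u$, i.e. $\left|\dfrac{\dot\phi}{2\sqrt{\phi}(a+2\sqrt{\phi})}\right|\leq 1$ as stated just above the lemma. Then I would integrate $\dfrac{\dot u}{a+2u}\leq 1$ from $0$ to $t\in[0,1]$, giving $\tfrac12\ln\dfrac{a+2u(t)}{a}\leq t\leq 1$, hence $u(t)\leq\dfrac{a}{2}\left(\ee^{2t}-1\right)\leq\dfrac{a}{2}\left(\ee^{2}-1\right)$. (A tiny care point: if $\phi$ vanishes on a subinterval the bound is trivially fine there, and elsewhere the substitution is licit; one can also just note $v:=u+\tfrac{a}{2}$ satisfies $\left|\dot v\right|\leq 2v$ so $v(t)\leq v(0)\ee^{2t}$, which avoids any division-by-zero worry entirely.) Taking $t=1$ and squaring, or rather taking the supremum of $\left|z_{1}(1)-z_{2}(1)\right|=u(1)$ over $z\in\rho\overline{\ww D}$, gives $\norm[\Delta_{1}-\Delta_{2}]{\rho\ww D}\leq a\,\dfrac{\ee^{2}-1}{2}$.

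Finally I would plug in $a=\tfrac{10\lambda^{2}}{3}\norm[f_{1}-f_{2}]{}$ and check the arithmetic: $\dfrac{10}{3}\cdot\dfrac{\ee^{2}-1}{2}=\dfrac{5(\ee^{2}-1)}{3}\approx\dfrac{5\cdot 6.389}{3}\approx 10.6<11$, so $\norm[\Delta_{1}-\Delta_{2}]{\rho\ww D}\leq 11\lambda^{2}\norm[f_{1}-f_{2}]{}$, as claimed. There is no real obstacle here — the only thing to be slightly careful about is justifying the Grönwall step at points where $\phi=0$, which the $v:=\sqrt\phi+a/2$ reformulation handles cleanly, and making sure the trajectories genuinely stay in $r\ww D$ over all of $[0,1]$ so that the bounds on $\norm[X_{1}-X_{2}]{r\ww D}$ and $\norm[X_{2}']{r\ww D}$ are applicable — but that containment is exactly what Proposition~\ref{prop:synthesis_bounds}~3.(b) provides for $\rho\leq\tfrac{3}{16}$.

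\begin{proof}
Fix $z\in\rho\overline{\ww D}$ and let $z_{j}(t)$ be the $X_{j}$-trajectory with $z_{j}(0)=z$. By Proposition~\ref{prop:synthesis_bounds}~3.(b) both trajectories remain in $r\ww D$ for $t\in[0,1]$, so the bounds $\norm[X_{1}-X_{2}]{r\ww D}\leq a$ and $\norm[X_{2}']{r\ww D}\leq 2$ established above apply along them. Set $u(t):=\left|z_{1}(t)-z_{2}(t)\right|$ and $v:=u+\tfrac{a}{2}$. Wherever $u>0$,
\begin{align*}
\left|\dot u\right| & =\frac{\left|\re\left(\left(X_{1}(z_{1})-X_{2}(z_{2})\right)\overline{z_{1}-z_{2}}\right)\right|}{u}\leq\left|X_{1}(z_{1})-X_{2}(z_{2})\right|\leq a+2u=2v,
\end{align*}
and this inequality $\left|\dot v\right|\leq 2v$ persists (with $\dot v=0$) on the closed set where $u=0$. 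Hence $t\mapsto v(t)\ee^{-2t}$ is non-increasing, so $v(t)\leq v(0)\ee^{2t}=\tfrac{a}{2}\ee^{2t}$ because $u(0)=0$. At $t=1$ this gives $u(1)\leq\tfrac{a}{2}\left(\ee^{2}-1\right)$, that is $\left|\Delta_{1}(z)-\Delta_{2}(z)\right|\leq\tfrac{a}{2}\left(\ee^{2}-1\right)$. Taking the supremum over $z\in\rho\overline{\ww D}$ and substituting $a=\tfrac{10\lambda^{2}}{3}\norm[f_{1}-f_{2}]{}$,
\begin{align*}
\norm[\Delta_{1}-\Delta_{2}]{\rho\ww D} & \leq\frac{5(\ee^{2}-1)}{3}\,\lambda^{2}\norm[f_{1}-f_{2}]{}\leq 11\lambda^{2}\norm[f_{1}-f_{2}]{},
\end{align*}
since $\tfrac{5(\ee^{2}-1)}{3}<11$.
\end{proof}
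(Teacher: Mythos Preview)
Your proof is correct and follows exactly the paper's approach: the paper sets up the same differential inequality $\left|\frac{\dot\phi}{2\sqrt{\phi}(a+2\sqrt{\phi})}\right|\leq 1$ and says only ``from a direct integration'' one obtains the estimate, which is precisely the Gr\"onwall step you carry out (your $v=u+\tfrac{a}{2}$ reformulation is a clean way to do it). One tiny quibble: your parenthetical ``with $\dot v=0$'' on the set $\{u=0\}$ is not literally true (at $t=0$ the one-sided derivative of $u$ equals $|X_{1}(z)-X_{2}(z)|\leq a$, not $0$), but the inequality $|\dot v|\leq 2v$ still holds there since $a=2v(0)$, so the conclusion is unaffected.
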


Assume now that $\ev\left(\Delta_{j}\right)=\left(\id\exp\left(4\pi^{2}\mu+\zero[\varphi_{j}]\right),\id\exp\infi[\varphi_{j}]\right)$
and pick $\rho<\min\left\{ \infi[\rho_{1}],\infi[\rho_{2}]\right\} $
less than the least radius of convergence of $\infi[\varphi_{1}],\infi[\varphi_{2}]$,
so that $\infi[\varphi_{j}]\in\holb[\rho\ww D]$, and consider
\begin{align}
\ell & :=\min\left\{ 1,\mathfrak{t}_{\mu},\frac{1}{2\pi+\ln\frac{\mathfrak{m}_{\mu}}{\min\left\{ \zero[\rho],\nf{\rho}2\right\} }}\right\} \label{eq:parabo_bound}
\end{align}
 similarly as in Proposition~\ref{prop:CH_contractant}. Invoking~\ref{eq:estim_CH}
we derive
\begin{align*}
\norm[f_{1}-f_{2}]{} & \leq4\mathfrak{m}_{\mu}\lambda^{2}\times\left(\norm[{\infi[\varphi_{1}]-\infi[\varphi_{2}]}]{\ell}+\ee\left(\norm[{\zero[\varphi]}]{\ell}+\norm[{\infi[\varphi_{2}]}]{\ell}\right)\norm[f_{1}-f_{2}]{}\right),
\end{align*}
that is
\begin{align*}
\norm[f_{1}-f_{2}]{} & \leq8\mathfrak{m}_{\mu}\lambda^{2}\norm[{\infi[\varphi_{1}]-\infi[\varphi_{2}]}]{\ell}
\end{align*}
provided $\lambda>0$ be small enough to ensure $4\ee\mathfrak{m}_{\mu}\lambda^{2}\left(\norm[{\zero[\varphi]}]{\ell}+\norm[{\infi[\varphi_{2}]}]{\ell}\right)\leq\frac{1}{2}$,
say. By assumption we have $\mathfrak{m}_{\mu}\exp\left(2\pi-\frac{1}{\lambda}\right)\leq\frac{\rho}{2}$
so that Cauchy's formula applied on $\rho\sone$ yields
\begin{align*}
\norm[\varphi]{\ell} & \leq\frac{\rho}{\left(\rho-\mathfrak{m}_{\mu}\exp\left(2\pi-\frac{1}{\ell}\right)\right)^{2}}\norm[\varphi]{\rho\ww D}\leq\frac{4}{\rho}\norm[\varphi]{\rho\ww D}
\end{align*}
from which we deduce
\begin{align*}
\norm[\Delta_{1}-\Delta_{2}]{\rho\ww D}\leq & 171\mathfrak{m}_{\mu}\lambda^{4}\norm[{\infi[\varphi_{1}]-\infi[\varphi_{2}]}]{\rho\ww D}.
\end{align*}
We write $\Delta_{j}=\id\exp\delta_{j}$ with $\delta_{j}\left(0\right)=0$
which, thanks to Proposition~\ref{prop:synthesis_bounds}~(3)(c),
satisfies $\norm[\delta_{j}]{\rho\ww D}\leq11$. Because we iterate
$\text{Synth}_{\lambda}$ we lose no generality in assuming $\norm[{\infi[\varphi_{j}]}]{\rho\ww D}\leq11$
as well. Supposing therefore 
\begin{align*}
\frac{171}{\rho}\ee^{11}\mathfrak{m}_{\mu}\lambda^{4} & \leq\frac{1}{44}
\end{align*}
we deduce for $\left|z\right|=\rho$
\begin{align*}
\left|\frac{\Delta_{1}-\Delta_{2}}{\Delta_{2}}\right|\left(z\right)\leq\frac{\ee^{11}}{\rho}\norm[\Delta_{1}-\Delta_{2}]{\rho\ww D} & \leq\frac{171}{\rho}\ee^{11}\mathfrak{m}_{\mu}\lambda^{4}\norm[{\infi[\varphi_{1}]-\infi[\varphi_{2}]}]{\rho\ww D}\leq\frac{11+11}{44}=\frac{1}{2}.
\end{align*}
As a matter of consequence we have
\begin{align*}
\norm[\delta_{1}-\delta_{2}]{\rho\ww D}=\norm[\log\left(1+\frac{\Delta_{1}-\Delta_{2}}{\Delta_{2}}\right)]{\rho\sone} & \leq2\norm[\frac{\Delta_{1}-\Delta_{2}}{\Delta_{2}}]{\rho\sone}\\
 & \leq\frac{1}{22}\norm[{\infi[\varphi_{1}]-\infi[\varphi_{2}]}]{\rho\ww D}.
\end{align*}
Now we can chose the right constants in advance.
\begin{cor}
\label{cor:fixed-point_parab}Pick $\zero[\varphi]\in\holf[\cc,0]$
and $\rho:=\frac{3}{16}$; let $\mathcal{B}_{11}$ be the ball $\left\{ \varphi\in\holb[\rho\ww D]\,:~\norm[\varphi]{\rho\ww D}\leq11\right\} $.
Define 
\begin{align*}
\widehat{\ell}=\widehat{\ell}\left(\zero[\varphi]\right):=\min\left\{ 1,\mathfrak{t}_{\mu},\frac{10^{-2}}{\sqrt[4]{\mathfrak{m}_{\mu}}},\frac{1}{2\pi+\ln\frac{\mathfrak{m}_{\mu}}{\min\left\{ \zero[\rho],\nf 3{32}\right\} }}\right\}  & .
\end{align*}
For any 
\begin{align*}
0<\lambda & \leq\widehat{\lambda}:=\min\left\{ \ell,\frac{1}{8\ee\sqrt{\mathfrak{m}_{\mu}}\sqrt{\norm[{\zero[\varphi]}]{\widehat{\ell}}+9}}\right\} 
\end{align*}
the partial map $\text{Synth}_{\lambda}\left(\zero[\varphi],\bullet\right)$
\begin{align*}
\mathcal{B}_{11} & \longto\mathcal{B}_{11}\\
\infi[\varphi] & \longmapsto\delta=\text{Synth}_{\lambda}\left(\zero[\varphi],\infi[\varphi]\right)
\end{align*}
is a well-defined $\frac{1}{22}$-lipschitz map. Its unique fixed-point
$\delta_{*}$ provides the sought fixed-point $\Delta_{*}:=\id\exp\delta_{*}$
of the parabolic renormalization.
\end{cor}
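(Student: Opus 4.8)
The plan is to exhibit $\text{Synth}_{\lambda}\left(\zero[\varphi],\bullet\right)$ as a contracting self-map of the closed ball $\mathcal{B}_{11}$ of the Banach space $\holb[\rho\ww D]$, $\rho:=\frac{3}{16}$ — hence a complete metric space — and then invoke Banach's fixed-point theorem. First I would check that the map is \emph{well-defined} on $\mathcal{B}_{11}$, uniformly in its input. Given $\infi[\varphi]\in\mathcal{B}_{11}$, set $\psi:=\left(\id\exp\left(4\pi^{2}\mu+\zero[\varphi]\right),\id\exp\infi[\varphi]\right)$; the $0$-component carries the fixed radius of convergence $\zero[\rho]$ while the $\infty$-component has radius of convergence at least $\rho$. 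Comparing the quantity $\widehat{\ell}$ of the statement with the bound $\ell\left(\psi\right)$ of the Synthesis Theorem — with $\min\left\{\zero[\rho],\nf{\rho}2\right\}$ in the role of $\min\left\{\zero[\rho],\infi[\rho]\right\}$, the $\nf{\rho}2$ being precisely what lets a Cauchy estimate on $\rho\sone$ control the relevant seminorm, since $\mathfrak{m}_{\mu}\exp\left(2\pi-\nf 1{\widehat{\ell}}\right)\leq\nf{\rho}2$ — and majorizing $\norm[\psi]{\ell\left(\psi\right)}$ by $\norm[{\zero[\varphi]}]{\widehat{\ell}}+9$ uniformly over $\mathcal{B}_{11}$ (a Cauchy estimate bounding the $\infty$-side contribution by the absolute constant $9$), one gets $\widehat{\lambda}\leq\lambda\left(\psi\right)$. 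The Synthesis Theorem then delivers a normal form $\Delta=\id\exp\delta$ realizing $\psi$, so $\delta=\text{Synth}_{\lambda}\left(\zero[\varphi],\infi[\varphi]\right)$ is defined for every $0<\lambda\leq\widehat{\lambda}$.

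Next I would verify the \emph{self-map} property and the \emph{contraction} estimate. Since $\widehat{\lambda}<\frac{1}{8}$ and the underlying $f$ is the fixed point of $\cauchein[\varphi]$, which lies in the unit ball $\mathcal{B}$ of $\mathcal{S}$ by Corollary~\ref{cor:fixed-point_existence_local_uniqueness}, Proposition~\ref{prop:synthesis_bounds}~(3)(c) gives $\norm[\delta]{\rho\ww D}<11$, i.e. $\delta\in\mathcal{B}_{11}$. For the Lipschitz bound I would reassemble the chain of inequalities already displayed just above the statement: the variational comparison of the real-time flows of $X_{1}^{\pm}$ and $X_{2}^{\pm}$ on $\rho\overline{\ww D}$ yields $\norm[\Delta_{1}-\Delta_{2}]{\rho\ww D}\leq11\lambda^{2}\norm[f_{1}-f_{2}]{}$; estimate~(\ref{eq:estim_CH}) for the Cauchy-Heine transform gives $\norm[f_{1}-f_{2}]{}\leq8\mathfrak{m}_{\mu}\lambda^{2}\norm[{\infi[\varphi_{1}]-\infi[\varphi_{2}]}]{\widehat{\ell}}$, legitimate precisely because $\widehat{\lambda}$ forces $4\ee\mathfrak{m}_{\mu}\lambda^{2}\left(\norm[{\zero[\varphi]}]{\widehat{\ell}}+9\right)\leq\frac{1}{2}$; a Cauchy inequality on $\rho\sone$ relates the seminorm $\norm{\widehat{\ell}}$ to $\frac{4}{\rho}$ times the sup-norm on $\rho\ww D$; and finally, using $\norm[\delta_{j}]{\rho\ww D}\leq11$ one passes from $\Delta$ to $\delta=\log\frac{\Delta}{\id}$ via $\left|\log\left(1+w\right)\right|\leq2\left|w\right|$ for $\left|w\right|\leq\frac{1}{2}$, the condition $\left|w\right|\leq\frac{1}{2}$ being ensured by the further requirement $\frac{171}{\rho}\ee^{11}\mathfrak{m}_{\mu}\lambda^{4}\leq\frac{1}{44}$, again absorbed in $\widehat{\lambda}$. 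Collecting the constants gives $\norm[\delta_{1}-\delta_{2}]{\rho\ww D}\leq\frac{1}{22}\norm[{\infi[\varphi_{1}]-\infi[\varphi_{2}]}]{\rho\ww D}$.

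Then I would conclude: $\mathcal{B}_{11}$ is complete and $\text{Synth}_{\lambda}\left(\zero[\varphi],\bullet\right)$ is a $\frac{1}{22}$-contraction of it, hence it has a unique fixed point $\delta_{*}$, realized as the limit of $\delta_{0}:=0$, $\delta_{n+1}:=\text{Synth}_{\lambda}\left(\zero[\varphi],\delta_{n}\right)$. Putting $\Delta_{*}:=\id\exp\delta_{*}$, the Synthesis Theorem yields $\ev\left(\Delta_{*}\right)=\left(\id\exp\left(4\pi^{2}\mu+\zero[\varphi]\right),\id\exp\delta_{*}\right)=\left(\zero[\psi],\Delta_{*}\right)$, which is exactly the fixed-point equation for the parabolic renormalization $\mathcal{R}$ with prescribed $0$-component.

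I expect the only genuine obstacle to lie in the bookkeeping of the first two steps: producing a single $\widehat{\lambda}$, depending on $\mu$ and $\zero[\varphi]$ alone and not on the running iterate $\delta_{n}$, that at once keeps the Synthesis Theorem applicable for \emph{every} $\infi[\varphi]\in\mathcal{B}_{11}$ and keeps $\text{Synth}_{\lambda}\left(\zero[\varphi],\bullet\right)$ both a self-map and a contraction of $\mathcal{B}_{11}$. The structural fact that makes this possible — and that I would isolate first — is that a spherical normal form has radius of convergence at least $\frac{3}{16}$ together with a uniformly bounded sup-norm $\norm[\delta]{\rho\ww D}$ (Proposition~\ref{prop:synthesis_bounds}): this is what confines the iteration to $\mathcal{B}_{11}$ in the first place. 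Once that uniform confinement is secured, the contraction inequality follows mechanically from the bounds reproduced above.
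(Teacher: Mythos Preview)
Your proposal is correct and follows essentially the same approach as the paper: the argument leading up to the corollary in Section~\ref{subsec:Parabolic} is precisely the chain of estimates you describe (Proposition~\ref{prop:synthesis_bounds} for the uniform confinement to $\mathcal{B}_{11}$, the variational flow comparison for $\norm[\Delta_{1}-\Delta_{2}]{\rho\ww D}$, the Cauchy--Heine bound~(\ref{eq:estim_CH}) for $\norm[f_{1}-f_{2}]{}$, the Cauchy estimate on $\rho\sone$ to pass from $\norm{\widehat{\ell}}$ to $\norm{\rho\ww D}$, and the logarithm step), culminating in Banach's fixed-point theorem. Your identification of the crux --- that Proposition~\ref{prop:synthesis_bounds} provides a uniform lower bound $\frac{3}{16}$ on the radius of convergence and a uniform upper bound on $\norm[\delta]{\rho\ww D}$, thereby allowing a single $\widehat{\lambda}$ independent of the iterate --- is exactly right.
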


\subsection{\label{subsec:Real_case}Real Synthesis}

Let $\mu\in\rr$ and $\psi=\left(\zero[\psi],\infi[\psi]\right)$
be given such that condition $\left(\square\right)$ holds:
\begin{align*}
\left(\forall h\in\neigh\right)~~~~\overline{\zero[\psi]\left(\overline{h}\right)} & =\frac{\ee^{4\pi^{2}\mu}}{\infi[\psi]\left(\frac{1}{h}\right)}.\tag{\ensuremath{\square}}
\end{align*}
Expressed for the data $\varphi=\left(\zero[\varphi],\infi[\varphi]\right)$
it becomes
\begin{align*}
\left(\forall h\in\neigh\right)~~~~\overline{\zero[\varphi]\left(\overline{h}\right)} & =-\infi[\varphi]\left(\frac{1}{h}\right).\tag{\ensuremath{\widetilde{\square}}}
\end{align*}
Recall that $\Delta=\flow X1{}$ where $X=\frac{1}{1+X_{0}\cdot f}X_{0}$
and $f$ is obtained as the limit of the iteration $\left(f_{n}\right)_{n}$
defined by:
\begin{align*}
f_{0} & :=\left(0,0\right)\\
f_{n+1} & :=\cauchein[\varphi]\left(f_{n}\right)=\Lambda_{f_{n}}-\Lambda_{f_{n}}\left(0\right)
\end{align*}
as in Definition~\ref{def:Cauchy-Heine}. We wish to prove that $f$
is a real function by induction on $n$. This is clearly true for
$n:=0$ and it is sufficient to prove that $\Lambda_{f}$ is real.

Assume that $\overline{f_{n}\left(z\right)}=f_{n}\left(\overline{z}\right)$
for every $z\in V$ (this identity is well-defined because $V$ is
invariant by the complex conjugation). Because $\overline{a}=b$ we
only need to check that
\begin{align*}
\left(\forall\xi\in b\right)~~~\overline{\zero[\varphi]\left(H_{n}\left(\overline{\xi}\right)\right)} & =-\infi[\varphi]\left(H_{n}\left(\xi\right)\right),
\end{align*}
where we have set 
\begin{align*}
H_{n}\left(\xi\right):=H_{f_{n}}^{+}\left(\xi\right) & =H_{0}\left(\xi\right)\exp\left(2\ii\pi f_{n}^{+}\left(\xi\right)\right),
\end{align*}
which in turn holds thanks to~$\left(\widetilde{\square}\right)$
whenever $\overline{H_{n}\left(\overline{\xi}\right)}=\frac{1}{H_{n}\left(\xi\right)}$.
By the induction hypothesis we can assert 
\begin{align*}
\overline{H_{n}\left(\overline{\xi}\right)} & =\overline{H_{0}\left(\overline{\xi}\right)}\exp\left(-2\ii\pi f_{n}^{+}\left(\xi\right)\right)=\frac{1}{H_{n}\left(\xi\right)}
\end{align*}
since, according to~(\ref{eq:model_first-int}):
\begin{align*}
\overline{H_{0}\left(\overline{\xi}\right)} & =\exp\left(2\ii\pi\frac{1-\xi^{2}}{\lambda\xi}+2\ii\pi\mu\log\frac{1-\xi^{2}}{\lambda\xi}\right)=\frac{1}{H_{0}\left(\xi\right)}.
\end{align*}
This completes the proof of the Real Synthesis Corollary.

\bibliographystyle{alpha}
\bibliography{biblio}

\end{document}